\documentclass[a4paper,11pt]{amsart}
\usepackage{amssymb}
\usepackage{amsmath}
\usepackage{amsthm}
\usepackage{verbatim}
\usepackage{url}
\usepackage[all]{xy}

\voffset=-15mm
\vsize=240mm 
\textheight=240mm 
\hsize=150mm 
\textwidth=150mm 
\hoffset=-10mm
\parskip=4pt
\parindent=12pt





\theoremstyle{plain}

\newtheorem{theorem}{Theorem}[section]

\newtheorem{proposition}[theorem]{Proposition}
\newtheorem{lemma}[theorem]{Lemma}
\newtheorem{corollary}[theorem]{Corollary}

\newtheorem*{property*}{}

\theoremstyle{definition}

\newtheorem{definition}[theorem]{Definition}
\newtheorem{example}[theorem]{Example}
\newtheorem{construction}[theorem]{Construction}

\newtheorem{subsec}[theorem]{}

\theoremstyle{remark}
\newtheorem{remark}[theorem]{Remark}



\newcommand{\C}{{\mathbf{C}}}
\newcommand{\R}{{\mathbf{R}}}
\newcommand{\Q}{{\mathbf{Q}}}
\newcommand{\Z}{{\mathbf{Z}}}

\newcommand{\G}{{\mathbb{G}}}
\newcommand{\ppP}{\mathbb{P}}
\newcommand{\A}{{\mathbb{A}}}
\newcommand{\PP}{{\mathbb P}}

\newcommand{\X}{{{\sf X}}}

\newcommand{\AAA}{{\mathsf A}}
\newcommand{\BB}{{\mathsf B}}
\newcommand{\CC}{{\mathsf C}}
\newcommand{\DD}{{\mathsf D}}
\newcommand{\EE}{{\mathsf E}}
\newcommand{\FF}{{\mathsf F}}
\newcommand{\GG}{{\mathsf G}}

\newcommand{\Aut}{\mathrm{Aut}}
\newcommand{\SAut}{{\rm SAut}}

\newcommand{\im}{\mathrm{im}}

\newcommand{\inn}{\mathrm{inn}}

\newcommand{\Spec}{{\rm Spec\,}}
\newcommand{\Gal}{{\rm Gal}}

\newcommand{\BRD}{{\rm BRD}}
\newcommand{\Hom}{{\rm Hom}}
\newcommand{\Dyn}{{\rm Dyn}}
\newcommand{\id}{{\rm id}}

\newcommand{\Val}{{\rm Val}}
\newcommand{\val}{{\rm val}}

\newcommand{\Stab}{{\rm Stab}}
\newcommand{\CF}{{\rm CF}}

\newcommand{\charr}{{\rm char\,}}

\newcommand{\into}{\hookrightarrow}
\newcommand{\isoto}{\overset{\sim}{\to}}

\newcommand{\labelto}[1]{\xrightarrow{\makebox[1.5em]{\scriptsize ${#1}$}}}

\newcommand{\sX}{{\mathcal X}}
\newcommand{\sV}{{\mathcal V}}
\newcommand{\sP}{{\mathcal P}}
\newcommand{\sD}{{\mathcal{D}}}
\newcommand{\sC}{{\mathcal C}}
\newcommand{\sF}{{\mathcal F}}
\newcommand{\sA}{{\mathcal A}}
\newcommand{\sN}{{\mathcal{N}}}
\newcommand{\sU}{{\mathcal{U}}}

\newcommand{\NGH}{{\sN_G(H)}}

\newcommand{\vs}{\varsigma}
\newcommand{\ve}{{\varepsilon}}
\newcommand{\vphi}{{\varphi}}
\newcommand{\vk}{\varkappa}

\newcommand{\veg}{{\ve_\gamma}}
\newcommand{\atil}{{\widetilde{a}}}
\newcommand{\kinf}{{k\cup\{\infty\}}}
\newcommand{\gams}{{\gamma_*}}

\newcommand{\uY}{{\, \underset{Y}{\cdot}\, }}
\newcommand{\uZ}{{\, \underset{Z}{\cdot}\, }}

\newcommand{\ttau}{{\raisebox{.0ex}{$^\tau$}}}
\newcommand{\uT}{{\, \underset{\ttau}{\cdot}\, }}

\newcommand{\Gtil}{{\widetilde{G}}}

\newcommand{\Omt}{{\Omega^{(2)}}}
\newcommand{\Omone}{{\Omega^{(1)}}}

\newcommand{\St}{{S_2}}

\newcommand{\GL}{{\rm GL}}
\newcommand{\SO}{{\rm SO}}
\newcommand{\PGL}{{\rm PGL}}
\newcommand{\SL}{{\rm SL}}

\newcommand{\Sp}{{\rm Sp}}

\newcommand{\Ggg}{{\mathfrak{g}}}
\newcommand{\hh}{{\mathfrak{h}}}
\newcommand{\nn}{{\mathfrak{n}}}
\newcommand{\uu}{{\mathfrak u}}

\newcommand{\Ddim}{{\rm dim}}
\newcommand{\Lie}{{\rm Lie}}

\newcommand{\barH}{{\overline{H}}}

\newcommand{\stY}{\smash{\widetilde Y}}
\newcommand{\stH}{\smash{\widetilde H}}
\newcommand{\stsV}{\smash{\widetilde \sV}}
\newcommand{\stsX}{\smash{\widetilde \sX}}
\newcommand{\stSigma}{\smash{\widetilde \Sigma}}
\newcommand{\stsD}{\smash{\widetilde \sD}}

\newcommand{\Ank}{{\A^n_k}}
\newcommand{\Anknull}{{\A^n_{k_0}}}

\newcommand{\Map}{{\rm Map}}
\newcommand{\SigmaN}{\Sigma^N}

\newcommand{\hs}{\kern 0.8pt}
\newcommand{\hm}{\kern -0.6pt}

\newcommand{\emm}{\bfseries}

\begin{document}

\title[Equivariant models of spherical varieties]
{Equivariant models of spherical varieties%
}

\author{Mikhail Borovoi\\{\Tiny with an appendix by}\\ Giuliano Gagliardi}

\address{Borovoi: Raymond and Beverly Sackler School of Mathematical Sciences,
Tel Aviv University, 6997801 Tel Aviv, Israel}
\email{borovoi@post.tau.ac.il}

\address{Gagliardi: Raymond and Beverly Sackler School of Mathematical Sciences,
Tel Aviv University, 6997801 Tel Aviv, Israel}
\email{giulianog@mail.tau.ac.il}

\thanks{This research was partially supported by the Hermann Minkowski Center for Geometry and by the Israel Science Foundation (grant No. 870/16)}

\keywords{Spherical variety, spherical homogeneous space, spherical embedding, color, model, form, semi-automorphism}

\subjclass[2010]{%
  14M27
, 14M17
, 14G27
, 20G15
}


\begin{abstract}
Let $G$ be a connected semisimple group over an algebraically closed field $k$ of characteristic 0.
Let $Y=G/H$ be a spherical homogeneous space of $G$, and let $Y'$ be a spherical embedding of $Y$.
Let $k_0$ be a subfield of $k$.
Let $G_0$ be a $k_0$-model ($k_0$-form) of $G$.
We show that if $G_0$ is an {\em inner} form of a split group and if the subgroup $H$ of $G$ is spherically
closed, then $Y$ admits a $G_0$-equivariant $k_0$-model.
If we replace the assumption that $H$ is spherically closed
by the stronger assumption that $H$ coincides with its normalizer in $G$,
then  $Y$ and $Y'$ admit compatible  $G_0$-equivariant $k_0$-models, and these models are unique.
\end{abstract}

\maketitle

\tableofcontents

\setcounter{section}{-1}

\section{Introduction}

Let $k$ be an algebraically closed field of characteristic 0, and let $k_0\subset k$ be a subfield.
Let $X$ be a $k$-variety, that is, an algebraic variety over $k$.
By a {\em $k_0$-model} ($k_0$-form) of $X$ we mean a $k_0$-variety $X_0$ together with an isomorphism of $k$-varieties
\[\vk_X\colon X_0\times_{k_0}k \isoto X.\]

Let $G$ be a connected semisimple group over $k$.
Let $Y$ be a $G$-variety, that is, an algebraic variety over $k$ together with a morphism
\[\theta \colon G\times_k Y\to Y\]
defining an action of $G$ on $Y$.
We say that $(Y,\theta)$ is a {\em $G$-$k$-variety} or just that $Y$ is a $G$-$k$-variety.

Let $G_0$ be a {\em $k_0$-model} ($k_0$-form) of $G$, that is, an algebraic group over $k_0$
together with an isomorphism of algebraic $k$-groups
\[\vk_G\colon G_0\times_{k_0} k\isoto G.\]
By a {\em  $G_0$-equivariant $k_0$-model of the $G$-$k$-variety $(Y,\theta)$} we mean a $G_0$-$k_0$-variety $(Y_0,\theta_0)$
together with an {\em isomorphism of $G$-$k$-varieties} $\vk_Y\colon Y_0\times_{k_0} k\isoto Y$,
that is, an isomorphism of $k$-varieties $\vk_Y$ such that the following diagram commutes:
\begin{equation}\label{e:model-action}
\begin{aligned}
\xymatrix{
G_{0,k}\times_k Y_{0,k}\ar[r]^-{\theta_{0,k}}\ar[d]_{\vk_G\times\vk_Y}   & Y_{0,k}\ar[d]^{\vk_Y} \\
G\times_k Y\ar[r]^-\theta                                                &Y
}
\end{aligned}
\end{equation}
where $G_{0,k}:=G_0\times_{k_0} k$ and  $Y_{0,k}:=Y_0\times_{k_0} k$.
For a given $k_0$-model $G_0$  of $G$ we ask whether there exists a  $G_0$-equivariant  $k_0$-model $Y_0$ of $Y$.

From now on till the end of the Introduction we assume that  $Y$ is a {\em spherical homogeneous space of $G$.}
This means that $Y=G/H$ (with the natural action of $G$) for some algebraic subgroup $H\subset G$
and that a Borel subgroup $B$ of $G$ has an open orbit in $Y$.
Then the set of orbits of $B$ in $Y$ is finite; see, for example, Timashev \cite[Section 25.1]{Timashev}.

Let $Y\into Y'$ be a {\em spherical embedding of \  $Y=G/H$.}
This means that $Y'$ is a $G$-$k$-variety, that $Y'$ is a normal variety, and that $Y'$ contains $Y$ as an open dense $G$-orbit.
Then $B$ has an open dense orbit in $Y'$.
Moreover the set of orbits of $B$ (and hence, of $G$) in $Y'$ is finite;  see,  for example, Timashev \cite[Section 25.1]{Timashev}.

Inspired by the works of Akhiezer and Cupit-Foutou  \cite{ACF}, \cite{Akhiezer}, \cite{CF},
for a given $k_0$-model $G_0$  of $G$ we ask whether there exist a $G_0$-equivariant $k_0$-model $Y_0$ of $Y$
and a $G_0$-equivariant $k_0$-model $Y'_0$ of $Y'$.

Since $ \charr k=0$, by a result of Alexeev and Brion \cite[Theorem 3.1]{AB},
see Knop's MathOverflow answer \cite{Knop-MO-AB} and  Appendix \ref{s:app-AB} below,
the spherical subgroup $H$ of $G$ is conjugate to some (spherical) subgroup
defined over the algebraic closure of $k_0$ in $k$.
Therefore, from now on we assume that $k$ is an algebraic closure of $k_0$.
We set $\Gamma=\Gal(k/k_0)$ (the Galois group of $k$ over $k_0$).

Let $T$ be a maximal torus of $G$ contained in a Borel subgroup $B$.
We consider the Dynkin diagram $\Dyn(G)=\Dyn(G,T,B)$.
The $k_0$-model $G_0$ of $G$ defines the so-called $\ast$-action of $\Gamma=\Gal(k/k_0)$
on the Dynkin diagram $\Dyn(G)$; see  Tits \cite[Section 2.3, p.\,39]{Tits}.
In other words, we obtain a homomorphism
\[\ve\colon \Gamma\to\Aut\,\Dyn(G).\]
The $k_0$-group $G_0$ is called an {\em inner form} (of a split group)
if the $\ast$-action is trivial, that is, if $\ve_\gamma=\id$ for all $\gamma\in\Gamma$.
For example, if $G$ is a simple group of any of the types
$\AAA_1,\ \BB_n,\ \CC_n,\ \EE_7,\ \EE_8,\ \FF_4,\ \GG_2$,
then any $k_0$-model $G_0$ of $G$ is an inner form, because in these cases
$\Dyn(G)$ has no nontrivial automorphisms.
If $G_0$ is a split $k_0$-group, then of course $G_0$ is an inner form.

Let $\sD(Y)$ denote the  {\em set of colors of $Y=G/H$}, that is, the (finite) set
of the closures of $B$-orbits of codimension one in $Y$.
A spherical subgroup $H\subset G$ is called {\em spherically closed}
if the automorphism group $\Aut^G(Y)=\NGH/H$
acts on $\sD=\sD(Y)$ faithfully, that is, if the homomorphism
\begin{equation*}
\Aut^G(Y)\to\Aut(\sD)
\end{equation*}
is injective.
Here $\NGH$ denotes the normalizer of $H$ in $G$, and $\Aut(\sD)$
denote the group of permutations of the finite set $\sD$.

\begin{example}\label{ex:CF}
Let $k=\C$, \  $G={\rm PGL}_{2,\C}$, $H=T$ (a maximal torus), $Y=G/T$.
Then $|\sN_G(T)/T|=2$, and the spherical homogeneous space $Y$ of $G$  has exactly two colors,
which are swapped by the non-unit element of $\sN_G(T)/T$.
We see that the subgroup $H=T$ of $G$ is spherically closed.
\end{example}

\begin{theorem}\label{t:CF}
Let $G$ be a connected semisimple group over an algebraically closed field $k$ of characteristic 0.
Let $Y=G/H$ be a spherical homogeneous space of $G$.
Let $k_0$ be a subfield of $k$ such that $k$ is an algebraic closure of $k_0$.
Let $G_0$ be a $k_0$-model of $G$.
Assume that:
\begin{enumerate}
\item[\rm(i)] $G_0$ is an inner form, and
\item[\rm(ii)] $H$ is spherically closed.
\end{enumerate}
Then $Y$ admits a  $G_0$-equivariant $k_0$-model $Y_0$.
\end{theorem}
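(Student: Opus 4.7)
The approach I would take is Galois descent: construct a compatible semilinear $\Gamma$-action on $Y$ lifting the one on $G$ defined by $G_0$, where $\Gamma=\Gal(k/k_0)$. Concretely, I would produce for each $\gamma\in\Gamma$ a $\gamma$-semilinear automorphism $\tau_\gamma\colon Y\to Y$ satisfying $\tau_\gamma(g\cdot y)=\sigma_\gamma(g)\cdot\tau_\gamma(y)$ (where $\sigma_\gamma$ is a $\gamma$-semilinear automorphism of $G$ representing $G_0$), such that $\gamma\mapsto\tau_\gamma$ is a continuous cocycle. Then $Y$ descends to a $G_0$-equivariant $k_0$-model $Y_0$.

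The first step is to exploit the inner form hypothesis (i) to choose the $\sigma_\gamma$ favorably. Since the $\ast$-action on $\Dyn(G)$ is trivial, I can arrange that each $\sigma_\gamma$ preserves a fixed Borel pair $(B,T)$ and induces the identity on $\Dyn(G,T,B)$; in particular, $\sigma_\gamma$ is an inner twist of a "naive" semilinear action. The next step uses hypothesis (ii): because the spherical system of $(G,H)$ is an intrinsic combinatorial invariant attached to $(G,T,B)$, the subgroup $\sigma_\gamma(H)$ has the same spherical system as $H$. Since $H$ is spherically closed, the uniqueness theorem of Knop/Losev classifying spherically closed subgroups up to $G$-conjugation by their spherical systems produces an element $g_\gamma\in G$ with $g_\gamma\sigma_\gamma(H)g_\gamma^{-1}=H$. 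Setting $\tau_\gamma(xH):=g_\gamma\sigma_\gamma(x)H$ then yields a $\gamma$-semilinear $G$-equivariant automorphism of $Y$.

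The main obstacle I expect is the cocycle condition: the composition $\tau_\gamma\circ\tau_\delta$ need not equal $\tau_{\gamma\delta}$, the defect being multiplication by some $c(\gamma,\delta)\in\Aut^G(Y)=\NGH/H$. Different choices of $g_\gamma$ modify $c$ by coboundaries, so one obtains a well-defined obstruction class in $H^2(\Gamma,\Aut^G(Y))$, and the task is to show it vanishes. Here the spherical closedness hypothesis is designed to enter: the faithful embedding $\Aut^G(Y)\into\Aut(\sD)$ into the finite permutation group on the colors, combined with the inner form hypothesis (which should force the $\Gamma$-action on $\sD$ induced by the $\tau_\gamma$ to be trivial, since colors are tied to $B$-geometry preserved by $\sigma_\gamma$), reduces the obstruction to a computation inside a finite group with trivial Galois action, where it can be trivialized by a judicious readjustment of the $g_\gamma$'s.

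Once a cocycle lift $(\tau_\gamma)$ is produced, descent is automatic: the fixed-point functor yields a $k_0$-variety $Y_0$ together with a $G_0$-action and an isomorphism $\vk_Y\colon Y_0\times_{k_0}k\isoto Y$ satisfying the equivariance diagram \eqref{e:model-action}. The critical steps where I expect technical subtleties are (a) the correct formulation of "the spherical system is preserved," which requires care about inner twists of $T$, and (b) the explicit trivialization of the obstruction class, which is the heart of why the hypothesis "spherically closed" (as opposed to some weaker sphericality) is imposed.
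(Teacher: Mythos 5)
Your overall architecture matches the paper's: use Losev's uniqueness theorem to produce, for each $\gamma$, a $\sigma_\gamma$-equivariant $\gamma$-semi-automorphism $\mu_\gamma$ of $Y$ (the paper's Corollary \ref{c:Akh-CF-bis}), then correct the failure of $\gamma\mapsto\mu_\gamma$ to be a homomorphism by elements of $\Aut^G(Y)=\NGH/H$, and finally descend via Borel--Serre. But the heart of the matter --- why the correction can actually be carried out --- is where your sketch has a genuine gap, and in fact two of the assertions you lean on there are wrong as stated.

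First, the inner form hypothesis does \emph{not} force the induced permutation $m_{\mu_\gamma}$ of the set of colors $\sD$ to be trivial. It forces $m_{\mu_\gamma}$ to preserve the invariants $\rho\times\vs$ of each color, i.e.\ to lie in $\Aut_\Omega(\sD)$; but it may still swap the two colors in a fiber over $\Omt$ (this is exactly the phenomenon in Example \ref{ex:CF}). Second, the claim that the obstruction class in $H^2(\Gamma,\Aut^G(Y))$ vanishes because the coefficient group is finite with trivial Galois action is false in general: $H^2(\Gal(\C/\R),\Z/2)\neq 0$, for instance, and here $\Aut^G(Y)$ is precisely a product of copies of $\Z/2$. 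So finiteness and faithfulness of $\Aut^G(Y)\to\Aut(\sD)$ alone do not kill the obstruction.

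What actually makes the argument work, and what is missing from your proposal, is the \emph{surjectivity} of $\Aut^G(Y)\to\Aut_\Omega(\sD)$ (Losev's unpublished theorem, Theorem \ref{t:Losev-3}, to which the paper devotes Appendix \ref{s:App}). The point is that the color-level defect is automatically a coboundary: since each $m_{\mu_\gamma}$ lies in $\Aut_\Omega(\sD)$, one can (after the combinatorial normalization of Lemma \ref{l:perm-cover}) choose $a_\gamma\in\Aut_\Omega(\sD)$ with $a_\gamma\circ m_{\mu_\gamma}$ forming a genuine homomorphism $\Gamma\to\Aut(\sD)$. Surjectivity then lifts each $a_\gamma$ to $\atil_\gamma\in\Aut^G(Y)$, and only \emph{then} does spherical closedness (injectivity) enter, to conclude that $\mu'_\gamma=\atil_\gamma\circ\mu_\gamma$ satisfies $\mu'_{\gamma\delta}=\mu'_\gamma\circ\mu'_\delta$ on the nose because the defect acts trivially on $\sD$. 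Without the surjectivity statement your ``judicious readjustment of the $g_\gamma$'s'' has no supply of $G$-equivariant automorphisms realizing the needed color permutations, and the argument cannot close. You also still need to verify condition (ii) of Lemma \ref{l:model} (agreement with a model over a finite subextension), which the paper again settles using spherical closedness.
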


Theorem \ref{t:CF} (which  was inspired by Theorem 1.1 of Akhiezer \cite{Akhiezer}
and by Corollary 1 of Cupit-Foutou \cite[Section 2.5]{CF}), is a special case
of the more general Theorem \ref{t:CF-CF} below,
where instead of assuming that $G_0$ is an inner form,
we assume only that for all $\gamma\in\Gamma$ the automorphism $\ve_\gamma$ of $\Dyn(G)$
preserves the {\em combinatorial invariants}
(Luna-Losev invariants) of the spherical homogeneous space $Y$.
This assumption is necessary for the existence of a $G_0$-equivariant $k_0$-model of $Y$;
see Proposition \ref{p:Huruguen} below.

\begin{remark}
Necessary and sufficient conditions for the existence of  $G_0$-equivariant $k_0$-model of $Y=G/H$
were given by Moser-Jauslin and Terpereau  \cite[Theorem 3.18]{MJT}
in the case when $k_0=\R$ and $H$ is a {\em horospherical} subgroup of $G$.
Note that a horospherical subgroup $H\subset G$ is not spherically closed unless it is parabolic, in which case $\sN_G(H)=H$.
The general case, when $k_0$ is an arbitrary field of characteristic 0
and $H$ is an arbitrary spherical subgroup of $G$,
will be treated in the forthcoming article \cite{BG} of the author and G. Gagliardi.
We have to assume that $\charr k=0$ when dealing with spherical varieties,
because  we use Losev's uniqueness theorem \cite[Theorem 1]{Losev},
which has been proved only in characteristic 0.
\end{remark}

Note that in general a $G_0$-equivariant $k_0$-model $Y_0$ in Theorem \ref{t:CF} is not unique.
The following theorem is a special case of the more general theorem \ref{t:Boris-Boris} below.

\begin{theorem}\label{t:Boris}
In Theorem \ref{t:CF} the set of isomorphism classes of $G_0$-equivariant $k_0$-models of $Y=G/H$
is naturally a principal homogeneous space of the finite abelian group
\[ H^1(\Gamma,\Aut^G(Y))\simeq\Map(\Omt,\hs \Hom(\Gamma,\St)\hs)\]
Here $\St$ is the symmetric group on two symbols,
$\Omt=\Omt(Y)$ is the finite set defined
in Section \ref{s:invariants} below (before Definition \ref{d:comb-inv}),
and $\Map(\Omt,\hs \Hom(\Gamma,\St)\hs)$ denotes the group of maps
from the set $\Omt$ to the abelian group  $\Hom(\Gamma,\St)$.
\end{theorem}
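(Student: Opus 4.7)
The plan is to apply Galois descent from a distinguished basepoint provided by Theorem \ref{t:CF}, and then to compute the resulting Galois cohomology group by exploiting assumption (i). By Theorem \ref{t:CF} there exists at least one $G_0$-equivariant $k_0$-model $Y_0$ of $Y$, so the set of isomorphism classes is nonempty and contains a distinguished basepoint. Fixing $Y_0$, any other $G_0$-equivariant $k_0$-model $Y_0'$ produces, via $\vk_{Y_0}$ and $\vk_{Y_0'}$, a $G$-equivariant identification of the two $k$-base changes with $Y$, and comparing the two semi-linear $\Gamma$-actions on $Y$ coming from $Y_0$ and $Y_0'$ yields a $1$-cocycle $\gamma\mapsto c_\gamma\in\Aut^G(Y)=\NGH/H$. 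Two models are $G_0$-isomorphic over $k_0$ precisely when the corresponding cocycles differ by a coboundary, and every cocycle arises by twisting $Y_0$. Standard descent therefore produces a canonical bijection between isomorphism classes of $G_0$-equivariant $k_0$-models of $Y$ and $H^1(\Gamma,\Aut^G(Y))$, giving the claimed principal homogeneous space structure.

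The next step is to identify $\Aut^G(Y)$ explicitly. The hypothesis that $H$ is spherically closed states precisely that the action of $\NGH/H$ on the set of colors $\sD$ is faithful. Using the description of the colored equipment of a spherical homogeneous space (Knop, Losev, Cupit-Foutou), I would show that the image of $\Aut^G(Y)\hookrightarrow\Aut(\sD)$ is an elementary abelian $2$-group generated by transpositions of certain distinguished pairs of colors, and that these pairs are indexed precisely by the finite set $\Omt$ of Section \ref{s:invariants}. This yields a canonical isomorphism
\[\Aut^G(Y)\ \cong\ \St^{\Omt}.\]
In particular $\Aut^G(Y)$ is abelian, so $H^1(\Gamma,\Aut^G(Y))$ really is an abelian group, as asserted in the statement.

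The crucial technical point is then to verify that the $\Gamma$-action on $\Aut^G(Y)$ induced by the basepoint $Y_0$ becomes trivial after transport through the identification with $\St^{\Omt}$. Because $G_0$ is an inner form, the $\ast$-action $\ve$ of $\Gamma$ on $\Dyn(G)$ is trivial, so all data attached to $(G,T,B)$ entering into the definition of the Luna-Losev invariants of $Y$ are $\Gamma$-fixed. Combined with the existence of the $G_0$-equivariant model $Y_0$ and the $\Gamma$-equivariance of the color data built into its construction, this will imply that the action of $\Gamma$ on the indexing set $\Omt$ is trivial; since $\St$ has no nontrivial automorphisms, the induced action of $\Gamma$ on $\St^{\Omt}$ is trivial as well.

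Granting this, the final computation is purely formal:
\[H^1(\Gamma,\Aut^G(Y))\ =\ H^1(\Gamma,\St^{\Omt})\ =\ \Hom(\Gamma,\St)^{\Omt}\ =\ \Map(\Omt,\hs\Hom(\Gamma,\St)\hs),\]
which is exactly the isomorphism in the statement. I expect the main obstacle to be the third step: establishing that the $\Gamma$-action on $\Omt$ is trivial under the inner form hypothesis. This requires a careful analysis of how the Luna-Losev invariants of $Y$ transform under the chosen model, and is the place where assumption (i) is genuinely used, beyond the existence result already supplied by Theorem \ref{t:CF}.
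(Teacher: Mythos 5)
Your proposal is correct and follows essentially the same route as the paper: Galois descent identifies the set of isomorphism classes with $H^1(\Gamma,\Aut^G(Y))$ (quasi-projectivity of $G/H$ being the point that makes every cocycle effective), spherical closedness together with the surjectivity of $\Aut^G(Y)\to\Aut_\Omega(\sD)$ identifies $\Aut^G(Y)$ with $\prod_{\omega\in\Omt}\St$, the inner-form hypothesis trivializes the $\Gamma$-action on $\Omt$, and the formal computation gives $\Map(\Omt,\Hom(\Gamma,\St))$. The one step you defer in your second paragraph — that the image of $\Aut^G(Y)$ in $\Aut(\sD)$ is \emph{all} of $\prod_{\omega\in\Omt}\St$ rather than merely contained in it — is exactly Losev's unpublished theorem, whose proof occupies the paper's Appendix \ref{s:App}; and the paper in fact proves the more general Theorem \ref{t:Boris-Boris}, where $\Gamma$ may permute $\Omt$ nontrivially and the answer is computed via Shapiro's lemma over the $\Gamma$-orbits, before specializing to the inner-form case you treat directly.
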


In particular, for $k_0=\R$ we have $\Hom(\Gamma,\St)\cong\St$,
and therefore, the number of these isomorphism classes is $2^s$, where $s=|\Omt|$.
For $G$ and $Y$ as in Example \ref{ex:CF} we have $s=1$, hence for each of the two $\R$-models of $G$
there are exactly two non-isomorphic equivariant $\R$-models of $Y$; see Example \ref{ex:SO3} below.

\begin{corollary}[\rm Akhiezer's theorem]
\label{c:Akhiezer}
In Theorem \ref{t:CF}, instead of {\em (ii)} assume that
\begin{enumerate}
\item[\rm(ii$'$)] $H$ is self-normalizing, that is, $\NGH=H$.
\end{enumerate}
Then $Y=G/H$ admits a $G_0$-equivariant $k_0$-model $Y_0$,
and this model is unique up to a unique isomorphism.
\end{corollary}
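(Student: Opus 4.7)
My plan is to deduce the corollary directly from Theorems \ref{t:CF} and \ref{t:Boris}, using the self-normalizing hypothesis to collapse $\Aut^G(Y)$ to the trivial group. First I would verify that (ii$'$) implies (ii): if $\NGH=H$, then $\Aut^G(Y)=\NGH/H$ is trivial, so the homomorphism $\Aut^G(Y)\to\Aut(\sD)$ is automatically injective; hence $H$ is spherically closed, and Theorem \ref{t:CF} yields a $G_0$-equivariant $k_0$-model $Y_0$ of $Y=G/H$.

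For uniqueness up to isomorphism, I would invoke Theorem \ref{t:Boris}: the set of isomorphism classes of $G_0$-equivariant $k_0$-models of $Y$ is a principal homogeneous space of $H^1(\Gamma,\Aut^G(Y))$, which is trivial when $\Aut^G(Y)=1$. Since Theorem \ref{t:CF} guarantees this set is nonempty, it must be a singleton, so any two $G_0$-equivariant $k_0$-models of $Y$ are isomorphic. For the uniqueness of the isomorphism itself, let $\alpha_0,\beta_0\colon Y_0\to Y_0'$ be two $G_0$-equivariant $k_0$-isomorphisms between models. Then $\phi_0:=\beta_0^{-1}\circ\alpha_0$ is a $G_0$-equivariant automorphism of $Y_0$; after base change to $k$ and transport via $\vk_G,\vk_Y$ it becomes a $G$-equivariant automorphism of $Y=G/H$, which lies in $\Aut^G(Y)=\NGH/H=1$ and so is the identity. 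By faithfulness of base change from $k_0$-schemes of finite type to $k$-schemes, $\phi_0=\id_{Y_0}$, whence $\alpha_0=\beta_0$.

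The substantive content is entirely in Theorems \ref{t:CF} and \ref{t:Boris}; I do not foresee any real obstacle, as the only facts one needs beyond those are that $\NGH=H$ trivially implies spherical closedness and that base change on finite-type $k_0$-schemes is faithful.
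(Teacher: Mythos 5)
Your proposal is correct and follows essentially the same route as the paper: the paper likewise observes that $\NGH=H$ forces $\Aut^G(Y)=\{1\}$, hence $H$ is spherically closed, applies Theorem \ref{t:CF} for existence, and dismisses uniqueness as immediate from the triviality of $\Aut^G(Y)$. Your only addition is to spell out that "immediate" step via Theorem \ref{t:Boris} and the base-change argument for uniqueness of the isomorphism, which is a faithful elaboration rather than a different proof.
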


Indeed, since $H$ is self-normalizing, we have $\Aut^G(Y)=\NGH/H=\{1\}$,
and  hence, $H$  is spherically closed.
By Theorem \ref{t:CF}, $Y$ admits a  $G_0$-equivariant $k_0$-model.
The uniqueness assertion is obvious because $\Aut^G(Y)=\{ 1\}$.

Corollary \ref{c:Akhiezer}  generalizes Theorem 1.1 of Akhiezer \cite{Akhiezer},
where the case $k_0=\R$ was considered.

\begin{theorem}\label{t:Huruguen}
Under the assumptions of Corollary \ref{c:Akhiezer},
 {\emm any} spherical embedding $Y'$ of $Y=G/H$ admits a  $G_0$-equivariant  $k_0$-model $Y'_0$.
This $k_0$-model $Y'_0$ is compatible with the unique  $G_0$-equivariant
$k_0$-model $Y_0$ of $Y$ from Corollary \ref{c:Akhiezer},
and hence is unique up to a unique isomorphism.
\end{theorem}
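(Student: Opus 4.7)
The plan is to combine the Luna--Vust classification of spherical embeddings with Galois descent. By Corollary \ref{c:Akhiezer}, the homogeneous space $Y=G/H$ admits a unique $G_0$-equivariant $k_0$-model $Y_0$, or equivalently a semi-linear $\Gamma$-action $\{\sigma_\gamma\}_{\gamma\in\Gamma}$ on $Y$ compatible with the $\Gamma$-action on $G$ determined by $G_0$. I would proceed by extending each $\sigma_\gamma$ to a semi-linear automorphism $\sigma'_\gamma$ of $Y'$, verifying the cocycle condition, and then invoking Weil's descent for quasi-projective varieties.

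The spherical embedding $Y\into Y'$ is encoded by its colored fan $\Sigma'$ in the standard Luna--Vust data $(N(Y),\sV(Y),\sD(Y))$, where $N(Y)$ denotes the lattice of $B$-invariant $\Z$-valued valuations of $k(Y)$. The semi-linear map $\sigma_\gamma$ extends uniquely to a morphism $Y'\to Y'$ precisely when $\Sigma'$ is stable under the induced action of $\sigma_\gamma$ on $(N(Y),\sV(Y),\sD(Y))$. Hence it suffices to show that this induced action is trivial. For $N(Y)$ and $\sV(Y)$ this follows from the hypothesis that $G_0$ is an inner form, since these objects depend only on the root datum of $G$ together with the spherical roots, and the $\ast$-action of $\Gamma$ on $\Dyn(G)$ is trivial. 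For the set $\sD(Y)$ of colors the argument is subtler: under spherical closedness of $H$ (which is implied by $\NGH=H$), the permutation action of $\Gamma$ on $\sD(Y)$ associated to any $G_0$-equivariant $k_0$-model of $Y$ differs from the permutation action coming from the $\ast$-action on $\Dyn(G)$ by a cocycle with values in $\Aut^G(Y)=\NGH/H$; since here $\Aut^G(Y)=\{1\}$ and the $\ast$-action is trivial, the action on $\sD(Y)$ is trivial as well. I expect this comparison with the $\ast$-action, via Losev's uniqueness theorem (already used in the proof of Theorem \ref{t:CF}), to be the main technical point.

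Granting triviality of the $\Gamma$-action on $(N(Y),\sV(Y),\sD(Y))$, every colored fan, and in particular $\Sigma'$, is automatically $\Gamma$-stable, so each $\sigma_\gamma$ extends uniquely to a semi-linear automorphism $\sigma'_\gamma$ of $Y'$ (uniqueness by normality of $Y'$ and density of $Y$ in $Y'$). The cocycle identities $\sigma'_{\gamma_1\gamma_2}=\sigma'_{\gamma_1}\circ\sigma'_{\gamma_2}$ and $\sigma'_1=\id$ hold on the dense open $Y$, hence on all of $Y'$ by uniqueness of the extension. Weil descent then produces a $k_0$-model $Y'_0$ of $Y'$; by construction it is $G_0$-equivariant and restricts on $Y$ to $Y_0$. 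For uniqueness, any two $G_0$-equivariant $k_0$-models of $Y'$ restrict to $G_0$-equivariant $k_0$-models of $Y$, both coinciding with $Y_0$ by Corollary \ref{c:Akhiezer}, and the unique $G_0$-equivariant isomorphism of these restrictions extends uniquely to a $G_0$-equivariant isomorphism of the ambient models of $Y'$ by normality and density.
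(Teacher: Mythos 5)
Your overall strategy coincides with the paper's: show that the semilinear $\Gamma$-action attached to the unique model $Y_0$ of $Y$ acts trivially on the Luna--Vust data $(V,\sV,\sD)$ (using that $G_0$ is an inner form for $V$ and $\sV$, and $\NGH=H$ together with Losev's theorem for $\sD$ --- in the paper this is Corollary \ref{c:D-injective}: the map $\zeta=\rho\times\vs\colon\sD\to\Omega$ is bijective when $\NGH=H$, and $s_\gamma=\id_\Omega$ since $\ve_\gamma=\id$), conclude that every colored cone of $\CF(Y')$ is fixed, and then descend. The uniqueness argument via density of $Y$ in $Y'$ also matches the paper's.

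There is, however, one genuine gap in the final descent step. You invoke ``Weil's descent for quasi-projective varieties,'' but $Y'$ is \emph{not} assumed to be quasi-projective --- the paper stresses this point explicitly. For a general (non-quasi-projective) $k$-variety, a compatible family of semilinear automorphisms satisfying the cocycle condition need not come from a $k_0$-model in the category of schemes: the descended object may exist only as an algebraic space (see the remark after Lemma \ref{l:model}, citing Huruguen's Theorem 2.35 for a counterexample already over $k_0=\R$). So the step ``Weil descent then produces a $k_0$-model $Y'_0$'' does not follow as stated. The fix is exactly what you have already established but did not exploit: since \emph{each individual} colored cone $(\sC,\sF)\in\CF(Y')$ is $\Gamma$-stable, Huruguen's Theorem 2.26 applies and shows that $Y'$ is covered by $G$-stable \emph{and} $\Gamma$-stable open quasi-projective subvarieties (the simple embeddings attached to the individual colored cones); descent then works on each piece and the pieces glue to give the scheme $Y'_0$. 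With that substitution your argument is complete and agrees with the paper's proof of Theorem \ref{t:Huruguen-bis}.
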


Theorem \ref{t:Huruguen} generalizes Theorem 1.2 of Akhiezer \cite{Akhiezer},
who proved in the case $k_0=\R$ that
the {\em wonderful} embedding of $Y$ admits a unique  $G_0$-equivariant $\R$-model.
Our proof of Theorem \ref{t:Huruguen} uses results of Huruguen \cite{Huruguen}.
Note that in Theorem \ref{t:Huruguen} we do not assume that $Y'$ is quasi-projective.

Theorems \ref{t:CF}, \ref{t:Boris}, and \ref{t:Huruguen} seem to be new even in the case $k_0=\R$.

The plan of the rest of the article is as follows.
In Sections \ref{s:conj}--\ref{s:models} we consider models and  semilinear morphisms
for general $G$-varieties and  homogeneous spaces of $G$, not necessarily spherical.
In Sections \ref{s:invariants}--\ref{s:action} we consider
combinatorial invariants of  spherical homogeneous spaces.
Following  ideas of Akhiezer \cite[Theorem 1.1]{Akhiezer} and Cupit-Foutou \cite[Theorem 3(1), Section 2.2]{CF},
for $\gamma\in\Gamma=\Gal(k/k_0)$
we give a criterion of isomorphism of a spherical homogeneous space $Y=G/H$
and the ``conjugated'' variety $\gamma_* Y=G/\gamma(H)$
in terms of the action of $\gamma$ on the combinatorial invariants of $G/H$.
In Sections \ref{s:aut-free}--\ref{s:embeddings} we prove
Corollary \ref{c:Akhiezer}, Theorem \ref{t:CF}, Theorem \ref{t:Boris}, and Theorem \ref{t:Huruguen}.
In Appendix \ref{s:app-AB}, for a connected  reductive group $G_0$
defined over an {\em algebraically closed} field $k_0$ of  characteristic 0
and for an algebraically closed extension $k\supset k_0$,
it is proved that any spherical subgroup $H$ of the base change $G=G_0\times_{k_0} k$
is conjugate to a (spherical) subgroup defined over $k_0$.
In Appendix \ref{s:App},  following Friedrich Knop's MathOverflow answer \cite{Knop-MO} to the author's question,
Giuliano Gagliardi gives a proof of an unpublished theorem of Ivan Losev describing the image
of $\Aut^G(G/H)=\NGH/H$ in the group of permutations of $\sD(G/H)$.
Our proofs of Theorems \ref{t:CF},  \ref{t:Boris}, and \ref{t:Huruguen} use this result of Losev.

{\em Acknowledgements.}
The author is very grateful to Friedrich Knop for answering the author's numerous MathOverflow questions,
especially for the answer \cite{Knop-MO},
to Giuliano Gagliardi for writing Appendix \ref{s:App},
and to Roman Avdeev for suggesting Example \ref{ex:Avdeev} and proving Proposition \ref{p:Avdeev-bis}.
It is a pleasure to thank Michel Brion for very helpful e-mail correspondence.
The author thanks Dmitri Akhiezer, St\'ephanie Cupit-Foutou,
Cristian D. Gonz\'alez-Avil\'es, David Harari,  Boris Kunyavski\u\i, and Stephan Snegirov  for helpful discussions.
The author thanks the referees for careful reading the article and very useful comments,
which helped to improve the exposition.
This article was written during the author's visits
to the University of La Serena (Chile) and to the Paris-Sud University,
and he is grateful to the departments of mathematics of these universities
for support and excellent working conditions.

\noindent
{\em Notation and assumptions.}

$k$ is a field. In  Section \ref{s:G} and everywhere starting Section \ref{s:homog}, $k$ is algebraically closed.
Starting Section \ref{s:invariants} we assume that $\charr k=0$.

$k_0$ is a subfield of the algebraically closed field $k$ such that $k$ is a Galois extension of $k_0$
(except for Appendix  \ref{s:app-AB}), hence $k_0$ is perfect.

A {\em $k$-variety} is a geometrically  reduced separated scheme of finite type over $k$, not necessarily irreducible.

An {\em algebraic $k$-group} is a smooth  $k$-group scheme of finite type over $k$, not necessarily connected.
All {algebraic $k$-subgroups} are assumed to be smooth.
Starting Section \ref{s:homog},  all algebraic groups are assumed to be linear (affine).

\section{Semi-morphisms of $k$-schemes}
\label{s:conj}

\begin{subsec}
Let $k$ be a field and let $\Spec k$ denote the spectrum of $k$.
By a {\em $k$-scheme} we mean a pair $(Y,p_Y)$, where $Y$ is a scheme
and $p_Y\colon Y\to \Spec k$ is a morphism of schemes.
Let $(Y,p_Y)$ and $(Z,p_Z)$ be two $k$-schemes.
By a {\em $k$-morphism,} or a {\em morphism of $k$-schemes,}
\[ \lambda\colon \ (Y,p_Y)\to (Z,p_Z)\]
we mean a morphism of schemes $\lambda\colon Y\to Z$  such that the following diagram commutes:
\begin{equation*}
\xymatrix@R=25pt@C=40pt{
Y\ar[r]^\lambda \ar[d]_{p_Y}          & Z\ar[d]^{p_Z} \\
\Spec k\ar[r]^-{\id}   & \Spec k
}
\end{equation*}

Let  $\gamma\colon k\to k$ be an automorphism of $k$ (we write  $\gamma\in\Aut(k)$\hs).
Let
\[\gamma^*:=\Spec\gamma\colon\ \Spec k\to\Spec k \]
denote the induced automorphism of $\Spec k$; then $(\gamma\gamma')^*=(\gamma')^*\circ\gamma^*$.

Let $(Y,p_Y$) be a $k$-scheme.
By abuse of notation we write just that $Y$ is a $k$-scheme.
We define the {\em $\gamma$-conjugated $k$-scheme} $\gamma_*(Y,p_Y)=(\gamma_*Y,\gamma_* p_Y)$
to be the base change of $(Y,p_Y)$ from $\Spec k$ to $\Spec k$ via $\gamma^*$.
By abuse of notation we write just $\gamma_* Y$  for $\gamma_*(Y,p_Y)$.
\end{subsec}

\begin{lemma}
Let $(Y,p_Y)$ be a $k$-scheme, and let  $\gamma\in\Aut(k)$.
Then the  $\gamma$-conjugat\-ed $k$-scheme $\gamma_*(Y,p_Y)$
is canonically isomorphic to $(Y,(\gamma^*)^{-1}\circ p_Y)$
\end{lemma}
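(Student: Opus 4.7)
The lemma is essentially a direct unwinding of the definition of the fiber product, so my plan is simply to exhibit the canonical isomorphism via the universal property. Recall that by construction,
$$\gamma_*(Y, p_Y) \;=\; Y \times_{\Spec k,\, \gamma^*} \Spec k,$$
where on the left factor one uses $p_Y\colon Y\to\Spec k$ and on the right factor one uses $\gamma^*\colon\Spec k\to\Spec k$. The two canonical projections $\mathrm{pr}_1\colon \gamma_* Y\to Y$ and $\mathrm{pr}_2 = \gamma_* p_Y\colon \gamma_* Y \to \Spec k$ satisfy $p_Y\circ\mathrm{pr}_1 = \gamma^*\circ\mathrm{pr}_2$, and $\mathrm{pr}_2$ is by definition the structure morphism of $\gamma_* Y$ as a $k$-scheme.

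First I would construct a morphism $\phi\colon (Y,\, (\gamma^*)^{-1}\circ p_Y) \to \gamma_*(Y,p_Y)$ by applying the universal property of the fiber product to the pair $(\id_Y,\, (\gamma^*)^{-1}\circ p_Y)$, which is compatible because $p_Y\circ\id_Y = \gamma^*\circ(\gamma^*)^{-1}\circ p_Y$. This yields a unique $\phi$ with $\mathrm{pr}_1\circ\phi = \id_Y$ and $\mathrm{pr}_2\circ\phi = (\gamma^*)^{-1}\circ p_Y$; the second identity says precisely that $\phi$ is a $k$-scheme morphism with the claimed source and target.

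Second, I would observe that $\phi$ is an isomorphism. Since $\gamma^*$ is an automorphism of $\Spec k$, base change along it preserves isomorphisms, so the first projection $\mathrm{pr}_1\colon \gamma_* Y \to Y$ is itself an isomorphism of schemes (namely the base change of $\gamma^*$ along $p_Y$). Combined with $\mathrm{pr}_1\circ\phi = \id_Y$, this forces $\phi = \mathrm{pr}_1^{-1}$, so $\phi$ is an isomorphism. Since the construction depends only on $(Y, p_Y, \gamma)$, with no auxiliary choices, the isomorphism is canonical.

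There is no real obstacle here; the only subtlety worth keeping straight is the bookkeeping distinction between morphisms of schemes and morphisms of $k$-schemes, since the underlying map $\phi$ is an isomorphism of schemes that happens to intertwine the two different $k$-structures $(\gamma^*)^{-1}\circ p_Y$ on $Y$ and $\gamma_* p_Y$ on $\gamma_* Y$.
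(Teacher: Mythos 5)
Your proof is correct and follows essentially the same route as the paper: both identify the canonical fiber-product projection $\gamma_*Y\to Y$ as the underlying scheme isomorphism and then check that it intertwines the structure morphism $\gamma_*p_Y$ with $(\gamma^*)^{-1}\circ p_Y$. Your justification that the projection is invertible (it is the base change of the isomorphism $\gamma^*$ along $p_Y$) is if anything slightly more direct than the paper's appeal to $(\gamma^{-1})_*\gamma_*(Y,p_Y)\cong(Y,p_Y)$.
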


\begin{proof}
Write $(X,p_X)=\gamma_*(Y,p_Y)$; then $X$ comes with a canonical morphism $\lambda\colon X\to Y$
such that the following diagram commutes:
\[
\xymatrix{
X\ar[r]^{\lambda}\ar[d]_{p_{X}} &Y\ar[d]^{p_Y}\\
\Spec k\ar[r]^{\gamma^*}  &\Spec k
} \]
Since $(\gamma^{-1})_*(\gamma_*(Y,p_Y))$ is canonically isomorphic to $(Y,p_Y)$,
one can easily see that $\lambda$ is an isomorphism of schemes.
From the above diagram we obtain a commutative diagram
\[
\xymatrix@R=20pt@C=50pt{
X\ar[r]^{\lambda} \ar[dd]_{p_X}        &Y\ar[d]^-{p_Y} \\
                                 &\Spec k\ar[d]^-{(\gamma^*)^{-1}} \\
\Spec k\ar[r]^{\id}   &\Spec k
}
\]
which gives a canonical isomorphism of $k$-schemes $(X,p_X)\isoto (Y,(\gamma^*)^{-1}\circ p_Y)$.
 \end{proof}

\begin{subsec}
We define an action of  $\gamma\colon k\to k$ on $k$-points.
Let $y$ be a $k$-point of $Y$, that is, a morphism
$y\colon \Spec k\to Y$ such that $p_Y\circ y=\id_{\Spec k}$\hs.
We denote
\begin{equation}\label{e:y-!}
\gamma_!(y)=y\circ \gamma^*\colon \ \Spec k\to\Spec k\to Y;
\end{equation}
then an easy calculation shows that
$\gamma_!(y)$ is a $k$-point of $\gamma_* Y$,
where we identify $\gamma_*(Y,p_Y)$ with $(Y,(\gamma^*)^{-1}\circ p_Y)$.
Thus we obtain a bijection
\begin{equation}\label{e:gamma-!}
\gamma_!\colon Y(k)\to (\gamma_* Y)(k),\quad  y\mapsto \gamma_! (y).
\end{equation}
\end{subsec}

\begin{subsec}
Let $G$ be a $k$-group scheme.
Following Flicker, Scheiderer, and Sujatha \cite[(1.2)]{FSS},
we define the $k$-group scheme $\gamma_* G$
to be the base change of $G$ from $\Spec k$ to $\Spec k$ via $\gamma^*$.
Then the map \eqref{e:gamma-!}
\[ \gamma_!\colon G(k)\to (\gamma_*G)(k)\]
is an isomorphism of abstract groups (because for any field extension $\lambda\colon k\into k'$,
the corresponding map on rational points
\[\lambda_!\colon G(k)\to (G\times_k k')(k')\]
is a homomorphism).
If $H\subset G$ is a $k$-group subscheme, then $\gamma_* H$ is naturally a $k$-group subscheme of $\gamma_* G$
(because a base change of a group subscheme is a group subscheme).
From the commutative diagram
\[
\xymatrix{
H(k)\ar[r]^-{\gamma_!}\ar@{_(->}[d]   & (\gamma_* H)(k)\ar@{_(->}[d]  \\
G(k)\ar[r]^-{\gamma_!}         & (\gamma_* G)(k)
}
\]
we see that
\begin{equation*}
(\gamma_* H)(k)=\gamma_!(H(k))\subset(\gamma_* G)(k).
\end{equation*}

Let $(Y, \theta)$ be a {\em $G$-$k$-scheme} (a $G$-scheme over $k$), where
\[\theta\colon G\times_k Y\to Y,\]
is an action of $G$ on $Y$.
 By abuse of notation we write just that $Y$ is a $G$-$k$-scheme.
We  define the $\gamma_* G$-$k$-scheme $\gamma_*(Y,\theta)=(\gamma_* Y,\gamma_*\theta)$
to be the base change of $(Y,\theta)$ from $\Spec k$ to $\Spec k$ via $\gamma^*$.
\end{subsec}

\begin{definition}
Let $(Y,p_Y)$ and $(Z,p_Z)$ be two $k$-schemes.
A {\em semilinear morphism}
\[ (\gamma,\nu)\colon \ (Y,p_Y)\to (Z,p_Z)\]
is a pair $(\gamma,\nu)$,
where $\gamma\colon k\to k$ is an automorphism of $k$ and
$\nu\colon Y\to Z$ is a morphism of schemes, such that the following diagram commutes:
\begin{equation*}
\xymatrix@R=25pt@C=40pt{
Y\ar[r]^\nu \ar[d]_{p_Y}          & Z\ar[d]^{p_Z} \\
\Spec k\ar[r]^-{(\gamma^*)^{-1}}   & \Spec k
}
\end{equation*}
We shorten ``semilinear morphism'' to ``semi-morphism''.
We write  ``$\nu\colon (Y,p_Y)\to (Z,p_Z)$ is a {\em $\gamma$-semi-morphism}''
if $(\gamma,\nu)\colon (Y,p_Y)\to (Z,p_Z)$ is a semi-morphism.
Then by abuse of notation we write just that $\nu\colon Y\to Z$ is a $\gamma$-semi-morphism.
\end{definition}

Note that if we take $\gamma=\id_k$, then a $\id_k$-semi-morphism $(Y,p_Y)\to (Z,p_Z)$
is just a morphism of $k$-schemes.

\begin{lemma}
If $(\gamma,\nu)\colon (Y,p_Y)\to (Z,p_Z)$ is a semi-morphism of nonempty $k$-schemes,
then the morphism of schemes $\nu\colon Y\to Z$ uniquely determines $\gamma$.
\end{lemma}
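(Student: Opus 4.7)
The plan is to pass from the geometric equation defining a semi-morphism to the underlying ring homomorphisms, and then to exploit the fact that any ring homomorphism from the field $k$ into a nonzero $k$-algebra is injective. The nonemptiness of $Y$ enters exactly at the point where injectivity of the structure map $k\to \Gamma(Y,\mathcal O_Y)$ is needed.

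First, I would suppose that two automorphisms $\gamma_1,\gamma_2\in\Aut(k)$ both make the same morphism of schemes $\nu\colon Y\to Z$ into a semi-morphism. Unpacking the defining diagram, this means
\[
(\gamma_1^*)^{-1}\circ p_Y \;=\; p_Z\circ\nu \;=\; (\gamma_2^*)^{-1}\circ p_Y
\]
as morphisms $Y\to\Spec k$. Next I would translate this into ring-theoretic terms: the morphism $p_Y$ corresponds to the structure ring homomorphism $p_Y^{\#}\colon k\to\Gamma(Y,\mathcal O_Y)$, and the morphism $(\gamma_i^*)^{-1}=\Spec(\gamma_i^{-1})$ corresponds on global sections to $\gamma_i^{-1}\colon k\to k$. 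The displayed equality therefore becomes
\[
p_Y^{\#}\circ\gamma_1^{-1} \;=\; p_Y^{\#}\circ\gamma_2^{-1}
\quad \text{as ring maps } k\to\Gamma(Y,\mathcal O_Y).
\]

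Finally, I would observe that $p_Y^{\#}$ is injective. Indeed, since $Y$ is nonempty it contains a nonempty affine open $U=\Spec A$ with $A\neq 0$; composing $p_Y^{\#}$ with the restriction $\Gamma(Y,\mathcal O_Y)\to A$ gives the structure map $k\to A$, which is a ring homomorphism from a field to a nonzero ring, hence injective. Consequently $p_Y^{\#}$ itself is injective, and cancelling it from the above equation yields $\gamma_1^{-1}=\gamma_2^{-1}$, i.e.\ $\gamma_1=\gamma_2$, as required.

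I do not foresee a substantive obstacle here: the argument is essentially an unwinding of the definition plus the elementary remark that a nonzero ring map out of a field is injective. The only mild subtlety worth flagging is the role of the hypothesis that $Y$ is nonempty, without which all $\gamma\in\Aut(k)$ would satisfy the defining condition vacuously, and the statement would fail.
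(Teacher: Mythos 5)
Your proof is correct and rests on the same key observation as the paper's: since $k$ is a field and $Y$ is nonempty, the structure homomorphism from $k$ into the functions on $Y$ is injective, so the composite $(\gamma^*)^{-1}\circ p_Y$ determines $\gamma$. The paper phrases this by reducing to the affine case and noting the vertical arrows of the corresponding square of rings are monomorphisms; your cancellation of $p_Y^{\#}$ on global sections is the same argument.
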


\begin{proof}
We may and shall assume that $Y$ and $Z$ are affine, $Y=\Spec R_Y$, $Z=\Spec R_Z$.
Then we have a commutative diagram
\begin{equation}\label{e:RY-RZ}
\begin{aligned}
\xymatrix{
R_Y      & R_Z\ar[l]_(.4){\nu^*}\\
k\ar[u]  & k\ar[u]\ar[l]_(.4){\gamma^{-1}}
}
\end{aligned}
\end{equation}
Since $k$ is a field, the vertical arrows are monomorphisms,
and therefore, the homomorphism of rings $\nu^*$
uniquely determines the automorphism $\gamma^{-1}$.
 \end{proof}

\begin{subsec}
We define an action of a semi-morphism $(\gamma,\nu)\colon (Y,p_Y)\to (Z,p_Z)$ on  $k$-points.
If $y\colon\Spec k\to Y$ is a $k$-point of $(Y,p_Y)$, we set
\begin{equation}\label{e:gamma-semi-points}
(\gamma, \nu)(y)=\nu\circ y\circ\gamma^*\colon \Spec k\to Z,
\end{equation}
which is a $k$-point of $(Z,p_Z)$.
This formula is compatible with the usual formula for the action of a $k$-morphism on $k$-points.
By abuse of notation we  write $\nu(y)$ instead of $(\gamma,\nu)(y)$.

If $(\beta,\mu)\colon (Z,p_Z)\to (W,p_W)$ is a semi-morphism of $k$-schemes, we set
\[ (\beta,\mu)\circ(\gamma,\nu)=(\beta\gamma,\mu\circ\nu).\]
Then clearly $(\beta,\mu)\circ(\gamma,\nu)$ is a semi-morphism, and for every $k$-point $y\in Y(k)$
we have
\begin{equation}\label{e:mu-nu}
(\mu\circ\nu)(y)=\mu(\nu(y)).
\end{equation}
\end{subsec}

\begin{definition}
By a {\em $\gamma$-semi-isomorphism $\nu\colon (Y,p_Y)\to (Z,p_Z)$},
where $(Y,p_Y)$ and $(Z,p_Z)$ are two $k$-schemes,
we mean a $\gamma$-semi-morphism $\nu\colon (Y,p_Y)\to (Z,p_Z)$
for which the morphisms of schemes $\nu\colon Y\to Z$ is an isomorphism.
By a {\em $\gamma$-semi-automorphism} of a $k$-scheme $(Y,p_Y)$
we mean a $\gamma$-semi-isomorphism $\mu \colon (Y,p_Y)\to (Y,p_Y)$.
\end{definition}

\begin{subsec}
Let us fix $\gamma\in\Aut(k)$.
Assume we have two $k$-schemes $(Y,p_Y)$ and $(Z,p_Z)$.
Let $\nu\colon Y\to Z$ be a morphism of schemes.
The diagram with commutative left-hand triangle
\begin{equation}\label{e:semi-morphism-bis}
\begin{aligned}
\xymatrix@R=30pt@C=40pt{
Y\ar[r]^\nu \ar[d]_{p_Y}\ar[rd]^{\gamma_*p_Y}   & Z\ar[d]^{p_Z} \\
\Spec k\ar[r]^-{(\gamma^*)^{-1}}                   & \Spec k
}
\end{aligned}
\end{equation}
shows that
\begin{equation*}
(\gamma,\nu)\colon (Y,p_Y)\to (Z,p_Z)
\end{equation*}
is a semi-morphism, that is, the rectangle commutes,
if and only if the right-hand triangle commutes, that is, if and only if
\begin{equation}\label{e:nu-natural}
(\id_k,\nu)\colon \gamma_*(Y,p_Y)\to (Z,p_Z)
\end{equation}
 is a semi-morphism.
In other words,
$\nu\colon (Y,p_Y)\to (Z,p_Z)$ is a $\gamma$-semi-morphism
if and only if
$\nu\colon \gamma_*(Y,p_Y)\to (Z,p_Z)$ is a $k$-morphism.
For brevity we write
\begin{equation}\label{e:nu-natural-bis}
\nu_\natural\colon \gamma_*Y\to Z
\end{equation}
for the $k$-morphism \eqref{e:nu-natural};
then the $k$-morphism $\nu_\natural$ acts on $k$-points as follows:
\begin{equation}\label{e:action-natural}
(y'\colon\Spec k\to\gamma_*Y)\ \longmapsto\ (\nu\circ y'\colon\Spec k\to Z).
\end{equation}
\end{subsec}

\begin{example}\label{ex:gamma-ast}
Let $(Y,p_Y)$ be a $k$-scheme, and let $\gamma\in\Aut(k)$.
Recall that $\gamma_*(Y,p_Y)=(Y,(\gamma^*)^{-1}\circ p_Y)$.
The commutative diagram
\[
\xymatrix@R=20pt@C=50pt{
Y\ar[r]^{\id_Y} \ar[dd]_-{p_Y}        &Y\ar[d]^-{p_Y} \\
                                 &\Spec k\ar[d]^{(\gamma^*)^{-1}} \\
\Spec k\ar[r]^{(\gamma^*)^{-1}}   &\Spec k
}
\]
shows that  $(\gamma,\id_Y)\colon Y\to\gamma_* Y$
is a $\gamma$-semi-isomorphism.
We denote this $\gamma$-semi-iso\-mor\-phism by
\[ \gamma_!\colon Y\to\gamma_* Y.\]
Comparing formulas \eqref{e:y-!} and \eqref{e:gamma-semi-points},
we see that the $\gamma$-semi-isomorphism $\gamma_!\colon Y\to\gamma_*Y$
acts on $k$-points as the bijective map
$\gamma_!\colon Y(k)\to(\gamma_* Y)(k)$ defined by formula \eqref{e:y-!}.
\end{example}

\begin{subsec}
Let $\nu\colon (Y,p_Y)\to(Z,p_Z)$ be a $\gamma$-semi-morphism.
Then the  diagram \eqref{e:semi-morphism-bis}  commutes, and hence,
\begin{equation*}
\nu=\nu_\natural\circ \gamma_!=\, (\id_k,\nu)\hs\circ\hs (\gamma,\id_Y)\colon
\ Y\labelto{\gamma_!} \gamma_* Y\labelto{\nu_\natural} Z,
\end{equation*}
where $\gamma_!$ is a $\gamma$-semi-isomorphism and $\nu_\natural$ is a $k$-morphism (an $\id_k$-semi-morphism).
It follows that
\begin{equation}\label{e:nat-y'}
\nu(y)=\nu_\natural(\gamma_!(y))\quad \text{for } y\in Y(k)
\end{equation}
(this can be also seen by comparing formulas \eqref{e:y-!},
\eqref{e:gamma-semi-points}, and \eqref{e:action-natural}\hs).
\end{subsec}

\begin{example}\label{ex:b-ch}
Let $Y_0$ be a $k_0$-scheme, where $k_0$ is a subfield of $k$.
For simplicity, we assume that $Y_0$ is affine, that is,
$Y_0=\Spec R_0$\hs, where $R_0$ is a $k_0$-algebra.
We set
\[ Y=Y_0\times_{k_0} k:=Y_0\times_{\Spec k_0} \Spec k.\]
Then
\[ Y=\Spec R,\quad\text{where } R=R_0\otimes_{k_0} k.\]
Let $i\colon R_0\into R$ denote the canonical embedding.  Consider the sets of $k$-points
\[Y_0(k)=\Hom_{k_0} (R_0, k)\quad\text{and}\quad Y(k)=\Hom_k(R,k).\]
We have a canonical morphism of schemes $i^*\colon Y\to Y_0$ inducing a canonical map
\[Y(k)\to Y_0(k), \quad (y\colon \Spec k\to Y)\,\longmapsto \, (y_0=i^*\circ y\colon \Spec k\to Y\to Y_0),\]
which in the language of rings can be written as
\begin{equation}\label{e:R-R0}
 Y(k)\to Y_0(k)\colon\ (y\colon R\to k)\, \longmapsto\, (y_0=y|_{R_0}\colon R_0\to k).
 \end{equation}
The map \eqref{e:R-R0} is bijective; the inverse map is given by
\[Y_0(k)\to Y(k)\colon\ (y_0\colon R_0\to k)\,\longmapsto\,
(r_0\otimes\lambda\mapsto y_0(r_0)\cdot\lambda\in k)\text{ for }r_0\in R_0,\,\lambda\in k.\]

Let $\gamma\in\Aut(k/k_0)$,  that  is, $\gamma$ is an automorphism of $k$ that fixes all elements of $k_0$.
Consider
\[\mu_\gamma=\id_{Y_0}\times (\gamma^*)^{-1}\colon\ Y\to Y.\]
It follows from the construction of $\mu_\gamma$ that the following diagram commutes:
\begin{equation}\label{e:diagram-mu-g}
\begin{aligned}
\xymatrix@R=25pt@C=40pt{  Y_0\ar[r]^{\id}   &Y_0 \\
Y\ar[r]^{\mu_\gamma} \ar[d]_{p_Y}\ar[u]^{i^*}\ar[rd]^{\gamma_*p_Y}          & Y\ar[d]^{p_Y}\ar[u]_{i^*} \\
\Spec k\ar[r]_-{(\gamma^*)^{-1}}   & \Spec k\ar@/^0.6pc/@{..>}[lu]^(0.6){y'} \ar@/^0.55pc/@{..>}[u]^(0.5){y}
}
\end{aligned}
\end{equation}
We see that $\mu_\gamma$ is a $\gamma$-semi-automorphism of $Y$,
and it induces an isomorphism of $k$-schemes
\[(\mu_\gamma)_\natural\colon \gamma_*Y\isoto Y;\]
see formula \eqref{e:nu-natural-bis}.
By formula \eqref{e:action-natural}, $(\mu_\gamma)_\natural$
takes a $k$-point $y'$ of $\gamma^*Y$ to the $k$-point  $y=\mu_\gamma\circ y'$ of $Y$.
We see from the diagram \eqref{e:diagram-mu-g} that
\[i^*\circ y=i^*\circ \mu_\gamma\circ y'=i^*\circ y'.\]
This means that  the isomorphism of $k$-schemes  $(\mu_\gamma)_\natural$
is compatible with the bijections
\[Y(k)\to Y_0(k)\quad\text{and}\quad (\gamma_*Y)(k)\to Y_0(k).\]
We identify the $k$-scheme  $\gamma_* Y$ with  $Y$ via $(\mu_\gamma)_\natural$\hs.
Then
\[\mu_\gamma=\gamma_!\colon Y\to\gamma_*Y=Y.\]
Similar assertions are true when the $k_0$-scheme $Y_0$ is not assumed to be affine.

If $\beta,\gamma\in\Aut(k/k_0)$, then clearly
\begin{equation}\label{e:b-g}
\mu_{\beta\gamma}=\mu_\beta\circ\mu_\gamma\hs.
\end{equation}
By \eqref{e:mu-nu} we obtain that for every $y\in Y(k)$ we have
\begin{equation}\label{e:beta-gamma}
\mu_{\beta\gamma}(y)=\mu_\beta(\mu_\gamma(y)).
\end{equation}
Thus the group $\Aut(k/k_0)$ acts on the set $Y(k)$.
\end{example}

Let $Y$ be an affine $k$-variety, $Y=\Spec R_Y$;
then $R_Y$ is the ring of regular functions on $Y$.
If $f\in R_Y$, then for any $y\in Y(k)$ the value $f(y)\in k$ is defined.

\begin{lemma}\label{l:nu-*-affine}
Let $\nu\colon (Y,p_Y)\to (Z,p_Z)$ be a $\gamma$-semi-isomorphism of {\em affine} $k$-vari\-eties,
where $\gamma\colon k\to k$ is an automorphism of $k$.
Let $Y=\Spec\, R_Y$\hs, $Z=\Spec\,R_Z\hs$,
and let $\nu^*\colon R_Z\to R_Y$ denote the morphism of rings corresponding to $\nu$.
Let $f_Z\in R_Z$. Then
\begin{equation}\label{e:f_Z}
f_Z(\nu(y))=\gamma(\hs(\nu^*\!f_Z)(y)\hs)\quad \text{for all }y\in Y(k).
\end{equation}
\end{lemma}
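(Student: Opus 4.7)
The plan is to translate everything into commutative algebra and compute. Since $Y$ and $Z$ are affine, a $k$-point $y\in Y(k)$ is a morphism $y\colon \Spec k\to Y$ with $p_Y\circ y=\id_{\Spec k}$, which on rings corresponds to a $k_0$-algebra (in fact $k$-algebra via $i_Y\colon k\to R_Y$) homomorphism $y^*\colon R_Y\to k$ satisfying $y^*\circ i_Y=\id_k$. For any $f\in R_Y$, the value $f(y)$ is by definition $y^*(f)$; so the whole claim will follow if I can identify the ring map corresponding to the $k$-point $\nu(y)\in Z(k)$.

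Next I would unwind formula~\eqref{e:gamma-semi-points}: by definition
\[
\nu(y)=\nu\circ y\circ\gamma^{*}\colon \ \Spec k\xrightarrow{\gamma^{*}}\Spec k\xrightarrow{y} Y\xrightarrow{\nu} Z,
\]
so on rings, reversing the composition,
\[
(\nu(y))^{*}=\gamma\circ y^{*}\circ\nu^{*}\colon\ R_Z\xrightarrow{\nu^{*}}R_Y\xrightarrow{y^{*}}k\xrightarrow{\gamma}k.
\]
Here I use that $\Spec\gamma=\gamma^{*}$, so $\gamma^{*}$ corresponds to $\gamma\colon k\to k$ on rings. Applying this to $f_Z\in R_Z$ yields
\[
f_Z(\nu(y))=(\nu(y))^{*}(f_Z)=\gamma\bigl(y^{*}(\nu^{*}f_Z)\bigr)=\gamma\bigl((\nu^{*}f_Z)(y)\bigr),
\]
which is formula~\eqref{e:f_Z}.

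A sanity check I would include: the computation $\nu(y)\in Z(k)$ is indeed a $k$-point, i.e.\ $p_Z\circ\nu(y)=\id_{\Spec k}$. This follows from the semi-morphism diagram: since $p_Z\circ\nu=(\gamma^{*})^{-1}\circ p_Y$, we get $p_Z\circ\nu\circ y\circ\gamma^{*}=(\gamma^{*})^{-1}\circ p_Y\circ y\circ\gamma^{*}=(\gamma^{*})^{-1}\circ\gamma^{*}=\id_{\Spec k}$. I would also remark that the commutative square~\eqref{e:RY-RZ} (equivalently, the $k$-linear compatibility $\nu^{*}(\lambda\cdot 1)=\gamma^{-1}(\lambda)\cdot 1$ for $\lambda\in k$) is consistent with the formula and is in fact what guarantees $\gamma\circ y^{*}\circ\nu^{*}$ is a $k$-algebra homomorphism $R_Z\to k$ (and not just a ring map). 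There is no real obstacle here; the only thing to be careful about is that the contravariant $\Spec$ reverses compositions, and that precomposition of $y$ by $\gamma^{*}$ corresponds to postcomposition of $y^{*}$ by $\gamma$ (not by $\gamma^{-1}$), which is precisely what produces the $\gamma$ on the right-hand side of~\eqref{e:f_Z}.
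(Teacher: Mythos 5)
Your proof is correct and follows essentially the same route as the paper's: both reduce to the identity $\vphi_{\nu(y)}=\gamma\circ\vphi_y\circ\nu^{*}$ for the ring homomorphism attached to the $k$-point $\nu(y)$, after which \eqref{e:f_Z} is immediate. You derive that identity a bit more explicitly by dualizing the definition \eqref{e:gamma-semi-points} (the paper instead asserts the corresponding commutative square directly), but this is the same argument.
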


\begin{proof}
The assumption that $\nu\colon Y\to Z$ is a $\gamma$-semi-morphism
means that the diagram  \eqref{e:RY-RZ} commutes.
A $k$-point $y\in Y(k)$ corresponds to a homomorphism of $k$-algebras
$\vphi_y\colon R_Y\to k$, and
the following diagram commutes:
\[
\xymatrix{
R_Y\ar[d]_{\vphi_y}       & R_Z\ar[d]^{\vphi_{\nu(y)}}\ar[l]_(0.4){\nu^*}  \\
k                    &k\ar[l]_(0.4){\gamma^{-1}}
}
\]
hence $\vphi_{\nu(y)}=\gamma\circ\vphi_y\circ\nu^*$.
We set $f_Y=\nu^*\!f_Z\in R_Y$; then $f_Y(y)=\vphi_y(f_Y)$,
and \eqref{e:f_Z} means that
\[(\gamma\circ\vphi_y\circ\nu^*)(f_Z)=\gamma(\vphi_y(\nu^* f_Z)),\]
which is obvious.
 \end{proof}

Now let $\nu\colon (Y,p_Y)\to (Z,p_Z)$ be a $\gamma$-semi-isomorphism of {\em irreducible} $k$-varieties,
where $\gamma\colon k\to k$ is an automorphism of $k$.
Then the isomorphism of schemes $\nu\colon Y\to Z$ induces an isomorphism of the fields of rational functions
\[\nu_*\colon K(Y)\to K(Z),\quad f\mapsto \nu_*f.\]
For any $f\in K(Y)$ and $y\in Y(k)$, the   value $f(y)\in \kinf$ of $f$ at $y$ is defined, where
we write $f(y)=\infty$ if $f$ is not regular at $y$.

\begin{corollary}\label{l:nu-*}
Let $\nu\colon (Y,p_Y)\to (Z,p_Z)$ be a $\gamma$-semi-isomorphism of irreducible $k$-varieties,
where $\gamma\colon k\to k$ is an automorphism of $k$.
With the above notation  we have
\begin{equation*}
(\nu_*f_Y)(z)=\gamma(f_Y(\nu^{-1}(z)))\quad \text{for all } f_Y\in K(Y),\ z\in Z(k).
\end{equation*}
\end{corollary}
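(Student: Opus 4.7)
The plan is to reduce the rational case to the affine case already established in Lemma~\ref{l:nu-*-affine} by restricting to suitable affine opens. Since $\nu\colon Y\to Z$ is an isomorphism of schemes and the induced isomorphism $\nu_*$ on function fields is, on regular functions, the inverse of the ring map $\nu^*$ figuring in that lemma, the corollary should follow from the lemma in one step once $f_Y$ has been made regular by localizing.

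First, I would fix $z \in Z(k)$ and set $y = \nu^{-1}(z) \in Y(k)$, handling initially the case where $f_Y$ is regular at $y$. I would choose an affine open neighborhood $U\subset Y$ of $y$ on which $f_Y|_U \in R_U$ is regular; since $\nu$ is an isomorphism of schemes, $V := \nu(U)$ is an affine open of $Z$ containing $z$, and the restriction $\nu|_U \colon U \to V$ is again a $\gamma$-semi-isomorphism of affine $k$-varieties. By construction, the restriction $(\nu_* f_Y)|_V \in R_V$ is characterized by $(\nu|_U)^*\bigl((\nu_* f_Y)|_V\bigr) = f_Y|_U$. Applying Lemma~\ref{l:nu-*-affine} to $\nu|_U$ with the regular function $f_Z := (\nu_* f_Y)|_V$ and the point $y$ would then yield
\[
(\nu_* f_Y)(z) \;=\; f_Z(\nu(y)) \;=\; \gamma\bigl(((\nu|_U)^* f_Z)(y)\bigr) \;=\; \gamma(f_Y(y)),
\]
which is the asserted identity.

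For the case $f_Y(y) = \infty$, I would extend $\gamma$ to $\kinf$ by setting $\gamma(\infty) = \infty$. Assuming $f_Y \neq 0$ (otherwise there is nothing to prove), I would apply the previous case to $g := 1/f_Y \in K(Y)$, which is regular at $y$ with $g(y) = 0$, obtaining $(\nu_* g)(z) = \gamma(0) = 0$. Since $\nu_*$ is an isomorphism of fields, $\nu_* g = 1/(\nu_* f_Y)$, so $\nu_* f_Y$ has a pole at $z$ and both sides of the desired formula equal $\infty$. I do not anticipate any genuine obstacle: the substantive content sits entirely in Lemma~\ref{l:nu-*-affine}, and the corollary amounts to a bookkeeping argument about localization together with the convention $\gamma(\infty) = \infty$.
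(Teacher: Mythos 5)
Your main argument --- localizing to an affine open $U\ni y$ on which $f_Y$ is regular, transporting it to $V=\nu(U)$, and invoking Lemma~\ref{l:nu-*-affine} --- is correct and is essentially the paper's own proof: the paper performs the same reduction to the affine lemma, merely writing ``we may and shall assume that $Y$ and $Z$ are affine varieties, \dots, $f_Z\in R_Z$'' without spelling out the localization, and it does not discuss the pole case at all.

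The one step of yours that fails is in the case $f_Y(y)=\infty$: the claim that $g:=1/f_Y$ is regular at $y$ with $g(y)=0$ is false in general. ``Not regular at $y$'' means $f_Y\notin\mathcal{O}_{Y,y}$, and on a variety of dimension $\ge 2$ the point $y$ may be a point of indeterminacy: for $Y=\A^2_k$ and $f_Y=x_1/x_2$, neither $f_Y$ nor $1/f_Y$ lies in $\mathcal{O}_{Y,0}$, so your reduction of the pole case to the regular case does not go through. The conclusion is nevertheless true, and the fix is immediate and does not need the reciprocal at all: since $\nu$ is an isomorphism of schemes carrying the scheme-theoretic image of $y$ to that of $z$, it induces an isomorphism of local rings $\mathcal{O}_{Z,z}\isoto\mathcal{O}_{Y,y}$ which on fraction fields is $\nu^*=\nu_*^{-1}$; hence $f_Y\in\mathcal{O}_{Y,y}$ if and only if $\nu_*f_Y\in\mathcal{O}_{Z,z}$, and when both fail the two sides of the asserted identity equal $\infty$ by the paper's convention (together with $\gamma(\infty)=\infty$). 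With that substitution your proof is complete, and in fact more careful than the one printed in the paper.
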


\begin{proof}
We consider the isomorphism
\[\nu^*=\nu_*^{-1}\colon K(Z)\to K(Y),\quad f_Z\mapsto\nu^*\!f_Z,\quad\text{where }f_Z\in K(Z).\]
Set $f_Z=\nu_*f_Y$; then $f_Y=\nu^* f_Z$.
We must prove that \eqref{e:f_Z} holds.
We may and shall assume that $Y$ and $Z$ are affine varieties,
$Y=\Spec\, R_Y$\hs, $Z=\Spec\,R_Z$\hs, $f_Z\in R_Z$\hs,
and that the morphism $\nu$ corresponds to a homomorphism of rings
$\nu^*\colon R_Z\to R_Y$\hs.
Now the corollary follows from Lemma \ref{l:nu-*-affine}.
 \end{proof}

\begin{subsec}\label{r:classical-language} ({\em Classical language})
In this subsection we describe the variety   $\gamma_* Y$
and the map $\gamma_!\colon Y(k)\to(\gamma_*Y)(k)$ in the language of classical algebraic geometry.
First, consider the $n$-dimensional affine space  $\Ank$; then $\Ank(k)=k^n$.
Let $k_0$ be the prime subfield of $k$, that is, the subfield generated by 1;
then $\Ank=\Anknull\times_{k_0}k$.
Let $\gamma\in\Aut(k)=\Aut(k/k_0)$; then $\gamma$ induces
a $\gamma$-semi-automorphism $\mu_\gamma\colon\Ank\to\Ank$;
see Example \ref{ex:b-ch}.
As in Example \ref{ex:b-ch}, we identify $\gamma_*\Ank$ with $\Ank$ using the $k$-isomorphism
\[(\mu_\gamma)_\natural\colon \gamma_*\Ank\isoto\Ank\hs;\]
then
\[\mu_\gamma=\gamma_!\colon \Ank\to\gamma_*\Ank=\Ank\hs,\]
and $\mu_\gamma(x)=\gamma_!(x)$ for $x\in \Ank(k)$.
For $i=1,\dots,n,$ let $f_i$ denote the $i$-th coordinate function on $\Ank$, which is a regular function.
Since $f_i$ comes from a regular function on $\Anknull$, we have $(\mu_\gamma)_*f_i=f_i$,
and by Lemma \ref{l:nu-*-affine} we have
\[ f_i(\mu_\gamma(x))=\gamma(f_i(x))\quad\text{for } x\in \Ank(k)=k^n.\]
If we write $x=(x_i)_{i=1}^n\in k^n=\Ank(k)$, where $x_i=f_i(x)\in k$, then
\[ \mu_\gamma(x)=\gamma(x_i)_{i=1}^n\hs.\]

Similarly, let $\PP_k^n$ denote the $n$-dimensional projective space over $k$; then $\PP_k^n=\PP_{k_0}^n\times_{k_0} k$.
We denote by $x=(x_0:x_1:\dots:x_n)\in \PP^n_k(k)$ the $k$-point with homogeneous coordinates $x_0,x_1,\dots,x_n$.
Then for $\gamma\in\Aut(k/k_0)$, the $\gamma$-semi-automorphism $\mu_\gamma$ of $\PP^n_k$ takes
$x$ to $\gamma_!(x)=(\gamma(x_0):\gamma(x_1):\dots:\gamma(x_n)\hs)$.

Now let $Y\subset \Ank$ be an affine variety (a closed subvariety of $\Ank$).
Let $\iota\colon Y\into \Ank$ denote the inclusion morphism; then $\gamma$ induces a $k$-morphism
\[ \gamma_*\iota\colon\gamma_*Y\to\gamma_*\Ank=\Ank\hs.\]
From the commutative diagram
\[
\xymatrix{
Y(k)\ar[r]^-{\gamma_!}\ar[d]_\iota    & (\gamma_*Y)(k)\ar[d]^{\gamma_*\iota}  \\
\Ank(k)   \ar[r]^-{\gamma_!}       &\Ank(k)
}
\]
we see that
\[ (\gamma_*\iota)(y')=\gamma_!(\iota(\gamma_!^{-1}(y')\hs)\hs) \quad\text{for } y'\in (\gamma_*Y)(k),\]

Now we assume that $k$ is algebraically closed.
As usual in classical algebraic geometry,
we identify an affine variety $Y\subset \Ank$ with the algebraic set
\[ Y(k)\subset \Ank(k)=k^n.\]
Furthermore, we identify $\gamma_*Y$ with the algebraic set
\[(\gamma_*\iota)(\gamma_*Y(k))=\gamma_!(Y(k))\subset k^n\hs.\]
We see that
\[\gamma_*Y=\{\hs\gamma(y_i)_{i=1}^n\ |\ (y_i)_{i=1}^n\in Y\hs\}, \]
and that the map
\[ \gamma_!\colon Y\to\gamma_*Y\]
sends a point  $y$ with coordinates  $(y_i)_{i=1}^n$ to the point with coordinates  $\gamma(y_i)_{i=1}^n$.
If $Y\subset k^n$ is defined by a family of polynomials $(P_\alpha)_{\alpha\in A}$,
then $\gamma_*Y\subset k^n$ is defined by the family $(\gamma(P_\alpha))_{\alpha\in A}$\hs,
where $\gamma(P_\alpha)$ is the polynomial obtained from $P_\alpha$
by acting by $\gamma$ on the coefficients.

One can describe similarly $\gamma_*Y$ when $Y\subset\PP^n_k$ is a projective or quasi-projective variety.
Namely,
\begin{equation*}
\gamma_*Y=\{\gamma_!(y)\ |\ y\in Y\}\subset \PP^n_k\hs.
\end{equation*}
\end{subsec}

\section{Semi-morphisms of $G$-varieties}
\label{s:G}

\begin{subsec}\label{ss:semi-auto-of-G}
In this section $k$ is an algebraically closed field, and $Y$ is a $k$-variety,
that is, a reduced separated scheme of finite type over $k$.

Let $G$ be an algebraic group over $k$ (we write also ``an algebraic $k$-group''), that is,
a smooth group scheme of finite type over $k$.
Let $(Y,\theta)$ be a $G$-$k$-variety, that is, a $k$-variety $Y$ together with an action
\[\theta\colon G\times_k Y\to Y\]
of $G$ on $Y$.
If $g\in G(k)$ and $y\in Y(k)$, we write just $g\uY y$ or $g\cdot y$ for $\theta(g,y)\in Y(k)$.
\end{subsec}

\begin{definition}[\rm cf. {\cite[(1.2)]{FSS}}\hs]
\label{d:tau-gamma}
Let $\gamma\in \Aut(k)$. A {\em $\gamma$-semi-automorphism of an algebraic $k$-group $G$}
 is a $\gamma$-semi-automorphism of $k$-schemes $\tau\colon G\to G$
such that the corresponding isomorphism of $k$-varieties
$\tau_\natural\colon\gamma_* G\to G$, see  \eqref{e:nu-natural-bis},
is an isomorphism of algebraic $k$-groups.
This condition is the same as to require that certain diagrams
containing $\tau$ commute; see \cite[Section 1.2]{Borovoi-Duke}.
\end{definition}

Let $H\subset G$ be an algebraic $k$-subgroup.
By  Definition \ref{d:tau-gamma} we have
$$\tau(H(k))=\tau_\natural(\hs(\gamma_*H)(k)\hs),$$
where $\tau_\natural\colon \gamma_* G\to G$ is an isomorphism of algebraic $k$-groups.
It follows that  $\tau(H(k))$ is the set of $k$-points of a $k$-subgroup of $G$, which we denote by $\tau(H)$.
By definition,  $\tau(H)(k)=\tau(H(k))$.

\begin{subsec}\label{ss:G0}
Let $k_0\subset k$ be a subfield.
We shall always assume that we are given a $k_0$-model $(G_0,\vk_G)$ of $G$; see the Introduction.
Let $\gamma\in\Aut(k/k_0)$, that is, $\gamma$ be an automorphism of $k$ fixing all elements of $k_0$.
Then we have  $\gamma$-semi-automorphisms
\begin{equation}\label{e:sigma-gamma}
\sigma^\circ_\gamma=\id_{G_0}\times(\gamma^*)^{-1}\colon G_{0,k}\to G_{0,k} \quad \text{and}\quad \sigma_\gamma\colon G\to G,
\end{equation}
compare Example \ref{ex:b-ch},
where $G_{0,k}=G_0\times_{k_0} k$ and we obtain $\sigma_\gamma$ from $\sigma_\gamma^\circ$ via the $k$-isomorphism $\vk_G\colon G_{0,k}\isoto G$.
If $\beta,\gamma \in\Aut(k/k_0)$, then by \eqref{e:b-g} we have
\begin{equation*}
\sigma_{\beta\gamma}=\sigma_\beta\circ\sigma_\gamma\hs,
\end{equation*}
and by \eqref{e:beta-gamma} we have
\begin{equation}\label{e:beta-gamma-sigma}
\sigma_{\beta\gamma}(g)=\sigma_\beta(\sigma_\gamma(g)\hs)\quad \text{for every } g\in G(k).
\end{equation}

If $\alpha$ is any $k$-automorphism of $G$, then
\[\tau:=\alpha\circ\sigma_\gamma\colon G\to G\]
is a $\gamma$-semi-automorphism of $G$, and all $\gamma$-semi-automorphisms of $G$ (for given  $\gamma$)
can be obtained in this way.

If we assume that $G=G_0\times_{k_0}k$, where $G_0$ is an affine algebraic group contained in $\GL_{n,k_0}$
and defined by polynomials $(P_\alpha)$ with coefficients in $k_0$,
then by Subsection \ref{r:classical-language} we have $\sigma_\gamma(g)=\gamma(g)$ for every $g\in G_0(k)$,
and clearly
\begin{equation}\label{e:g1g2}
\sigma_\gamma(g_1\hs g_2)=\sigma_\gamma(g_1)\cdot\sigma_\gamma(g_2)\quad\text{for every } g_1,\hs g_2\in G_0(k).
\end{equation}
In general, for any $k_0$-model $G_0$ of $G$, since the multiplication law in $G$ is ``defined over $k_0$'',
 one can easily show that again \eqref{e:g1g2} holds.
By \eqref{e:beta-gamma-sigma} the group $\Aut(k/k_0)$ acts on the set $G_0(k)=G(k)$, and by \eqref{e:g1g2}
this action preserves the group structure in $G(k)$.
\end{subsec}

\begin{subsec}
Let $G_0$ be as in \ref{ss:G0} and let $\sigma_\gamma$ be as in \eqref{e:sigma-gamma}.
If we have a  $G$-$k$-variety $(Y,\theta)$ and
a $G_0$-equivariant $k_0$-model $(Y_0,\theta_0)$ of $(Y,\theta)$ (see the Introduction),
then for any $\gamma\in \Aut(k/k_0)$ we obtain a $\gamma$-semi-auto\-morphism $\mu_\gamma\colon Y\to Y$,
compare  Example \ref{ex:b-ch} and Subsection \ref{ss:G0}.
Since $\theta$ is ``defined over $k_0$'', it is easy to see that
\[\mu_\gamma(g\cdot y)=\sigma_\gamma(g)\cdot\mu_\gamma(y)\quad\text{for all }g\in G(k),\ y\in Y(k).\]
\end{subsec}

\begin{definition}\label{d:g-semi-morphism}
Let $G$ be an algebraic $k$-group, and
let $(Y,\theta_Y)$ and $(Z,\theta_Z)$ be two $G$-$k$-varieties.
Let $\gamma\in\Aut(k)$, and let $\tau\colon G\to G$
be a $\gamma$-semi-automorphism of $G$.
A {\em $\tau$-equivariant $\gamma$-semi-morphism}
\[\nu\colon (Y,\theta_Y)\to (Z,\theta_Z)\]
is a $\gamma$-semi-morphism $\nu\colon Y\to Z$ such that the following diagram commutes:
\begin{equation}\label{e:g-semi-morphism}
\begin{aligned}
\xymatrix{
G\times_k Y\ar[r]^-{\theta_Y}\ar[d]_{\tau\times\nu}   & Y \ar[d]^\nu\\
G\times_k Z\ar[r]^-{\theta_Z}                                     &Z
}
\end{aligned}
\end{equation}
where we write $\tau\times \nu$ for the product of $\tau$ and $\nu$
over the automorphism $(\gamma^*)^{-1}$ of $\Spec k$.
\end{definition}

Since $k$ is algebraically closed, $G$ is smooth (reduced),  $Y$ and $Z$ are reduced, we see that
the diagram \eqref{e:g-semi-morphism} commutes if and only if
\[\nu(g\cdot y)=\tau(g)\cdot\nu(y)\quad\text{for all }g\in G(k),\ y\in Y(k).\]

\begin{construction}\label{ss:*Y*Y}
Let $G$ be an algebraic $k$-group, and let $(Y,\theta_Y)$ be a $G$-$k$-variety.
The group $\gamma_*G$ naturally acts on $\gamma_* Y$:
the action $\theta\colon G\times_k Y\to Y$ induces an action
\begin{equation}\label{e:ast-ast}
\gamma_*\theta\colon \gamma_* G\times_k\gamma_* Y\to\gamma_* Y.
\end{equation}
By definition, a $\gamma$-semi-automorphism $\tau$ of $G$ defines an isomorphism of algebraic $k$-groups
\[ \tau_\natural\colon\gamma_* G\to G.\]
We identify $G$ and $\gamma_* G$ via $\tau_\natural$ and obtain from \eqref{e:ast-ast} an action
\[ \tau^*\gamma_*\theta\colon\hs  G\hs\times_k\hs \gamma_*Y\to\gamma_*Y,\ \,
            (g,y')\mapsto(\gamma_*\theta)(\tau_\natural^{-1}(g),y')
                      \ \, \text{for }g\in G(k),\ y'\in (\gamma_*Y)(k).\]
By abuse of notation, we write 
 $\gamma_*Y$ for the $G$-$k$-variety $(\gamma_*Y,\tau^*\gamma_*\theta)$.
We write
\[ g\uT y'\quad\text{for } (\tau^*\gamma_*\theta)(g,y')=(\gamma_*\theta)(\tau_\natural^{-1}(g),y'),
                                                    \quad \text{where } g\in G(k),\ y'\in(\gamma_*Y)(k).\]
By formula \eqref{e:nat-y'} we have
$\tau(g)=\tau_\natural(\gamma_!(g)),$
and hence,
\begin{equation}\label{e:*-tau}
\begin{aligned}
g\uT y'=(\gamma_*\theta)(\tau_\natural^{-1}(g),y')
&=\gamma_!(\theta(\gamma_!^{-1}(\tau_\natural^{-1}(g)),\gamma_!^{-1}(y')))\\
&=\gamma_!(\hs\theta(\tau^{-1}(g),\gamma_!^{-1}(y')))=\gamma_!(\tau^{-1}(g)\uY\gamma_!^{-1}(y')).
\end{aligned}
\end{equation}
\end{construction}

\begin{lemma}\label{l:*-Stab}
Let $G$ be an algebraic $k$-group, and let $(Y,\theta)$ be a $G$-$k$-variety.
Let  $\gamma\in \Aut(k)$, and let $\tau\colon G\to G$ be a $\gamma$-semi-automorphism of $G$.
Let $y^{(0)}\in Y(k)$ be a $k$-point, and write $H=\Stab_G(y^{(0)})$.
Consider the action
\[\tau^*\gamma_*\theta\colon\  G\times_k\gamma_*Y\to \gamma_*Y.\]
Then the stabilizer in $G(k)$ of the point $\gamma_!(y^{(0)})\in (\gamma_*Y)(k)$
under the action  $\tau^*\gamma_*\theta$ is $\tau(H(k))=\tau(H)(k)$.
\end{lemma}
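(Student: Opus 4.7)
The plan is to reduce the claim to a direct application of formula \eqref{e:*-tau}, which already encodes the twisted action $\uT$ in terms of the original action $\uY$. First I would specialize \eqref{e:*-tau} to $y' = \gamma_!(y^{(0)})$; since $\gamma_!^{-1}(\gamma_!(y^{(0)})) = y^{(0)}$, this immediately yields
\[ g \uT \gamma_!(y^{(0)}) \;=\; \gamma_!\bigl(\tau^{-1}(g) \uY y^{(0)}\bigr) \]
for every $g \in G(k)$.

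Next I would use the bijectivity of $\gamma_! \colon Y(k) \to (\gamma_*Y)(k)$, which is \eqref{e:gamma-!}, to conclude that
\[ g \uT \gamma_!(y^{(0)}) = \gamma_!(y^{(0)}) \iff \tau^{-1}(g) \uY y^{(0)} = y^{(0)} \iff \tau^{-1}(g) \in H(k). \]
The last condition is equivalent to $g \in \tau(H(k))$. This identifies the stabilizer of $\gamma_!(y^{(0)})$ in $G(k)$ under $\tau^*\gamma_*\theta$ with $\tau(H(k))$.

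Finally, the equality $\tau(H(k)) = \tau(H)(k)$ is already recorded in the paragraph following Definition \ref{d:tau-gamma}, where the algebraic $k$-subgroup $\tau(H) \subset G$ is defined precisely by this property (using that $\tau_\natural$ is an isomorphism of algebraic $k$-groups, so it maps the $k$-subgroup $\gamma_* H$ of $\gamma_* G$ onto a $k$-subgroup of $G$). Combining this with the previous step completes the proof.

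There is no real obstacle here: the argument is purely a matter of unwinding the definitions of $\uT$ and of $\tau(H)$, and the computation is entirely done by \eqref{e:*-tau} together with the bijectivity of $\gamma_!$. The role of this lemma in what follows is to guarantee that when one "conjugates" the homogeneous space $Y = G/H$ by $\gamma$ (using the twisted action via $\tau$), the result is the homogeneous space $G/\tau(H)$; that interpretation will be the key point used in the later sections on combinatorial invariants.
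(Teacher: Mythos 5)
Your proof is correct and is essentially the paper's own argument: the paper likewise applies formula \eqref{e:*-tau} to get $g\uT\gamma_!(y^{(0)})=\gamma_!(\tau^{-1}(g)\cdot y^{(0)})$ and then concludes from the fact that the stabilizer of $y^{(0)}$ is $H(k)$. Your version merely spells out the bijectivity of $\gamma_!$ and the definition of $\tau(H)$, which the paper leaves implicit.
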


\begin{proof}
By formula \eqref{e:*-tau} we have
\[g\uT\gamma_!(y^{(0)})=\gamma_!(\tau^{-1}(g)\cdot y^{(0)}).\]
Since the stabilizer in $G(k)$ of $y^{(0)}\in Y(k)$ is $H(k)$, the lemma follows.
 \end{proof}

Note that the $\gamma$-semi-morphism  $\nu$ in Definition
\ref{d:g-semi-morphism} defines a $k$-morphism
\[ \nu_\natural\colon \gamma_*Y\to Z;\]
see \eqref{e:nu-natural-bis}.

\begin{lemma}\label{l:equi}
Let $\gamma\in\Aut(k)$ and let $\tau\colon G\to G$ be a $\gamma$-semi-automorphism of $G$.
Let $(Y,\theta_Y)$ and $(Z,\theta_Z)$ be two $G$-$k$-varieties.
A morphism of schemes $\nu\colon Y\to Z$
is a $\tau$-equivariant $\gamma$-semi-morphism
if and only if $\nu_\natural \colon\gamma_*Y\to Z$
is a $G$-equivariant morphism of $k$-varieties
(where we write $\gamma_*Y$ for $(\gamma_*Y,\tau^*\gamma_*\theta_Y)$\hs).
\end{lemma}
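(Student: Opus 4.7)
The plan is to reduce both conditions to identities on $k$-points. This is legitimate because $k$ is algebraically closed, $G$ is smooth, and $Y$, $Z$, $\gamma_*Y$ are reduced (the last one because $\gamma_*Y$ has the same underlying scheme as $Y$), exactly as in the observation made right after Definition \ref{d:g-semi-morphism}. Thus the $\tau$-equivariance of $\nu$ amounts to $\nu(g\uY y)=\tau(g)\uZ\nu(y)$ for all $g\in G(k),\,y\in Y(k)$, and the $G$-equivariance of $\nu_\natural$ amounts to $\nu_\natural(g\uT y')=g\uZ\nu_\natural(y')$ for all $g\in G(k),\,y'\in(\gamma_*Y)(k)$.

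The two tools already established are the identity $\nu(y)=\nu_\natural(\gamma_!(y))$ from \eqref{e:nat-y'}, expressing the factorisation $\nu=\nu_\natural\circ\gamma_!$, together with formula \eqref{e:*-tau}, which describes the twisted action on $\gamma_*Y$ as
\[g\uT y'=\gamma_!\bigl(\tau^{-1}(g)\uY\gamma_!^{-1}(y')\bigr).\]
As a preliminary step I would specialise \eqref{e:*-tau} by setting $y'=\gamma_!(y)$ and $g=\tau(g_0)$ to obtain the transport identity
\[\gamma_!(g_0\uY y)=\tau(g_0)\uT\gamma_!(y),\qquad g_0\in G(k),\ y\in Y(k),\]
which converts the original action on $Y$ into the twisted action on $\gamma_*Y$ via the bijection $\gamma_!$.

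For the forward direction, assume $\nu(g\uY y)=\tau(g)\uZ\nu(y)$. Given $g\in G(k)$ and $y'\in(\gamma_*Y)(k)$, set $y=\gamma_!^{-1}(y')$ and chain the equalities
\[\nu_\natural(g\uT y')=\nu_\natural\bigl(\gamma_!(\tau^{-1}(g)\uY y)\bigr)=\nu(\tau^{-1}(g)\uY y)=g\uZ\nu(y)=g\uZ\nu_\natural(y'),\]
using \eqref{e:*-tau}, then \eqref{e:nat-y'}, then the hypothesis (applied to $\tau^{-1}(g)$), then \eqref{e:nat-y'} again. For the reverse direction, assume $\nu_\natural$ is $G$-equivariant for $\uT$. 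For $g\in G(k)$ and $y\in Y(k)$, chain
\[\nu(g\uY y)=\nu_\natural(\gamma_!(g\uY y))=\nu_\natural\bigl(\tau(g)\uT\gamma_!(y)\bigr)=\tau(g)\uZ\nu_\natural(\gamma_!(y))=\tau(g)\uZ\nu(y),\]
using \eqref{e:nat-y'}, the preliminary transport identity, the equivariance hypothesis, and \eqref{e:nat-y'}.

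The argument is entirely formal, a matter of tracing through the definitions; the only real obstacle is notational, namely keeping straight the three actions $\uY$, $\uZ$, $\uT$ and remembering that $\nu_\natural$ is a genuine $k$-morphism while $\nu$ itself is only $\gamma$-semilinear.
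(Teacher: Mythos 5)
Your proof is correct and follows essentially the same route as the paper's: both reduce the two equivariance conditions to identities on $k$-points and chain them together via formula \eqref{e:*-tau} and the factorisation $\nu=\nu_\natural\circ\gamma_!$ from \eqref{e:nat-y'}; your "transport identity" is just the specialisation of \eqref{e:*-tau} that the paper uses implicitly. The only thing worth adding is the paper's opening remark that, by \eqref{e:semi-morphism-bis}, $\nu$ is a $\gamma$-semi-morphism if and only if $\nu_\natural$ is a $k$-morphism, which settles the non-equivariant layer of the equivalence before the point-level computation.
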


\begin{proof}
By \eqref{e:semi-morphism-bis} the morphism of schemes $\nu$
is a $\gamma$-semi-morphism $Y\to Z$ if and only if it is a $k$-morphism $\gamma_* Y\to Z$.

Let $g\in G(k)$, $y'\in (\gamma_*Y)(k)$.
Using formula \eqref{e:*-tau} we obtain
\begin{equation}\label{e:nu-natural-22}
\nu_\natural(g\uT y')=\nu(\gamma_!^{-1}(g\uT y'))=\nu(\hs\tau^{-1}(g)\uY\gamma_!^{-1}(y')\hs).
\end{equation}
We have also
\begin{equation}\label{e:g-ast-Z}
g\uZ\nu(\gamma_!^{-1}(y'))=g\uZ\nu_\natural(y').
\end{equation}

If $\nu$ is $\tau$-equivariant, then
\begin{equation*}
\nu(\tau^{-1}(g) \uY \gamma_!^{-1}(y'))=g \uZ\nu(\gamma_!^{-1}(y')),
\end{equation*}
and we obtain using \eqref{e:g-ast-Z} that
\begin{equation}\label{e:nu-tau-g-25'}
\nu(\tau^{-1}(g) \uY \gamma_!^{-1}(y'))=g\uZ\nu_\natural(y').
\end{equation}
From \eqref{e:nu-natural-22} and \eqref{e:nu-tau-g-25'} we obtain that
\[\nu_\natural(g\uT y')=\nu(\tau^{-1}(g) \uY \gamma_!^{-1}(y'))=g\uZ\nu_\natural(y')\quad
            \text{for all } g\in G(k),\ y'\in (\gamma_*Y)(k),\]
hence $\nu_\natural$ is $G$-equivariant.

Conversely, if $\nu_\natural$ is $G$-equivariant,
then
$\nu_\natural(g\uT y')=g\uZ\nu_\natural(y')$,
and we obtain using \eqref{e:nu-natural-22} and \eqref{e:g-ast-Z} that
\begin{equation}\label{e:nu-tau-g-23}
\nu(\tau^{-1}(g)\uY\gamma_!^{-1}(y'))=\nu_\natural(g\uT y')
              =g\uZ\nu_\natural(y')=g\uZ \nu(\gamma_!^{-1}(y')).
\end{equation}
Set $g'=\tau^{-1}(g)\in G(k)$,  $y=\gamma_!^{-1}(y')\in Y(k)$.
Then we obtain from \eqref{e:nu-tau-g-23} that
\[\nu(g'\uY y)=\tau(g')\uZ \nu(y)\quad\text{for all } g'\in G(k),\ y\in Y(k).\]
Thus $\nu$ is $\tau$-equivariant.
 \end{proof}

\begin{corollary}\label{c:tau-equi-semi}
Let $\gamma$ be an automorphism of $k$, and let $\tau\colon G\to G$ be a $\gamma$-semi-auto\-mor\-phism of $G$.
Let $(Y,\theta)$ be a $G$-$k$-variety.
There exists a $\tau$-equivariant $\gamma$-semi-automorphism $\mu\colon Y\to Y$
if and only if the $G$-$k$-variety $(\gamma_*Y,\tau^*\gamma_*\theta)$ is isomorphic to $(Y,\theta)$.
\end{corollary}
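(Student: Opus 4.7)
The plan is to deduce the corollary immediately from Lemma \ref{l:equi}, by observing that the bijective correspondence $\mu \leftrightarrow \mu_\natural$ between $\gamma$-semi-morphisms $Y \to Y$ and $k$-morphisms $\gamma_*Y \to Y$ sends semi-isomorphisms to isomorphisms of $k$-varieties, and sends $\tau$-equivariant semi-morphisms to $G$-equivariant morphisms.

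For the forward direction, I would start with a $\tau$-equivariant $\gamma$-semi-automorphism $\mu\colon Y\to Y$ and consider the associated $k$-morphism $\mu_\natural\colon \gamma_*Y\to Y$ from \eqref{e:nu-natural-bis}. By Lemma \ref{l:equi} applied with $Z=Y$, the map $\mu_\natural$ is $G$-equivariant for the actions $\tau^*\gamma_*\theta$ on $\gamma_*Y$ and $\theta$ on $Y$. Since $\mu=\mu_\natural\circ \gamma_!$ (see the decomposition just before \eqref{e:nat-y'}) and $\gamma_!\colon Y\to \gamma_*Y$ is an isomorphism of schemes (Example \ref{ex:gamma-ast}), the fact that $\mu$ is an isomorphism of schemes forces $\mu_\natural$ to be an isomorphism. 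Thus $\mu_\natural$ is the desired isomorphism of $G$-$k$-varieties $(\gamma_*Y,\tau^*\gamma_*\theta)\isoto (Y,\theta)$.

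For the reverse direction, I would start with an isomorphism of $G$-$k$-varieties $\nu_\natural\colon (\gamma_*Y,\tau^*\gamma_*\theta)\isoto (Y,\theta)$ and set $\mu=\nu_\natural\circ\gamma_!\colon Y\to Y$. Since $\gamma_!$ is a $\gamma$-semi-isomorphism of $k$-schemes and $\nu_\natural$ is a $k$-isomorphism, their composition $\mu$ is a $\gamma$-semi-isomorphism of $k$-schemes, i.e., a $\gamma$-semi-automorphism of $Y$. The $k$-morphism associated to $\mu$ via \eqref{e:nu-natural-bis} is $\nu_\natural$ itself, so Lemma \ref{l:equi} (applied in the ``if'' direction, with $Z=Y$) gives that $\mu$ is $\tau$-equivariant.

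I do not anticipate any serious obstacle: the real content is already in Lemma \ref{l:equi} and in the factorization $\nu=\nu_\natural\circ\gamma_!$ established in the paragraph containing \eqref{e:nat-y'}. The only small point to verify carefully is that the bijection $\mu\leftrightarrow \mu_\natural$ preserves the property of being an isomorphism, which is immediate since $\gamma_!$ is always an isomorphism of schemes. Consequently the proof can be written in a few lines.
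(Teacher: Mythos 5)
Your proposal is correct and follows exactly the paper's route: the paper proves this corollary by taking $Z=Y$ in Lemma \ref{l:equi}, leaving implicit the (easy) point you spell out, namely that the correspondence $\mu\leftrightarrow\mu_\natural$ preserves the isomorphism property because $\mu=\mu_\natural\circ\gamma_!$ with $\gamma_!$ an isomorphism of schemes. No gaps; your write-up is just a slightly more detailed version of the same one-line argument.
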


\begin{proof}
We take $Z=Y$ in Lemma \ref{l:equi}.
 \end{proof}

\section{Quotients}
\label{s:quotients}

\begin{subsec}
Let $k$ be a field (not necessarily algebraically closed).
By an {\em algebraic scheme} over $k$ we mean a scheme of finite type over $k$.
By an {\em algebraic group scheme} over $k$ we mean a group scheme over $k$ whose
underlying scheme is of finite type over $k$.

 Let $H$ be an algebraic group subscheme of an algebraic group $k$-scheme $G$.
A {\em quotient} of $G$ by $H$ is an algebraic scheme $Y$ over $k$ equipped with an action
$\theta\colon G\times_k Y\to Y$ and a point $y^{(0)}\in Y(k)$ fixed by $H$ satisfying certain conditions
(a) and (b); see Milne \cite[Definition 5.20]{Milne}.

By  \cite[Theorem 5.28]{Milne} there does exist a quotient of $G$ by $H$.
By \cite[Proposition 5.22]{Milne} this quotient $(Y,\theta,y^{(0)})$ has the following universal property:

\begin{property*}[\rm U]
Let $Z$ be a $k$-scheme on which $G$ acts, and let $z^{(0)}\in Z(k)$ be a point fixed by $H$.
Then there exists a unique $G$-equivariant map $Y\to Z$ making the following diagram commute:
\[
\xymatrix{
G\ar[rr]^{g\mapsto g\cdot y^{(0)} } \ar[rrd]_{g\mapsto g\cdot z^{(0)} } & & Y\ar[d] \\
                                                                       &  & Z
}
\]
\end{property*}

Clearly the universal property (U) uniquely determines the quotient up to a unique isomorphism,
so we may take (U) as a definition of the quotient.
\end{subsec}

\begin{subsec}
We return to our settings: $k$ is an {\em algebraically closed} field.
Let $G$ be a {\em linear} algebraic $k$-group (a smooth affine group $k$-scheme of finite type over $k$)
and $H$ be a {\em smooth} algebraic $k$-subgroup of $G$.
Under these assumptions,
a classical construction of Chevalley (see, for instance, \cite[Theorem 4.27]{Milne}\hs) shows that
the quotient $Y$ exists as a quasi-projective variety, see \cite[Theorem 7.18]{Milne}.
Thus  $Y$ is a $k$-variety, and therefore, in the universal property (U) defining $Y$
we may assume that $Z$ is a $k$-variety.
Since $k$ is algebraically closed and $H$ is smooth, the condition ``fixed by $H$''
is equivalent to ``fixed by $H(k)$''.
Thus we arrive to the following definition of Springer:
\end{subsec}

\begin{definition}[\rm cf. Springer {\cite[Section 5.5]{Springer-LAG}}]
\label{d:Springer}
Let $k$ be an algebraically closed  field,
and let $G$ be a linear algebraic $k$-group.
Let $H\subset G$ be a  (smooth)  $k$-subgroup.
A {\em quotient of $G$ by $H$} is a pointed $G$-$k$-variety
$(Y,\ \theta\colon G\times_k Y\to Y,\  y^{(0)}\in Y(k)\,)$
such that $H(k)$ fixes $y^{(0)}$, with the following universal property:

\begin{property*}[\rm U$'$]
 For any pointed $G$-$k$-variety $(Z,\theta_Z,z^{(0)})$
with the $k$-point $z^{(0)}\in Z(k)$ fixed by $H(k)$,
there exists a unique morphism  of pointed $G$-$k$-varieties
$(Y,\theta,y^{(0)})\ \to\ (Z,\theta_Z,z^{(0)})$.
\end{property*}
\end{definition}

\begin{subsec}
For $G$ and $H$ as in Definition \ref{d:Springer}, let $(Y,\theta, y^{(0)})$ be a quotient of $G$ by $H$.
The action of $G$ on $Y$ induces a {\em $G$-$k$-morphism} (a morphism of $G$-$k$-varieties)
\begin{equation}\label{e:lambda-Y}
 G\to Y,\quad g\mapsto g\cdot y^{(0)}\quad \text{for } g\in G,
\end{equation}
where $G$ acts on itself by left translations.

As usual, we write $G/H$ for $Y$ and $g\cdot H$ or $gH$ for $g\cdot y^{(0)}$, where $g\in G(k)$.
In particular, we write $1\cdot H$ for $y^{(0)}$.
The $G$-equivariant morphism $G/H=Y \to Z$ of (U$'$) sends $1\cdot H\in (G/H)(k)$ to $z^{(0)}$,
hence for every $g\in G(k)$ it sends the $k$-point $gH\in(G/H)(k)$  to $g\cdot z^{(0)}\in Z(k)$.
Thus the quotient $G/H$ has the following universal property:

\begin{property*}[\rm U$''$]
For any pointed $G$-$k$-variety $(Z,\theta_Z,z^{(0)})$
such that the $k$-point $z^{(0)}$ is fixed by $H(k)$,
there exists a unique $G$-$k$-morphism
$G/H\to Z$ sending $gH$ to $g\cdot z^{(0)}$ for every $g\in G(k)$.
\end{property*}

By \cite[Definition 5.20(a)]{Milne} the morphism \eqref{e:lambda-Y}  induces an injective map
\[G(k)/H(k)\to (G/H)(k), \quad g\cdot H(k)\mapsto  gH.\]
By \cite[Proposition 5.25]{Milne} the morphism \eqref{e:lambda-Y} is faithfully flat, and therefore,
since $k$ is algebraically closed, we see that the induced map $G(k)/H(k)\to (G/H)(k)$ is surjective.
We conclude that this map is bijective.
Thus any $k$-point of $G/H$ is of the form $gH$, where $g\in G(k)$.
\end{subsec}

\section{Semi-morphisms of homogeneous spaces}
\label{s:homog}

Let $k$ be an algebraically closed field.
All algebraic $k$-groups are assumed to be linear and smooth,
and all $k$-subgroups are assumed to be smooth.

\begin{lemma}[\rm well-known]
\label{l:isom-conj}
Let $G$ be a linear algebraic $k$-group over an algebraical\-ly closed field $k$,
and let $H_1, H_2$ be two $k$-subgroups.
Then $Y_1=G/H_1$ and $Y_2=G/H_2$ are isomorphic
as $G$-$k$-varieties
if and only if the subgroups $H_1$ and $H_2$ are conjugate.
To be more precise, for $a\in G(k)$ the following two assertions are equivalent:
\begin{enumerate}
\item[\rm(i)] There exists an isomorphism of $G$-$k$-varieties $\phi_a\colon G/H_1\to G/H_2$
taking $g\cdot H_1$ to $ga^{-1}\cdot H_2$ for $g\in G(k)$;
\item[\rm(ii)] $H_1=a^{-1} H_2\hs a$.
\end{enumerate}
\end{lemma}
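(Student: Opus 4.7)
The plan is to deduce both implications directly from the universal property (U$''$) of the quotient stated in Section \ref{s:quotients}, using that $k$ is algebraically closed so that a $G$-$k$-morphism $G/H_1 \to G/H_2$ is determined by its effect on $k$-points (which are exhausted by cosets $gH_1$).

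For the implication (ii) $\Rightarrow$ (i), I would start from the assumption $H_1 = a^{-1} H_2 a$ and look at the pointed $G$-$k$-variety $(G/H_2,\theta_{H_2},\,a^{-1}H_2)$. A direct check shows that $H_1(k)$ fixes the base point $a^{-1}H_2$: for $h_1\in H_1(k)$, the element $ah_1a^{-1}$ lies in $H_2(k)$, so
\[
h_1\cdot(a^{-1}H_2)=a^{-1}(ah_1a^{-1})H_2=a^{-1}H_2.
\]
Property (U$''$) then yields a unique $G$-$k$-morphism $\phi_a\colon G/H_1\to G/H_2$ sending $gH_1\mapsto ga^{-1}H_2$. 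Swapping the roles of $H_1$ and $H_2$ (using $H_2=aH_1a^{-1}$ and the point $aH_1\in G/H_1$ fixed by $H_2(k)$) produces $\phi_{a^{-1}}\colon G/H_2\to G/H_1$ with $gH_2\mapsto gaH_1$. On $k$-points, the compositions $\phi_{a^{-1}}\circ\phi_a$ and $\phi_a\circ\phi_{a^{-1}}$ act as the identity; since both compositions are $G$-$k$-morphisms and $k$-points of $G/H_i$ are exactly the cosets $gH_i$ (by the surjectivity of $G(k)/H_i(k)\to(G/H_i)(k)$ recalled in Section \ref{s:quotients}), they equal the respective identity morphisms. Hence $\phi_a$ is an isomorphism of $G$-$k$-varieties.

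For the implication (i) $\Rightarrow$ (ii), I would exploit $G$-equivariance of $\phi_a$ at the base point. From $\phi_a(1\cdot H_1)=a^{-1}H_2$, for every $h_1\in H_1(k)$ we compute in two ways:
\[
\phi_a(h_1H_1)=\phi_a(1\cdot H_1)=a^{-1}H_2,\qquad
\phi_a(h_1H_1)=h_1\cdot\phi_a(1\cdot H_1)=h_1a^{-1}H_2.
\]
Comparing gives $ah_1a^{-1}\in H_2(k)$, so $a\hs H_1(k)\hs a^{-1}\subseteq H_2(k)$. Applying the same argument to the inverse isomorphism $\phi_a^{-1}\colon G/H_2\to G/H_1$, which sends $g'H_2$ to $g'a\hs H_1$, yields the reverse inclusion $a^{-1}H_2(k)a\subseteq H_1(k)$. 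Since $k$ is algebraically closed and the subgroups $H_1,H_2$ are smooth, the equality of $k$-points implies $aH_1a^{-1}=H_2$, that is, $H_1=a^{-1}H_2a$.

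There is essentially no substantive obstacle here: the content is purely formal, relying on (U$''$) for existence and uniqueness of $\phi_a$ and on the bijection $G(k)/H_i(k)\isoto(G/H_i)(k)$ to upgrade pointwise identities to identities of morphisms. The only mild subtlety is to verify that $\phi_a\circ\phi_{a^{-1}}=\id$ via the uniqueness clause of (U$''$), rather than by a separate scheme-theoretic argument.
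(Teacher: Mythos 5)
Your proposal is correct and follows essentially the same route as the paper: the implication (ii)$\Rightarrow$(i) is obtained exactly as in the paper by applying the universal property (U$''$) twice to produce mutually inverse $G$-morphisms, and your (i)$\Rightarrow$(ii) argument via equivariance at the base point is just the paper's stabilizer comparison ($\Stab_{G(k)}(1\cdot H_1)=H_1(k)$ versus $\Stab_{G(k)}(a^{-1}\cdot H_2)=a^{-1}H_2(k)\hs a$) unwound into two inclusions. The extra care you take in invoking the uniqueness clause of (U$''$) to see that the compositions are the identity is a harmless elaboration of the paper's ``clearly these two morphisms are mutually inverse.''
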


\begin{proof}
(i)$\Rightarrow$(ii).
Clearly
\[\Stab_{G(k)}(1\cdot H_1)=H_1(k)\quad \text{and} \quad\Stab_{G(k)}(a^{-1}\cdot H_2)=a^{-1}\cdot H_2(k)\cdot a.\]
Since $\phi_a(1\cdot H_1)=a^{-1}\cdot H_2$, these stabilizers coincide, whence (ii).

(ii)$\Rightarrow$(i).
Set \,$Y_2=G/H_2$, \,$y_2^{(0)}=1\cdot H_2\in Y_2(k)$, \,$y'=a^{-1}\cdot y_2^{(0)}\in Y_2(k)$;
then $\Stab_{G(k)}(y')=a^{-1} H_2(k)\hs a=H_1(k)$,
 so by the property (U$''$) of the quotient $G/H_1$ there exists a unique morphism
 of $G$-varieties $\phi_a\colon G/H_1\to G/H_2$
 such that
 \[\phi_a(g\cdot H_1)=g\cdot a^{-1}\cdot H_2\quad\text{for } g\in G(k).\]
Similarly, since the stabilizer in $G(k)$ of \, $a\cdot H_1\in (G/H_1)(k)$ \, is $H_2(k)$,
there exists a unique  morphism of $G$-varieties $\psi_a\colon G/H_2\to G/H_1$
such that
 \begin{equation*}
 \psi_a(g\cdot H_2)=g\cdot a\cdot H_1\quad\text{for } g\in G(k).
 \end{equation*}
 Clearly these two morphisms are mutually inverse, hence both $\phi_a$ and $\psi_a$ are isomorphisms.
 \end{proof}

\begin{definition}
Let $G$ be a linear algebraic group over an algebraically closed field $k$.
Let $Y$ be a $G$-$k$-variety.
We denote by $\Aut^G(Y)$ the group of $G$-equivariant $k$-automorphisms of $Y$, that is,
of $k$-automorphisms $\psi\colon Y\to Y$ such that
\[ \psi(g\cdot y)=g\cdot \psi(y) \quad\text{for }g\in G,\ y\in Y.\]
\end{definition}

\begin{corollary}[\rm well-known]
\label{l:k-automorphisms}
Let $G$ be a linear algebraic group over an algebraic\-al\-ly closed field $k$.
Let $Y$ be a homogeneous $G$-$k$-variety, that is,  $Y=G/H$,  where $H$ is a $k$-subgroup of $G$.
Set  $N=\NGH$, the normalizer of $H$ in $G$.
For  $n\in N(k)$ we define a map on $k$-points
\begin{equation*}\label{e:Aut-G-H}
n_*\colon G/H\to G/H,\ \  gH\, \longmapsto\ gHn^{-1}=gn^{-1} H\quad \text{for }g\in G(k).
\end{equation*}
Then
\begin{enumerate}
\item[\rm(i)] The map $n_*$ is induced by some automorphism $\phi_n\in \Aut^G(G/H)$;
\item[\rm(ii)] The map
\begin{equation}\label{e:N(H)-H}
\phi\colon N(k)\to\Aut^G(G/H),\quad n\mapsto \phi_n
\end{equation}
is a homomorphism inducing an isomorphism
\begin{equation*}
 N(k)/H(k)\isoto \Aut^G(G/H).
\end{equation*}
\end{enumerate}
\end{corollary}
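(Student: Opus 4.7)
The plan is to deduce everything directly from Lemma \ref{l:isom-conj}, applied with $H_1 = H_2 = H$. For part (i), given $n \in N(k)$, since $n$ normalizes $H$, we have $H = n^{-1} H n$, so condition (ii) of Lemma \ref{l:isom-conj} holds with $a = n$. The lemma then produces an isomorphism of $G$-$k$-varieties $\phi_n \colon G/H \to G/H$ that sends $gH$ to $gn^{-1}H$ on $k$-points; as a $G$-equivariant self-isomorphism, $\phi_n$ lies in $\Aut^G(G/H)$. Note that $gn^{-1}H = gHn^{-1}$ because $n$ (hence $n^{-1}$) normalizes $H$, so $n_*$ is indeed induced by $\phi_n$.

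For the homomorphism property in (ii), I would verify on $k$-points that
\[
\phi_{n_1 n_2}(gH) \,=\, g(n_1 n_2)^{-1}H \,=\, gn_2^{-1}n_1^{-1}H \,=\, \phi_{n_1}(gn_2^{-1}H) \,=\, \phi_{n_1}(\phi_{n_2}(gH)),
\]
which determines the equality of morphisms since $k$ is algebraically closed and $G/H$ is reduced (so a $G$-equivariant morphism is determined by its value at the base point). The kernel of $\phi$ is computed by evaluating at the base point: $\phi_n = \id$ if and only if $n^{-1}\cdot (1\cdot H) = 1\cdot H$, i.e., $n \in H(k)$.

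For surjectivity of the induced map $N(k)/H(k) \to \Aut^G(G/H)$, I would take any $\psi \in \Aut^G(G/H)$ and write $\psi(1 \cdot H) = aH$ for some $a \in G(k)$, using the surjectivity of $G(k) \to (G/H)(k)$ established in Section \ref{s:quotients}. By $G$-equivariance of $\psi$, the stabilizer in $G(k)$ of $\psi(1\cdot H) = aH$ must equal the stabilizer of $1 \cdot H$, namely $H(k)$; but the stabilizer of $aH$ is $aH(k)a^{-1}$, so $aH(k)a^{-1} = H(k)$, forcing $a \in N(k)$. Setting $n = a^{-1} \in N(k)$, the automorphisms $\psi$ and $\phi_n$ are both $G$-equivariant and agree on the base point $1\cdot H$, so by the universal property (U$''$) of the quotient $G/H$ they coincide.

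No serious obstacle arises here: the whole argument is a direct unwinding of Lemma \ref{l:isom-conj} and the universal property of quotients. The only point requiring care is bookkeeping around the left/right coset conventions—specifically, verifying the equality $gHn^{-1} = gn^{-1}H$ and ensuring the inverse appears on the correct side so that $\phi$ is a homomorphism rather than an anti-homomorphism.
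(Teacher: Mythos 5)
Your proof is correct and follows essentially the same route as the paper: part (i) is obtained by applying Lemma \ref{l:isom-conj} with $a=n$, and part (ii) reduces to the kernel computation and the surjectivity argument via stabilizers and the universal property (U$''$), which the paper leaves as "straightforward" and you have correctly spelled out.
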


\begin{proof}
By assumption $n^{-1} H n=H$, and by Lemma \ref{l:isom-conj}
there exists an isomorphism  $\phi_n\colon G/H\to G/H$ such that $\phi_n(g\cdot H)=gn^{-1}\cdot H$,
which proves (i).

Clearly the map $\phi$  of \eqref{e:N(H)-H} is a homomorphism with kernel $H(k)$.
To prove (ii) it remains to show that $\phi$ is surjective,
which is straightforward.
 \end{proof}

\begin{corollary}\label{c:NGH}
If $\NGH=H$, then $\Aut^G(G/H)=\{1\}$.\qed
\end{corollary}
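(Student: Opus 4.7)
The plan is to apply the preceding Corollary \ref{l:k-automorphisms}(ii) directly. That corollary furnishes a group isomorphism
\[
N(k)/H(k)\ \isoto\ \Aut^G(G/H),
\]
where $N=\NGH$. Under the hypothesis $\NGH=H$, the groups $N$ and $H$ coincide as algebraic $k$-subgroups of $G$, so in particular $N(k)=H(k)$, and hence the left-hand side is the trivial group. The isomorphism then forces $\Aut^G(G/H)=\{1\}$.

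There is essentially no obstacle: the work has already been done in Corollary \ref{l:k-automorphisms}, which in turn rests on Lemma \ref{l:isom-conj} characterizing $G$-equivariant isomorphisms between $G/H_1$ and $G/H_2$ in terms of conjugation of subgroups. The only (purely formal) point to note is that one is using smoothness of $H$ and $N$ together with the fact that $k$ is algebraically closed, so that the equality $\NGH=H$ of algebraic $k$-subgroups is equivalent to the equality $N(k)=H(k)$ of their groups of $k$-points, which is exactly what the quotient $N(k)/H(k)$ sees.
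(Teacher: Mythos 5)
Your proof is correct and is exactly the argument the paper intends: the corollary is stated with a \qed because it follows immediately from the isomorphism $N(k)/H(k)\isoto\Aut^G(G/H)$ of Corollary \ref{l:k-automorphisms}(ii) once $N=H$. Nothing further is needed.
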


\begin{subsec}\label{ss:k0-tau}
Let $k$ be an algebraically closed field.
Let $G$ be a  linear algebraic group over $k$.
Let $\gamma\in\Aut(k)$.
Let $\tau\colon G\to G$ be a $\gamma$-semi-automorphism of $G$.

Let $H\subset G$ be a (smooth) $k$-subgroup.
Set $Y=G/H$; then we have a morphism $\theta\colon G\times_k Y\to Y$ defining the action of $G$ on $Y$.
Furthermore, the variety $Y$ has a $k$-point $y^{(0)}=1\cdot H$ such that $\Stab_{G(k)}(y^{(0)})=H(k)$,
and the group of $k$-points $G(k)$ acts on $Y(k)$ transitively.

Consider the variety $\gamma_* Y$, the action
$\gamma_*\theta\colon \gamma_*G\times_k\gamma_*Y\to \gamma_*Y$ of $\gamma_*G$ on $\gamma_*Y$,
and the $k$-point $\gamma_!(y^{(0)})\in (\gamma_*Y)(k)$.
As in Construction \ref{ss:*Y*Y}  we obtain an action
\[\tau^*\gamma_*\theta\colon\  G\times_k\gamma_*Y\to \gamma_*Y.\]
\end{subsec}

\begin{lemma}\label{l:nu-1-2}
Let $k$ be an algebraically closed field, and
let $G$ be a  linear algebraic group over $k$.
Let $\gamma\in\Aut(k)$, and
let $\tau\colon G\to G$ be a $\gamma$-semi-automorphism of $G$.
Let $H\subset G$ be a smooth $k$-subgroup.
Set  $Y=G/H$ and let $\theta\colon G\times_k Y\to Y$ denote the canonical action.
Consider the map on $k$-points
\begin{equation}\label{e:nu-1-2}
 (G/H)(k)\to (G/\tau(H))(k),\quad g\cdot H\mapsto \tau(g)\cdot \tau(H) \quad \text{for }g\in G(k).
\end{equation}
Then the following assertions hold:
\begin{enumerate}
\item[\rm (i)] The pointed $G$-$k$-variety $(\gamma_*Y,\hs\tau^*\gamma_*\theta,\hs \gamma_!(y^{(0)}))$
is isomorphic to $G/\tau(H)$;
\item[\rm (ii)] the map \eqref{e:nu-1-2}  is induced by some
$\gamma$-semi-isomorphism $\nu\colon G/H\to G/\tau(H)$.
\end{enumerate}
\end{lemma}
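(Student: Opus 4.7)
The strategy is to first recognize the twisted $G$-action $\tau^*\gamma_*\theta$ as making $(\gamma_*Y,\gamma_!(y^{(0)}))$ into a copy of the pointed quotient $G/\tau(H)$, thereby proving (i); then (ii) drops out by applying Lemma \ref{l:equi} to the inverse of this identification.

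For (i), I first check transitivity and identify the stabilizer. By formula \eqref{e:*-tau}, the action of $g\in G(k)$ on $(\gamma_*Y)(k)$ is the conjugate by $\gamma_!$ of left translation by $\tau^{-1}(g)$ on $Y(k)$; since $G(k)$ acts transitively on $Y(k)=(G/H)(k)$ and $\gamma_!$ is a bijection (Example \ref{ex:gamma-ast}), the twisted action is also transitive. By Lemma \ref{l:*-Stab}, the stabilizer of $\gamma_!(y^{(0)})$ is $\tau(H)(k)$. The universal property (U$''$) of the quotient $G/\tau(H)$ then produces a unique $G$-equivariant $k$-morphism $\phi\colon G/\tau(H)\to\gamma_*Y$ sending $g\cdot\tau(H)$ to $g\uT\gamma_!(y^{(0)})$. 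To see that $\phi$ is an isomorphism, I observe that $\gamma_*Y=\gamma_*G/\gamma_*H$ (base change along $\gamma^*$ commutes with the quotient construction), and that the $k$-group isomorphism $\tau_\natural\colon\gamma_*G\isoto G$ from Definition \ref{d:tau-gamma} carries $\gamma_*H$ onto $\tau(H)$; transporting the natural $\gamma_*G$-action on $\gamma_*G/\gamma_*H$ via $\tau_\natural$ yields exactly the action $\tau^*\gamma_*\theta$, so $\gamma_*Y\cong G/\tau(H)$ as pointed $G$-$k$-varieties, and this isomorphism inverts $\phi$.

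For (ii), set $\nu_\natural:=\phi^{-1}\colon\gamma_*Y\to G/\tau(H)$, a $G$-equivariant $k$-morphism. By Lemma \ref{l:equi} the corresponding $\nu:=\nu_\natural\circ\gamma_!\colon Y\to G/\tau(H)$ is a $\tau$-equivariant $\gamma$-semi-morphism (cf.\ \eqref{e:nat-y'}); it is a $\gamma$-semi-isomorphism because $\gamma_!$ is one and $\nu_\natural$ is a $k$-isomorphism. Finally, for $g\in G(k)$, formula \eqref{e:*-tau} gives $\gamma_!(gH)=\tau(g)\uT\gamma_!(y^{(0)})$, and hence
\[
\nu(gH)=\phi^{-1}\bigl(\tau(g)\uT\gamma_!(y^{(0)})\bigr)=\tau(g)\cdot\tau(H),
\]
matching \eqref{e:nu-1-2}. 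The main obstacle is the argument that $\phi$ is actually an isomorphism of varieties (and not merely a bijection on $k$-points); I expect the cleanest route is to recognize $\gamma_*Y$ as $\gamma_*G/\gamma_*H$ transported along $\tau_\natural$, after which everything else is routine bookkeeping with the formulas already established in Sections \ref{s:conj}--\ref{s:quotients}.
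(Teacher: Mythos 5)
Your proof is correct, and its overall architecture matches the paper's: both arguments hinge on identifying the pointed $G$-$k$-variety $(\gamma_*Y,\hs\tau^*\gamma_*\theta,\hs\gamma_!(y^{(0)}))$ as a quotient of $G$ by $\tau(H)$, and both then obtain (ii) from Lemma \ref{l:equi} together with the point computation based on formula \eqref{e:*-tau}, using Lemma \ref{l:*-Stab} for the stabilizer. The genuine difference is in how the quotient identification in (i) is justified. The paper verifies the universal property (U$'$) directly: given a test object $(Z,\theta_Z,z^{(0)})$ with $z^{(0)}$ fixed by $\tau(H)(k)$, it sets up a bijection between morphisms from $(Y,\theta,y^{(0)})$ into the $(\gamma^{-1},\tau^{-1})$-twist of $Z$ and morphisms from the $(\gamma,\tau)$-twist of $Y$ into $Z$, and then quotes (U$'$) for $G/H$; this adjunction-style argument proves exactly what is needed without ever asserting a general compatibility of quotients with base change. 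You instead factor the twist into two steps --- base change along the isomorphism $\gamma^*$ of $\Spec k$, which carries the quotient $G/H$ to the quotient $\gamma_*G/\gamma_*H$ (formal, since base change along an isomorphism is an equivalence of categories), followed by transport of the action along the $k$-group isomorphism $\tau_\natural\colon\gamma_*G\isoto G$, which carries $\gamma_*H$ onto $\tau(H)$ and converts $\gamma_*\theta$ into $\tau^*\gamma_*\theta$ as in Construction \ref{ss:*Y*Y} --- each step manifestly preserving the universal property. Both routes are sound; yours makes the conceptual content more visible, while the paper's is more self-contained. Two small remarks: your preliminary construction of $\phi$ via (U$''$) is harmless but redundant, since once $\gamma_*Y$ is recognized as a quotient of $G$ by $\tau(H)$ the comparison isomorphism with $G/\tau(H)$, respecting base points, comes for free from uniqueness of quotients (this is the map $\lambda$ of \eqref{e:lambda-gamma-star}); and the transitivity check at the start is likewise not needed for the argument you actually run.
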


\begin{proof}
Consider the pointed $G$-$k$-variety $(\gamma_*Y,\hs\tau^*\gamma_*\theta,\hs \gamma_!(y^{(0)}))$.
By Lemma \ref{l:*-Stab}, the subgroup $\tau(H(k))=\tau(H)(k)$ of $G(k)$ fixes $\gamma_!(y^{(0)})$.

Now let $(Z,\theta_Z, z^{(0)})$ be a pointed $G$-$k$-variety such that $\tau(H(k))$ fixes $z^{(0)}$.
Consider the pointed $G$-$k$-variety
$$(\hs(\gamma^{-1})_*Z,\hs (\tau^{-1})^*(\gamma^{-1})_*\theta_Z,\hs (\gamma^{-1})_!\hs(z^{(0)})\hs),$$
where the action
\begin{equation}\label{e:tau-1-Z}
(\tau^{-1})^*(\gamma^{-1})_*\theta_Z\colon \ G\times_k (\gamma^{-1})_*Z\ \to\  (\gamma^{-1})_*Z
\end{equation}
is defined as in Construction \ref{ss:*Y*Y},
but  for the pair $(\gamma^{-1},\tau^{-1})$ instead of $(\gamma,\tau)$.
By Lemma \ref{l:*-Stab} applied to $(\gamma^{-1},\tau^{-1})$, the group $H(k)$
fixes $(\gamma^{-1})_!\hs(z^{(0)})\in (\gamma^{-1})_*Z$.
For any morphism of pointed $G$-$k$-varieties
\begin{equation}\label{e:nu-nu}
\vk\colon (Y,\theta, y^{(0)})\to (\hs(\gamma^{-1})_*Z,\, (\tau^{-1})^*(\gamma^{-1})_*\theta_Z,\, (\gamma^{-1})_!\hs(z^{(0)})\hs)
\end{equation}
we obtain a morphism of pointed $G$-$k$-varieties
\begin{equation}\label{e:gamma*nu-nu}
\gamma_*\vk\colon (\gamma_*Y,\hs\tau^*\gamma_*\theta,\hs \gamma_!(y^{(0)})\hs)\to (Z, \theta_Z, z^{(0)}).
\end{equation}
We see that the map $\vk\mapsto \gamma_*\vk$ is a bijection
between the set of morphisms as in \eqref{e:nu-nu}
and the set of morphisms as in \eqref{e:gamma*nu-nu}.
Since $Y=G/H$ and  $H(k)$ fixes $(\gamma^{-1})_!\hs(z^{(0)})$ under the action \eqref{e:tau-1-Z},
we conclude by the universal property (U$'$) for the quotient $Y=G/H$,
that the former set contains exactly one element.
It follows that the latter set contains exactly one element,
that is, the pointed $G$-$k$-variety $(\gamma_*Y,\hs\tau^*\gamma_*\theta,\hs \gamma_!(y^{(0)})\hs)$
has the universal property (U$'$).
This means that, by Definition \ref{d:Springer},
the pointed $G$-$k$-variety $(\gamma_*Y,\hs\tau^*\gamma_*\theta,\hs \gamma_!(y^{(0)})\hs)$
is a quotient of $G$ by $\tau(H)$, which proves (i).
It follows that there exists an isomorphism of $G$-$k$-varieties
\begin{equation}\label{e:lambda-gamma-star}
\lambda\colon \gamma_*Y\to G/\tau(H)\quad \text{such that}\quad  g \uT \gamma_!(y^{(0)})\ \longmapsto\ g\cdot \tau(H).
\end{equation}
We set
\begin{equation}\label{e:nu-lam-gam*}
\nu=\lambda\circ \gamma_!\colon\ G/H=Y\ \labelto{\gamma_!}\ \gamma_*Y\ \labelto{\lambda}\  G/\tau(H)\hs,
\end{equation}
where $\gamma_!\colon Y\to\gamma_*Y$ is the $\gamma$-semi-morphism of Example \ref{ex:gamma-ast}.
Then we have $\nu_\natural=\lambda$.
Since $\nu_\natural$ is an isomorphism of $G$-$k$-varieties,
by Lemma \ref{l:equi}  \,$\nu$ is a $\tau$-equivariant $\gamma$-semi-isomorphism.
Since by \eqref{e:lambda-gamma-star} and \eqref{e:nu-lam-gam*} we have
\[\nu(1\cdot H)=\nu(y^{(0)})=\lambda(\gamma_!(y^{(0)}))=\lambda(1\uT\gamma_!(y^{(0)}))=1\cdot\tau(H)\]
and since $\nu$ is $\tau$-equivariant, we obtain that
\[\nu(g\cdot H)=\nu(g\cdot (1\cdot H))=\tau(g)\cdot \nu(1\cdot H)=\tau(g)\cdot\tau(H),\]
which proves (ii).
 \end{proof}

\begin{corollary}\label{c:maincor-3}
Let $G$ be a linear algebraic $k$-group and $H\subset G$ be an algebraic $k$-subgroup. Set $Y=G/H$.
Let $\gamma\in\Aut(k)$ and let $\tau\colon G\to G$ be a $\gamma$-semi-automorphism of $G$.
The following three conditions are equivalent:
\begin{enumerate}
\item[\rm(i)] There exists a $\tau$-equivariant $\gamma$-semi-automorphism $\mu\colon Y\to Y$;
\item[\rm(ii)] The $G$-$k$-variety $G/\tau(H)$ is isomorphic to $G/H$;
\item[\rm(iii)] The algebraic subgroup $\tau(H)\subset G$ is conjugate to $H$.
\end{enumerate}
\end{corollary}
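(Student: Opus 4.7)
The plan is to assemble the corollary from three results already in hand: Corollary~\ref{c:tau-equi-semi}, Lemma~\ref{l:nu-1-2}(i), and Lemma~\ref{l:isom-conj}. None of the steps requires new input; the only thing to verify is that the ``distinguished $k$-point'' data carried along in Lemma~\ref{l:nu-1-2}(i) is compatible with the purely variety-theoretic isomorphism being asked for in (ii).

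First I would establish (i)\,$\Leftrightarrow$\,(ii). By Corollary~\ref{c:tau-equi-semi}, condition (i) is equivalent to the existence of an isomorphism of $G$-$k$-varieties $(\gamma_*Y,\tau^*\gamma_*\theta)\simeq(Y,\theta)$. By Lemma~\ref{l:nu-1-2}(i) applied to the pointed $G$-$k$-variety $(Y,\theta,y^{(0)})$ with $y^{(0)}=1\cdot H$, the pointed $G$-$k$-variety $(\gamma_*Y,\tau^*\gamma_*\theta,\gamma_!(y^{(0)}))$ is isomorphic to the quotient $G/\tau(H)$. Forgetting the base point, we see that $(\gamma_*Y,\tau^*\gamma_*\theta)$ is isomorphic to $G/\tau(H)$ as a $G$-$k$-variety. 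Thus the existence of an isomorphism in Corollary~\ref{c:tau-equi-semi} is the same as (ii).

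Next, (ii)\,$\Leftrightarrow$\,(iii) is exactly the well-known statement recorded as Lemma~\ref{l:isom-conj}: two homogeneous $G$-$k$-varieties $G/H_1$ and $G/H_2$ (with $k$ algebraically closed) are isomorphic as $G$-$k$-varieties if and only if $H_1$ and $H_2$ are conjugate in $G$. Here one takes $H_1=H$ and $H_2=\tau(H)$.

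There is no genuine obstacle; the only fussy point is making sure that in the (i)\,$\Rightarrow$\,(ii) direction one really produces a $G$-equivariant $k$-isomorphism (not merely a semi-isomorphism) between $\gamma_*Y$ and $Y$. This is guaranteed by Corollary~\ref{c:tau-equi-semi}, whose statement already gives a $G$-isomorphism after passing to $\gamma_*Y$ via the $\tau$-twisted action. With that caveat noted, the chain (i)\,$\Leftrightarrow$\,(ii)\,$\Leftrightarrow$\,(iii) is immediate.
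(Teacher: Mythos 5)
Your proposal is correct and follows exactly the paper's own argument: Corollary \ref{c:tau-equi-semi} gives (i) equivalent to $(\gamma_*Y,\tau^*\gamma_*\theta)\cong(Y,\theta)$, Lemma \ref{l:nu-1-2}(i) identifies $(\gamma_*Y,\tau^*\gamma_*\theta)$ with $G/\tau(H)$, and Lemma \ref{l:isom-conj} gives (ii)\,$\Leftrightarrow$\,(iii). Nothing further is needed.
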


\begin{proof}
By Corollary \ref{c:tau-equi-semi} there exists $\mu\colon Y\to Y$ as in (i) if and only if
the $G$-$k$-variety $(\gamma_*Y,\tau^*\gamma_*\theta)$ is isomorphic to $(Y,\theta)$.
By construction $(Y,\theta)=G/H$, and by Lemma \ref{l:nu-1-2}, $(\gamma_*Y,\tau^*\gamma_*\theta)\cong G/\tau(H)$.
Thus (i)$\Leftrightarrow$(ii).
By Lemma \ref{l:isom-conj} (ii)$\Leftrightarrow$(iii).
 \end{proof}

\begin{remark}
Lemma \ref{l:nu-1-2} and Corollary \ref{c:maincor-3}
are the main results of Sections  \ref{s:conj}--\ref{s:homog}.
\end{remark}

\section{Equivariant models of $G$-varieties}
\label{s:models}

\begin{subsec}\label{ss:k-k0-equi}
Let $k$ be an algebraically closed field,
and $k_0\subset k$ be a subfield such that $k$ is  a Galois extension of $k_0$,
that is, $k_0$ is a perfect field and $k$ is an  algebraic closure of $k_0$.
We write $\Gamma=\Gal(k/k_0):=\Aut(k/k_0)$.

Let $Y$ be a $k$-variety.
Let $\gamma,\gamma'\in\Gamma$.
If $\mu$ is a $\gamma$-semi-automorphism of $Y$, and $\mu'$ is a $\gamma'$-semi-automorphism of $Y$,
then $\mu\circ\mu'$ is a $\gamma\circ\gamma'$-semi-automorphism of $Y$
and $\mu^{-1}$ is a $\gamma^{-1}$-semi-automorphism of $Y$.

We denote by $\SAut_{k/k_0}(Y)$ or just by $\SAut(Y)$ the group
of all $\gamma$-semi-auto\-morphisms  $\mu$ of $Y$ where $\gamma$
runs over $\Gamma=\Gal(k/k_0)$.

A {\em $k_0$-model of $Y$} is a $k_0$-variety $Y_0$ together with an isomorphism of $k$-varieties
\[\vk_Y\colon Y_0\times_{k_0} k\isoto Y.\]
Note that $\gamma\in \Gamma$ defines a $\gamma$-semi-automorphism of $Y_0\times_{k_0} k$
$$\id_{Y_0}\times(\gamma^*)^{-1} \colon \ Y_0\times_{\Spec k_0} \Spec k\  \longrightarrow\ \ Y_0\times_{\Spec k_0} \Spec k$$
and thus, via $\vk_Y$, a $\gamma$-semi-automorphism $\mu_\gamma$ of $Y$.
We obtain a homomorphism
\[\Gamma\to \SAut(Y),\ \gamma\mapsto\mu_\gamma\hs .\]
\end{subsec}
Conversely:

\begin{lemma}[\rm Borel and Serre {\cite[Lemma 2.12]{Borel-Serre}}]
\label{l:BS}
Let $k$, $k_0$, $\Gamma$, $Y$ be as above.
Assume that for every $\gamma\in\Gamma$ we have a $\gamma$-semi-automorphism $\mu_\gamma$
of $Y$ such that the following conditions are satified:
\begin{enumerate}
\item[\rm(i)]  the map $\Gamma\to\SAut_{k/k_0}(Y),\ \gamma\mapsto\mu_\gamma$, is a homomorphism,
\item[\rm(ii)]  the restriction of this map
to $\Gal(k/k_1)$ for some finite Galois extension $k_1/k_0$ in $k$
comes from a $k_1$-model $Y_1$ of $Y$,
\item[\rm(iii)] $Y$ is quasi-projective.
\end{enumerate}
Then there exists a $k_0$-model $Y_0$ of $Y$ that defines this homomorphism $\gamma\mapsto\mu_\gamma$.\qed
\end{lemma}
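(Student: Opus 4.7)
The plan is to reduce the infinite Galois descent problem to a finite one using hypothesis (ii), and then invoke Weil's classical descent theorem for quasi-projective varieties under a finite Galois extension.

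First I would use hypothesis (ii) to fix a finite Galois extension $k_1/k_0$ in $k$ and a $k_1$-model $Y_1$ of $Y$ whose $k_1$-structure induces the given $\mu_\delta$ for $\delta\in\Gal(k/k_1)$ in the sense of Example \ref{ex:b-ch}. Since $Y\simeq Y_1\times_{k_1}k$ is quasi-projective by hypothesis (iii), the $k_1$-variety $Y_1$ is itself quasi-projective, as quasi-projectivity descends under faithfully flat base change.

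Next I would construct a descent datum on $Y_1$ relative to the finite Galois extension $k_1/k_0$. Writing $L=\Gal(k_1/k_0)$, we have the exact sequence
\[1\to\Gal(k/k_1)\to\Gamma\to L\to 1.\]
For each $\sigma\in L$ I choose a lift $\tilde\sigma\in\Gamma$; then $\mu_{\tilde\sigma}$ is a $\tilde\sigma$-semi-automorphism of $Y$. Using the homomorphism property (i), for every $\delta\in\Gal(k/k_1)$ I compute $\mu_{\tilde\sigma}\circ\mu_\delta=\mu_{\tilde\sigma\delta}=\mu_{\tilde\sigma\delta\tilde\sigma^{-1}}\circ\mu_{\tilde\sigma}$, and since $\tilde\sigma\delta\tilde\sigma^{-1}$ again lies in $\Gal(k/k_1)$ and the $\mu$ on this subgroup comes from the $k_1$-structure on $Y_1$, this is precisely the $\Gal(k/k_1)$-equivariance required for $\mu_{\tilde\sigma}$ to descend. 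I obtain thereby a $\sigma$-semi-automorphism $\bar\mu_\sigma$ of $Y_1$. A different lift $\tilde\sigma'=\tilde\sigma\delta$ gives $\mu_{\tilde\sigma'}=\mu_{\tilde\sigma}\circ\mu_\delta$, and since $\mu_\delta$ is the canonical $\delta$-semi-automorphism coming from $Y_1$, the descended map $\bar\mu_\sigma$ is independent of the lift. The homomorphism property (i) on $\Gamma$ then yields a homomorphism $L\to\SAut_{k_1/k_0}(Y_1)$, $\sigma\mapsto\bar\mu_\sigma$, which is a descent datum.

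Finally I would apply Weil's descent theorem for quasi-projective varieties under a finite Galois extension: the descent datum $\{\bar\mu_\sigma\}_{\sigma\in L}$ on the quasi-projective $k_1$-variety $Y_1$ is effective, producing a $k_0$-variety $Y_0$ together with a $k_1$-isomorphism $Y_0\times_{k_0}k_1\isoto Y_1$. Composing with the isomorphism $Y_1\times_{k_1}k\isoto Y$ yields a $k$-isomorphism $\vk_Y\colon Y_0\times_{k_0}k\isoto Y$, and a routine check shows that the homomorphism $\Gamma\to\SAut_{k/k_0}(Y)$ defined by this $k_0$-model coincides with the given $\gamma\mapsto\mu_\gamma$: the check splits into the case $\gamma\in\Gal(k/k_1)$, which is built into the construction of $Y_1$, and the case of the chosen lifts, which was arranged by design. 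The main obstacle is the effectiveness step, and this is precisely where quasi-projectivity is essential, since for general (non-quasi-projective) varieties Galois descent data need not be effective; some hypothesis like (iii) is therefore unavoidable.
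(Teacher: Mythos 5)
Your argument is correct and is essentially the standard proof of this lemma, which the paper does not reprove but simply cites from Borel--Serre: one checks via the normality of $\Gal(k/k_1)$ in $\Gamma$ and the intertwining relation $\mu_{\tilde\sigma}\circ\mu_\delta=\mu_{\tilde\sigma\delta\tilde\sigma^{-1}}\circ\mu_{\tilde\sigma}$ that each $\mu_{\tilde\sigma}$ preserves the $k_1$-structure and hence descends to $Y_1$, and then one applies Weil's Galois descent for quasi-projective varieties over the finite extension $k_1/k_0$. Your identification of effectiveness of the descent datum as the point where quasi-projectivity enters is also the right one.
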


\begin{subsec}\label{ss:5.2}
Let $k$ and $k_0$ be as in \ref{ss:k-k0-equi}, and
let $G$ be a linear algebraic group over $k$.
We denote by $\SAut_{k/k_0}(G)$ or just by $\SAut(G)$ the group
of all $\gamma$-semi-auto\-morphisms  $\sigma$ of $G$ where $\gamma$
runs over $\Gamma=\Gal(k/k_0)$.

We assume that we are given a $k_0$-model of $G$,
 that is, a linear algebraic group $G_0$ over $k_0$
together with an isomorphism of algebraic $k$-groups
$\vk_G\colon G_0\times_{k_0} k\isoto G$.
For $\gamma\in\Gamma$, the automorphism $(\gamma^*)^{-1}$ of $\Spec k$ induces a $\gamma$-semi-automorphism
$\id_{G_0}\times(\gamma^*)^{-1}$ of $G_0\times_{\Spec k_0} \Spec k$.
We identify $G$ with $G_0\times_{\Spec k_0} \Spec k$ via $\vk_G$;
then for any $\gamma\in \Gamma$ we obtain a $\gamma$-semi-automorphism $\sigma_\gamma\colon G\to G$.
The map
\[\Gamma\to\SAut(G),\quad \gamma\mapsto\sigma_\gamma\]
is a homomorphism.
We identify $\gamma_* G$ with $G$ using $(\sigma_\gamma)_\natural\colon \gamma_*G\to G$.

For a $G$-$k$-variety $(Y,\theta)$ and for a given $k_0$-model $G_0$  of $G$,
we ask whether there exists a  $G_0$-equivariant  $k_0$-model $(Y_0,\theta_0)$ of $(Y,\theta)$.

Let  $(Y,\theta)$ be a $G$-$k$-variety.
We write $g\cdot y$ for $\theta(g,y)$.
Recall (Definition \ref{d:g-semi-morphism}) that a $\gamma$-semi-automorphism $\mu$
of $Y$ is {\em $\sigma_\gamma$-equivariant} if
the following diagram commutes:
\begin{equation*}
\xymatrix{
G\times_k Y\ar[r]^-\theta\ar[d]_{\sigma_\gamma\times\mu}   & Y \ar[d]^\mu\\
G\times_k Y\ar[r]^-\theta                                     &Y
}
\end{equation*}
This is the same as to require that
\[\mu(g\cdot y)=\sigma_\gamma(g)\cdot\mu(y)\quad\text{for all }g\in G(k),\ y\in Y(k).\]

A  $G_0$-equivariant  $k_0$-model $(Y_0,\theta_0)$ of $(Y,\theta)$ defines a homomorphism
\[\Gamma\to \SAut_{k/k_0}(Y),\quad\gamma\mapsto\mu_\gamma\hs ,\]
where for every $\gamma\in\Gamma$, the $\gamma$-semi-automorphism
$\mu_\gamma$ of $Y$ is $\sigma_\gamma$-equivariant.
Conversely:
\end{subsec}

\begin{lemma}\label{l:model}
Let $k$, $k_0$, $\Gamma=\Gal(k/k_0)$, $G$, $(Y,\theta)$ be as in \ref{ss:5.2} and
let $G_0$ be a $k_0$-model of $G$.
Assume that for every $\gamma\in\Gamma$ we have a $\gamma$-semi-automorphism $\mu_\gamma$
of $Y$ such that the following conditions are satisfied:
\begin{enumerate}
\item[\rm(i)]  the map $\Gamma\to\SAut_{k/k_0}(Y),\ \gamma\mapsto\mu_\gamma$ is a homomorphism,
\item[\rm(ii)]  the restriction of this map
to $\Gal(k/k_1)$ for some finite Galois extension $k_1/k_0$ in $k$
comes from a $G_1$-equivariant $k_1$-model $Y_1$ of $Y$, where $G_1=G_0\times_{k_0} k_1$,
\item[\rm(iii)] $Y$ is quasi-projective,
\item[\rm(iv)] for every $\gamma\in\Gamma$, the $\gamma$-semi-automorphism
$\mu_\gamma$ is $\sigma_\gamma$-equivariant.
\end{enumerate}
Then there exists a  $G_0$-equivariant $k_0$-model $(Y_0,\theta_0)$ of $(Y,\theta)$
that defines this homomorphism $\gamma\mapsto\mu_\gamma$.
\end{lemma}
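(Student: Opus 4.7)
The plan is to obtain a $k_0$-model $Y_0$ of $Y$ realising the given semi-automorphisms via Lemma \ref{l:BS}, and then to descend the action morphism $\theta$ to a $k_0$-morphism $\theta_0$ on $Y_0$, with condition (iv) providing exactly the Galois-equivariance needed for this descent.

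First I would apply Lemma \ref{l:BS}: hypotheses (i), (ii), (iii) of the present lemma are verbatim the hypotheses of that lemma, so it yields a $k_0$-model $(Y_0,\vk_Y)$ of $Y$ whose associated Galois semi-automorphisms, in the sense of \ref{ss:k-k0-equi}, are exactly the given $\mu_\gamma$. Next I would form the product $k_0$-variety $G_0\times_{k_0}Y_0$; base-changed to $k$ and identified with $G\times_k Y$ via $\vk_G\times\vk_Y$, its associated $\gamma$-semi-auto\-mor\-phism is $\sigma_\gamma\times\mu_\gamma$ (this is the definition of the semi-automorphisms attached to a product model).

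Now hypothesis (iv) says exactly that, for every $\gamma\in\Gamma$, the morphism $\theta\colon G\times_k Y\to Y$ satisfies $\theta\circ(\sigma_\gamma\times\mu_\gamma)=\mu_\gamma\circ\theta$, i.e.\ it is $\Gamma$-equivariant for the semi-linear actions on source and target coming from the $k_0$-models $G_0\times_{k_0}Y_0$ and $Y_0$. By standard Galois descent for morphisms of $k$-varieties (applied to a covering of $Y_0$ by $\Gamma$-stable affine opens, which exist because $Y$ is quasi-projective by (iii), so that descent reduces to the classical statement that a $\Gamma$-equivariant $k$-algebra map between $k_0$-algebras tensored up with $k$ descends to a $k_0$-algebra map), there exists a unique $k_0$-morphism
\[\theta_0\colon G_0\times_{k_0} Y_0\to Y_0\]
whose base change to $k$ is $\theta$ under the identifications above. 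It remains to check that $\theta_0$ is an action. The unit and associativity diagrams for $\theta_0$ are built from $\theta_0$, the multiplication $m_{G_0}$, the identity section of $G_0$, and projections, all of which are $k_0$-morphisms. Base-changed to $k$ they become the unit and associativity diagrams for $\theta$, which commute since $\theta$ is an action; by faithful flatness of $k_0\to k$ they then commute over $k_0$. Hence $(Y_0,\theta_0)$ is a $G_0$-$k_0$-variety and $(Y_0,\theta_0,\vk_Y)$ is the required $G_0$-equivariant $k_0$-model of $(Y,\theta)$, and by construction it induces the homomorphism $\gamma\mapsto\mu_\gamma$.

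The only step needing care is the descent of the morphism $\theta$, which I expect to be a minor technical obstacle rather than a serious one: it is classical Galois descent for morphisms, legitimate here because both source and target are quasi-projective (the source being the product of an affine group with a quasi-projective variety). One could equivalently appeal to effective fpqc descent for morphisms of schemes, applied to the faithfully flat extension $\Spec k\to\Spec k_0$; either route produces the unique $\theta_0$ and makes the rest of the argument formal.
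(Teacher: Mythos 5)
Your proposal is correct and follows exactly the route of the paper's own proof: first invoke Lemma \ref{l:BS} with hypotheses (i)--(iii) to produce the $k_0$-model $Y_0$, then use condition (iv) together with Galois descent for morphisms (the paper cites Jahnel, Proposition~2.8) to descend $\theta$ to $\theta_0\colon G_0\times_{k_0}Y_0\to Y_0$. Your additional verification that $\theta_0$ is an action (via faithful flatness of $k_0\to k$) is a detail the paper leaves implicit, but the argument is the same.
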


\begin{proof}
By Lemma \ref{l:BS}  the homomorphism
\[\Gamma\to\SAut(Y),\quad \gamma\mapsto \mu_\gamma\]
defines a $k_0$-model $Y_0$ of  $Y$.
Using Galois descent for morphisms (see, for example, Jahnel \cite[Proposition 2.8]{Jahnel})
we obtain from condition (iv)
that $\theta$ comes from some morphism $\theta_0\colon G_0\times_{k_0}Y_0\to Y_0$,
and the $k_0$-model $(Y_0,\theta_0)$ of $(Y,\theta)$ is $G_0$-equivariant.
 \end{proof}

\begin{remark}
If in Lemma \ref{l:model} we do not assume that $Y$ is quasi-projective,
then we obtain a $k_0$-model $Y_0$
in the category of algebraic $k_0$-spaces (see Wedhorn \cite[Proposition 8.1]{Wedhorn}\hs),
but not necessarily in the category of $k_0$-schemes (even when $k=\C$ and $k_0=\R$,
see  Huruguen \cite[Theorem 2.35]{Huruguen}).
\end{remark}

We need a proposition.

\begin{proposition}[\rm well-known; see, for example, EGA IV$_3$ {\cite[Theorem 8.8.2]{EGA}}]
\label{p:anon}
Let $k$ be an algebraically closed field.
\begin{enumerate}
\item[\rm(i)] For any $k$-variety $X$ and any subfield $k_0$ of $k$,
there exists a $k_1$-model $X_1$ of $X$ for some  finitely generated extension $k_1$ of $k_0$ in $k$;
\item[\rm(ii)] If $k_1$ is a subfield of algebraically closed field
$k$, $f\colon X\to Y$ a morphism of $k$-varieties,
and $X_1$, $Y_1$ are $k_1$-models of $X$, $Y$, respectively,
then there exists a finitely generated field extension $k_2$ of $k_1$ in $k$ such that,
if we set  $X_2=X_1\times_{k_1} k_2$ and  $Y_2=Y_1\times_{k_1} k_2$,
then there exists a $k_2$-morphism $f_2\colon X_2\to Y_2$
such that the triple $(X_2,Y_2,f_2)\times_{k_2} k$ is isomorphic to $(X,Y,f)$.
\end{enumerate}
\end{proposition}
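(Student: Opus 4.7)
The plan is to reduce both statements to the standard fact that a finitely generated (hence finitely presented) object over a field is cut out by finitely many coefficients, and to collect those coefficients into a finitely generated subextension. There is no deep content; the only care needed is in making sure that gluing data (for (i)) and overlap compatibility (for (ii)) are accounted for without needlessly enlarging the ground field infinitely.

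For part (i), I would first choose a finite affine open cover $X=\bigcup_{i=1}^r U_i$ with $U_i=\Spec R_i$. Each $R_i$ is a finitely generated $k$-algebra, so I can present it as $R_i=k[x_1,\dots,x_{n_i}]/(P_{i,1},\dots,P_{i,m_i})$, where the $P_{i,j}$ involve only finitely many coefficients of $k$. The intersections $U_i\cap U_j$ are quasi-compact, so each can be covered by finitely many principal opens $D(f_{ij\alpha})\subset U_i$, and the gluing isomorphisms $D(f_{ij\alpha})\isoto D(f_{ji\alpha})$ are determined by finitely many ring elements (their images on generators). The cocycle/separatedness conditions are a finite set of polynomial equations in these data. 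Let $k_1\subset k$ be the subextension of $k_0$ generated by this finite set of coefficients and gluing elements; then $k_1/k_0$ is finitely generated, and the \emph{same} polynomials and gluing data define a $k_1$-variety $X_1$ together with a canonical isomorphism $X_1\times_{k_1}k\isoto X$. The fact that a finite equational condition holding over $k$ already holds over the subfield containing all its coefficients handles separatedness and the cocycle condition.

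For part (ii), I start from the given models $X_1,Y_1$ over $k_1$ and choose a finite affine open cover $Y_1=\bigcup_j V_{1,j}$. Pulling back to $k$ gives $Y=\bigcup_j V_j$ with $V_j=V_{1,j}\times_{k_1}k$. Now I cover $X$ by finitely many affine opens $U_i$, each contained in the preimage of some $V_{j(i)}$; by part (i) applied inside $X_1$ I may arrange (possibly after enlarging $k_1$ to a finitely generated $k_1'$) that this affine cover $\{U_i\}$ of $X$ descends to an affine cover $\{U_{1',i}\}$ of $X_1\times_{k_1}k_1'$ refining (the pullback of) some cover of $X_1$. On each $U_i\to V_{j(i)}$ the morphism $f$ is given by a $k$-algebra homomorphism $\Gamma(V_{j(i)})\otimes_{k_1'}k\to \Gamma(U_{1',i})\otimes_{k_1'}k$, and this homomorphism is determined by the images of finitely many generators, hence by finitely many elements of $k$. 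Adjoin all these elements, along with whatever finitely many elements were needed to produce $k_1'$, to obtain a finitely generated extension $k_2\subset k$ of $k_1$. Over $k_2$ the component maps are defined, and the condition that they agree on the (finitely many pairwise) overlaps is again a finite system of polynomial relations already satisfied in $k$, hence already satisfied over $k_2$. This produces the required $k_2$-morphism $f_2\colon X_2\to Y_2$ whose base change to $k$ is $f$.

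The only substantive obstacle is the bookkeeping around covers in part (ii): one must refine the pullback of the given affine cover of $Y_1$ by an affine cover of $X$ compatible with $f$, and then descend this refinement back to a finitely generated extension of $k_1$. Everything else is a direct application of the principle that a finite amount of algebraic data lives over a finitely generated subfield, together with Galois/faithfully flat descent of equalities. (Alternatively, one can bypass the bookkeeping entirely by invoking EGA IV$_3$, \cite[Theorem 8.8.2]{EGA}, as cited in the statement.)
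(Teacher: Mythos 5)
Your proof is correct. For part (i) you follow essentially the same route as the paper: reconstruct $X$ from a finite affine cover plus gluing data and observe that only finitely many elements of $k$ are involved (the paper invokes separatedness to make the intersections $X_i\cap X_j$ themselves affine, whereas you cover them by principal opens; this is an immaterial difference). For part (ii) your route genuinely differs from the paper's: the paper descends the \emph{graph} of $f$, viewed as a closed subvariety of $X\times Y$ cut out by an ideal with finitely many coefficients, to a finitely generated extension $k_2$ of $k_1$, while you descend $f$ directly by writing it out on a compatible affine cover and adjoining the finitely many elements of $k$ that determine the component ring maps, then checking the overlap compatibilities over $k_2$. The graph argument is shorter, but it leaves implicit the final step that the descended closed subscheme $\Gamma_2\subset X_2\times_{k_2} Y_2$ is in fact the graph of a morphism, i.e.\ that the projection $\Gamma_2\to X_2$ is an isomorphism; this follows by faithfully flat descent (it becomes an isomorphism after the base change $k_2\to k$), but it is a step one must notice. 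Your cover-and-glue argument avoids that issue at the cost of the bookkeeping you describe (descending the refined cover of $X$ to a finitely generated extension), which you handle correctly; note that descending an open affine $U_i\subset X$ is again just a matter of adjoining the finitely many coefficients of an element $g$ with $U_i=D(g)$ inside a pulled-back affine chart of $X_1$. Both arguments are sound, and both are instances of the general spreading-out principle of EGA IV$_3$, Theorem 8.8.2, cited in the statement.
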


\begin{proof}[Proof {\rm (communicated by an anonymous  MathOverflow user)}]

(i) A variety $X$ is a finite union of affine open subvarieties $X_i$.
Since $X$ is separated, the intersections $X_i\cap X_j$ are  affine varieties;
see, for example, Liu \cite[Ch.~3, Proposition 3.6 on p.~100]{Liu}.
Now $X$ can be reconstructed from the affine varieties $X_i,X_i\cap X_j$
and the morphisms of affine varieties $X_i\cap X_j\to X_i$.
Obviously, this system is defined over a subfield of $k$  finitely generated over $k_0$.

(ii) The graph of $f$ is a closed subvariety of $X\times Y$,
and so is defined by an ideal in the structure sheaf of $X\times Y$,
which is obviously defined over a subfield of $k$
finitely generated over $k_1$.
 \end{proof}

\begin{corollary}\label{c:anon-anon}
Let $G$ be a linear algebraic group over an algebraically closed field $k$,
and let $\theta\colon G\times _k Y\to Y$ be an action of $G$ on a $k$-variety $Y$.
Let $k_0$ be a subfield of $k$, and let $G_0$ be a $k_0$-model of $G$.
Then there exists a finitely generated extension $k_2$ of $k_0$ in $k$,
a $k_2$-variety $Y_2$, and a $k_2$-action $\theta_2\colon G_2\times_{k_2} Y_2\to Y_2$
such that $(Y_2,\theta_2)$ is a $k_2$-model of $(Y,\theta)$. Here $G_2=G_0\times_{k_0} k_2$.
\end{corollary}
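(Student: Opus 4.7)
The plan is to deduce the corollary directly from Proposition \ref{p:anon}, first descending the variety $Y$, then the morphism $\theta$, and finally verifying that the descended morphism is still an action.

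First, I would apply Proposition \ref{p:anon}(i) to the $k$-variety $Y$ and the subfield $k_0$ to obtain a finitely generated extension $k_1$ of $k_0$ in $k$ and a $k_1$-model $Y_1$ of $Y$. Setting $G_1 = G_0 \times_{k_0} k_1$, the product $G_1 \times_{k_1} Y_1$ is then a $k_1$-model of $G \times_k Y$, since base change commutes with fibre products.

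Next, I would apply Proposition \ref{p:anon}(ii) to the morphism $\theta \colon G \times_k Y \to Y$, viewed as a morphism between two $k$-varieties which admit the $k_1$-models $G_1 \times_{k_1} Y_1$ and $Y_1$. This yields a finitely generated extension $k_1' \supset k_1$ in $k$ and a $k_1'$-morphism
\[
\theta_1' \colon (G_1 \times_{k_1} Y_1) \times_{k_1} k_1' \longrightarrow Y_1 \times_{k_1} k_1'
\]
whose base change to $k$ recovers $\theta$. Rewriting the source as $G_1' \times_{k_1'} Y_1'$, where $G_1' = G_0 \times_{k_0} k_1'$ and $Y_1' = Y_1 \times_{k_1} k_1'$, we have a morphism of the correct shape to be an action.

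The main obstacle, as I see it, is ensuring that $\theta_1'$ actually defines an action, i.e., that the associativity diagram and the identity axiom commute at the $k_1'$-level, not merely after base change to $k$. To handle this, I would use Proposition \ref{p:anon}(ii) once more (or, equivalently, the standard fact that a morphism of finite-type schemes over an algebraically closed field descends together with finitely many equations among morphisms to a finitely generated subfield). Concretely, the two compositions expressing associativity are two $k_1'$-morphisms $G_1' \times_{k_1'} G_1' \times_{k_1'} Y_1' \to Y_1'$ whose base changes to $k$ agree; by part (ii), applied to their difference or to each of them with the common value as target, these morphisms already agree over a finitely generated extension $k_2 \supset k_1'$ in $k$. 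The same reasoning applies to the identity axiom, which is a single equality of morphisms $Y_1' \to Y_1'$, and at the cost of enlarging $k_2$ by another finitely generated extension (still finitely generated over $k_0$) we may assume both axioms hold. Setting $Y_2 = Y_1' \times_{k_1'} k_2$, $G_2 = G_0 \times_{k_0} k_2$, and $\theta_2 = \theta_1' \times_{k_1'} k_2$, the pair $(Y_2, \theta_2)$ is then a $G_2$-$k_2$-variety whose base change to $k$ is isomorphic to $(Y, \theta)$, i.e., a $k_2$-model of $(Y,\theta)$ in the required sense.
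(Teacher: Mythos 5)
Your proof is correct and follows essentially the same route as the paper's: descend $Y$ via Proposition \ref{p:anon}(i), form the $k_1$-model $G_1\times_{k_1}Y_1$ of $G\times_k Y$, and descend $\theta$ via Proposition \ref{p:anon}(ii). Your extra care in checking the action axioms is welcome (the paper leaves this implicit), but the final enlargement of the field is unnecessary: since $\Spec k\to\Spec k_1'$ is faithfully flat, base change to $k$ is a faithful functor on morphisms, so the associativity and identity diagrams, which commute after base change to $k$, already commute over $k_1'$ itself.
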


\begin{proof} By Proposition \ref{p:anon}(i) there exists a $k_1$-model  $Y_1$ of the variety
$Y$ for some finitely generated extension $k_1$ of $k_0$ in $k$.
We obtain a $k_1$-model $G_1\times_{k_1} Y_1$ of $G\times_k Y$, where $G_1=G_0\times_{k_0} k_1$.
By Proposition \ref{p:anon}(ii) the action $\theta$
can be defined over a finitely generated extension $k_2$ of $k_1$ in $k$.
 \end{proof}

\begin{lemma}\label{l:Aut-1}
Let $k$, $k_0$, $\Gamma=\Gal(k/k_0)$, $G$, $(Y,\theta)$ be as in \ref{ss:5.2}, and
let $G_0$ be a $k_0$-model of $G$.
{\em Assume that}  $\Aut^G(Y)=\{1\}$.
Assume that for every $\gamma\in\Gamma$ there exists a $\gamma$-semi-automorphism $\mu_\gamma$ of $Y$
satisfying condition (iv) of Lemma \ref{l:model}.
Then such $\mu_\gamma$ is unique, and the map $\gamma\mapsto\mu_\gamma$
satisfies conditions (i) and (ii) of Lemma \ref{l:model}.
\end{lemma}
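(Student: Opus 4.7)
The plan is to use the hypothesis $\Aut^G(Y)=\{1\}$ as a uniqueness/rigidity tool, deducing the cocycle property and the descent condition essentially from uniqueness of the $\mu_\gamma$'s.

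\textbf{Step 1 (uniqueness).} Suppose $\mu_\gamma$ and $\mu'_\gamma$ are both $\sigma_\gamma$-equivariant $\gamma$-semi-auto\-morphisms of $Y$. Then $\mu'_\gamma\circ\mu_\gamma^{-1}$ is an $\id_k$-semi-automorphism of $Y$ (i.e., a $k$-automorphism of the $k$-variety $Y$), and a short computation shows it is $G$-equivariant because the semi-equivariance factors $\sigma_\gamma$ cancel. Hence it lies in $\Aut^G(Y)=\{1\}$ and equals $\id_Y$, giving $\mu_\gamma=\mu'_\gamma$.

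\textbf{Step 2 (condition (i): homomorphism).} For $\beta,\gamma\in\Gamma$, the composition $\mu_\beta\circ\mu_\gamma$ is a $\beta\gamma$-semi-automorphism of $Y$. For $g\in G(k)$ and $y\in Y(k)$,
\[
(\mu_\beta\circ\mu_\gamma)(g\cdot y)=\mu_\beta(\sigma_\gamma(g)\cdot\mu_\gamma(y))=\sigma_\beta(\sigma_\gamma(g))\cdot(\mu_\beta\circ\mu_\gamma)(y)=\sigma_{\beta\gamma}(g)\cdot(\mu_\beta\circ\mu_\gamma)(y),
\]
where the last equality uses \eqref{e:beta-gamma-sigma}. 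Thus $\mu_\beta\circ\mu_\gamma$ is a $\sigma_{\beta\gamma}$-equivariant $\beta\gamma$-semi-automorphism of $Y$, and by Step~1 it equals $\mu_{\beta\gamma}$.

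\textbf{Step 3 (condition (ii): descent to a finite subextension).} Apply Corollary \ref{c:anon-anon} to obtain a finitely generated extension $k_2$ of $k_0$ in $k$ and a $G_2$-equivariant $k_2$-model $(Y_2,\theta_2)$ of $(Y,\theta)$, with $G_2=G_0\times_{k_0}k_2$. Since by the conventions of \ref{ss:k-k0-equi} the field $k$ is an algebraic closure of $k_0$, the extension $k_2/k_0$ is both finitely generated and algebraic, hence finite. Let $k_1$ be any finite Galois extension of $k_0$ inside $k$ containing $k_2$ (for instance, the Galois closure of $k_2/k_0$ in $k$). Then $Y_1:=Y_2\times_{k_2}k_1$ is a $G_1$-equivariant $k_1$-model of $(Y,\theta)$. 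As in \ref{ss:k-k0-equi}--\ref{ss:5.2}, this $k_1$-model produces, for each $\gamma\in\Gal(k/k_1)$, a $\sigma_\gamma$-equivariant $\gamma$-semi-automorphism $\mu^{(1)}_\gamma$ of $Y$. By the uniqueness of Step~1, $\mu^{(1)}_\gamma=\mu_\gamma$ for all such $\gamma$, so the restriction of $\gamma\mapsto\mu_\gamma$ to $\Gal(k/k_1)$ comes from the $G_1$-equivariant $k_1$-model $Y_1$, which is exactly condition (ii).

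The proof is essentially routine once uniqueness is established; the only mildly non-obvious point is locating a \emph{finite} Galois $k_1\subset k$ over which $Y$ descends equivariantly, which is immediate because $k/k_0$ is algebraic so the finitely generated field produced by Corollary \ref{c:anon-anon} is automatically finite over $k_0$.
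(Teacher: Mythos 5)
Your proof is correct and follows essentially the same route as the paper: uniqueness from $\Aut^G(Y)=\{1\}$, the cocycle identity from uniqueness applied to $\mu_\beta\circ\mu_\gamma$, and condition (ii) by producing an equivariant model over a finite subextension via Corollary \ref{c:anon-anon} and matching its semi-automorphisms with $\mu_\gamma$ by uniqueness. Your Step 3 is in fact slightly more careful than the paper's, which asserts a finite Galois $k_1/k_0$ directly; you correctly note that the finitely generated extension is automatically finite because $k/k_0$ is algebraic and that one should pass to the Galois closure.
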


\begin{proof}
If $\mu'_\gamma$ another such $\gamma$-semi-automorphism,
then $\mu_\gamma^{-1} \mu'_\gamma\in \Aut^G(Y)=\{1\}$,
hence $\mu'_\gamma=\mu_\gamma$.
If $\beta,\gamma\in\Gamma$, then $\mu_{\beta\gamma}^{-1}\mu_\beta\hs \mu_\gamma\in \Aut^G(Y)=\{1\}$,
hence $\mu_{\beta\gamma}=\mu_\beta\hs \mu_\gamma$, hence the map $\gamma\mapsto\mu_\gamma$ is a homomorphism,
that is, condition (i) is satisfied.

By Corollary \ref{c:anon-anon}, there exists a finite field extension $k_1/k_0$ in $k$
and a  $G_1$-equivariant $k_1$-model $(Y_1,\theta_1)$ of $(Y,\theta)$, where  $G_1=G_0\times_{k_0} k_1$.
This $k_1$-model defines a homomorphism
\begin{equation*}
\gamma\mapsto \mu'_\gamma\colon\ \Gal(k/k_1)\to \SAut(Y)
\end{equation*}
such that $\mu'_\gamma$ is $\sigma_\gamma$-equivariant for all $\gamma\in\Gal(k/k_1)$.
Since a $\sigma_\gamma$-equivariant $\gamma$-semi-automorphism is unique,
we see that for all $\gamma\in\Gal(k/k_1)$
we have $\mu_\gamma=\mu'_\gamma$, and hence, the restriction of the map
\begin{equation*}
\gamma\mapsto \mu_\gamma\colon \Gamma\to \SAut(Y)
\end{equation*}
to $\Gal(k/k_1)$ comes from the $k_1$-model  $(Y_1,\theta_1)$ of $(Y,\theta)$,
that is, condition (ii) of Lemma \ref{l:model} is satisfied.
 \end{proof}

\section{Spherical homogeneous spaces and their combinatorial invariants}
\label{s:invariants}

Starting this section, $k$ is an algebraically closed field {\em of characteristic} 0.
Let $G$ be a connected  reductive $k$-group.
We describe combinatorial invariants (invariants of Luna and Losev)
of a spherical homogeneous space $Y=G/H$ of $G$.

\begin{subsec}
We start with combinatorial invariants of $G$.
We fix $T\subset B\subset G$, where $B$ is a Borel subgroup and $T$ is a maximal torus.
Let $\BRD(G)=\BRD(G,T,B)$ denote the based root datum of $G$.
We have
\[ \BRD(G,T,B)=(X,X^\vee, R,R^\vee, S, S^\vee)\]
where\\
 $X=\X^*(T):=\Hom(T,\G_{m,k})$ is the character group of $T$;\\
 $X^\vee=\X_*(T):=\Hom(\G_{m,k}, T)$ is the cocharacter group of $T$;\\
 $R=R(G,T)\subset X$ is the root system;\\
 $R^\vee\subset X^\vee$ is the coroot system;\\
 $S=S(G,T,B)\subset R$ is the system of simple roots (the basis of $R$) defined by $B$;\\
 $S^\vee\subset R^\vee$ is the system of simple coroots.

 There is a canonical pairing $X\times X^\vee\to \Z,\ (\chi,x)\mapsto \langle \chi,x\rangle$,
 and a canonical bijection $\alpha\mapsto \alpha^\vee\colon R\to R^\vee$
 such that $S^\vee=\{\alpha^\vee\ |\ \alpha\in S\}$.
See Springer \cite[Sections 1 and 2]{Springer} for details.

We consider also the Dynkin diagram $\Dyn(G)=\Dyn(G,T,B)$,
which is a graph with the set of vertices $S$.
The edge between two simple roots $\alpha,\beta\in S$
is described in terms of the integers $\langle \alpha,\beta^\vee\rangle$
and $\langle \beta,\alpha^\vee\rangle$.

We call a pair $(T,B)$ as above a {\em Borel pair.}
If $(T',B')$ is another Borel pair, then
by  Theorem 11.1 and Theorem 10.6(4) in Borel's book \cite{Borel},
there exists $g\in G(k)$ such that
\begin{equation}\label{e:Borel-pair}
g\cdot T\cdot g^{-1}=T',\quad  g\cdot B\cdot g^{-1}=B'.
\end{equation}
This element $g$ induces an isomorphism
\[ g^*\colon \BRD(G,T',B')\isoto\BRD(G,T,B).\]
If $g'\in G(k)$ another element as in \eqref{e:Borel-pair}, then $g=gt$ for some $t\in T(k)$,
and therefore, the isomorphism
\[ (g')^*\colon \BRD(G,T',B')\isoto\BRD(G,T,B)\]
coincides with $g^*$.
Thus we may  identify the based root datum $\BRD(G,T',B')$ with $\BRD(G,T,B)$
and write $\BRD(G)$ for $\BRD(G,T,B)$.
We say that $\BRD(G)$ is the {\em canonical based root datum of} $G$.
We see that the  based root datum $\BRD(G)$ is an invariant of $G$.
In particular, the character lattice $X=\X^*(T)$ with the subset $S\subset X$ is an invariant,
and the Dynkin diagram $\Dyn(G)$ is an invariant.
\end{subsec}

\begin{subsec}
We describe the combinatorial invariants of a homogeneous spherical $G$-variety $Y=G/H$.
Let $K(Y)$ denote the field of rational functions of $Y$.
The group $G(k)$ acts on $K(Y)$ by
\[ (g\cdot f)(y) = f(g^{-1}\cdot y)\quad \text{for }f\in K(Y),\ g\in G(k), \text{ and } y\in Y(k).\]
For $\chi\in\X^*(B)$, let
 $K(Y)^{(B)}_\chi$ denote the space of $\chi$-eigenfunctions in $K(Y)$, that is,
 the $k$-space of rational functions $f\in K(X)$ such that
\[ b\cdot f=\chi(b)\cdot f\quad \text{for all } b\in B(k).\]
Since $B$ has an open dense orbit in $Y$, the $k$-dimension of $K(Y)^{(B)}_\chi$ is  at most 1.
Let $\sX=\sX(Y)\subset\X^*(B)$ denote the set of characters $\chi$ of $B$ such that $K(Y)^{(B)}_\chi\neq \{0\}$.
Then $\sX$ is a subgroup of $\X^*(B)$ called the {\em weight lattice of $Y$.}
We set
\[ V=V(Y)=\Hom_\Z(\sX,\Q).\]

Let $\Val(K(Y))$ denote the {\em set of $\Q$-valued valuations}
of the field $K(Y)$ that are trivial on $k$.
The group  $G(k)$ naturally acts on $K(Y)$ and on $\Val(K(Y))$.
We shall consider
the set $\Val^B(K(Y))$ of $B(k)$-invariant valuations and
the set $\Val^G(K(Y))$ of $G(k)$-invariant valuations.
We have a canonical map
\[\rho\colon \Val(K(Y))\to V,\ v\mapsto (\chi\mapsto v(f_\chi)\hs ),\]
where $v\in\Val(K(Y)),\ \chi\in\sX,\ f_\chi\in K(Y)^{(B)}_\chi$, $f_\chi\neq 0$.
It is known  (see Knop \cite[Corollary 1.8]{Knop-LV}\hs)
that the restriction of $\rho$ to $\Val^G(K(Y))$ is injective.
We denote by
\[\sV=\sV(Y):=\rho(\Val^G(K(Y)))\subset V\]
the image of $\Val^G(K(Y))$ in $V$.
It is a cone in $V$ called the {\em valuation cone of $Y$}; see Perrin \cite[Corollary 3.3.6]{Perrin}.

Let $\sD=\sD(Y)$ denote the {\em set of colors of $Y$}, that is,
the set of $B$-invariant prime divisors in $Y$.
Each color $D\in\sD$ defines a $B$-invariant valuation of $K(Y)$ that we denote by $\val(D)$.
Thus we obtain a map
\[\val\colon \sD\to\Val(K(Y)).\]
By abuse of notation we denote $\rho(\val(D))\in V$ by $\rho(D)$.
Thus we obtain a map
\[\rho\colon \sD\to V,\]
which in general is not injective (for example, it is not injective
for $G$ and $Y$ as in Example \ref{ex:CF}).

For $D\in\sD$, let $\Stab_G(D)$ denote the stabilizer of $D\subset Y$ in $G$.
Clearly $\Stab_G(D)\supset B$, hence $\Stab_G(D)$ is a parabolic subgroup of $G$.
For $\alpha\in S$, let $P_\alpha\supset B$
denote the corresponding minimal parabolic subgroup of $G$ containing $B$.
Let $\vs(D)$ denote the set of $\alpha\in S$ for which $P_\alpha$ is {\em not contained} in $\Stab_G(D)$.
We obtain a map
\[\vs\colon \sD\to\sP(S),\]
where $\sP(S)$ denotes the set of all subsets of $S$.
\end{subsec}

\begin{lemma}\label{l:fiber-2}
Any fiber of the map $\vs$ has $\le 2$ elements.
\end{lemma}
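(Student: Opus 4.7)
The plan is to reduce the statement to the classical bound $|\mathcal{D}(\alpha)|\leq 2$ for each simple root $\alpha$, where $\mathcal{D}(\alpha):=\{D\in\sD(Y) : \alpha\in\vs(D)\}$. First observe that $\vs(D)$ is nonempty for every color $D$: indeed, $\Stab_G(D)\supset B$ is a proper parabolic subgroup (it cannot equal $G$, for then $D$ would be $G$-stable, impossible since $Y$ is a homogeneous space of dimension $>\dim D$), so at least one minimal parabolic $P_\alpha$ is not contained in it. Hence the fiber $\vs^{-1}(\emptyset)$ is empty. For a nonempty subset $S'\subset S$, pick any $\alpha\in S'$; then $\vs^{-1}(S')\subset\mathcal{D}(\alpha)$, so once we know $|\mathcal{D}(\alpha)|\leq 2$ the lemma follows.

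To control $\mathcal{D}(\alpha)$, I would interpret it via the $P_\alpha$-action on $Y$. A color $D$ lies in $\mathcal{D}(\alpha)$ iff $P_\alpha\cdot D\neq D$; since $P_\alpha\cdot D$ is irreducible, $P_\alpha$-stable, and of dimension strictly larger than $\dim D=\dim Y-1$, it must equal $Y$. Equivalently, the open $B$-orbit $D^\circ\subset D$ is contained in the open $P_\alpha$-stable subset $Y':=P_\alpha\cdot Y_B$, where $Y_B$ is the open $B$-orbit of $Y$. Thus $\mathcal{D}(\alpha)$ is in bijection with the set of codimension-one $B$-orbits of $Y'$.

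The final step is a rank-one analysis: $Y'$ is a spherical $P_\alpha$-variety with dense $B$-orbit $Y_B$, and I want to show it has at most two codimension-one $B$-orbits. The standard way is to pass to the semisimple rank-$1$ Levi quotient of $P_\alpha$ (a group isogenous to $\SL_2$), using the $\mathbb{P}^1$-bundle structure $P_\alpha/B\cong\mathbb{P}^1$. A spherical variety of a group of semisimple rank $1$ has at most three $B$-orbits: one open orbit and at most two $B$-orbits of codimension one. This is either seen directly by classifying the rank-one spherical homogeneous spaces of $\SL_2$ (and inspecting their embeddings), or quoted from Luna's theory, e.g.\ Timashev \cite[\S 30]{Timashev}.

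The main obstacle is precisely this last rank-one classification: once it is granted, the counting is immediate. I would therefore either reference Luna's result directly or, if the paper prefers self-containedness, sketch the $\SL_2$-case (where the only spherical homogeneous spaces up to finite quotients are $\SL_2$, $\SL_2/T$, $\SL_2/N_{\SL_2}(T)$, $\SL_2/U$, and $\SL_2/B=\mathbb{P}^1$, each giving $|\mathcal{D}(\alpha)|\in\{1,2\}$). Everything else in the proof is routine manipulation of $B$-orbit geometry.
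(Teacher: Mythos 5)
Your proposal is correct and follows essentially the same route as the paper: observe that $\vs(D)\neq\varnothing$ because $Y$ is homogeneous (so $\Stab_G(D)\neq G$), hence every fiber over a nonempty $S'\subset S$ sits inside $\sD(\alpha)$ for any $\alpha\in S'$, and then invoke the classical bound $|\sD(\alpha)|\le 2$. The paper quotes this bound as Proposition \ref{p:facts-type-a} (citing Luna and Timashev), which is exactly the rank-one fact you reduce to, so your extra sketch of the $\SL_2$-analysis is just an optional unpacking of the same citation.
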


\begin{proof}
Let $D\in\sD$.
Since  $Y$ is a homogeneous $G$-variety, clearly $\Stab_G(D)\neq G$, hence $\vs(D)\neq\varnothing$.
We see that there exists $\alpha\in \vs(D)$.
Consider the set $\sD(\alpha)$ consisting of those $D\in \sD$ for which $\alpha\in\vs(D)$.
By Proposition \ref{p:facts-type-a} in Appendix \ref{s:App} below
we have $|\sD(\alpha)|\le 2$, and the lemma follows.
 \end{proof}

Consider the map
\[\rho\times\vs\ \colon \sD\to V\times\sP(S).\]

\begin{corollary}\label{c:fiber-2}
Any fiber of the map $\rho\times\vs$ has $\le 2$ elements.\qed
\end{corollary}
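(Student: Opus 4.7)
The plan is to deduce Corollary \ref{c:fiber-2} as an immediate consequence of Lemma \ref{l:fiber-2}. The key observation is that the map $\rho\times\vs\colon \sD\to V\times\sP(S)$ factors through $\vs\colon\sD\to\sP(S)$ via the second projection, so any fiber of $\rho\times\vs$ is contained in the corresponding fiber of $\vs$.

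More concretely, I would argue as follows. Fix $(v,\Sigma)\in V\times\sP(S)$ and consider the fiber
\[
F=(\rho\times\vs)^{-1}(v,\Sigma)=\{D\in\sD\mid \rho(D)=v\text{ and }\vs(D)=\Sigma\}.
\]
Then $F\subset \vs^{-1}(\Sigma)$, and by Lemma \ref{l:fiber-2} the set $\vs^{-1}(\Sigma)$ has at most $2$ elements. Hence $|F|\le 2$.

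There is no real obstacle here: the corollary is a purely formal consequence of the lemma, since passing to a finer equivalence relation (tracking $\rho$ in addition to $\vs$) can only refine, not enlarge, the fibers. All the content lies in Lemma \ref{l:fiber-2}, whose proof in turn reduces to the assertion (Proposition \ref{p:facts-type-a} in Appendix \ref{s:App}) that for any simple root $\alpha\in S$, the set $\sD(\alpha)$ of colors moved by the minimal parabolic $P_\alpha$ has at most two elements.
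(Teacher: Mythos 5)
Your proof is correct and matches the paper's (implicit) reasoning: the corollary is stated with no proof precisely because any fiber of $\rho\times\vs$ is contained in a fiber of $\vs$, so Lemma \ref{l:fiber-2} gives the bound immediately. Nothing further is needed.
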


Consider the subset $\Omega:=\im(\rho\times \vs)\subset V\times\sP(S)$.
Let $\Omega^{(2)}$ (resp.~$\Omega^{(1)}$) denote the subset of $\Omega$
consisting of the elements with two preimages (resp.~with one preimage) in $\sD$.
We obtain two subsets $\Omone,\Omt\subset V\times\sP(S)$,
and by Corollary \ref{c:fiber-2} we have  $\Omega=\Omone\sqcup\hs \Omt$ (disjoint union).

\begin{definition}\label{d:comb-inv}
Let $G$ be a connected reductive group over an algebraically closed field $k$ of characteristic 0.
Let $Y=G/H$ be a spherical homogeneous space of $G$.
By the {\em combinatorial invariants of $Y$} we mean
\[ \sX\subset\X^*(B),\quad \sV\subset V:=\Hom_\Z(\sX,\Q), \text{ and } \Omone,\Omt\subset V\times\sP(S).\]
\end{definition}

\begin{subsec}\label{ss:for-Losev}
Let  $G$ be a connected reductive $k$-group.
Let  $H_1\subset G$ be a spherical subgroup; then we set $Y_1=G/H_1$.
We consider the set of colors $\sD(Y_1)$ and the canonical maps
\[ \rho_1\colon \sD(Y_1)\to  V(Y_1),\quad \varsigma_1\colon \sD(Y_1)\to \sP(S).\]
If $H_2\subset G$ is another spherical subgroup, then we set $Y_2=G/H_2$
and consider the set of colors $\sD(Y_2)$ and the canonical maps
\[ \rho_2\colon \sD(Y_2)\to  V(Y_2),\quad \varsigma_2\colon \sD(Y_2)\to \sP(S).\]
Now assume that there exists $a\in G(k)$ such that $H_2=a H_1 a^{-1}$.
Then we have an isomorphism of $G$-varieties of Lemma \ref{l:isom-conj}
\[\phi_a\colon Y_1\to Y_2,\quad g\cdot H_1\mapsto ga^{-1}\cdot H_2\,.\]
It follows that $\sX(Y_1)=\sX(Y_2)$, $V(Y_1)=V(Y_2)$, $\sV(Y_1)=\sV(Y_2)$.
Moreover, the $G$-equivariant map $\phi_a\colon Y_1\to Y_2$
induces a bijection
\[(\phi_a)_*\colon\, \sD(Y_1)\to \sD(Y_2)\]
satisfying
\begin{equation}\label{e:a-rho-vs}
\rho_2\circ(\phi_a)_*=\rho_1,\quad \vs_2\circ(\phi_a)_*=\vs_1\,.
\end{equation}
It follows that
\[\Omone(Y_1)=\Omone(Y_2)\quad\text{and}\quad \Omt(Y_1)=\Omt(Y_2).\]
Conversely:
\end{subsec}

\begin{proposition}
[\rm Losev's\hm\ Uniqueness\hm\ Theorem\hm\ {\cite[Theorem 1]{Losev}}]
\label{t:Losev}
Let $G$ be a connected reductive group over an algebraically closed field $k$ {of characteristic $0$}.
Let $H_1,H_2\subset G$ be two spherical subgroups, and let $Y_1=G/H_1$ and $Y_2=G/H_2$
be the corresponding spherical homogeneous spaces.
If $\sX(Y_1)=\sX(Y_2)$, $\sV(Y_1)=\sV(Y_2)$, $\Omone(Y_1)=\Omone(Y_2)$, and $\Omt(Y_1)=\Omt(Y_2)$,
then there exists $a\in G(k)$ such that $H_2=a H_1 a^{-1}$.
\end{proposition}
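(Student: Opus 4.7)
The plan is to attack this in two stages via Luna's classification philosophy: first handle the spherically closed case (where wonderful compactifications are available and classification is cleaner), then recover the general case by descent from the spherical closure.

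First I would pass to spherical closures. Define $\overline{H}_i$ to be the spherical closure of $H_i$, i.e.\ the kernel of the natural map $\NGH_i \to \Aut(\sD(G/H_i))$. The inclusion $H_i \subset \overline{H}_i$ induces a $G$-equivariant quotient morphism $Y_i \to \overline{Y}_i := G/\overline{H}_i$ that, by construction, is a bijection on colors. Consequently the invariants $\sX(\overline{Y}_i)$, $\sV(\overline{Y}_i)$, together with the map $\rho \times \vs\colon \sD(\overline{Y}_i) \to V \times \sP(S)$, are reconstructible from those of $Y_i$. Since each $\overline{H}_i$ is spherically closed, the map $\rho \times \vs$ for $\overline{Y}_i$ is injective, so the image is simply the underlying set of $\Omone(Y_i) \sqcup \Omt(Y_i)$, together with the multiplicities read off from $\Omt$. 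By hypothesis this data agrees for $i=1,2$, and hence $\overline{Y}_1$ and $\overline{Y}_2$ have identical homogeneous spherical data in the sense of Luna. By Knop's theorem spherically closed spherical subgroups admit wonderful compactifications, so the Luna conjecture (now a theorem, established by Bravi, Cupit-Foutou, Luna, and Pezzini) yields $a \in G(k)$ with $\overline{H}_2 = a\overline{H}_1 a^{-1}$.

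After replacing $H_2$ by $a^{-1} H_2 a$, I may assume $\overline{H}_1 = \overline{H}_2 =: \overline{H}$, with $H_1, H_2 \subset \overline{H}$. The second stage is to exploit the fact that $\overline{H}/H_i$ embeds into $\Aut(\sD(Y_i))$ by definition of spherical closure, and that the image of this embedding is exactly determined by which elements of $\Omega$ have two preimages (those in $\Omt$) versus one (those in $\Omone$). More precisely, two colors $D, D' \in \sD(Y_i)$ lie in the same fibre of $\rho \times \vs$ iff they are swapped by some element of $\overline{H}/H_i$, so the set $\Omt(Y_i)$ determines $\overline{H}/H_i$ as a subgroup of $\Aut(\sD(\overline{Y}))$ (the latter being identified for $i=1,2$ via the conjugation from stage one). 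Since the hypothesis gives $\Omt(Y_1) = \Omt(Y_2)$, one concludes $\overline{H}/H_1 = \overline{H}/H_2$ as subgroups of $\overline{H}/H_i^\circ$, hence $H_1 = H_2$, finishing the proof up to the inner conjugation by $a$.

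The hard part, and the genuine obstacle, is stage one: the classification of wonderful $G$-varieties by their spherical systems is a deep theorem whose proof is long and case-dependent. Losev's own proof in \cite{Losev} in fact bypasses that classification by a more direct route, using Knop's theory of invariant collective motion and the Alexeev--Brion invariant Hilbert scheme: one places all spherical subgroups with fixed combinatorial invariants into a single moduli scheme and proves that this scheme is smooth and connected by a deformation argument based on the structure of the cotangent bundle $T^*Y$ and the moment map. The technical heart there is the analysis of rank-one situations and of colors of ``type $a$'' (i.e.\ those contributing to $\Omt$), where one must precisely control the $\SL_2$-subgroups attached to such colors. Either route — Luna-style classification or Losev's Hilbert scheme approach — requires substantial additional machinery that goes well beyond the combinatorics recalled in this section.
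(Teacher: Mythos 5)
This proposition is not proved in the paper at all: it is imported verbatim as Theorem~1 of Losev's article \cite{Losev}, and the authors explicitly rely on that reference (indeed they remark that the characteristic-zero hypothesis is forced on them precisely because Losev's theorem is only known there). So any comparison is between your sketch and Losev's external proof, not an argument in this text. Your closing paragraph describes Losev's actual route (invariant Hilbert schemes, smoothness and connectedness of the moduli of spherical subgroups with fixed invariants) accurately, and the alternative route through Luna's program is in principle viable; but the sketch you give of that route contains concrete errors.

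The main one is in your stage two, where you have the kernel and the image of the color action interchanged. By definition (see Appendix~\ref{s:app-AB}, step~5, of this paper) the spherical closure satisfies $\barH/H=\ker\bigl[\sN_G(H)/H\to\Aut\,\sD(G/H)\bigr]$, so $\barH/H$ acts \emph{trivially} on $\sD(Y)$; it does not ``embed into $\Aut(\sD(Y_i))$'', and it is false that two colors in the same fibre of $\rho\times\vs$ are swapped by an element of $\barH/H_i$. What is true (Theorem~\ref{t:Losev-3}, proved in Appendix~\ref{s:App}) is that they are swapped by an element of $\sN_G(H_i)/H_i$, necessarily one lying \emph{outside} $\barH_i/H_i$. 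Consequently the conclusion ``$\Omt(Y_1)=\Omt(Y_2)$ determines $\barH/H_i$ as a subgroup of $\Aut(\sD)$, hence $H_1=H_2$'' does not follow; recovering $H_i$ from $\barH$ is governed by the diagonalizable quotient $\barH/H_i$, i.e.\ by characters of $\barH$, and the relevant combinatorial control comes from the lattice $\sX(Y_i)$ and the functionals $\rho(D)\in\Hom(\sX(Y_i),\Q)$ (Luna's ``augmentations''), not from the color permutation data alone. A secondary error occurs in stage one: spherical closedness of $\barH_i$ does \emph{not} make $\rho\times\vs$ injective on $\sD(\overline{Y}_i)$. Spherical closedness means $\Aut^G(\overline{Y}_i)$ injects into $\Aut(\sD)$, which by Corollary~\ref{c:closed-isom} makes it isomorphic to $\Aut_\Omega(\sD)$ --- a group that is nontrivial exactly when $\rho\times\vs$ fails to be injective. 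Example~\ref{ex:CF} ($G=\PGL_{2}$, $H=T=\barH$) is a spherically closed subgroup whose two colors have the same image under $\rho\times\vs$. Injectivity holds under the stronger hypothesis $\sN_G(H)=H$ (Corollary~\ref{c:D-injective}), which is not available here. Both defects would have to be repaired before the reduction to the wonderful classification could be made to work.
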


\begin{subsec}
Consider the group $\Aut^G(Y)=\mathcal N_G(H)/H$ (cf.~Corollary \ref{l:k-automorphisms}).
This group acts on $\mathcal D$.
We consider the surjective  map
\begin{equation}\label{e:zeta0}
\zeta=\rho\times\vs\colon \sD\to \Omega.
\end{equation}
By Corollary \ref{c:fiber-2} each of the fibers of $\zeta$  has either one or two elements.
We denote by $\Aut_\Omega(\sD)$ the group of permutations
$\pi\colon \sD\to\sD$ such that $\zeta\circ \pi=\zeta$.
From \eqref{e:a-rho-vs} we see that the group $\Aut^G(Y)$, when acting on $\sD$,
acts on the fibers of the map $\zeta$, and so we obtain a homomorphism
\begin{equation*}
\Aut^G(Y)\to \Aut_\Omega(\sD).
\end{equation*}
\end{subsec}

\begin{theorem}[\rm Losev, unpublished]
\label{t:Losev-3}
Let $G$ be a connected reductive group over an algebraically closed field $k$ of characteristic 0.
Let $Y=G/H$ be a spherical homogeneous space of $G$.
Then, with the above notation,  the homomorphism
\begin{equation}\label{e:zeta}
\Aut^G(Y)\to \Aut_\Omega(\sD).
\end{equation}
 is surjective.
\end{theorem}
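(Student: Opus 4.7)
My plan proceeds by reducing to the case of a single transposition and then building the realizing element of $\NGH/H$ via Losev's Uniqueness Theorem.

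\textbf{Reduction to transpositions.} By Corollary \ref{c:fiber-2} every fiber of $\zeta=\rho\times\vs$ has cardinality one or two, so $\Aut_\Omega(\sD)$ is an elementary abelian $2$-group, canonically isomorphic to $\St^{\Omt}$, generated by the ``simple transpositions'' $\tau_\omega$ indexed by $\omega\in\Omt$: each $\tau_\omega$ swaps the two elements of $\zeta^{-1}(\omega)$ and fixes every other color. Since \eqref{e:zeta} is a group homomorphism, it suffices to exhibit, for each $\omega\in\Omt$, an element of $\Aut^G(Y)=\NGH/H$ whose image in $\Aut(\sD)$ is $\tau_\omega$.

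\textbf{Realizing a single transposition.} Fix $\omega\in\Omt$ with preimage $\{D_1,D_2\}\subset\sD$; by Proposition \ref{p:facts-type-a} this pair arises from a simple root $\alpha\in\vs(D_1)=\vs(D_2)$ of ``type (a)''. My aim is to produce a $k$-subgroup $H'$ with $H\subsetneq H'\subset\NGH$ and $[H':H]=2$, for which the quotient map $\pi\colon G/H\to G/H'$ identifies $D_1$ with $D_2$, restricts to a bijection on $\sD(G/H)\setminus\{D_1,D_2\}$ onto $\sD(G/H')\setminus\{\pi(D_1)\}$, and preserves the remaining combinatorics. Granted such an $H'$, the nontrivial coset in $H'(k)/H(k)\subset\NGH(k)/H(k)=\Aut^G(Y)$ swaps $D_1$ and $D_2$ and fixes every other color, thereby realizing $\tau_\omega$.

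\textbf{Construction of $H'$ via Losev's Uniqueness Theorem.} To produce $H'$, form the ``merged'' combinatorial datum obtained from the invariants $(\sX,\sV,\Omone,\Omt)$ of $Y$ by moving $\omega$ from $\Omt$ into $\Omone$, while retaining $\sX$, $\sV$, and $\Omega\setminus\{\omega\}$. One would verify that this modified datum is itself the combinatorial datum of a homogeneous spherical $G$-variety; the type (a) hypothesis, together with the local rank-one analysis of colors carried out in Appendix \ref{s:App}, is what licenses this fusion of the two colors into one. By the Luna classification of spherical varieties in characteristic zero, this datum is realized by some spherical homogeneous space $G/H'$; by Proposition \ref{t:Losev}, $H'$ is unique up to conjugacy in $G$. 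A rank computation shows $\dim(G/H')=\dim(G/H)$, and the collapse of exactly one two-element fiber forces $[H':H]=2$. After conjugating in $G$ one may arrange $H\subset H'\subset \NGH$, and the induced automorphism $\phi_{n}$ of Corollary \ref{l:k-automorphisms} for any $n\in H'(k)\setminus H(k)$ then realizes $\tau_\omega$ on $\sD$.

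\textbf{Main obstacle.} The delicate step is the third one: checking that the merged datum is an admissible homogeneous spherical datum and that the resulting $H'$ can actually be chosen to \emph{contain} $H$ (rather than just a conjugate of it), and that the cover $G/H\to G/H'$ identifies exactly the two colors $D_1,D_2$ and nothing else. This is where the deep input of the paper, namely the classification of spherical subgroups and the rank-one color theory recalled in Appendix \ref{s:App}, enters essentially; once $H'$ is in hand, the verification that its nontrivial coset acts as $\tau_\omega$ is a straightforward comparison of color sets through $\pi$.
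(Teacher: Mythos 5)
Your reduction to the transpositions $\tau_\omega$, $\omega\in\Omt$, is fine and agrees with the paper, but the construction of the index-two overgroup $H'$ has a genuine gap, and it sits exactly where you locate your ``main obstacle'' --- which is to say that the proposal restates the theorem rather than proving it. Two concrete problems. First, the ``merged datum'' is misidentified: if $H\subset H'$ with $[H':H]=2$ and $\pi\colon G/H\to G/H'$ fuses $D_\alpha^+$ with $D_\alpha^-$, then the weight lattice of $G/H'$ is a proper (index-two) sublattice of $\sX$ not containing $\alpha$ --- one cannot keep $\sX$ and $\sV$ and merely move $\omega$ from $\Omt$ to $\Omone$. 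Indeed, by Proposition \ref{p:facts-type-a} a lone color $D$ with $\vs(D)=\{\alpha\}$, $\rho(D)=\tfrac12\alpha^\vee|_{\sX}$ and $\alpha\in\Sigma\cap S$ violates the axioms of a homogeneous spherical datum, so your modified datum is not admissible over the same lattice and the appeal to the Luna classification does not get off the ground. Second, even granting a realization $G/H'$ of some corrected datum, Proposition \ref{t:Losev} only pins $H'$ down up to conjugacy; nothing in the uniqueness theorem lets you arrange $H\subset H'$, nor that $\pi$ identifies exactly the two chosen colors. That containment is the whole point.

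The paper's proof goes the other way around: it first produces the automorphism inside $\Aut^G(Y)$ and only then forms the overgroup $\stH$ with $\stH/H=\langle\phi_\alpha\rangle$ to read off its action on colors. The essential inputs you are missing are (a) Knop's description $\Aut^G(Y)\cong\{\psi\in\Hom(\sX,k^\times):\psi(\SigmaN)=\{1\}\}$, and (b) Losev's computation (Proposition \ref{p:Losev-roots}) that for a type-(a) root $\alpha$ with $\rho(D_\alpha^+)=\rho(D_\alpha^-)$ one has $2\alpha\in\SigmaN$ and hence $\alpha\notin\SigmaN$. Point (b) is precisely what guarantees the existence of a finite-order $\psi_\alpha$ with $\psi_\alpha(\alpha)=-1$ and $\psi_\alpha\equiv 1$ on the other spherical roots, hence of $\phi_\alpha\in\Aut^G(Y)$ negating $f_\alpha$ and fixing the other eigenfunctions $f_\beta$; the verification that $\phi_\alpha$ swaps $D_\alpha^\pm$ and fixes the other pairs is then done by checking which eigenfunctions descend to $K(G/\stH)$. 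Without an input of this kind your argument cannot distinguish the true situation from a hypothetical one in which $\alpha\in\SigmaN$ and no swapping automorphism exists, so the gap is not merely technical.
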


This theorem will be proved in Appendix \ref{s:App}; see Theorem \ref{t:Losev-3-bis}.

\begin{corollary}[\rm Strong version of Losev's Uniqueness Theorem]
Let $G$,\,$H_1$,\,$H_2$, $Y_1=G/H_1$, $Y_2=G/H_2$ be as in Proposition \ref{t:Losev},
in particular, $\sX(Y_1)=\sX(Y_2)$, $\sV(Y_1)=\sV(Y_2)$,
$\Omone(Y_1)=\Omone(Y_2)$, and $\Omt(Y_1)=\Omt(Y_2)$.
Let $\varphi\colon \sD(Y_1)\to\sD(Y_2)$ be {\emm any} bijection satisfying
\[\rho_2\circ\varphi=\rho_1,\quad \vs_2\circ\varphi=\vs_1\]
(such a bijection exists because $\Omone(Y_1)=\Omone(Y_2)\text{ and } \Omt(Y_1)=\Omt(Y_2)$\hs).
Then there exists $a'\in G(k)$ such that $H_2=a' H_1(a')^{-1}$ and
\[(\phi_{a'})_*=\varphi\colon\, \sD(Y_1)\to \sD(Y_2).\]
\end{corollary}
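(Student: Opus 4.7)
The plan is to reduce this strengthening to the ordinary Losev Uniqueness Theorem \ref{t:Losev} combined with the unpublished surjectivity result, Theorem \ref{t:Losev-3}. First, applying Proposition \ref{t:Losev} to the hypothesis $\sX(Y_1)=\sX(Y_2)$, $\sV(Y_1)=\sV(Y_2)$, $\Omone(Y_1)=\Omone(Y_2)$, $\Omt(Y_1)=\Omt(Y_2)$, I would produce some $a\in G(k)$ with $H_2=a H_1 a^{-1}$. This gives, via Lemma \ref{l:isom-conj}, an isomorphism of $G$-$k$-varieties $\phi_a\colon Y_1\to Y_2$, $gH_1\mapsto ga^{-1}H_2$, and therefore a bijection $(\phi_a)_*\colon \sD(Y_1)\to\sD(Y_2)$ satisfying $\rho_2\circ(\phi_a)_*=\rho_1$ and $\vs_2\circ(\phi_a)_*=\vs_1$; cf.\ the discussion in \ref{ss:for-Losev}.

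Next, I would compare the given bijection $\varphi$ with $(\phi_a)_*$ by setting
\[
\psi:=(\phi_a)_*^{-1}\circ\varphi\colon \sD(Y_1)\to\sD(Y_1).
\]
Since both $(\phi_a)_*$ and $\varphi$ are compatible with the pairs $(\rho_i,\vs_i)$, the permutation $\psi$ preserves each fiber of the map $\zeta_1=\rho_1\times\vs_1\colon\sD(Y_1)\to\Omega$ of \eqref{e:zeta0}; that is, $\psi\in\Aut_\Omega(\sD(Y_1))$.

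Now I would invoke Theorem \ref{t:Losev-3}: the homomorphism $\Aut^G(Y_1)\to\Aut_\Omega(\sD(Y_1))$ of \eqref{e:zeta} is surjective, so there exists $\alpha\in\Aut^G(Y_1)$ whose image in $\Aut_\Omega(\sD(Y_1))$ equals $\psi$. By Corollary \ref{l:k-automorphisms}, $\alpha=\phi_n$ for some $n\in\sN_G(H_1)(k)$, and the action of $\alpha$ on $\sD(Y_1)$ is precisely $(\phi_n)_*$; hence $(\phi_n)_*=\psi$.

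Finally, I would set $a':=an\in G(k)$. Since $n\in\sN_G(H_1)(k)$ we have $nH_1n^{-1}=H_1$, and therefore
\[
a'H_1(a')^{-1}=an\cdot H_1\cdot n^{-1}a^{-1}=aH_1a^{-1}=H_2.
\]
A direct computation on $k$-points using Lemma \ref{l:isom-conj} shows $\phi_{a'}=\phi_a\circ\phi_n$, so by functoriality of pushforward on colors,
\[
(\phi_{a'})_*=(\phi_a)_*\circ(\phi_n)_*=(\phi_a)_*\circ\psi=\varphi,
\]
which is exactly what was to be proved. The entire argument is essentially bookkeeping; the only substantive ingredients are Proposition \ref{t:Losev} (which produces one conjugating element $a$) and Theorem \ref{t:Losev-3} (which allows us to correct $a$ to $a'=an$ so that the induced bijection on colors matches the prescribed $\varphi$). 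Consequently, the main obstacle is not in this corollary itself but lies upstream, in proving Losev's surjectivity statement, which is relegated to Appendix \ref{s:App}.
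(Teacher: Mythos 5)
Your proposal is correct and follows exactly the paper's own argument: produce $a$ via Losev's Uniqueness Theorem, measure the discrepancy $\psi=(\phi_a)_*^{-1}\circ\varphi\in\Aut_\Omega(\sD(Y_1))$, lift it to $n\in\sN_G(H_1)$ by Theorem \ref{t:Losev-3}, and set $a'=an$. Nothing to add.
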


\begin{proof}
By Proposition  \ref{t:Losev} there exists $a\in G(k)$ such that $H_2=a H_1 a^{-1}$.
This element $a$ defines a map
\[(\phi_a)_*\colon \sD(Y_1)\to\sD(Y_2)\]
satisfying \eqref{e:a-rho-vs}.
Set
\[\psi=(\phi_a)_*^{-1}\circ\varphi\colon \  \sD(Y_1)\to\sD(Y_1);\]
then $\psi$ satisfies
\[\rho_1\circ\psi=\rho_1,\quad \vs_1\circ\psi=\vs_1\,,\]
hence $\psi\in\Aut_\Omega\, \sD(Y_1).$
By Theorem \ref{t:Losev-3} there exists $n\in\sN_G(H_1)$
such that
\[(\phi_n)_*=\psi\colon \sD(Y_1)\to \sD(Y_1).\]
We set $a'=an$; then $a' H_1(a')^{-1}=H_2$, \
$\phi_{a'}=\phi_a\circ\phi_n$, \ and
\[(\phi_{a'})_*=(\phi_a)_*\circ (\phi_n)_*=(\phi_a)_*\circ\psi=\varphi.\qedhere\]
\end{proof}

\begin{corollary}\label{c:closed-isom}
In Theorem \ref{t:Losev-3}, if  $H$  is spherically closed,
then the homomorphism \eqref{e:zeta} is an isomorphism.
\end{corollary}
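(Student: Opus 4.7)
The plan is very short because both directions are essentially immediate from what is already in hand. The homomorphism in question,
\[
\Aut^G(Y)\to \Aut_\Omega(\sD),
\]
is by construction the restriction of the natural action of $\Aut^G(Y)$ on $\sD$, composed with the observation (using \eqref{e:a-rho-vs}) that this action preserves the fibers of $\zeta=\rho\times\vs$, so it lands in $\Aut_\Omega(\sD)\subset\Aut(\sD)$.

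For surjectivity, I would simply cite Theorem \ref{t:Losev-3}, which gives exactly this statement with no extra hypothesis.

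For injectivity, I would unpack the assumption that $H$ is spherically closed. By the definition given before Example \ref{ex:CF}, this means precisely that the homomorphism
\[
\Aut^G(Y)=\NGH/H\to \Aut(\sD)
\]
is injective. Since the map $\Aut^G(Y)\to \Aut_\Omega(\sD)$ of Theorem \ref{t:Losev-3} is the same homomorphism with its codomain restricted from $\Aut(\sD)$ to the subgroup $\Aut_\Omega(\sD)$, injectivity of the former forces injectivity of the latter. Hence the map is an isomorphism.

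There is no real obstacle here; the corollary is a direct combination of Losev's surjectivity theorem (Theorem \ref{t:Losev-3}) with the definition of sphericaly closed. The only thing worth being careful about is to verify that the homomorphism in Theorem \ref{t:Losev-3} is indeed the restriction of the one appearing in the definition of spherically closed subgroup, which is evident since both are induced by the same action of $\NGH/H$ on the finite set $\sD=\sD(Y)$.
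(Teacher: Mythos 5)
Your proposal is correct and coincides with the paper's own argument: injectivity is immediate from the definition of a spherically closed subgroup, surjectivity is Theorem \ref{t:Losev-3}, and the map in question is the same homomorphism with codomain restricted to $\Aut_\Omega(\sD)$. Your extra remark checking that the two homomorphisms agree is a harmless (and reasonable) elaboration of what the paper leaves implicit.
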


\begin{proof}
Indeed, by the definition of a spherically closed spherical subgroup,
the homomorphism \eqref{e:zeta} is injective,
and by Theorem \ref{t:Losev-3} it is surjective, hence it is bijective, as required.
 \end{proof}

Note that
\[  \Aut_\Omega(\sD)=\prod_{\omega\in \Omega} \Aut(\zeta^{-1}(\omega)),\]
where $\Aut(\zeta^{-1}(\omega))$ is the group of permutations of the set $\zeta^{-1}(\omega)$.
It is clear that  for every $\omega\in \Omega$, the restriction homomorphism
\begin{equation}\label{e:Aut-Omega-sD}
\Aut_\Omega(\sD)\to \Aut(\zeta^{-1}(\omega))
\end{equation}
is surjective.

\begin{corollary}\label{c:D-injective}
In Theorem \ref{t:Losev-3}, if  $\NGH=H$, then the surjective map
$\zeta$ of \eqref{e:zeta0} is bijective, hence $\sD$ injects into $V\times\sP(S)$.
\end{corollary}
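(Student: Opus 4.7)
The plan is to combine Theorem \ref{t:Losev-3} with the product description of $\Aut_\Omega(\sD)$ and Corollary \ref{c:NGH} to force every fiber of $\zeta$ to be a singleton.

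First, by Corollary \ref{l:k-automorphisms} (or more directly Corollary \ref{c:NGH}), the hypothesis $\NGH=H$ gives $\Aut^G(Y)=\NGH/H=\{1\}$. Hence the homomorphism \eqref{e:zeta}, $\Aut^G(Y)\to\Aut_\Omega(\sD)$, is the trivial homomorphism.

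Next, I invoke Theorem \ref{t:Losev-3}: this homomorphism is surjective. Combining with the previous step, the group $\Aut_\Omega(\sD)$ itself is trivial.

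Finally, I use the product decomposition noted just after Corollary \ref{c:closed-isom}:
\[
\Aut_\Omega(\sD)=\prod_{\omega\in\Omega}\Aut(\zeta^{-1}(\omega)).
\]
Triviality of the left-hand side forces $\Aut(\zeta^{-1}(\omega))=\{1\}$ for each $\omega\in\Omega$. By Corollary \ref{c:fiber-2} each fiber $\zeta^{-1}(\omega)$ has at most two elements; if any fiber had exactly two elements, then $\Aut(\zeta^{-1}(\omega))$ would be the symmetric group $\St$, which is nontrivial, a contradiction. Therefore every fiber is a singleton, so the surjection $\zeta\colon\sD\to\Omega$ is a bijection. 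Since $\zeta=\rho\times\vs$ takes values in $V\times\sP(S)$, we conclude that $\rho\times\vs\colon\sD\hookrightarrow V\times\sP(S)$ is injective, as required. There is no real obstacle here; the argument is essentially a direct translation of Losev's theorem once the hypothesis $\NGH=H$ is used to kill $\Aut^G(Y)$.
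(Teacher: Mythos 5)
Your proof is correct and follows essentially the same route as the paper: both use Theorem \ref{t:Losev-3} together with $\Aut^G(Y)=\NGH/H=\{1\}$ to force $\Aut_\Omega(\sD)$ (equivalently, each $\Aut(\zeta^{-1}(\omega))$) to be trivial, whence every fiber of the surjection $\zeta$ is a singleton. The only cosmetic difference is that the paper phrases the last step as transitivity of the (trivial) action on each fiber, while you argue via the product decomposition and the bound $|\zeta^{-1}(\omega)|\le 2$; both are immediate.
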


\begin{proof}
It follows from Theorem \ref{t:Losev-3} and the surjectivity of the homomorphism \eqref{e:Aut-Omega-sD},
that the group $\Aut^G(Y)=\NGH/H$ acts transitively
on the fiber $\zeta^{-1}(\omega)$ for every $\omega\in \Omega$.
Since by assumption $\NGH/H=\{1\}$, we conclude that each fiber of $\zeta$ has exactly one element,
hence $\zeta$ is bijective, as required.
 \end{proof}

\section{Action of an automorphism of the base field\\
on the combinatorial invariants of a spherical homogeneous space}
\label{s:action}

\begin{subsec}\label{ss:H1-H2}
Let $k$ be an algebraically closed field of characteristic 0,
$G$ be a  connected  reductive group over $k$,
$H_1\subset G$ be a spherical subgroup, and
$Y_1=G/H_1$ be the corresponding spherical homogeneous space.

Let $k_0\subset k$ be a subfield and  let $\gamma\in\Aut(k/k_0)$.
We assume that ``$G$ is defined over $k_0$", that is, we are given a $k_0$-model $G_0$ of $G$.
Then we have a $\gamma$-semi-automorphism $\sigma_\gamma$ of $G$; see Subsection \ref{ss:5.2}.
Set $H_2=\sigma_\gamma(H_1)\subset G$ and denote by $Y_2:=G/H_2$
the corresponding spherical homogeneous space.

We wish to know whether the spherical homogeneous spaces $Y_1$ and $Y_2$ are isomorphic as $G$-varieties.
For this end we compare their combinatorial invariants.
\end{subsec}

We fix a Borel pair $(T,B)$; then $T\subset B\subset G$.
Consider
\[ \sigma_\gamma(T)\subset\sigma_\gamma(B) \subset G.\]
Then $(\sigma_\gamma(T),\sigma_\gamma(B))$ is again a Borel pair and  hence,
there exists $g_\gamma\in G(k)$ such that
\[ g_\gamma\cdot\sigma_\gamma(T)\cdot g_\gamma^{-1}=T,\quad  g_\gamma\cdot\sigma_\gamma(B)\cdot g_\gamma^{-1}=B.\]
Set $\tau={\rm inn}(g_\gamma)\circ\sigma_\gamma\colon\  G\to G$; then $\tau$
is a $\gamma$-semi-automorphism of $G$, and
\begin{equation}\label{e:sig-T-B}
 \tau(B)=B,\quad \tau(T)=T.
\end{equation}
Set $H'_2=\tau(H_1)\subset G$ and $Y'_2=G/H'_2$.
We have $H'_2=g_\gamma\cdot H_2\cdot g_\gamma^{-1}$, so by Lemma \ref{l:isom-conj} $Y_2$ and $Y_2'$ are isomorphic, and
we wish to know whether $Y_1$ and $Y'_2$ are isomorphic.

By \eqref{e:sig-T-B},  $\tau$ naturally acts on the characters of $T$ and $B$;
we denote the corresponding automorphism by $\veg$.
By definition
\begin{equation}\label{e:veg}
\veg(\chi)(b)=\gamma(\chi(\hs \tau^{-1}(b)\hs )\hs ) \quad\text{for }\chi\in\X^*(B),\ b\in B(k),
\end{equation}
and the same holds for the characters of $T$ (recall that $\X^*(B)=\X^*(T)$\hs ).
Since $\tau(B)=B$, we see that $\veg$, when acting on $\X^*(T)$,
preserves the set of simple roots $S=S(G,T,B)\subset \X^*(T)$.
It is well known (see for example \cite[Section 3.2 and Proposition 3.1(a)]{BKLR})
that the automorphism $\veg$ does not depend on the choice of $g_\gamma$ and that the map
\[\ve\colon \Aut(k/k_0)\to \Aut(\X^*(T), S),\quad \gamma\mapsto \veg \]
is a homomorphism.
Since $\veg$ acts on $\X^*(B)$ and on $S$, one can define
$\veg (\sX(Y_1))$, $\veg(\sV(Y_1))$, $\veg(\Omone(Y_1)\hs )$, $\veg(\Omt(Y_1)\hs )$.

Following Akhiezer \cite{Akhiezer}, we compute the combinatorial invariants of the spherical homogeneous space $Y'_2$.
We define a map on $k$-points
\begin{equation}\label{e:nu-1-2'}
 Y_1(k)\to Y_2'(k),\quad g\cdot H_1\mapsto \tau(g)\cdot H_2',\text{ where } g\in G(k).
\end{equation}
Recall that $H_2'=\tau(H_1)$.
It is easy to see that  the map $\nu$ is well-defined and bijective and that we have
\begin{equation}\label{e:nu-tau}
\nu^{-1}(g'\cdot y_2')=\tau^{-1}(g')\cdot\nu^{-1}(y_2')\quad\text{for }g'\in G(k),\ y_2'\in Y_2'(k).
\end{equation}
By  Lemma \ref{l:nu-1-2} the map \eqref{e:nu-1-2'} is induced
by some $\tau$-equivariant  $\gamma$-semi-isomorphism
\[ \nu\colon Y_1\to Y_2'\hs ,\]
and so we obtain an isomorphism of the fields of rational functions
\[\nu_*\colon K(Y_1)\to K(Y_2').\]

\begin{lemma}\label{l:Akhiezer-calc}
Let $\chi_1\in\X^*(B)$ and assume that $f_1\in K(Y_1)^{(B)}_{\chi_1}$\hs.
Then $\nu_*f_1\in K(Y_2')^{(B)}_{\chi_2}$, where $\chi_2=\veg(\chi_1)$.
\end{lemma}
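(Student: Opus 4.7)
The plan is a direct unwinding of definitions. Let $f_2'=\nu_*f_1\in K(Y_2')$. To show $f_2'\in K(Y_2')^{(B)}_{\chi_2}$, I would verify that $b\cdot f_2' = \chi_2(b)\cdot f_2'$ for every $b\in B(k)$, which by definition of the action means $f_2'(b^{-1}\cdot y_2')=\chi_2(b)\cdot f_2'(y_2')$ for all $y_2'\in Y_2'(k)$.

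First, I would apply Corollary \ref{l:nu-*} to write
\[
f_2'(b^{-1}\cdot y_2')=(\nu_*f_1)(b^{-1}\cdot y_2')=\gamma\bigl(f_1(\nu^{-1}(b^{-1}\cdot y_2'))\bigr).
\]
Then I would use the equivariance identity \eqref{e:nu-tau}, namely $\nu^{-1}(g'\cdot y_2')=\tau^{-1}(g')\cdot\nu^{-1}(y_2')$, with $g'=b^{-1}$, to push the $B$-action across $\nu^{-1}$ and obtain
\[
f_1(\nu^{-1}(b^{-1}\cdot y_2'))=f_1(\tau^{-1}(b)^{-1}\cdot\nu^{-1}(y_2'))=(\tau^{-1}(b)\cdot f_1)(\nu^{-1}(y_2')).
\]
Here one must check that $\tau^{-1}(b)$ actually lies in $B(k)$, which is immediate from $\tau(B)=B$ (see \eqref{e:sig-T-B}), so the $B$-eigenfunction property of $f_1$ applies.

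Next I would invoke that $f_1\in K(Y_1)^{(B)}_{\chi_1}$, giving $(\tau^{-1}(b)\cdot f_1)(\nu^{-1}(y_2'))=\chi_1(\tau^{-1}(b))\cdot f_1(\nu^{-1}(y_2'))$. Applying $\gamma$, using that it is a field automorphism and hence multiplicative, and recalling from \eqref{e:veg} that $\veg(\chi_1)(b)=\gamma(\chi_1(\tau^{-1}(b)))$, I obtain
\[
\gamma\bigl(\chi_1(\tau^{-1}(b))\cdot f_1(\nu^{-1}(y_2'))\bigr)=\veg(\chi_1)(b)\cdot\gamma(f_1(\nu^{-1}(y_2')))=\chi_2(b)\cdot f_2'(y_2'),
\]
where the last equality again invokes Corollary \ref{l:nu-*}. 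Chaining these equalities yields $f_2'(b^{-1}\cdot y_2')=\chi_2(b)\cdot f_2'(y_2')$ as required.

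There is no real obstacle: every ingredient is already set up in the preceding paragraphs of the paper. The only points requiring a moment of care are (i) that $\tau^{-1}$ preserves $B(k)$ so that evaluating $\chi_1$ on $\tau^{-1}(b)$ makes sense and the $B$-eigenfunction relation for $f_1$ is applicable, and (ii) being consistent with the convention $(g\cdot f)(y)=f(g^{-1}\cdot y)$ when passing the group element through the rational function — a sign (inverse) one must not lose. Both are handled by the identities \eqref{e:sig-T-B}, \eqref{e:nu-tau}, and \eqref{e:veg} already at hand.
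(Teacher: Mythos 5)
Your proof is correct and is essentially identical to the paper's: the same chain of equalities using Corollary \ref{l:nu-*}, the equivariance identity \eqref{e:nu-tau}, the eigenfunction property of $f_1$, the fact that $\tau^{-1}(B)=B$, and the definition \eqref{e:veg} of $\veg$. Nothing to add.
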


\begin{proof}
Since $f_1\in K(Y_1)^{(B)}_{\chi_1}$\hs, we have
\begin{equation}\label{e:b-m-y}
f_1(b^{-1} y_1)=\chi_1(b)\cdot f_1(y_1)\quad\text{for all }\ y_1\in Y_1(k),\ b\in B(k).
\end{equation}
We write $f_2'=\nu_* f_1\in K(Y_2')$.
Since $\nu\colon Y_1\to Y_2'$ is a $\gamma$-semi-isomorphism, by Corollary \ref{l:nu-*}
we have
\begin{equation}\label{e:f'2}
f_2'(y_2')=\gamma(f_1(\nu^{-1}(y_2')))\quad\text{for } y_2'\in Y_2'(k).
\end{equation}
Note that $\tau^{-1}\colon G\to G$ is a $\gamma^{-1}$-semi-automorphism of $G$,
and $\nu^{-1}\colon Y_2'\to Y_1$ is a $\tau^{-1}$-equivariant $\gamma^{-1}$-semi-isomorphism.
Moreover, $\tau^{-1}(T)=T$ and $\tau^{-1}(B)=B$.
Using \eqref{e:f'2}, \eqref{e:nu-tau}, \eqref{e:b-m-y}, and \eqref{e:veg}, we compute:
\begin{align*}
f_2'(b^{-1}\cdot y_2')&=\gamma(f_1(\nu^{-1}(b^{-1}\cdot y_2')))
= \gamma(f_1(\hs (\tau^{-1}(b))^{-1}\cdot \nu^{-1}(y_2')\hs ))\\
&=\gamma(\chi_1(\tau^{-1}(b)))\cdot \gamma(f_1(\nu^{-1}(y_2')))
=\veg(\chi_1)(b)\cdot f_2'(y_2').
\end{align*}
Thus $f_2'\in K(Y_2')^{(B)}_{\chi_2}$ where $\chi_2=\veg(\chi_1)$, as required.
\end{proof}

\begin{corollary}\label{c:sMY_2'}
$\sX(Y_2')=\veg(\sX(Y_1))$.
\end{corollary}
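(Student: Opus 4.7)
The plan is to extract the corollary directly from Lemma \ref{l:Akhiezer-calc} using the invertibility of the map $\nu_*\colon K(Y_1)\to K(Y_2')$, without any further computation.

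First I would establish the inclusion $\veg(\sX(Y_1))\subset \sX(Y_2')$. Let $\chi_1\in\sX(Y_1)$; by definition there exists a nonzero $f_1\in K(Y_1)^{(B)}_{\chi_1}$. Since $\nu\colon Y_1\to Y_2'$ is an isomorphism of schemes (being a $\gamma$-semi-isomorphism), the induced map $\nu_*\colon K(Y_1)\to K(Y_2')$ is an isomorphism of fields, so $\nu_*f_1$ is a nonzero element of $K(Y_2')$. By Lemma \ref{l:Akhiezer-calc} we have $\nu_* f_1\in K(Y_2')^{(B)}_{\veg(\chi_1)}$, which shows $\veg(\chi_1)\in \sX(Y_2')$.

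For the reverse inclusion, I would apply the same argument to the inverse semi-isomorphism $\nu^{-1}\colon Y_2'\to Y_1$, which is a $\tau^{-1}$-equivariant $\gamma^{-1}$-semi-iso\-morphism (as noted in the proof of Lemma \ref{l:Akhiezer-calc}). Note that $\tau^{-1}$ preserves $T$ and $B$ and that $\ve_{\gamma^{-1}}=\ve_\gamma^{-1}$ since $\gamma\mapsto\veg$ is a homomorphism. The analog of Lemma \ref{l:Akhiezer-calc} applied to $\nu^{-1}$ (which is proved by exactly the same computation) shows that if $f_2'\in K(Y_2')^{(B)}_{\chi_2}$ is nonzero, then $\nu^{-1}_* f_2'$ is a nonzero element of $K(Y_1)^{(B)}_{\veg^{-1}(\chi_2)}$. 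Hence $\veg^{-1}(\chi_2)\in\sX(Y_1)$, i.e.\ $\chi_2\in\veg(\sX(Y_1))$, giving $\sX(Y_2')\subset\veg(\sX(Y_1))$.

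Combining both inclusions yields $\sX(Y_2')=\veg(\sX(Y_1))$. There is no real obstacle here: everything reduces to the fact that $\nu_*$ is a bijection on rational functions together with the already established Lemma \ref{l:Akhiezer-calc}; the only thing one must be slightly careful about is that the analog of Lemma \ref{l:Akhiezer-calc} is valid for $\nu^{-1}$, which either follows by rerunning the same argument or by a formal inversion using that $\veg$ is a group-theoretic automorphism.
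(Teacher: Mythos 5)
Your proof is correct and is essentially the paper's own argument: the forward inclusion follows directly from Lemma \ref{l:Akhiezer-calc}, and the reverse inclusion is obtained by applying the same lemma to the triple $(\gamma^{-1},\tau^{-1},\nu^{-1})$ together with the fact that $\gamma\mapsto\veg$ is a homomorphism. Your extra remark that the lemma genuinely applies to $\nu^{-1}$ (since $\tau^{-1}$ preserves $T$ and $B$ and $\nu^{-1}$ is $\tau^{-1}$-equivariant) is exactly the justification the paper relies on implicitly.
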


\begin{proof}
By Lemma \ref{l:Akhiezer-calc} we have $\veg(\sX(Y_1))\subset \sX(Y_2')$.
Applying Lemma \ref{l:Akhiezer-calc} to the triple $(\gamma^{-1},\tau^{-1}, \nu^{-1})$, we obtain that
$\ve_{\gamma^{-1}} ( \sX(Y_2'))\subset\sX(Y_1)$,
hence $ \sX(Y_2')\subset \veg(\sX(Y_1))$.
Thus $\sX(Y_2')=\veg(\sX(Y_1))$, as required.
\end{proof}

Let $v_1\in\Val^B(K(Y_1))$.
We define $\nu_* v_1\in \Val^B(K(Y_2'))$ by
\[(\nu_* v_1)(f_2')=v_1(\nu_*^{-1}(f_2'))\quad\text{for } f_2'\in K(Y_2').\]
We consider the maps
\[ \rho_1\colon \Val^B(K(Y_1))\to V(Y_1)\quad\text{and}\quad \rho_2'\colon \Val^B(K(Y_2'))\to V(Y_2').\]

\begin{lemma}\label{Huruguen-2.18}
For any $v_1\in\Val^B(K(Y_1))$ we have
\[\rho_2'(\nu_* v_1)=\veg(\rho_1(v_1)).\]
\end{lemma}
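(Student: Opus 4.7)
The plan is to verify the equality pointwise, by evaluating both sides at an arbitrary element $\chi_2' \in \sX(Y_2')$ and using Lemma \ref{l:Akhiezer-calc} to convert eigenfunctions on $Y_1$ into eigenfunctions on $Y_2'$.

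First I would fix $\chi_2' \in \sX(Y_2')$. By Corollary \ref{c:sMY_2'} we have $\sX(Y_2') = \veg(\sX(Y_1))$, so there is a unique $\chi_1 \in \sX(Y_1)$ with $\veg(\chi_1) = \chi_2'$. Choose a nonzero $f_1 \in K(Y_1)^{(B)}_{\chi_1}$. By Lemma \ref{l:Akhiezer-calc}, $\nu_* f_1 \in K(Y_2')^{(B)}_{\chi_2'}$, and it is nonzero since $\nu_*$ is an isomorphism of fields. So we may take $f_2' := \nu_* f_1$ as the representative used in the definition of $\rho_2'$.

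Next I would unwind the definition of the action of $\veg$ on $V$: viewed as an isomorphism $V(Y_1) \to V(Y_2')$, it satisfies $(\veg(v))(\chi_2') = v(\veg^{-1}(\chi_2')) = v(\chi_1)$ for every $v \in V(Y_1)$. Applying this with $v = \rho_1(v_1)$ gives
\begin{equation*}
\veg(\rho_1(v_1))(\chi_2') = \rho_1(v_1)(\chi_1) = v_1(f_1).
\end{equation*}
On the other hand, by the definition of $\nu_* v_1$ and the choice $f_2' = \nu_* f_1$,
\begin{equation*}
\rho_2'(\nu_* v_1)(\chi_2') = (\nu_* v_1)(f_2') = v_1(\nu_*^{-1}(f_2')) = v_1(f_1).
\end{equation*}
Comparing these two displays yields the desired equality, for every $\chi_2' \in \sX(Y_2')$.

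There is essentially no obstacle here, as soon as Lemma \ref{l:Akhiezer-calc} and Corollary \ref{c:sMY_2'} are in hand: everything reduces to choosing the representative $f_2'$ of the $\chi_2'$-eigenline on $Y_2'$ to be $\nu_* f_1$, which makes the $v_1(f_1)$ on both sides tautologically match. The only care needed is the bookkeeping for how $\veg$ is transported from an automorphism of $\X^*(B)$ to an isomorphism $V(Y_1) \isoto V(Y_2')$ via the identification $\sX(Y_2') = \veg(\sX(Y_1))$.
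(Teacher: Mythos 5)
Your proof is correct. The paper itself does not supply an argument here — it simply cites Huruguen [Proposition 2.18] — so you have filled in the omitted computation, and it is the natural one: choose the representative of the $\chi_2'$-eigenline to be $\nu_* f_1$, so that both sides evaluate to $v_1(f_1)$ (this is well defined because the eigenspaces are at most one-dimensional and valuations are trivial on $k$, so $\rho$ does not depend on the choice of nonzero eigenfunction). Your bookkeeping of how $\veg$ is transported to an isomorphism $V(Y_1)\isoto V(Y_2')$ via $(\veg(v))(\chi_2')=v(\veg^{-1}(\chi_2'))$ is exactly the convention needed for the statement to make sense, and Lemma \ref{l:Akhiezer-calc} together with Corollary \ref{c:sMY_2'} is all that is required.
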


\begin{proof}
See Huruguen \cite[Proposition 2.18]{Huruguen}.
\end{proof}

\begin{corollary}\label{c:sVY_2'}
$\sV(Y_2')=\veg(\sV(Y_1))$.\hfill \qed
\end{corollary}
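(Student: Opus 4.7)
The plan is to reduce the statement to Lemma \ref{Huruguen-2.18} by showing that the isomorphism $\nu_* \colon K(Y_1) \isoto K(Y_2')$ carries $G$-invariant valuations to $G$-invariant valuations. Once this is established, the statement follows formally: every $v_1 \in \Val^G(K(Y_1))$ is in particular $B$-invariant, so Lemma \ref{Huruguen-2.18} yields $\rho_2'(\nu_* v_1) = \veg(\rho_1(v_1))$; taking the image over all such $v_1$ and using the bijection in the opposite direction gives $\sV(Y_2') = \veg(\sV(Y_1))$, exactly as in the proof of Corollary \ref{c:sMY_2'}.

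The main technical point is the equivariance behavior of the pullback under a semi-isomorphism. First I would compute how $\nu^* = \nu_*^{-1}$ interacts with the $G$-action on rational functions. Using Corollary \ref{l:nu-*} applied to $\nu^{-1}$, one obtains $(\nu^* f_2')(y_1) = \gamma^{-1}(f_2'(\nu(y_1)))$ for $f_2' \in K(Y_2')$ and $y_1 \in Y_1(k)$. Combining this with the $\tau$-equivariance $\nu(g' \uY y_1) = \tau(g') \uY \nu(y_1)$, a short calculation yields
\[
\nu^*(g \cdot f_2') \;=\; \tau^{-1}(g) \cdot \nu^*(f_2') \qquad \text{for all } g \in G(k),\ f_2' \in K(Y_2').
\]
Here the action on $K(Y)$ is the usual one $(g\cdot f)(y) = f(g^{-1}\cdot y)$.

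From this identity the bijection $\nu_*\colon \Val^G(K(Y_1)) \isoto \Val^G(K(Y_2'))$ is immediate: for $v_1$ that is $G$-invariant, $(\nu_* v_1)(g \cdot f_2') = v_1(\nu^*(g\cdot f_2')) = v_1(\tau^{-1}(g)\cdot \nu^* f_2') = v_1(\nu^* f_2') = (\nu_* v_1)(f_2')$, where we used that $\tau^{-1}$ is a bijection of $G(k)$ onto itself (so $G$-invariance is preserved). The inverse map is handled symmetrically by applying the same reasoning to the $\gamma^{-1}$-semi-isomorphism $\nu^{-1}\colon Y_2' \to Y_1$.

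The only step that requires any thought is the equivariance identity for $\nu^*$; everything else is bookkeeping. I don't expect any real obstacle, because the $\tau$-equivariance of $\nu$ is set up precisely so that switching from $G$ to itself via $\tau$ is transparent, and Lemma \ref{Huruguen-2.18} does all the work of tracking the $\veg$-twist on the target cone $V$. So the argument parallels that of Corollary \ref{c:sMY_2'}, with $\Val^G$ in place of $K(Y)^{(B)}_\chi$.
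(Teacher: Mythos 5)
Your argument is correct and follows exactly the route the paper intends: the paper states this corollary with no proof, treating it as immediate from Lemma \ref{Huruguen-2.18} once one knows that $\nu_*$ restricts to a bijection $\Val^G(K(Y_1))\isoto\Val^G(K(Y_2'))$, and your equivariance identity $\nu^*(g\cdot f_2')=\tau^{-1}(g)\cdot\nu^*(f_2')$ is precisely the detail needed to justify that restriction. The computation checks out (using that $\tau$ induces a group automorphism of $G(k)$, so $\tau^{-1}(g)^{-1}=\tau^{-1}(g^{-1})$), and the symmetric argument for $\nu^{-1}$ gives surjectivity, so nothing is missing.
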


Let $D_1\in\sD(Y_1)$ be a color, that is, $D_1$ is the closure of a $B$-orbit of codimension one in $Y_1$.
We set $D_2':=\nu(D_1)\subset Y_2'$; then $D_2'\in\sD(Y_2')$.
We also write $\nu_*D_1$ for $\nu(D_1)$.

Let $D_1\in \sD(Y_1)$ and $D_2'\in\sD(Y_2')$; then we denote by
$\val_1(D_1)\in \Val^B(K(Y_1))$ and $\val_2'(D_2')\in \Val^B(K(Y_2'))$
the corresponding $B$-invariant valuations.

\begin{lemma}\label{Huruguen-2.19}
For any $D_1\in \sD(Y_1)$ we have
\[\val_2'(\nu_*D_1)=\nu_*(\val_1(D_1)).\]
\end{lemma}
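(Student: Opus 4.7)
The plan is to reduce the statement to the trivial fact that any isomorphism of (irreducible) schemes preserves orders of vanishing along prime divisors, which is a purely scheme-theoretic statement unaffected by the Galois twist.

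First I would observe that although $\nu\colon Y_1\to Y_2'$ is only a $\gamma$-semi-isomorphism, the morphism of schemes underlying $\nu$ is a genuine isomorphism of schemes. Consequently the induced map $\nu^*:=\nu_*^{-1}\colon K(Y_2')\to K(Y_1)$ (which is $\gamma^{-1}$-semilinear over $k$, but a ring isomorphism) sends the local ring $\mathcal O_{Y_2',\, \nu_*D_1}$ isomorphically onto $\mathcal O_{Y_1,\,D_1}$. Since both of these are discrete valuation rings with value group $\Z$, and $\val_1(D_1)$, $\val_2'(\nu_*D_1)$ are (up to the canonical $\Q$-valued extension) the associated valuations, the ring isomorphism $\nu^*$ preserves orders of vanishing:
\begin{equation*}
\val_1(D_1)(\nu^*f_2')\,=\,\val_2'(\nu_*D_1)(f_2')\qquad\text{for all } f_2'\in K(Y_2').
\end{equation*}

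Then I would combine this identity with the definition of the pushforward of a valuation given just before Lemma~\ref{Huruguen-2.18}, namely $(\nu_*v_1)(f_2')=v_1(\nu_*^{-1}f_2')=v_1(\nu^*f_2')$, applied to $v_1=\val_1(D_1)$. This yields
\begin{equation*}
(\nu_*(\val_1(D_1)))(f_2')\,=\,\val_1(D_1)(\nu^*f_2')\,=\,\val_2'(\nu_*D_1)(f_2'),
\end{equation*}
which is the desired equality of valuations on $K(Y_2')$.

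The only point requiring care is the conceptual one: one must resist treating the $\gamma$-semi-isomorphism $\nu$ as something more exotic than it is. Because valuations here take values in $\Q$ and $\gamma$ acts trivially on $\Q$, the semilinearity of $\nu^*$ over $k$ is invisible at the level of the valuation, so the computation reduces to the elementary fact that an isomorphism of irreducible schemes intertwines the valuations associated to corresponding prime divisors. No obstacle of substance arises; one could alternatively factor $\nu=\nu_\natural\circ\gamma_!$ as in \eqref{e:nu-natural-bis} and verify the claim first for the $k$-iso\-morphism $\nu_\natural\colon \gamma_*Y_1\to Y_2'$ (where it is standard) and then for the tautological $\gamma$-semi-iso\-morphism $\gamma_!\colon Y_1\to\gamma_*Y_1$ (where it is tautological), but this is just a notational reformulation of the argument above.
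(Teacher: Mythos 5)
Your proof is correct. The paper gives no argument of its own here---it simply refers to Huruguen's Proposition 2.19---and the argument you supply is the expected one: the scheme isomorphism underlying the $\gamma$-semi-isomorphism $\nu$ identifies the discrete valuation ring $\mathcal{O}_{Y_2',\,\nu_*D_1}$ with $\mathcal{O}_{Y_1,\,D_1}$ and hence intertwines the divisorial valuations, the $\gamma$-semilinearity being invisible since the valuations are $\Q$-valued and trivial on $k$; combined with the definition $(\nu_*v_1)(f_2')=v_1(\nu_*^{-1}f_2')$ this is exactly the claimed identity.
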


\begin{proof}
See Huruguen \cite[Proposition 2.19]{Huruguen}.
 \end{proof}

\begin{remark}
Propositions 2.18 and 2.19 of  Huruguen \cite[Section 2.2]{Huruguen}
were proved in his article under certain additional assumptions.
Namely, Huruguen assumes that $k/k_0$ is a Galois extension,
that the triple $(G,Y,\theta)$ has a $k_0$-model $(G_0,Y_0,\theta_0)$,
and that $Y_0$ has a $k_0$-point $y^{(0)}$.
Those assumptions are not used in his proof.
\end{remark}

By abuse of notation, if $D_1\in \sD(Y_1)$ and $D_2'\in\sD(Y_2')$,
we write $\rho_1(D_1)$ for $\rho_1(\val_1(D_1))\in V(Y_1)$
and  $\rho_2'(D_2')$ for $\rho_2'(\val_2'(D_2'))\in V(Y_2')$.

\begin{corollary}[\rm from Lemma \ref{Huruguen-2.18} and Lemma \ref{Huruguen-2.19}]
\label{c:sDY2'}
For any $D_1\in \sD(Y_1)$ we have
\[ \rho_2'(\nu_*D_1)=\veg(\rho_1(D_1)).\]\qed
\end{corollary}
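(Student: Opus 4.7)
The plan is to simply chain together the two preceding lemmas, unwinding the abuse of notation that was introduced just before the corollary. Recall that $\rho_1(D_1)$ is shorthand for $\rho_1(\val_1(D_1))$ and $\rho_2'(\nu_*D_1)$ is shorthand for $\rho_2'(\val_2'(\nu_*D_1))$; so the statement to prove is really
\[ \rho_2'(\val_2'(\nu_*D_1)) = \veg(\rho_1(\val_1(D_1))).\]

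First, I would apply Lemma \ref{Huruguen-2.19} to rewrite $\val_2'(\nu_*D_1)$ as $\nu_*(\val_1(D_1)) \in \Val^B(K(Y_2'))$. This reduces the left-hand side to $\rho_2'(\nu_*(\val_1(D_1)))$. Next, I would apply Lemma \ref{Huruguen-2.18} with the choice $v_1 = \val_1(D_1) \in \Val^B(K(Y_1))$, which gives
\[\rho_2'(\nu_*(\val_1(D_1))) = \veg(\rho_1(\val_1(D_1))).\]
Combining the two equalities yields the desired identity.

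There is no real obstacle here: both lemmas have already been established (or cited from Huruguen \cite{Huruguen}), so the corollary amounts to a one-line concatenation. The only thing to be careful about is to make sure the compositions of maps and the domains match: $\val_1$ sends $\sD(Y_1)$ into $\Val^B(K(Y_1))$, and $\nu_*$ sends this into $\Val^B(K(Y_2'))$, on which $\rho_2'$ is defined; meanwhile Lemma \ref{Huruguen-2.18} is stated in exactly the form needed for the element $\val_1(D_1)$, and $\veg$ acts on $V(Y_1)$ with image landing in $V(Y_2')$ (as is implicit in Corollaries \ref{c:sMY_2'} and \ref{c:sVY_2'}). So the proof is just this two-step substitution.
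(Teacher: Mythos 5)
Your proof is correct and is exactly the argument the paper intends: the corollary is stated as an immediate consequence of Lemmas \ref{Huruguen-2.18} and \ref{Huruguen-2.19}, and your two-step substitution (first rewriting $\val_2'(\nu_*D_1)$ via Lemma \ref{Huruguen-2.19}, then applying Lemma \ref{Huruguen-2.18} to $v_1=\val_1(D_1)$) is precisely the chain the paper leaves implicit with its bare \textsc{qed}. Nothing further is needed.
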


\begin{lemma}\label{l:D1sDY1}
For any $D_1\in \sD(Y_1)$ we have:
\begin{enumerate}
\item[\rm(i)] $\Stab_G(\nu_*D_1)=\tau(\Stab_G(D_1))$;
\item[\rm(ii)] $\vs(\nu_*D_1)=\veg(\vs(D_1))$.
\end{enumerate}
\end{lemma}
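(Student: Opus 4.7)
For part (i), my plan is to work at the level of $k$-points and then invoke smoothness. Since $\nu\colon Y_1\to Y_2'$ is a $\tau$-equivariant $\gamma$-semi-isomorphism, the defining identity
\[
\nu(g\cdot y)=\tau(g)\cdot\nu(y)\quad \text{for }g\in G(k),\ y\in Y_1(k),
\]
yields $\nu(g\cdot D_1)=\tau(g)\cdot\nu(D_1)$ and, since $\nu$ is a bijection on $k$-points and sends irreducible closed subsets to irreducible closed subsets, the stabilizer in $G(k)$ of $\nu_*D_1$ is exactly $\tau(\Stab_G(D_1)(k))$. By the definition of $\tau(H)$ given just after Definition \ref{d:tau-gamma}, this equals $\tau(\Stab_G(D_1))(k)$. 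Since $k$ is algebraically closed and all subgroups in play are smooth, two $k$-subgroups of $G$ with the same group of $k$-points coincide, which gives (i).

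For part (ii), the plan is to combine (i) with a standard fact about the action of $\tau$ on minimal parabolics containing $B$: namely, $\tau(P_\alpha)=P_{\veg(\alpha)}$ for every $\alpha\in S$. Granting this, the chain of equivalences
\[
\alpha\in\vs(\nu_*D_1)\ \Longleftrightarrow\ P_\alpha\not\subset\tau(\Stab_G(D_1))\ \Longleftrightarrow\ \tau^{-1}(P_\alpha)\not\subset\Stab_G(D_1)\ \Longleftrightarrow\ \ve_{\gamma^{-1}}(\alpha)\in\vs(D_1)
\]
yields $\vs(\nu_*D_1)=\veg(\vs(D_1))$.

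To establish $\tau(P_\alpha)=P_{\veg(\alpha)}$, I would argue at the level of Lie algebras. Since $\tau(B)=B$ and $\tau(T)=T$, the differential of $\tau$ preserves $\Lie(B)$ and sends $\Lie(P_\alpha)=\Lie(B)\oplus\Lie(U_{-\alpha})$ to a direct sum $\Lie(B)\oplus\tau_*\Lie(U_{-\alpha})$. The map $\tau$ is $\gamma$-semi-linear, so for $u\in\Lie(U_{-\alpha})$ and $t\in T(k)$, setting $t':=\tau(t)$ and $u':=\tau_*u$, one computes
\[
\Ad(t')\cdot u'=\tau_*(\Ad(t)u)=\tau_*((-\alpha)(t)\cdot u)=\gamma((-\alpha)(\tau^{-1}(t')))\cdot u'=(-\veg(\alpha))(t')\cdot u',
\]
using formula \eqref{e:veg}. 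Hence $u'\in\Lie(U_{-\veg(\alpha)})$, which identifies $\tau(P_\alpha)$ with $P_{\veg(\alpha)}$.

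The main obstacle is the last identification: one must be careful that $\tau$ is not a $k$-morphism, so $\tau_*$ is only $\gamma$-semi-linear on Lie algebras, and the $T$-eigenvalue computation has to track the $\gamma$-twist correctly. Everything else is a direct consequence of the $\tau$-equivariance of $\nu$ recorded earlier in Lemma \ref{l:nu-1-2} and of the definitions of $\Stab_G(D)$ and $\vs$.
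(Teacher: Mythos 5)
Your proof is correct and follows essentially the same route as the paper, which simply notes that (i) follows from the $\tau$-equivariance of $\nu$ and that (ii) follows from (i); your write-up supplies the details the paper leaves implicit, in particular the identity $\tau(P_\alpha)=P_{\veg(\alpha)}$, whose eigenvalue computation you carry out correctly with the $\gamma$-twist from \eqref{e:veg}.
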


\begin{proof}
(i) follows from the fact that the map $\nu\colon Y_1(k)\to Y_2'(k)$
is $\tau$-equivariant, and (ii) follows from (i).
 \end{proof}

\begin{corollary}[\rm from Corollary \ref{c:sDY2'} and Lemma \ref{l:D1sDY1}]
\label{c:OmegaY2'}
\[ \Omone(Y_2')=\veg(\Omone(Y_1))\quad\text{and}\quad \Omt(Y_2')=\veg(\Omt(Y_1)).\]\qed
\end{corollary}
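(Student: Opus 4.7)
The plan is to assemble the corollary directly from Corollary~\ref{c:sDY2'} and Lemma~\ref{l:D1sDY1}(ii), once we observe that the map $\nu_*\colon\sD(Y_1)\to\sD(Y_2')$ is a bijection that matches the fiber structures of $\rho_1\times\vs_1$ and $\rho_2'\times\vs_2'$.

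First I would record that $\nu_*$ is a bijection of color sets. This uses that $\nu\colon Y_1\to Y_2'$ is a $\tau$-equivariant $\gamma$-semi-isomorphism with $\tau(B)=B$, so for $b\in B(k)$ and $y_1\in Y_1(k)$ we have $\nu(b\cdot y_1)=\tau(b)\cdot\nu(y_1)\in B(k)\cdot\nu(y_1)$. Therefore $\nu$ sends $B$-orbits in $Y_1$ bijectively to $B$-orbits in $Y_2'$, and preserves codimension because $\nu$ is an isomorphism of the underlying schemes; in particular it restricts to a bijection $\nu_*\colon\sD(Y_1)\isoto\sD(Y_2')$.

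Next I would spell out the commutative square
\[
\xymatrix@C=50pt{
\sD(Y_1)\ar[r]^{\nu_*}\ar[d]_{\rho_1\times\vs_1} & \sD(Y_2')\ar[d]^{\rho_2'\times\vs_2'}\\
V(Y_1)\times\sP(S)\ar[r]^{\veg\times\veg} & V(Y_2')\times\sP(S),
}
\]
whose commutativity on the first coordinate is Corollary~\ref{c:sDY2'} and on the second coordinate is Lemma~\ref{l:D1sDY1}(ii). Here, via Corollary~\ref{c:sMY_2'} and the fact that $\veg$ preserves $S$, the bottom arrow is a bijection. Since $\nu_*$ is a bijection and the bottom arrow is a bijection, $\nu_*$ induces a bijection between the fibers of $\rho_1\times\vs_1$ over $\omega$ and the fibers of $\rho_2'\times\vs_2'$ over $\veg(\omega)$, preserving cardinalities.

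Finally, restricting to fibers of cardinality one (resp.\ two) and taking images yields $\Omone(Y_2')=\veg(\Omone(Y_1))$ and $\Omt(Y_2')=\veg(\Omt(Y_1))$. There is no real obstacle here: the only point requiring a line of argument is the bijectivity of $\nu_*$ together with the fact that $\veg$ is an automorphism, and then the statement is a purely set-theoretic consequence of the commutative square above.
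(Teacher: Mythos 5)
Your proof is correct and follows exactly the route the paper intends (the paper leaves the corollary as an immediate consequence of Corollary \ref{c:sDY2'} and Lemma \ref{l:D1sDY1}): the bijection $\nu_*$ on colors intertwines $\rho_1\times\vs_1$ with $\rho_2'\times\vs_2'$ via the bijection $\veg$, so it preserves fiber cardinalities and hence carries $\Omone(Y_1)$ and $\Omt(Y_1)$ onto $\Omone(Y_2')$ and $\Omt(Y_2')$. Your explicit justification that $\nu_*$ is a bijection of color sets (from $\tau(B)=B$ and $\nu$ being an isomorphism of schemes) is a detail the paper takes for granted, and it is argued correctly.
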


\begin{proposition}\label{p:Akh}
\begin{align*}
\sX(Y_2)=\veg(\sX(Y_1)\hs ),\quad &\sV(Y_2)=\veg(\sV(Y_1)\hs ),\\
&\Omone(Y_2)=\veg (\Omone(Y_1)\hs ),\quad \Omt(Y_2)=\veg (\Omt(Y_1)\hs ).
\end{align*}
\end{proposition}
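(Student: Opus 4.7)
The plan is to reduce the statement about $Y_2 = G/H_2$ to the already-established statements about $Y_2' = G/H_2'$, using the fact that $Y_2$ and $Y_2'$ are isomorphic as $G$-varieties.

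First, I would recall from the setup that $H_2' = \tau(H_1) = g_\gamma H_2 g_\gamma^{-1}$, where $H_2 = \sigma_\gamma(H_1)$ and $\tau = \inn(g_\gamma)\circ\sigma_\gamma$. Thus $H_2$ and $H_2'$ are conjugate in $G$, so by Lemma \ref{l:isom-conj} there is an isomorphism of $G$-$k$-varieties $\phi_{g_\gamma}\colon Y_2 \isoto Y_2'$. As noted in Subsection \ref{ss:for-Losev}, a $G$-equivariant isomorphism between spherical homogeneous spaces preserves all the combinatorial invariants: it gives $\sX(Y_2) = \sX(Y_2')$, $V(Y_2) = V(Y_2')$, $\sV(Y_2) = \sV(Y_2')$, and via the induced bijection on colors compatible with $\rho$ and $\vs$, it gives $\Omone(Y_2) = \Omone(Y_2')$ and $\Omt(Y_2) = \Omt(Y_2')$.

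Second, I would invoke the corollaries already proved for the ``twisted'' model $Y_2' = G/\tau(H_1)$. Namely, Corollary \ref{c:sMY_2'} gives $\sX(Y_2') = \veg(\sX(Y_1))$; Corollary \ref{c:sVY_2'} gives $\sV(Y_2') = \veg(\sV(Y_1))$; and Corollary \ref{c:OmegaY2'} gives $\Omone(Y_2') = \veg(\Omone(Y_1))$ and $\Omt(Y_2') = \veg(\Omt(Y_1))$. Combining these two steps immediately yields the four required equalities of the proposition.

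The proof is essentially a bookkeeping step — there is no hard calculation to perform at this stage, since all the analytic content (the transformation law for $B$-eigenfunctions, valuations, colors, stabilizers under the $\gamma$-semi-isomorphism $\nu\colon Y_1 \to Y_2'$) has already been absorbed into Corollaries \ref{c:sMY_2'}, \ref{c:sVY_2'}, and \ref{c:OmegaY2'}. The only point one should be slightly careful about is to verify that the invariants are intrinsic enough to be unchanged by a $G$-equivariant isomorphism of spherical homogeneous spaces; this is precisely the content of the discussion in Subsection \ref{ss:for-Losev}, where it is observed that conjugate spherical subgroups produce identical weight lattices, valuation cones, and sets $\Omone$, $\Omt$. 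Thus the only potential obstacle — ensuring that moving from $Y_2'$ to $Y_2$ does not introduce a further twist by $\veg$ — is dealt with by the fact that $\phi_{g_\gamma}$ is a genuine $k$-morphism (not a semi-morphism), hence acts trivially on the combinatorial data.
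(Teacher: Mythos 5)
Your proposal is correct and follows exactly the paper's own argument: the paper likewise notes that $H_2'$ and $H_2$ are conjugate, invokes Lemma \ref{l:isom-conj} to conclude that $Y_2'$ and $Y_2$ are isomorphic as $G$-varieties (hence share combinatorial invariants, as in Subsection \ref{ss:for-Losev}), and then cites Corollaries \ref{c:sMY_2'}, \ref{c:sVY_2'}, and \ref{c:OmegaY2'}. Nothing is missing.
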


\begin{proof}
Since $H_2'$  and $H_2$ are conjugate, by Lemma \ref{l:isom-conj}
the $G$-varieties $Y_2'$ and $Y_2$ are isomorphic,
hence they have the same combinatorial invariants,
and the proposition follows from Corollaries \ref{c:sMY_2'}, \ref{c:sVY_2'}, and \ref{c:OmegaY2'}.
 \end{proof}

Note that Proposition \ref{p:Akh} generalizes Propositions 5.2, 5.3, and 5.4 of Akhiezer \cite {Akhiezer}.
Namely, in the case when $\gamma^2=1$, our Proposition \ref{p:Akh} is equivalent to those results of Akhiezer.
Our proofs are similar to his.

\begin{proposition}\label{p:CF}
With the notation and assumptions of Subsection \ref{ss:H1-H2},
the subgroups $H_1$ and $H_2=\sigma_\gamma(H_1)$ are conjugate
if and only if  $\veg$ preserves the combinatorial invariants of $Y_1$, that is
\begin{equation}\label{e:CF}
\begin{aligned}
\veg(\sX(Y_1)\hs )=\sX(Y_1),\quad &\veg(\sV(Y_1)\hs )=\sV(Y_1),\\
&\veg( \Omone(Y_1)\hs )= \Omone(Y_1),\quad \veg( \Omt(Y_1)\hs )= \Omt(Y_1).
\end{aligned}
\end{equation}
\end{proposition}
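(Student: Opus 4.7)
The plan is to derive the proposition as an immediate consequence of Proposition \ref{p:Akh} together with Losev's Uniqueness Theorem (Proposition \ref{t:Losev}).

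For the ``only if'' direction, I would assume that $H_2 = \sigma_\gamma(H_1)$ is conjugate to $H_1$ in $G(k)$. Then by Lemma \ref{l:isom-conj} the $G$-varieties $Y_1$ and $Y_2$ are isomorphic, so they share the same combinatorial invariants:
\[
\sX(Y_1)=\sX(Y_2),\quad \sV(Y_1)=\sV(Y_2),\quad \Omone(Y_1)=\Omone(Y_2),\quad \Omt(Y_1)=\Omt(Y_2).
\]
Substituting the formulas of Proposition \ref{p:Akh} on the right-hand side yields \eqref{e:CF}.

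For the ``if'' direction, I would assume that $\ve_\gamma$ preserves the combinatorial invariants of $Y_1$. Then Proposition \ref{p:Akh} gives
\[
\sX(Y_2)=\sX(Y_1),\quad \sV(Y_2)=\sV(Y_1),\quad \Omone(Y_2)=\Omone(Y_1),\quad \Omt(Y_2)=\Omt(Y_1).
\]
Now Losev's Uniqueness Theorem (Proposition \ref{t:Losev}), applied to the pair of spherical subgroups $H_1,H_2\subset G$, produces an element $a\in G(k)$ with $H_2 = a H_1 a^{-1}$, which is exactly the desired conjugacy.

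No step here is really an obstacle, since the heavy lifting has already been done: Proposition \ref{p:Akh} transports the invariants of $Y_1$ to those of $Y_2$ via $\ve_\gamma$, and Losev's theorem is precisely the converse of the trivial observation that conjugate spherical subgroups have equal combinatorial invariants. The only point requiring a small check is that $\char k = 0$, which is in force throughout this section, so Losev's theorem indeed applies.
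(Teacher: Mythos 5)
Your argument is correct and coincides with the paper's own proof: both directions combine Lemma \ref{l:isom-conj}, Proposition \ref{p:Akh}, and Losev's Uniqueness Theorem (Proposition \ref{t:Losev}) in exactly the same way. Nothing to add.
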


Proposition \ref{p:CF} generalizes Theorem 3(1) of Cupit-Foutou \cite{CF},
where the case $k_0=\R$ was considered.

\begin{proof}
If $H_1$ and $H_2$ are conjugate, then by Lemma \ref{l:isom-conj}
the homogeneous spaces $Y_1=G/H_1$ and $Y_2=G/H_2$ are isomorphic as $G$-varieties and  hence,
they have the same combinatorial invariants, that is,
\begin{equation}\label{e:AkhL}
\begin{aligned}
\sX(Y_2)=\sX(Y_1),\quad &\sV(Y_2)=\sV(Y_1),\\
&\Omone(Y_2)=\Omone(Y_1),\quad  \Omt(Y_2)=\Omt(Y_1),
\end{aligned}
\end{equation}
and \eqref{e:CF} follows from Proposition \ref{p:Akh}.
Conversely, if equalities \eqref{e:CF} hold, then
by Proposition \ref{p:Akh} the equalities \eqref{e:AkhL} hold,
and by Proposition  \ref{t:Losev} (Losev's Uniqueness Theorem)
the subgroups $H_1$ and $H_2$ are conjugate.
 \end{proof}

\begin{corollary}\label{c:Akh-CF-bis}
With the notation and assumptions of Subsection \ref{ss:H1-H2},
 there exists a  $\sigma_\gamma$-equivariant $\gamma$-semi-automorphism $\mu\colon  Y_1\to Y_1$,
if and only if $\veg$ preserves the combinatorial invariants of $Y_1$, that is,  equalities \eqref{e:CF} hold.
\end{corollary}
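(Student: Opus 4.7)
The plan is to combine Corollary~\ref{c:maincor-3} with Proposition~\ref{p:CF}; this corollary should fall out directly from these two earlier results, so no genuinely new work is required.

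First, I would apply Corollary~\ref{c:maincor-3} to the linear algebraic group $G$, the subgroup $H := H_1$, the $G$-variety $Y := Y_1 = G/H_1$, the automorphism $\gamma \in \Aut(k/k_0)$, and the $\gamma$-semi-automorphism $\tau := \sigma_\gamma$ of $G$ (noting that $\sigma_\gamma$ is indeed a $\gamma$-semi-automorphism by construction in Subsection~\ref{ss:G0}). The equivalence (i)$\Leftrightarrow$(iii) of that corollary then yields: a $\sigma_\gamma$-equivariant $\gamma$-semi-automorphism $\mu \colon Y_1 \to Y_1$ exists if and only if $\sigma_\gamma(H_1) = H_2$ is conjugate to $H_1$ in $G$.

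Next, I would invoke Proposition~\ref{p:CF}, which states precisely that the conjugacy of $H_1$ and $H_2 = \sigma_\gamma(H_1)$ is equivalent to $\veg$ preserving the combinatorial invariants of $Y_1$, that is, to the equalities~\eqref{e:CF}. Stringing the two equivalences together finishes the proof.

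There is no real obstacle here, since all the heavy lifting has already been carried out earlier in the paper: Proposition~\ref{p:CF} rests on Proposition~\ref{p:Akh} (the calculation that $\veg$ transports the invariants of $Y_1$ to those of $Y_2$, which in turn is built on Corollaries~\ref{c:sMY_2'}, \ref{c:sVY_2'}, and \ref{c:OmegaY2'}) together with Losev's Uniqueness Theorem (Proposition~\ref{t:Losev}); and Corollary~\ref{c:maincor-3} is a consequence of the semi-morphism formalism developed in Sections~\ref{s:conj}--\ref{s:homog}, culminating in Lemma~\ref{l:nu-1-2}. The only point one should verify is that replacing $\tau = \inn(g_\gamma) \circ \sigma_\gamma$ by $\sigma_\gamma$ itself causes no trouble in the application of Corollary~\ref{c:maincor-3}, which is immediate since that corollary is stated for an arbitrary $\gamma$-semi-automorphism of $G$.
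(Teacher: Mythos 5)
Your proposal is correct and follows exactly the paper's own proof: Corollary \ref{c:maincor-3} applied with $\tau=\sigma_\gamma$ reduces the existence of the equivariant semi-automorphism to the conjugacy of $H_1$ and $\sigma_\gamma(H_1)$, and Proposition \ref{p:CF} translates that conjugacy into the preservation of the combinatorial invariants. Nothing further is needed.
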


\begin{proof}
By Corollary \ref{c:maincor-3} there exists a $\sigma_\gamma$-equivariant
$\gamma$-semi-automorphism  $\mu$ of $Y_1$
if and only if the subgroup $\sigma_\gamma(H_1)$ of $G$ is conjugate to $H_1$.
Now the corollary follows from Proposition \ref{p:CF}.
 \end{proof}

\section{Equivariant models of automorphism-free\\ spherical homogeneous spaces}
\label{s:aut-free}

\begin{subsec}\label{ss:mt}
Let $k_0$ be a field of characteristic 0 and let $k$ be a fixed algebraic closure of $k_0$
with Galois group $\Gamma=\Gal(k/k_0)$.

Let $G$ be a  connected  reductive group over $k$.
Let $T\subset B\subset G$ be as in Section \ref{s:invariants}.
We consider the based root datum $\BRD(G)=\BRD(G,T,B)$.

Let $G_0$ be a $k_0$-model of $G$.
 For any $\gamma\in\Gamma$, this model defines a $\gamma$-semi-automorphism
\[\sigma_\gamma\colon G\to G,\]
which induces an automorphism $\ve_\gamma\in\Aut\, \BRD(G)$; see Subsection \ref{ss:H1-H2}.
We obtain a homomorphism
\[\ve\colon \Gamma\to\Aut\, \BRD(G),\quad \gamma\mapsto \ve_\gamma\hs .\]

Let $Y=G/H$ be a spherical homogeneous space of $G$.
We consider the combinatorial invariants of $Y$:
\begin{align*}
\sX=\sX(Y)\subset\X^*(T),\quad & \sV=\sV(Y)\subset \Hom_\Z(\sX,\Q),\\
     & \Omone=\Omone(Y),\quad \Omt=\Omt(Y)\subset \Hom_\Z(\sX,\Q)\times\sP(S);
     \end{align*}
see Section \ref{s:invariants}.
Since $\ve_\gamma$ acts on $\BRD(G)$, we can define
\[\ve_\gamma(\sX),\ \ve_\gamma(\sV),\ \ve_\gamma(\Omone),\ \ve_\gamma(\Omt). \]
\end{subsec}

Recall that $Y=G/H$.
By Lemma \ref{l:nu-1-2}(i) we have $\gamma_*Y\cong G/\sigma_\gamma(H)$.

\begin{proposition}\label{p:Huruguen}
If $Y=G/H$ admits a  $G_0$-equivariant $k_0$-model $Y_0$,
then for all $\gamma\in \Gamma$, the automorphism $\veg$ preserves the combinatorial invariants of $Y$, that is
\begin{equation}\label{e:Gamma-stable}
\ve_\gamma(\sX)=\sX,\ \ve_\gamma(\sV)=\sV,\ \ve_\gamma(\Omone)= \Omone,\ \ve_\gamma(\Omt)=\Omt.
\end{equation}
\end{proposition}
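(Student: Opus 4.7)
The plan is to reduce the statement directly to Corollary \ref{c:Akh-CF-bis} via the semilinear formalism developed in Sections \ref{s:models} and \ref{s:action}.

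First, I would unpack what the existence of a $G_0$-equivariant $k_0$-model $(Y_0,\theta_0)$ of $(Y,\theta)$ gives us. By the discussion preceding Lemma \ref{l:model} (applied to $Y_0$ exactly as in Subsection \ref{ss:k-k0-equi} and Subsection \ref{ss:5.2} for $G_0$), the model produces, for every $\gamma\in\Gamma=\Gal(k/k_0)$, a $\gamma$-semi-automorphism
\[ \mu_\gamma\colon Y\to Y \]
which is $\sigma_\gamma$-equivariant, that is, $\mu_\gamma(g\cdot y)=\sigma_\gamma(g)\cdot\mu_\gamma(y)$ for all $g\in G(k)$, $y\in Y(k)$.

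Next, I would fix an arbitrary $\gamma\in\Gamma$ and apply Corollary \ref{c:Akh-CF-bis} to $H_1:=H$ and $Y_1:=Y=G/H$. The existence of the $\sigma_\gamma$-equivariant $\gamma$-semi-automorphism $\mu_\gamma$ from the previous paragraph is precisely the hypothesis of the ``if'' direction of that corollary. Hence the conclusion of the corollary forces $\ve_\gamma$ to preserve all four combinatorial invariants, which are exactly the four equalities in \eqref{e:Gamma-stable}. Since $\gamma\in\Gamma$ was arbitrary, this establishes the proposition.

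The only subtlety to check is that the formalism used to derive Corollary \ref{c:Akh-CF-bis} (which passes through a replacement of $\sigma_\gamma$ by $\tau=\inn(g_\gamma)\circ\sigma_\gamma$ preserving the chosen Borel pair $(T,B)$) really is compatible with the $\ve_\gamma$ associated to $G_0$ as defined in Subsection \ref{ss:mt}. But this compatibility is built into the definition: the automorphism $\ve_\gamma\in\Aut\,\BRD(G)$ induced by $\sigma_\gamma$ does not depend on the choice of $g_\gamma$ (as recalled just after \eqref{e:veg}), so the $\ve_\gamma$ of Subsection \ref{ss:mt} agrees with the $\ve_\gamma$ appearing in Proposition \ref{p:CF} and Corollary \ref{c:Akh-CF-bis}. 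I expect no real obstacle here; this is a bookkeeping verification, and indeed the proposition is essentially a formal corollary of the material in Section \ref{s:action}, asserting the necessity of a condition whose sufficiency will require the substantially harder work in Sections \ref{s:aut-free}--\ref{s:embeddings}.
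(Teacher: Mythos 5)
Your proposal is correct and follows essentially the same route as the paper: the paper likewise extracts from the model the $\sigma_\gamma$-equivariant $\gamma$-semi-automorphisms $\mu_\gamma$ and concludes that $G/H$ and $G/\sigma_\gamma(H)$ are isomorphic $G$-varieties, then invokes Proposition \ref{p:Akh} — which is exactly what the relevant direction of Corollary \ref{c:Akh-CF-bis} packages (via Corollary \ref{c:maincor-3} and Proposition \ref{p:CF}, without needing Losev's uniqueness theorem for this implication). Your remark on the independence of $\ve_\gamma$ from the choice of $g_\gamma$ is the right bookkeeping point and is all that needs checking.
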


Proposition \ref{p:Huruguen} follows from formulas of Huruguen \cite[Section 2.2]{Huruguen}.
For the reader's convenience we prove it here.

\begin{proof}
A $G_0$-equivariant $k_0$-model $Y_0$ of $Y$ defines, for any $\gamma\in \Gamma$,
a $\sigma_\gamma$-equivariant $\gamma$-semi-automorphism $\mu_\gamma$ of $Y$,
hence an isomorphism of $G$-$k$-varieties
$$(\mu_\gamma)_\natural\colon \  G/\sigma_\gamma(H)\cong\gamma_*Y\,\labelto{\sim} Y.$$
We see that the $G$-varieties $G/H$ and $G/\sigma_\gamma(H)$ are isomorphic,
hence they have the same combinatorial invariants.
By Proposition  \ref{p:Akh} the combinatorial invariants
of the spherical homogeneous space $G/\sigma_\gamma(H)$ are
\[\ve_\gamma(\sX),\ \ve_\gamma(\sV),\ \ve_\gamma(\Omone),\ \ve_\gamma(\Omt),\]
and \eqref{e:Gamma-stable} follows.
 \end{proof}

The next theorem is a partial converse of Proposition \ref{p:Huruguen}.

\begin{theorem}\label{t:main}
Let $k,\ k_0,\ \Gamma,\ G,\ H,\ G_0$ be as in \ref{ss:mt}.
Assume that:
\begin{enumerate}
\item[\rm(i)] For all $\gamma\in \Gamma$,  \,$\veg$ preserves the combinatorial invariants of $Y=G/H$,
that is, equalities \eqref{e:Gamma-stable} hold, and
\item[\rm(ii)] $\NGH=H$.

\end{enumerate}
Then $Y$ admits a  $G_0$-equivariant  $k_0$-model $Y_0$.
This $k_0$-model is unique up to a unique isomorphism.
\end{theorem}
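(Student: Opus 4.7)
The plan is to combine condition (i) with Corollary \ref{c:Akh-CF-bis} to obtain, for each $\gamma\in\Gamma$, a $\sigma_\gamma$-equivariant $\gamma$-semi-automorphism $\mu_\gamma\colon Y\to Y$, and then use condition (ii) to assemble these into a descent datum, which will produce the desired $k_0$-model via Lemma \ref{l:model}.

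First, I would invoke Corollary \ref{c:Akh-CF-bis}: under assumption (i), for every $\gamma\in\Gamma$ there exists at least one $\sigma_\gamma$-equivariant $\gamma$-semi-automorphism $\mu_\gamma$ of $Y$. Next, Corollary \ref{c:NGH} together with assumption (ii) gives $\Aut^G(Y)=\{1\}$, so by Lemma \ref{l:Aut-1} such a $\mu_\gamma$ is in fact unique, and moreover the map $\gamma\mapsto\mu_\gamma$ automatically satisfies the cocycle-type compatibility conditions (i) and (ii) of Lemma \ref{l:model}: it is a homomorphism $\Gamma\to\SAut_{k/k_0}(Y)$, and its restriction to some $\Gal(k/k_1)$ for a finite Galois extension $k_1/k_0$ comes from a $G_1$-equivariant $k_1$-model of $Y$.

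To apply Lemma \ref{l:model}, it remains to check that $Y$ is quasi-projective. Since $G$ is linear and $H\subset G$ is a smooth algebraic subgroup, by the classical construction of Chevalley (as recalled before Definition \ref{d:Springer}) the quotient $G/H$ exists as a quasi-projective $k$-variety; this verifies condition (iii) of Lemma \ref{l:model}. Condition (iv) of that lemma is built into the construction of $\mu_\gamma$. Lemma \ref{l:model} then produces a $G_0$-equivariant $k_0$-model $(Y_0,\theta_0)$ of $(Y,\theta)$.

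For uniqueness, suppose $Y_0$ and $Y_0'$ are two $G_0$-equivariant $k_0$-models of $Y$. Each of them gives rise to a family of $\sigma_\gamma$-equivariant $\gamma$-semi-automorphisms $\mu_\gamma$ and $\mu_\gamma'$ of $Y$. By the uniqueness from Lemma \ref{l:Aut-1} (which rests on $\Aut^G(Y)=\{1\}$) we have $\mu_\gamma=\mu_\gamma'$ for all $\gamma$, so the two descent data coincide; hence $Y_0$ and $Y_0'$ are isomorphic as $G_0$-$k_0$-varieties. Any $G_0$-equivariant $k_0$-isomorphism between them base-changes to an element of $\Aut^G(Y)=\{1\}$, so the isomorphism is unique. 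I do not anticipate a serious obstacle here: the entire argument is a straightforward bookkeeping application of the tools (Lemma \ref{l:model}, Lemma \ref{l:Aut-1}, Corollary \ref{c:Akh-CF-bis}, Corollary \ref{c:NGH}) already assembled in earlier sections, with the only mildly delicate point being the verification of the conditions of Lemma \ref{l:model}, which is precisely what Lemma \ref{l:Aut-1} is designed to carry out.
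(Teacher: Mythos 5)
Your proposal is correct and follows essentially the same route as the paper: Corollary \ref{c:Akh-CF-bis} supplies the $\sigma_\gamma$-equivariant semi-automorphisms, Corollary \ref{c:NGH} and Lemma \ref{l:Aut-1} give $\Aut^G(Y)=\{1\}$ and hence conditions (i) and (ii) of Lemma \ref{l:model}, quasi-projectivity of $G/H$ gives condition (iii), and Lemma \ref{l:model} then produces the model. The uniqueness argument via the triviality of $\Aut^G(Y)$ also matches the paper's.
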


\begin{proof}
Let $\gamma\in \Gamma$.
Since  $\veg$ preserves the combinatorial invariants of $Y$,
by Corollary \ref{c:Akh-CF-bis} there exists a {\em $\sigma_\gamma$-equivariant}  $\gamma$-semi-automorphism
\[\mu_\gamma\colon  Y\to Y.\]
Thus condition (iv) of Lemma \ref{l:model} is satisfied.

Since $\NGH=H$, by Corollary \ref{c:NGH}  we have $\Aut^G(Y)=\{1\}$,
and by Lemma \ref{l:Aut-1} conditions (i) and (ii) of Lemma \ref{l:model}
are satisfied.

The variety $Y=G/H$ is quasi-projective (see Springer \cite[Corollary 5.5.6]{Springer}\hs)
and hence, condition (iii) of Lemma \ref{l:model} is satisfied.

By  Lemma \ref{l:model} there exists a  $G_0$-equivariant  $k_0$-model $Y_0$ of $Y$.
Since  $\Aut^G(Y)=\{1\}$, for any given $\gamma\in\Gamma$ the $\gamma$-semi-automorphism $\mu_\gamma$ is unique,
hence the model $Y_0$ is unique up to a unique isomorphism.
 \end{proof}

Recall that a $k_0$-model $G_0$ of a  connected  reductive $k$-group $G$ is called an {\em inner form}
(of a split group) if $\ve_\gamma=1$ for all $\gamma\in\Gamma = \Gal(k/k_0)$.

\begin{lemma}\label{l:inner-cond}
Let $k,\ k_0,\ \Gamma,\ G,\ H,\ G_0$ be as in \ref{ss:mt}.
Then each of the conditions below imply condition (i) of Theorem \ref{t:main}.
\begin{enumerate}
\item[\rm(i)] $G_0$ is an inner form;
\item[\rm(ii)] $G_0$ is absolutely simple (that is, $G$ is simple)
of one of the types $\AAA_1$, $\BB_n$, $\CC_n$, $\EE_7$, $\EE_8$, $\FF_4$, $\GG_2$;
\item[\rm(iii)] $G_0$ is split.
\end{enumerate}
\end{lemma}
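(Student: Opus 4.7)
The plan is to reduce all three conditions to the single claim that $\ve_\gamma\in\Aut\,\BRD(G)$ is the identity for every $\gamma\in\Gamma$. Once this is established, condition (i) of Theorem~\ref{t:main} follows immediately: the identity of $\BRD(G)$ acts trivially on $X=\X^*(T)$ (hence on $V=\Hom_\Z(\sX,\Q)$) and on the set of simple roots $S$ (hence on $\sP(S)$), so it preserves every subset of $V\times\sP(S)$, in particular the combinatorial invariants $\sX$, $\sV$, $\Omone$, $\Omt$ of $Y$. For case (i), the definition of an inner form says precisely that the $\ast$-homomorphism $\ve\colon\Gamma\to\Aut\,\Dyn(G)$ is trivial; since $G$ is semisimple in the setting in which this lemma is applied (cf.~Theorems~\ref{t:CF} and \ref{t:Boris}), the natural restriction map $\Aut\,\BRD(G)\to\Aut\,\Dyn(G)$ is injective, an automorphism of $\BRD(G)$ being realized by a unique pinned automorphism of $G$ and thus determined by its action on $S$. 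Hence $\ve_\gamma$ is trivial on $\BRD(G)$ as well.

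Case (ii) reduces to case (i): the Dynkin diagrams of types $\AAA_1,\BB_n,\CC_n,\EE_7,\EE_8,\FF_4,\GG_2$ admit no nontrivial automorphism, so the homomorphism $\ve\colon\Gamma\to\Aut\,\Dyn(G)$ is forced to be trivial and $G_0$ is automatically an inner form. Case (iii) similarly reduces to case (i): a split model $G_0$ admits a Borel pair $(T_0,B_0)$ defined over $k_0$ with $T_0$ split. After identifying $(T,B)$ with $(T_0,B_0)\times_{k_0}k$, the semi-automorphism $\sigma_\gamma$ from Subsection~\ref{ss:5.2} already preserves $(T,B)$ and, since every character of $T$ comes from a character of the split torus $T_0$ defined over $k_0$, acts trivially on $\X^*(T)=\X^*(T_0)$. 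Thus one may take $g_\gamma=1$ in the construction of Section~\ref{s:action}, and formula~\eqref{e:veg} gives $\ve_\gamma=\id$; in particular $G_0$ is inner and (i) applies.

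The only step that goes beyond unwinding definitions is the compatibility between the two incarnations of $\ve_\gamma$ used in the paper --- as an element of $\Aut\,\Dyn(G)$ in the Introduction and of $\Aut\,\BRD(G)$ in Sections~\ref{s:invariants}--\ref{s:action} --- which rests on the injectivity of the restriction $\Aut\,\BRD(G)\to\Aut\,\Dyn(G)$ for semisimple $G$. This is a standard structural fact about based root data and presents no serious obstacle; everything else in the proof is a direct verification from the definitions of inner form, split form, and the list of types with no diagram automorphisms.
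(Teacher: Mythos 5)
Your proof is correct and follows essentially the same route as the paper: (i) is immediate from the definition, and (ii) and (iii) reduce to (i). The only (harmless) extra work is your detour in case (i) through the injectivity of $\Aut\,\BRD(G)\to\Aut\,\Dyn(G)$; this is unnecessary because the paper's working definition of an inner form in Section \ref{s:aut-free} is already that $\ve_\gamma=1$ as an element of $\Aut\,\BRD(G)$ (so (i) holds for reductive, not just semisimple, $G$), although that injectivity fact is indeed what is implicitly needed, and correctly supplied by you, to pass from the Dynkin diagram to $\BRD(G)$ in case (ii).
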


\begin{proof}
(i) If $G_0$ is an inner form, then $\ve_\gamma=1$ for every $\gamma\in\Gamma$,
hence condition (i) of Theorem \ref{t:main} is clearly satisfied.

(ii) In these cases the Dynkin diagram  $\Dyn(G)$ has no nontrivial automorphisms,
hence $\Gamma$ acts trivially on $\Dyn(G)$ and on $\BRD(G)$.
We see that (ii) implies (i).

(iii) In this case clearly $G_0$ is an inner form.
 \end{proof}

\begin{corollary}
If  $\NGH=H$ and at least one of the conditions (i--iii)
of Lemma \ref{l:inner-cond} is satisfied,
then $Y$ admits a  $G_0$-equivariant $k_0$-model, and this $k_0$-model is unique. \qed
\end{corollary}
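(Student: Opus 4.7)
The plan is to deduce this corollary directly by assembling the two results immediately preceding it: Lemma \ref{l:inner-cond} and Theorem \ref{t:main}. First I would check that both hypotheses of Theorem \ref{t:main} are satisfied under the assumptions of the corollary. Condition (ii) of Theorem \ref{t:main}, namely $\NGH=H$, is one of the standing hypotheses of the corollary, so it holds by assumption. For condition (i) of Theorem \ref{t:main} — that for every $\gamma\in\Gamma$ the automorphism $\veg$ preserves the combinatorial invariants $\sX,\sV,\Omone,\Omt$ of $Y$ — I would invoke Lemma \ref{l:inner-cond}, which asserts exactly that each of the conditions (i), (ii), (iii) listed there implies condition (i) of Theorem \ref{t:main}; since one of these is assumed, the required preservation of combinatorial invariants follows.

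Once both hypotheses of Theorem \ref{t:main} are verified, the conclusion of that theorem gives both existence and uniqueness (up to a unique isomorphism) of the desired $G_0$-equivariant $k_0$-model $Y_0$ of $Y$, which is precisely the assertion of the corollary. There is no essential obstacle in the argument — the corollary is a purely formal combination of the two stated results — so the proof reduces to citing Lemma \ref{l:inner-cond} to reduce to the hypothesis of Theorem \ref{t:main}, and then applying Theorem \ref{t:main} itself.
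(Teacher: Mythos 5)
Your argument is correct and is exactly the intended one: the paper states this corollary with no written proof (just \qed), precisely because it is the immediate combination of Lemma \ref{l:inner-cond} (supplying hypothesis (i) of Theorem \ref{t:main}) with Theorem \ref{t:main} itself. Nothing further is needed.
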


\begin{remark}
Assume that $k=\R$ and $\NGH=H$.
The assertion that if condition (iii) of Lemma \ref{l:inner-cond}
is satisfied, then $Y$ has a unique  $G_0$-equivariant  $\R$-model $Y_0$,
is Theorem 4.12 of Akhiezer and Cupit-Foutou \cite{ACF}.
The similar assertion when only condition (i) of Lemma \ref{l:inner-cond}
is satisfied, is Theorem 1.1 of Akhiezer \cite{Akhiezer}.
Our article is inspired by this result of Akhiezer,
and our proof of Theorem \ref{t:main} is similar to his proof.
\end{remark}

\section{Equivariant models of spherically closed spherical homogeneous spaces}
\label{s:sph-closed}

In this section we do not assume that $\NGH=H$.

\begin{subsec}
Let $k,\ G,\  H,\ Y=G/H,\ T\subset B\subset G$ be as in Section \ref{s:invariants},
in particular, $k$ is an algebraically closed
field of characteristic 0.
The group $\Aut^G(Y)=\NGH/H$ acts on $Y$
and on the set $\sD$ of colors of $Y$.

Recall that a spherical homogeneous space $Y=G/H$ is called {\em spherically closed}
if $\NGH/H$ acts on $\sD$ {\em faithfully}, that is, if the homomorphism
\[ \Aut^G(Y)\to \Aut(\sD)\]
is injective. (Here $\Aut(\sD)$ denotes the group of permutations of the finite set $\sD$.)

Let $k_0\subset k$ be a subfield such that $k$ is an algebraic closure of $k_0$.
Let $G_0$ be a $k_0$-model of $G$,
and for $\gamma\in\Gamma:=\Gal(k/k_0)$
let   $\sigma_\gamma\colon G\to G$ be the $\gamma$-semi-automorphism defined by $G_0$.
Let $\ve_\gamma\colon \X^*(T)\to \X^*(T)$ be as in \eqref{e:veg}.
\end{subsec}

\begin{theorem} \label{t:CF-CF}
Let  $k$ be an algebraically closed field of characteristic 0.
Let $G$, $H$, $Y=G/H$ be as in Section  \ref{s:invariants}.
Let  $G_0$ be a $k_0$-model of $G$, where $k_0\subset k$ is a subfield
such that $k$ is an algebraic closure of $k_0$.
Assume that
\begin{enumerate}
\item[\rm(i)] $\veg$ preserves the combinatorial invariants of $Y$ for all $\gamma\in\Gamma$, and
\item[\rm(ii)] $Y$ is spherically closed.
\end{enumerate}
Then $Y$ admits a  $G_0$-equivariant  $k_0$-model.
\end{theorem}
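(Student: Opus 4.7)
The plan is to apply Lemma \ref{l:model}. Condition (iii) there is immediate since $Y=G/H$ is quasi-projective (Springer). The work consists in constructing, for each $\gamma\in\Gamma$, a $\sigma_\gamma$-equivariant $\gamma$-semi-automorphism $\mu_\gamma$ of $Y$ so that $\gamma\mapsto\mu_\gamma$ is a homomorphism (conditions (i) and (iv) of Lemma \ref{l:model}) and so that its restriction to $\Gal(k/k_1)$ for some finite extension $k_1/k_0$ comes from a $k_1$-model (condition (ii)).

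By Corollary \ref{c:Akh-CF-bis}, assumption (i) of the theorem furnishes, for each $\gamma\in\Gamma$, a nonempty set $M_\gamma$ of $\sigma_\gamma$-equivariant $\gamma$-semi-automorphisms of $Y$; this $M_\gamma$ is a torsor under $\Aut^G(Y)$. Any $\mu_\gamma\in M_\gamma$ induces a permutation $[\mu_\gamma]$ of $\sD$ that covers the action of $\veg$ on $\Omega$ via $\zeta$ (by the computations of Section \ref{s:action}). Let $\Pi_\gamma$ denote the set of all such covering permutations of $\sD$; it is a torsor under $\Aut_\Omega(\sD)$. Here is where spherical closedness enters decisively: by Corollary \ref{c:closed-isom} the natural homomorphism $\Aut^G(Y)\to\Aut_\Omega(\sD)$ is an isomorphism, and therefore the equivariant map $M_\gamma\to\Pi_\gamma$ is a bijection.

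The next step is to construct a $\Gamma$-action on $\sD$ that lifts the action $\gamma\mapsto\veg$ on $\Omega$. Since $\Omt$ is finite, for each $\omega\in\Omt$ one arbitrarily labels the two elements of $\zeta^{-1}(\omega)$ as $D_\omega^+,D_\omega^-$ and sets $\gamma\cdot D_\omega^{\pm}:=D_{\veg(\omega)}^{\pm}$; on $\zeta^{-1}(\Omone)$ the lift is forced. One checks directly that this defines a group action. Transporting this datum back through the bijections $M_\gamma\leftrightarrow\Pi_\gamma$ yields $\mu_\gamma\in M_\gamma$ for every $\gamma\in\Gamma$. Both $\mu_\beta\circ\mu_\gamma$ and $\mu_{\beta\gamma}$ lie in $M_{\beta\gamma}$ and, by construction, induce the same permutation of $\sD$; injectivity of $\Aut^G(Y)\to\Aut(\sD)$ (spherical closedness again) then forces $\mu_{\beta\gamma}=\mu_\beta\circ\mu_\gamma$. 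This verifies conditions (i) and (iv) of Lemma \ref{l:model}.

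For condition (ii), Corollary \ref{c:anon-anon} supplies a finite extension $k_1'/k_0$ and a $G_0\times_{k_0}k_1'$-equivariant $k_1'$-model $Y_1'$ of $(Y,\theta)$, giving a homomorphism $\mu'\colon\Gal(k/k_1')\to\SAut(Y)$ with $\mu'_\gamma\in M_\gamma$. Both actions of $\Gal(k/k_1')$ on the finite set $\sD$, via $\mu$ and via $\mu'$, factor through a common finite quotient, say $\Gal(k_1/k_1')$. On $\Gal(k/k_1)$ the induced permutations of $\sD$ coincide, so by spherical closedness $\mu_\gamma=\mu'_\gamma$ for all $\gamma\in\Gal(k/k_1)$. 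Hence $\mu|_{\Gal(k/k_1)}$ comes from the $k_1$-model $Y_1'\times_{k_1'}k_1$, and Lemma \ref{l:model} then delivers a $G_0$-equivariant $k_0$-model. The main obstacle throughout is the coherent-choice step of assembling individual $\mu_\gamma$'s into a homomorphism; this coherence is possible essentially only under spherical closedness, which via Corollary \ref{c:closed-isom} reduces the problem to lifting a $\Gamma$-action from $\Omega$ to the finite set $\sD$, a task that a simple labeling suffices to accomplish.
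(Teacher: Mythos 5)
Your proposal is correct and follows essentially the same route as the paper: existence of some $\mu_\gamma$ via Corollary \ref{c:Akh-CF-bis}, a coherent lift of the $\Gamma$-action from $\Omega$ to $\sD$ by an explicit labeling of the two-element fibers (this is exactly the paper's Lemma \ref{l:perm-cover}), transfer back to semi-automorphisms using the isomorphism $\Aut^G(Y)\isoto\Aut_\Omega(\sD)$ of Corollary \ref{c:closed-isom} (the paper phrases this as surjectivity from Theorem \ref{t:Losev-3} plus injectivity from spherical closedness, which is the same content as your torsor bijection $M_\gamma\leftrightarrow\Pi_\gamma$), and finally the finite-extension check feeding into Lemma \ref{l:model}.
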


Theorem \ref{t:CF-CF} generalizes the existence assertion of Theorem \ref{t:main}.
It was inspired by Corollary 1 of Cupit-Foutou \cite[Section 2.5]{CF}, where the case $k_0=\R$ was considered.

In order to prove the theorem we need a few lemmas.

\begin{lemma}\label{l:perm-cover}
Let $\zeta\colon \sD\to \Omega$ be a mapping of nonempty finite sets.
Let $\Gamma$ be a group acting on $\Omega$ by a homomorphism
\[s\colon\Gamma\to \Aut(\Omega),\quad \gamma\mapsto s_\gamma.\]
Assume that for every $\gamma\in\Gamma$ there exists
a permutation $m_\gamma\colon \sD\to \sD$ covering $s_\gamma$,
that is, such that the following diagram commutes:
\[
\xymatrix{
\sD\ar[r]^{m_\gamma} \ar[d]_\zeta  &\sD\ar[d]^\zeta \\
\Omega\ar[r]^{s_\gamma}                 & \Omega
}
\]
Then there exists a {\emm homomorphism} $m'\colon\Gamma\to\Aut(\sD)$ such that:
\begin{enumerate}
\item[\rm(i)] for every $\gamma\in\Gamma$ the permutation  $m'_\gamma$ covers $s_\gamma$;
\item[\rm(ii)] for every $\gamma\in\Gamma$ we have $m'_\gamma=a_\gamma\circ m_\gamma$,
where $a_\gamma\in\Aut_\Omega(\sD)$;
\item[\rm(iii)]  $m'_\gamma=\id_\sD$ for all $\gamma\in\ker\hs s$.
\end{enumerate}
\end{lemma}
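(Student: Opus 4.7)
The plan is to construct $m'$ explicitly by picking a set-theoretic trivialization of the 2-to-1 part of $\zeta$. By Corollary \ref{c:fiber-2}, every fiber of $\zeta\colon \sD\to\Omega$ has one or two elements, so there are canonical partitions $\Omega=\Omone\sqcup\Omt$ and $\sD=\sD^{(1)}\sqcup\sD^{(2)}$ where $\sD^{(i)}:=\zeta^{-1}(\Omega^{(i)})$. Since these partitions are intrinsic and each $m_\gamma$ is assumed to cover $s_\gamma$, each $s_\gamma$ must preserve $\Omone$ and $\Omt$ separately. Moreover $\zeta$ restricts to a bijection $\sD^{(1)}\isoto\Omone$ and to a 2-to-1 map $\sD^{(2)}\to\Omt$.

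First I would pick, for each $\omega\in\Omt$, a labeling of the two points of $\zeta^{-1}(\omega)$: concretely, a set-theoretic section $\sigma\colon\Omt\to\sD^{(2)}$ of $\zeta$, with $\bar\sigma(\omega)$ denoting the other point of the fiber above $\omega$. Then define $m'_\gamma\colon\sD\to\sD$ by taking the unique covering of $s_\gamma|_{\Omone}$ on $\sD^{(1)}$, and by the rules $m'_\gamma(\sigma(\omega))=\sigma(s_\gamma\omega)$ and $m'_\gamma(\bar\sigma(\omega))=\bar\sigma(s_\gamma\omega)$ on $\sD^{(2)}$. By construction $m'_\gamma$ is a permutation of $\sD$ covering $s_\gamma$. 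The homomorphism property $m'_{\beta\gamma}=m'_\beta\circ m'_\gamma$ holds on $\sD^{(1)}$ because the unique covering of a composition is the composition of the unique coverings, and on $\sD^{(2)}$ because $\sigma(s_\beta s_\gamma\omega)=\sigma(s_{\beta\gamma}\omega)$ yields $m'_\beta\circ m'_\gamma=m'_{\beta\gamma}$ on $\sigma(\Omt)$, and similarly on $\bar\sigma(\Omt)$.

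Assertions (ii) and (iii) are then immediate. For (ii), set $a_\gamma:=m'_\gamma\circ m_\gamma^{-1}$; since both $m'_\gamma$ and $m_\gamma$ cover $s_\gamma$, we have $\zeta\circ a_\gamma=\zeta$, hence $a_\gamma\in\Aut_\Omega(\sD)$. For (iii), if $\gamma\in\ker s$ then $s_\gamma=\id_\Omega$, and the defining formulas give $m'_\gamma=\id$ on both $\sD^{(1)}$ and $\sD^{(2)}$. I do not anticipate a real obstacle: the only substantive input is that fibers of $\zeta$ have size at most two, which is exactly what allows the extension $1\to\Aut_\Omega(\sD)\to E\to s(\Gamma)\to 1$, where $E$ is the group of permutations of $\sD$ that descend to permutations of $\Omega$ via $\zeta$, to split canonically once a labeling of the two points in each 2-element fiber has been chosen.
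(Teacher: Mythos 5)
Your construction is essentially the paper's proof: both arguments choose, once and for all, a labelling of the elements of each fiber of $\zeta$ and then define $m'_\gamma$ to transport labels along $s_\gamma$, after which (i)--(iii) are immediate. The one discrepancy is that you invoke Corollary \ref{c:fiber-2} to assume every fiber has at most two elements, whereas the lemma is stated for an arbitrary map of nonempty finite sets; this restriction is both unnecessary and slightly narrower than the statement. The paper avoids it by first reducing to a single $\Gamma$-orbit in $\Omega$, observing that the existence of the covering permutations $m_\gamma$ forces all fibers over that orbit to have the same cardinality $n$, and then labelling each fiber by $\{1,\dots,n\}$ rather than by a two-element set --- your section-plus-complement $\{\sigma(\omega),\bar\sigma(\omega)\}$ is exactly the case $n\le 2$ of this, and your argument generalizes verbatim.
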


\begin{proof}
We may and shall assume that $\Gamma$ acts transitively on $\Omega$.
Let $\omega,\omega' \in \Omega$; then there exists $\gamma\in \Gamma$
such that $s_\gamma(\omega)=\omega'$.
By hypotheses there exists $m_\gamma\in\Aut(\sD)$ covering $s_\gamma$\hs.
Then $m_\gamma$ induces a bijection
$\zeta^{-1}(\omega)\to\zeta^{-1}(\omega')$, hence the cardinalities
of $\zeta^{-1}(\omega)$ and  $\zeta^{-1}(\omega')$ are equal.
We see that $\omega\mapsto |\zeta^{-1}(\omega)|$ is a constant function on $\Omega$; we denote its value by $n$.
For each $\omega\in \Omega$ we fix some bijection between $\zeta^{-1}(\omega)$ and the set $\{1,\dots,n\}$;
we denote the element of $\zeta^{-1}(\omega)\subset\sD$ corresponding to $i\in \{1,\dots,n\}$
by $d_\omega^{(i)}$.
Then we {\em construct} $m'_\gamma\in\Aut(\sD)$ by setting
\[ m'_\gamma(\hs d_\omega^{(i)}\hs )=d_{s_\gamma(\omega)}^{(i)}.\]
Since $s\colon\gamma\mapsto s_\gamma$ is a homomorphism, we see that $m'\colon\gamma\mapsto m'_\gamma$
is a homomorphism, and clearly $m'_\gamma$ covers $s_\gamma$, which proves (i).
Set $a_\gamma=m'_\gamma\circ m_\gamma^{-1}$; then clearly (ii) holds, and the assertion (iii) holds
by construction.
 \end{proof}

\begin{subsec}\label{ss:zeta-Omega-s-gamma}
Write
\[ \zeta=\rho\times\vs\colon\  \sD\to V\times\sP(S),\quad \Omega=\im\, \zeta,\quad
           s_\gamma=\ve_\gamma|_\Omega\colon \,\Omega\to \Omega;\]
then the map
\[\Gamma\to \Aut(\Omega),\quad \gamma\mapsto s_\gamma\]
is a homomorphism.
Assume that for all $\gamma\in \Gamma$ there exists a $\sigma_\gamma$-equivariant
$\gamma$-semi-automorphism $\mu\colon Y\to Y$, that is,
a $\gamma$-semi-automorphism satisfying
\begin{equation}\label{e:s-g-eq}
\mu(g\cdot y)=\sigma_\gamma(g)\cdot \mu(y)\quad\text{for all }g\in G(k),\ y\in Y(k).
\end{equation}
The following lemma is obvious:
\end{subsec}

\begin{lemma}\label{l:gam-sig-mu}
If $\gamma,\delta\in\Aut(k/k_0)$, $\mu$ is a $\sigma_\gamma$-equivariant
$\gamma$-semi-automorphism, and $\nu$ is  a $\sigma_\delta$-equivariant $\delta$-semi-automorphism,
then $\mu\nu$ is a $\sigma_{\gamma\delta}$-equivariant $\gamma\delta$-semi-automorphism
and $\mu^{-1}$  is a  $\sigma_{\gamma^{-1}}$-equivariant $\gamma^{-1}$-semi-automorphism. \qed
\end{lemma}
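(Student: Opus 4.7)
The plan is to verify the two assertions by direct checks on $k$-points, using the compositional structure of semi-morphisms established in Section \ref{s:conj} together with the homomorphism property \eqref{e:beta-gamma-sigma} of $\gamma \mapsto \sigma_\gamma$. Neither assertion requires any new idea; the work is purely bookkeeping, as the lemma's label \emph{obvious} suggests.

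First I would handle the semi-morphism aspect. By the composition rule $(\beta,\mu) \circ (\gamma,\nu) = (\beta\gamma, \mu \circ \nu)$ for semi-morphisms and the corresponding statement for inverses, $\mu \circ \nu$ is automatically a $\gamma\delta$-semi-isomorphism of $Y$ and $\mu^{-1}$ is a $\gamma^{-1}$-semi-isomorphism of $Y$. So it remains only to check equivariance with respect to the appropriate $\sigma$.

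Next I would verify the equivariance of $\mu \circ \nu$. Since $k$ is algebraically closed and $Y$ is reduced, it suffices (by the remark following Definition \ref{d:g-semi-morphism}) to check the identity on $k$-points. Starting from $(\mu\nu)(g \cdot y)$ for $g \in G(k)$, $y \in Y(k)$, I would apply the $\sigma_\delta$-equivariance of $\nu$ and then the $\sigma_\gamma$-equivariance of $\mu$, and finish using $\sigma_\gamma \circ \sigma_\delta = \sigma_{\gamma\delta}$ from \eqref{e:beta-gamma-sigma}. This is a three-line calculation.

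Finally, I would verify the equivariance of $\mu^{-1}$. Given $g \in G(k)$ and $y \in Y(k)$, write $y = \mu(y')$ with $y' = \mu^{-1}(y)$; applying the $\sigma_\gamma$-equivariance of $\mu$ to the point $\sigma_{\gamma^{-1}}(g) \cdot y'$ and using $\sigma_\gamma \circ \sigma_{\gamma^{-1}} = \sigma_1 = \id_G$ gives $g \cdot y = \mu(\sigma_{\gamma^{-1}}(g) \cdot y')$, and applying $\mu^{-1}$ yields $\mu^{-1}(g \cdot y) = \sigma_{\gamma^{-1}}(g) \cdot \mu^{-1}(y)$, as required. There is no real obstacle here; the only thing to be mindful of is invoking \eqref{e:beta-gamma-sigma} correctly so that the group-law computations in $G(k)$ line up with the semilinearity convention fixed in Section \ref{s:G}.
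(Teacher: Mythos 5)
Your verification is correct and is exactly the routine check the paper has in mind — the paper itself omits the proof, declaring the lemma obvious, and your computation (composition/inversion of semi-automorphisms from Subsection \ref{ss:k-k0-equi} plus the identity \eqref{e:beta-gamma-sigma} applied on $k$-points) is the intended argument. Nothing further is needed.
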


\begin{subsec}\label{ss:m-mu}
Consider $\sigma_\gamma(T)\subset\sigma_\gamma(B)\subset G$.
There exists $g_\gamma\in G(k)$ such that if we set $\sigma_\gamma'=\inn(g_\sigma)\circ \sigma_\gamma$,
then
\begin{equation}\label{e:g-gamma}
\sigma_\gamma'(T)=T\quad\text{and}\quad \sigma_\gamma'(B)=B.
\end{equation}
Let $\mu\colon Y\to Y$ be  as in \eqref{e:s-g-eq}.
We define a $\gamma$-semi-automorphism
\[\mu'=g_\gamma\circ\mu\colon Y\to Y,\quad y\mapsto g_\gamma\cdot \mu(y)\quad \text{for }y\in Y(k).\]
Then for $g\in G(k),\ y\in Y(k)$, we have
\begin{equation}\label{e:sigma'-mu'}
\begin{aligned}
\mu'(g\cdot y)=g_\gamma\cdot\mu(g\cdot y)&=g_\gamma\cdot \sigma_\gamma(g)\cdot\mu(y)\\
&=(g_\gamma\cdot\sigma_\gamma(g)\cdot g_\gamma^{-1})\cdot(g_\gamma\cdot \mu(y)) =\sigma_\gamma'(g)\cdot \mu'(y).
\end{aligned}
\end{equation}

Let $D\in \sD=\sD(Y)$ be a color, this means that $D$ is the closure of a codimension one $B$-orbit in $Y$.
Since $\sigma_\gamma'(B)=B$, it follows from \eqref{e:sigma'-mu'} that the divisor $\mu'(D)$ in $Y$
is the closure of a codimension one $B$-orbit, that is, a color.
We obtain a permutation
\begin{equation}\label{e:mu-m}
m_\mu\colon \sD\to \sD,\quad D\mapsto \mu'(D),
\end{equation}
covering $s_\gamma$.
Since $g_\gamma$ for which \eqref{e:g-gamma} holds
is defined uniquely up to multiplication on the left by an element $t\in T(k)\subset B(k)$,
we see that $m_\mu$ depends only on $\mu$ and does not depend on the choice of $g_\gamma$.
\end{subsec}

\begin{lemma}\label{l:homomorphism-m}
The map $\mu\mapsto m_\mu$ is a homomorphism: for $\gamma,\mu,\delta,\nu$ as in Lemma \ref{l:gam-sig-mu},
we have $m_{\mu\circ\nu}=m_\mu\circ m_\nu$.
\end{lemma}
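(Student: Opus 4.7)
The plan is a direct computation showing $(\mu\nu)' = \mu' \circ \nu'$ as $\gamma\delta$-semi-automorphisms of $Y$, from which the conclusion $m_{\mu\nu} = m_\mu \circ m_\nu$ follows immediately from the definition \eqref{e:mu-m} of $m_\mu$ applied to each color $D \in \sD$.

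First I would fix elements $g_\gamma, g_\delta \in G(k)$ making $\sigma_\gamma' = \inn(g_\gamma) \circ \sigma_\gamma$ and $\sigma_\delta' = \inn(g_\delta) \circ \sigma_\delta$ preserve the pair $(T,B)$, and set $\mu' = g_\gamma \circ \mu$ and $\nu' = g_\delta \circ \nu$ as in \ref{ss:m-mu}. The key identity to establish at the level of semi-automorphisms of $G$ is
\begin{equation*}
\sigma_\gamma' \circ \sigma_\delta' = \inn(g_\gamma) \circ \sigma_\gamma \circ \inn(g_\delta) \circ \sigma_\delta = \inn\bigl(g_\gamma \cdot \sigma_\gamma(g_\delta)\bigr) \circ \sigma_{\gamma\delta},
\end{equation*}
using $\sigma_\gamma \circ \inn(g_\delta) = \inn(\sigma_\gamma(g_\delta)) \circ \sigma_\gamma$ and $\sigma_\gamma \circ \sigma_\delta = \sigma_{\gamma\delta}$ from \eqref{e:beta-gamma-sigma}. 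Since the left-hand side stabilizes $T$ and $B$ (each factor does), the element $g_{\gamma\delta} := g_\gamma \cdot \sigma_\gamma(g_\delta)$ is a legitimate choice for defining $(\mu \circ \nu)'$, and because $m_{\mu\nu}$ does not depend on the choice of $g_{\gamma\delta}$ (as noted in \ref{ss:m-mu}), we are free to use this one.

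Next I would compute both sides on an arbitrary $k$-point $y \in Y(k)$. By definition,
\begin{equation*}
(\mu \circ \nu)'(y) = g_{\gamma\delta} \cdot \mu(\nu(y)) = g_\gamma \cdot \sigma_\gamma(g_\delta) \cdot \mu(\nu(y)),
\end{equation*}
while the $\sigma_\gamma$-equivariance of $\mu$ gives
\begin{equation*}
\mu'(\nu'(y)) = g_\gamma \cdot \mu\bigl(g_\delta \cdot \nu(y)\bigr) = g_\gamma \cdot \sigma_\gamma(g_\delta) \cdot \mu(\nu(y)).
\end{equation*}
The two expressions agree, so $(\mu \circ \nu)' = \mu' \circ \nu'$ as maps on $k$-points, hence as morphisms of schemes since $Y$ is reduced and $k$ is algebraically closed.

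Finally, applying this to a color $D \in \sD$ we get $m_{\mu \circ \nu}(D) = (\mu \circ \nu)'(D) = \mu'(\nu'(D)) = \mu'(m_\nu(D)) = m_\mu(m_\nu(D))$, which is the desired identity. I do not expect a genuine obstacle: the whole lemma reduces to the standard composition law for semilinear automorphisms of $G$ combined with the defining equivariance \eqref{e:s-g-eq}; the only subtlety worth flagging is invoking the independence of $m_\mu$ on the choice of $g_\gamma$ to justify using the compatible composite $g_{\gamma\delta} = g_\gamma \cdot \sigma_\gamma(g_\delta)$ rather than an arbitrary one.
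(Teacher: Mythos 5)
Your proof is correct and is precisely the "straightforward calculation" the paper leaves to the reader: the identity $\sigma_\gamma'\circ\sigma_\delta'=\inn\bigl(g_\gamma\cdot\sigma_\gamma(g_\delta)\bigr)\circ\sigma_{\gamma\delta}$ together with the $\sigma_\gamma$-equivariance of $\mu$ gives $(\mu\circ\nu)'=\mu'\circ\nu'$, and the independence of $m_{\mu\nu}$ from the choice of $g_{\gamma\delta}$ (noted in \ref{ss:m-mu}) justifies using the composite element. Nothing is missing.
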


\begin{proof} A straightforward calculation. \end{proof}

\begin{subsec}  {\em Proof of Theorem {\ref{t:CF-CF}}.}
Let $\gamma\in\Gamma$.
Since  $\veg$ preserves the combinatorial invariants of $Y=G/H$,
by Corollary \ref{c:Akh-CF-bis} there exists a $\sigma_\gamma$-equivariant $\gamma$-semi-automorphism
$\mu_\gamma\colon Y\to Y$.
Set
\[ m_\gamma=m_{\mu_\gamma}\in\Aut(\sD), \]
see Subsection \ref{ss:m-mu}; then $m_\gamma$ covers $s_\gamma$,
where $s_\gamma\in\Aut(\Omega)$ is the restriction of $\veg$ to $\Omega$.
By Lemma \ref{l:perm-cover} there exists a {\em homomorphism}
\[ m' \colon \Gamma \to\Aut(\sD),\quad \gamma\mapsto m'_\gamma\]
such that for every $\gamma\in\Gamma$ the permutation $m'_\gamma\in\Aut(\sD)$
covers $s_\gamma$ (property (i)\hs) and
 we have $m'_\gamma=a_\gamma\circ m_\gamma$, where $a_\gamma\in\Aut_\Omega(\sD)$ (property (ii)\hs).
By Theorem \ref{t:Losev-3} there exists an automorphism
$\atil_\gamma\in \Aut^G(Y)$ inducing  $a_\gamma$ on $\sD$.
We set
\[\mu'_\gamma=\atil_\gamma\circ\mu_\gamma\hs.\]
Then $\mu'_\gamma$ is a $\sigma_\gamma$-equivariant
 $\gamma$-semi-automorphism of $Y$ and hence, condition (iv) of  Lemma \ref{l:model} is satisfied.
By Lemma \ref{l:homomorphism-m}, $\mu'_\gamma$ acts on $\sD$ by $a_\gamma\circ m_\gamma=m'_\gamma$.

Let $\gamma,\delta\in \Gamma$; then $\mu'_\gamma\hs \mu'_\delta\hs (\mu'_{\gamma\delta})^{-1}\in\Aut^G(Y)$
and by Lemma \ref{l:homomorphism-m} it acts on $\sD$ by
 $m'_\gamma\hs m'_\delta\hs (m'_{\gamma\delta})^{-1}=\id_\sD$.
Since $Y$ is spherically closed,
we conclude that $\mu'_\gamma\hs \mu'_\delta\hs (\mu'_{\gamma\delta})^{-1}=\id_Y$,
hence $\mu'_{\gamma\delta}=\mu'_\gamma\circ\mu'_\delta$.
Thus the map $\gamma\mapsto\mu'_\gamma$ is a homomorphism and hence,
condition (i) of   Lemma \ref{l:model} is satisfied.

By Corollary \ref{c:anon-anon}, the $G$-variety $Y$ admits a  $G_{k_2}$-equivariant $k_2$-model $Y_2$
over some finite Galois extension $k_2/k_0$ in $k$.
Let $\Gamma_2=\Gal(k/k_2)$, and for $\gamma\in \Gamma_2$ let $\mu''_\gamma$
denote the $\gamma$-semi-automorphism of $Y$ defined by the
$k_2$-model $Y_2$.
After passing to a finite extension, we may assume that for $\gamma\in \Gamma_2$
we have $s_\gamma=\id_\Omega$, and by property (iii) of Lemma \ref{l:perm-cover} we have $m'_\gamma=\id_\sD$\hs .
Moreover, we may assume that for $\gamma\in \Gamma_2$ the semi-automorphism  $\mu''_\gamma$ acts trivially on $\sD$.
It follows that $(\mu''_\gamma)^{-1}\mu'_\gamma$ acts trivially on $\sD$,
and clearly  $(\mu''_\gamma)^{-1}\mu'_\gamma\in \Aut^G(Y)$.
Since $Y$ is spherically closed, we conclude that $(\mu''_\gamma)^{-1} \mu'_\gamma=\id_Y$,
and hence, $\mu'_\gamma= \mu''_\gamma$ for $\gamma\in\Gamma_2$.
We see that the homomorphism $\gamma\mapsto\mu'_\gamma$ satisfies condition (ii) of Lemma \ref{l:model}.
Note that  $Y=G/H$ is quasi-projective, that is, condition (iii) of Lemma \ref{l:model} is satisfied as well.
By Lemma \ref{l:model} the homomorphism $\gamma\mapsto\mu'_\gamma$
defines a  $G_0$-equivariant $k_0$-model $Y_0$ of $Y$,
which completes the proof of the theorem. \qed
\end{subsec}

\begin{remark}\label{r:anti-CF}
Let $k=\C$, $k_0=\R$, $\Gamma=\Gal(\C/\R)=\{1,\gamma\}$.
In this case Theorem  \ref{t:CF-CF} means that if $\ve_\gamma$
corresponding to a real structure $\sigma_\gamma\colon G\to G$
preserves the combinatorial invariants of  $Y=G/H$, and $Y$ is spherically closed, then
{\em there exists} an anti-holomorphic $\sigma_\gamma$-equivariant semi-automorphism
$\mu_\gamma\colon Y\to Y$ such that $\mu_\gamma^2=1$.
Note that in general it is not true that
then {\em for every}  anti-holomorphic $\sigma_\gamma$-equivariant semi-automorphism
$\mu_\gamma\colon Y\to Y$  we have $\mu_\gamma^2=1$; see Example \ref{ex:anti-CF} below.
\end{remark}

\begin{example}\label{ex:anti-CF}
Let
\begin{gather*}
G_1=G_2=\SO_{3,\C}\hs,\ H_1=H_2=\SO_{2,\C}\hs,\ Y_1=G_1/H_1\hs,\ Y_2=G_2/H_2\hs,\\
G=G_1\times_\C G_2\hs,\ Y=Y_1\times_\C Y_2\hs.
\end{gather*}
Then $Y$ is a spherically closed spherical homogeneous space of $G$.
Let $\sD_1=\{D_1^+, D_1^-\}$ denote the set of colors of $Y_1$, and let $\sD_2=\{D_2^+, D_2^-\}$
denote the set of colors of $Y_2$;
then  the set of colors $\sD$ of $Y$ can be identified with $\sD_1\sqcup\sD_2$ (disjoint union).
Let $G_0=R_{\C/\R} \SO_{3,\C}$ (the Weil restriction of scalars from $\C$ to $\R$),
which is a real model of $G$, and let $H_0=R_{\C/\R} \SO_{2,\C}\subset G_0$.
We set $Y_0=G_0/H_0=R_{\C/\R} Y$.
Let $\sigma_\gamma\colon G\to G$ and $\mu_\gamma\colon Y\to Y$
correspond to the real models $G_0$ and $Y_0$, respectively.
Then $\mu_\gamma$ acts on $\sD$ by the permutation of order 2
\[ m_\mu=(D_1^+, D_2^+)\cdot(D_1^-,D_2^-)\ \in\Aut(\sD),\]
where $(D_1^+, D_2^+)$ and $(D_1^-,D_2^-)$ are transpositions.
Consider the transposition
\[ a=(D_1^+,D_1^-)\in\Aut (\sD);\]
then the permutation $a$ is induced by the nontrivial element
\[\atil\in {\rm O}_{2,\C}/\SO_{2,\C}=\Aut^{G_1}(Y_1)\subset \Aut^G(Y).\]
Set
\[\mu'_\gamma =\atil\circ\mu_\gamma\colon \ Y\to Y,\]
which is an anti-holomorphic $\sigma_\gamma$-equivariant semi-automorphism of $Y$.
Since $\mu'_\gamma$ acts on $\sD$ as $a\circ m_\gamma$, which is a permutation of order 4,
we conclude that $(\mu'_\gamma)^2\neq 1$.
\end{example}

\begin{subsec}\label{ss:obvious}
In Example \ref{ex:CF} we considered a spherically closed spherical variety $Y=G/H$,
where $G=\SL_{2,k}$ and $H=T$, a maximal torus in $G$.
In this case it is obvious that for any $k_0$-model $G_0$ of $G$
there exists a $G_0$-equivariant $k_0$-model $Y_0$ of $Y=G/H$.
Indeed, there exists a maximal torus $T_0\subset G_0$ defined over $k_0$,
and it is clear that $Y_0:=G_0/T_0$ is a $G_0$-equivariant $k_0$-model  of $Y=G/T$.
In the following example we consider a spherically closed spherical subgroup
that is not conjugate to a subgroup defined over $k_0$.
\end{subsec}

\begin{example} \label{ex:Avdeev}
Let $k=\C$, $k_0=\R$. Following a suggestion of Roman Avdeev, we take  $G=\SO_{2n+1,\C}$, where $n\ge 2$,
and we take for $H$ a Borel subgroup of $\SO_{2n,\C}$, where $\SO_{2n,\C}\subset \SO_{2n+1,\C}=G$.
By Proposition \ref{p:Avdeev-bis} below,
$H$ is a spherically closed spherical subgroup of $G$ and $\sN_G(H)\neq H$.
Take $G_0=\SO_{2n+1,\R}$; then $G_0$ is an anisotropic (compact) $\R$-model of $G$.
Since the Dynkin diagram $\BB_n$ of $G$ has no nontrivial automorphisms, $G_0$ is an inner form.
We wish to show that $Y=G/H$ admits a $G_0$-equivariant $\R$-model.
The subgroup $H$ is not conjugate to any subgroup $H_0$ of  $G_0$ defined over $\R$
because $H$ is not reductive; see Lemma \ref{l:Borel-Tits} below.
We see that we cannot argue as in Subsection \ref{ss:obvious}.
Since $\sN_G(H)\neq H$, we cannot apply Theorem \ref{t:main} either.
However, since $H$ is spherically closed,
by Theorem \ref{t:CF-CF} the homogeneous variety $Y=G/H$ does admit a $G_0$-equivariant $\R$-model $Y_0$.
\end{example}

\begin{proposition}[\rm Roman Avdeev, private communication]
\label{p:Avdeev-bis}
Let $G=\SO_{2n+1,\C}$, where $n\ge2$.
Let $H$ be a Borel subgroup of \hs$\SO_{2n,\C}$, where $\SO_{2n,\C}\subset \SO_{2n+1,\C}=G$.
Then  $H$ is a spherically closed spherical subgroup of $G$, and $\sN_G(H)\neq H$.
\end{proposition}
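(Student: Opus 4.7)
The plan is to verify the three assertions in order: sphericality, $\NGH\supsetneq H$, and spherical closure.

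For sphericality, the ambient pair $(\SO_{2n+1,\C},\SO_{2n,\C})$ is a rank-one symmetric pair, so $G/\SO_{2n,\C}$ (the smooth affine quadric) is spherical. Consequently $G/H$ is the total space of a flag-variety bundle $\SO_{2n,\C}/H\to G/H\to G/\SO_{2n,\C}$. A dimension count gives
\[
 \dim G/H \ =\  n(2n+1)-n^2\ =\ n(n+1)\ =\ \dim B,
\]
where $B$ is a Borel subgroup of $G$. To prove sphericality it suffices to exhibit an explicit $g\in G$ for which $B\cap gHg^{-1}$ is $0$-dimensional; after fixing compatible maximal tori and Borels in $G$ and $\SO_{2n,\C}$, this reduces to a straightforward matrix calculation.

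For $\NGH\supsetneq H$, write $\C^{2n+1}=\C v_0\oplus V$ with $Q(v_0)=1$ and $V=v_0^\perp$, so that $\SO_{2n,\C}=\SO(V)$ is the stabilizer of $v_0$ in $G$. Since $n\ge 2$, the Dynkin diagram $\DD_n$ admits a nontrivial automorphism, realized on $\SO(V)$ by conjugation by some $g_1\in{\rm O}(V)$ with $\det g_1=-1$. All Borels of $\SO(V)$ are conjugate, so $H$ may be chosen to be one normalized by $g_1$. Then
\[
 g\ :=\ g_1\oplus(-\id_{\C v_0})\ \in\ {\rm O}(V)\times{\rm O}(\C v_0)\ \subset\ {\rm O}(\C^{2n+1})
\]
has determinant $(-1)\cdot(-1)=1$, hence lies in $G$. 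Moreover, the only vectors in $\C^{2n+1}$ fixed by the Borel $H$ are scalar multiples of $v_0$ (since no weight of $H$ on $V$ is trivial); combined with $g\in\SO(Q)$, this forces any $g'\in\NGH\setminus\SO_{2n,\C}$ to act as $-1$ on $\C v_0$ and to normalize $H$ through an outer automorphism of $\SO(V)$. Thus $\NGH=H\sqcup gH$ and $\NGH/H\cong\St$.

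For spherical closure, by Corollary \ref{l:k-automorphisms} we have $\Aut^G(G/H)\cong\NGH/H$, so it suffices to show that the class of $g$ acts nontrivially on the set $\sD$ of colors of $G/H$. I would exhibit two colors $D^+$ and $D^-$ in $\sD$ arising from the two $\SO(V)$-conjugacy classes of maximal isotropic subspaces of $V$, that is, from the two maximal parabolics of $\SO(V)$ attached to the two spin nodes of $\DD_n$. Since $g$ acts on $\SO(V)$ via its outer automorphism, which interchanges these two classes, we have $g\cdot D^+=D^-$, so the homomorphism $\Aut^G(G/H)\to\Aut(\sD)$ is injective, as required.

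The principal obstacle is the explicit verification of sphericality (exhibiting the open $B$-orbit in $G/H$) together with the identification of the two ``spin colors'' in step three; once these are in hand, the remaining statements follow from the normalizer computation above and the elementary representation theory of $\SO_{2n,\C}$.
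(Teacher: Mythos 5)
Your overall skeleton is sound, and your normalizer computation is complete and correct: the element $g=g_1\oplus(-\id_{\C v_0})$ is essentially the same element the paper produces (a representative in $\sN_G(T)$ of the reflection in the short simple root $e_n$ of $\BB_n$, which preserves the positive long roots and hence normalizes $H=TU_l$ while inducing the outer swap of the two spin nodes of $\DD_n$); your fixed-vector argument even gives the stronger statement $\NGH=H\sqcup gH$ that the paper only quotes from Avdeev. However, the two assertions that carry the real content of the proposition are not proved in your write-up, only announced. First, sphericality: the dimension count $\dim G/H=n(n+1)=\dim B$ is correct and reduces the claim to exhibiting one point with finite $B$-stabilizer, but that ``straightforward matrix calculation'' is exactly the nontrivial step and is not performed; note that sphericality of the symmetric space $G/\SO_{2n,\C}$ gives you nothing here, since passing to the smaller subgroup $H$ enlarges the homogeneous space. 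The paper instead invokes Avdeev's criterion for solvable spherical subgroups (\cite[Theorem 1]{Av11}), whose hypothesis reduces to the linear independence of the short positive roots $e_1,\dots,e_n$ of $\BB_n$.

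Second, and more seriously, the spherical-closedness step is only a plan (``I would exhibit two colors $D^+$, $D^-$\dots''). The logic is right --- since $\NGH/H\cong\St$, faithfulness on $\sD$ amounts to finding two colors swapped by $g$ --- but the proposed identification of colors of $G/H$ with the two families of maximal isotropic subspaces of $V$ is not justified: colors are $B$-stable prime divisors for a Borel $B$ of $G$, not of $\SO(V)$, and the $B$-orbit structure on $G/H$ does not simply decompose along the fibration $G/H\to G/\SO_{2n,\C}$. Making this precise requires the description of the colors of $G/H$ for a strongly solvable spherical subgroup, which is why the paper cites \cite[Proposition 5.25]{Avdeev} rather than arguing directly. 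As it stands, both the sphericality and the spherical closure are genuine gaps; either carry out the two computations you defer, or replace them by the references the paper uses.
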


\begin{proof}
Write $\Ggg=\Lie(G)$.
Choose a Borel subgroup $B \subset G$ and a maximal torus $T \subset B$.
Let $X=\X^*(T)$ denote the character lattice of~$T$ and let $R =R(G,T)\subset X$ be the root system.
The Borel subgroup $B$ defines a set of positive roots $R^+\subset R$
and the corresponding set of simple roots $S\subset R^+\subset R$.
Let $U$ denote the unipotent radical of~$B$ and put $\uu = \Lie(U)$.
We have
\[\Ggg=\Lie(T)\oplus\bigoplus_{\beta\in R} \Ggg_\beta\hs,\qquad \uu=\bigoplus_{\beta\in R^+} \Ggg_\beta\hs,\]
where $\Ggg_\beta$ is the root subspace corresponding to a root $\beta$.

Let $R_l\subset R$ denote the root subsystem consisting of the {\em long} roots.
Observe that $R$ is a root system of type $\BB_n$, and $R_l$ is a root system of type $\DD_n$.
Set $R^+_l=R^+\cap R_l$. We set
\[\Ggg_l=\Lie(T)\oplus\bigoplus_{\beta\in R_l} \Ggg_\beta\hs,\qquad \uu_l=\bigoplus_{\beta\in R^+_l} \Ggg_\beta\hs,\]
where the direct sums are taken over  long roots.
Let $G_l$  (resp., $U_l$) be the connected algebraic subgroup of $G$ with Lie algebra $\Ggg_l$ (resp., $\uu_l$).
Set $H=TU_l$.
Then $G_l\simeq\SO_{2n,\C}$ and $H$ is a Borel subgroup of $G_l$.

It is well known that $H$ is a spherical subgroup of~$G$.
For example, this fact follows from Theorem 1 of Avdeev \cite{Av11} (to apply this theorem
one needs to check that the short positive roots in~$R$ are linearly independent).
By Avdeev \cite[Proposition~5.25]{Avdeev} $H$ is spherically closed.

We consider the Weyl group $W=W(G,T)=W(R)$.
Let $r\in W=\sN_G(T)/T$ denote the reflection with respect to the {\em short} simple root,
and let $\rho$ be a  representative of $r$  in $\sN_G(T)$.
Since $r$ preserves $R^+_l$, we see that $\rho \in \sN_G(H)$.
Since $\rho\notin T$ and $\sN_G(T)\cap B=T$, we see that $\rho\notin B$.
By construction $H\subset B$, and we conclude that $\rho\notin H$, hence $\sN_G(H) \neq H$.
In fact, $\sN_G(H) = H \cup \rho H$ by Avdeev \cite[Theorem~3]{Av13}.
 \end{proof}

\begin{lemma}[\rm well-known]
\label{l:Borel-Tits}
Let $k_0$ be a field of characteristic 0, and let $G_0$
be a connected, reductive, anisotropic $k_0$-group.
Then any connected $k_0$-subgroup $H_0\subset G_0$ is reductive.
\end{lemma}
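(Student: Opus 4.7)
The plan is to argue by contradiction using the Borel--Tits theorem on unipotent subgroups of reductive groups over an arbitrary field. Suppose $H_0$ is not reductive. Then, since $\charr k_0=0$ and $H_0$ is a connected linear algebraic $k_0$-group, its unipotent radical $U_0:=R_u(H_0)$ is a nontrivial connected unipotent $k_0$-subgroup of $G_0$.

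Next, I would invoke the theorem of Borel and Tits (\emph{\'El\'ements unipotents et sous-groupes paraboliques de groupes r\'eductifs}, Invent.\,Math.\,12 (1971), Cor.\,3.7): for any connected reductive $k_0$-group $\mathcal{G}$, every unipotent $k_0$-subgroup of $\mathcal{G}$ is contained in the unipotent radical of some parabolic $k_0$-subgroup of $\mathcal{G}$. Applied to our $U_0\subset G_0$, this yields a parabolic $k_0$-subgroup $P_0\subset G_0$ with $U_0\subset R_u(P_0)$.

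Now I would use that $G_0$ is anisotropic. By the general structure theory of reductive groups over an arbitrary field (see, e.g., Borel--Tits, \emph{Groupes r\'eductifs}, Publ.\,IHES 27 (1965), Thm.\,4.13), a connected reductive $k_0$-group is anisotropic if and only if it admits no proper parabolic $k_0$-subgroup (equivalently, no nontrivial $k_0$-split torus). Hence $P_0=G_0$, and therefore $R_u(P_0)=R_u(G_0)=\{1\}$ because $G_0$ is reductive. This forces $U_0=\{1\}$, contradicting our assumption. Therefore $H_0$ is reductive.

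The only nontrivial input is the Borel--Tits theorem producing the parabolic $P_0$; once it is granted, the conclusion follows immediately from the defining property of anisotropic reductive groups and the fact that $G_0$ itself has trivial unipotent radical. No characteristic-$0$ hypothesis beyond smoothness of $H_0$ and the existence of $R_u(H_0)$ as a $k_0$-subgroup is needed; in characteristic $0$ both are automatic.
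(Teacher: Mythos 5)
Your proof is correct, but it takes a genuinely different route from the paper's. You keep the whole unipotent radical $U_0=R_u(H_0)$ as an algebraic subgroup and invoke the Borel--Tits theorem from their 1971 paper (over a perfect field, every unipotent $k_0$-subgroup of a connected reductive group lies in the unipotent radical of a parabolic $k_0$-subgroup), then kill $U_0$ using the absence of proper parabolic $k_0$-subgroups in an anisotropic group. The paper instead works at the level of a single rational point: since $\charr k_0=0$, the nontrivial (split) unipotent group $U_0$ has a non-identity $k_0$-point $u$, which is a non-semisimple element of $G_0(k_0)$, contradicting Corollary 8.5 of Borel--Tits, \emph{Groupes r\'eductifs} (1965), according to which all $k_0$-points of a connected reductive anisotropic group are semisimple. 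The two inputs are closely related results of comparable depth, so neither argument buys much over the other; yours avoids having to produce a rational point of $U_0$ (a step the paper leaves implicit, justified because in characteristic $0$ a nontrivial unipotent group is isomorphic to affine space as a variety), while the paper's avoids the structure theory of parabolic subgroups. One small imprecision in your write-up: anisotropy of a connected reductive group is \emph{not} equivalent to the absence of proper parabolic $k_0$-subgroups --- a nontrivial central split torus makes $G_0$ isotropic without creating any proper parabolic --- but you only use the correct implication (anisotropic implies no proper parabolic $k_0$-subgroup), so this does not affect the validity of the proof.
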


\begin{proof}
For the sake of contradiction, assume that $H_0$ is not reductive.
Let $U_0$ denote the unipotent radical of $H_0$, which is a nontrivial unipotent $k_0$-subgroup.
Let $u\in U_0(k_0)\subset H_0(k_0)\subset G_0(k_0)$ be a non-unit element of $U_0(k_0)$.
We see that $G_0(k_0)$ contains a non-unit nilpotent element.
On the other hand, by Borel--Tits \cite[Corollary 8.5]{Borel-Tits}, all $k_0$-points
of a connected, reductive, anisotropic $k_0$-group are semisimple elements.
Contradiction.
 \end{proof}

The following example shows that $G/H$ might have no $G_0$-equivariant $k_0$-model
when $H$ is not spherically closed.

\begin{example}\label{ex:c-ex}
Let $k=\C$, $k_0=\R$.
Let  $n\ge 1$,  $G=\Sp_{2n,\C} \times_\C \Sp_{2n,\C}$, $Y=\Sp_{2n,\C}$, the group
$G$ acts on $Y$ by
\[ (g_1,g_2)*y=g_1\hs y\hs g_2^{-1},\quad g_1,\hs g_2,\hs y\in\Sp_{2n}(\C).\]
Let $H$ denote the stabilizer in $G$ of $1 \in\Sp_{2n}(\C)=Y(\C)$;
then  $H=\Sp_{2n,\C}$ embedded diagonally in $G$.
We have  $Y=G/H$,
and $Y$ is a spherical homogeneous space of $G$.
We have $\NGH=Z(G)\!\cdot\! H$, where $Z(G)$ denotes the center of $G$.
It follows that  $\NGH/H\simeq\{\pm 1\}\neq \{1\}$.
Clearly $\NGH/H$ acts trivially on $\sD(G/H)$, so $H$ is not spherically closed.

Consider the following  real model of $G$:
\[ G_0=\Sp_{2n,\R}\times_\R \Sp(n),\]
where $\Sp(n)$ is the compact real form of $\Sp_{2n}$.
We show that $Y$ cannot have a $G_0$-equivariant real model,
although $G_0$ is an inner form of a split group.

Indeed, assume for the sake of contradiction
that such a real model $Y_0$ of $Y$ exists.
We have $Y=\Sp_{2n,\C}$, and $Y_0$  is simultaneously a principal homogeneous
space of $\Sp_{2n,\R}$  and of $\Sp(n)$.
Since $H^1(\R, \Sp_{2n,\R})=1$ (see  Serre \cite[III.1.2, Proposition 3]{Serre}\hs),
we see that $Y_0(\R)$ is not empty.
It follows that the topological space $Y_0(\R)$ is simultaneously a principal
homogeneous space of $\Sp(2n,\R)$ and of $\Sp(n)$.
Thus $Y_0(\R)$ is simultaneously homeomorphic to
the noncompact Lie group $\Sp(2n,\R)$ and to the compact Lie group $\Sp(n)$, which is clearly impossible.
Thus, there is  no  $G_0$-equivariant  real model $Y_0$ of $Y$.
\end{example}

\begin{subsec}\label{ss:Boris-Boris}
Let $k,\ k_0,\ \Gamma,\ G,\ H,\ G_0$ be as in Subsection \ref{ss:mt}, in particular,
$k$ is an algebraically closed field of characteristic 0 and $\Gamma=\Gal(k/k_0)$.
We assume that $H$ is spherically closed and that $Y=G/H$ admits a $G_0$-equivariant $k_0$-model $Y_0$.
Then by Corollary \ref{c:Akh-CF-bis}, \,$\veg$ preserves
the combinatorial invariants of $Y$ for all $\gamma\in\Gamma$,
in particular, $\Gamma$ acts on the finite set $\Omt=\Omt(Y)$.
Let $U_1,U_2,\dots,U_r$ be the orbits of $\Gamma$ in $\Omt$.
For each $i=1,2,\dots,r$, let us choose a point $u_i\in U_i$.
Set $\Gamma_i=\Stab_\Gamma(u_i)$.
\end{subsec}

\begin{theorem}\label{t:Boris-Boris}
With the notation and assumptions of \ref{ss:Boris-Boris} we have:
\begin{enumerate}
\item[\rm(i)] The set of isomorphism classes of $G_0$-equivariant $k_0$-models of $Y$
is canonically a principal homogeneous space of the abelian group
$H^1(\Gamma,\Aut_\Omega(\sD))$;
\item[\rm(ii)] $H^1(\Gamma,\Aut_\Omega(\sD))\simeq\prod_{i=1}^r \Hom(\Gamma_i,\St)$,
where $S_2$ is the symmetric group on two symbols.
\end{enumerate}
\end{theorem}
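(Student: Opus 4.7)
The plan is to establish the standard Galois-cohomological parametrization of $G_0$-equivariant $k_0$-forms of $Y$, identify the coefficient group via Losev's theorem (Corollary \ref{c:closed-isom}) using spherical closedness of $H$, and then compute its $H^1$ orbit-by-orbit via Shapiro's lemma.

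For (i), I would first fix the given $k_0$-model $Y_0$ together with its associated system $\{\mu_\gamma\}_{\gamma\in\Gamma}$ of $\sigma_\gamma$-equivariant $\gamma$-semi-auto\-mor\-phisms (a homomorphism $\Gamma\to\SAut_{k/k_0}(Y)$, see Section \ref{s:models}). Given any other $G_0$-equivariant $k_0$-model $Y_0'$ with system $\{\mu_\gamma'\}$, I would put $c_\gamma:=\mu_\gamma'\circ\mu_\gamma^{-1}$: the semi-linear twists cancel, and $\sigma_\gamma$-equivariance of both sides forces $c_\gamma\in\Aut^G(Y)$. Endowing $\Aut^G(Y)$ with the $\Gamma$-action $\gamma\cdot\psi:=\mu_\gamma\psi\mu_\gamma^{-1}$ (well-defined up to inner automorphisms of $\Aut^G(Y)$, which vanish because $\Aut^G(Y)\cong\Aut_\Omega(\sD)$ is abelian, as Corollary \ref{c:closed-isom} and the next paragraph make clear), the homomorphism relations for $\{\mu_\gamma'\}$ and $\{\mu_\gamma\}$ translate into the $1$-cocycle identity $c_{\gamma\delta}=c_\gamma\cdot(\gamma\cdot c_\delta)$. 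A $G_0$-equivariant $k_0$-isomorphism $Y_0''\to Y_0'$ base-changes to $\alpha\in\Aut^G(Y)$ intertwining the two systems, giving $c'_\gamma=\alpha\cdot c''_\gamma\cdot(\gamma\cdot\alpha)^{-1}$, i.e., cohomologous cocycles. Conversely, for any $1$-cocycle $c$ I would set $\mu'_\gamma:=c_\gamma\mu_\gamma$: this new system is a homomorphism by the cocycle identity, each $\mu'_\gamma$ is $\sigma_\gamma$-equivariant, and since $\Aut^G(Y)$ embeds in the finite group $\Aut(\sD)$ the cocycle factors through some $\Gal(k_1/k_0)$, so after possibly enlarging $k_1$ the restricted system descends to a $k_1$-model. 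Lemma \ref{l:model} then produces $Y_0'$ with cocycle class $[c]$, establishing (i).

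For (ii), Corollary \ref{c:closed-isom} gives $\Aut^G(Y)\cong\Aut_\Omega(\sD)$, and since by Corollary \ref{c:fiber-2} fibers of $\zeta\colon\sD\to\Omega$ have cardinality $1$ or $2$, one has
\[\Aut_\Omega(\sD)=\prod_{\omega\in\Omega}\Aut(\zeta^{-1}(\omega))\ \cong\ \prod_{\omega\in\Omt}\St\ \cong\ \Map(\Omt,\St),\]
which is abelian. I would then check that the $\Gamma$-action transported from $\Aut^G(Y)$ is the natural action on $\Map(\Omt,\St)$: permutation of the index set by $s_\gamma=\veg|_{\Omt}$, trivial on each intrinsic factor $\St$. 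Indeed, conjugation of a product of fiberwise permutations by any covering bijection $m_\gamma\in\Aut(\sD)$ (as constructed in Section \ref{s:sph-closed}) permutes the factors by $s_\gamma$, while within each two-element fiber the conjugation is trivial since the nontrivial transposition is canonically $\St$, independent of labeling. Decomposing $\Omt=\bigsqcup_{i=1}^r U_i$ into $\Gamma$-orbits with $U_i\cong\Gamma/\Gamma_i$ yields a $\Gamma$-module isomorphism
\[\Map(\Omt,\St)\ \cong\ \prod_{i=1}^r\Map(\Gamma/\Gamma_i,\St),\]
and each factor is the coinduced module $\mathrm{CoInd}_{\Gamma_i}^{\Gamma}\St$; Shapiro's lemma together with the triviality of the $\Gamma_i$-action on the abelian group $\St$ give
\[H^1(\Gamma,\Aut_\Omega(\sD))\ \cong\ \prod_{i=1}^r H^1(\Gamma_i,\St)\ =\ \prod_{i=1}^r \Hom(\Gamma_i,\St),\]
as claimed.

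The most delicate point will be verifying that the $\Gamma$-action on $\Aut^G(Y)$, a priori defined via the non-canonical choice of system $\{\mu_\gamma\}$, really does coincide with the purely combinatorial permutation action on $\Map(\Omt,\St)$. This hinges on both the abelianness of the target (to make the action independent of $\{\mu_\gamma\}$) and the observation that conjugation by any covering permutation $m_\gamma$ acts trivially on each $\St$-factor, which ultimately reduces to the canonical nature of the nontrivial transposition of a two-element set. A secondary technicality is the descent step in the surjectivity half of (i): I would combine the finiteness of $\Aut^G(Y)$ (which forces any $1$-cocycle to be locally constant) with Corollary \ref{c:anon-anon} to realize the twisted system over a finite extension, so that Lemma \ref{l:model} applies.
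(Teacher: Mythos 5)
Your proof is correct and follows essentially the same route as the paper: the paper classifies the $G_0$-equivariant $k_0$-models by $H^1(\Gamma,\Aut^G(Y))$ by citing Serre (III.1.3, Prop.~5 for the bijection and I.5.3, Prop.~35~bis for the principal homogeneous space structure), invokes Losev's theorem that $\Aut^G(Y)$ is abelian and Corollary \ref{c:closed-isom} to identify the coefficients with $\Aut_\Omega(\sD)$, and then computes the $H^1$ exactly as you do, via the orbit decomposition of $\Omt$ and the Faddeev--Shapiro lemma. The only differences are that you unwind the Serre citations into an explicit cocycle computation fed into Lemma \ref{l:model}, and that you verify explicitly that the transported $\Gamma$-action on $\Map(\Omt,\St)$ is the natural permutation action via $s_\gamma$ --- a compatibility the paper uses but leaves implicit.
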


\begin{corollary}\label{c:Boris-Boris}
In Theorem \ref{t:Boris-Boris} assume that $|\Gamma|=2$.
Then the number of isomorphism classes of $G_0$-equivariant $k_0$-models of $Y=G/H$
is $2^s$, where $s$ is the number of fixed points of $\Gamma$ in $\Omt$.
\end{corollary}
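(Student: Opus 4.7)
The plan is to deduce Corollary \ref{c:Boris-Boris} by specializing Theorem \ref{t:Boris-Boris} to the case $|\Gamma|=2$ and computing the cardinality of the resulting product. First I would observe that the standing hypothesis of \ref{ss:Boris-Boris} includes that $Y$ admits a $G_0$-equivariant $k_0$-model, so the principal homogeneous space of isomorphism classes appearing in Theorem \ref{t:Boris-Boris}(i) is nonempty. Being nonempty, a principal homogeneous space of an abelian group has cardinality equal to that of the group, so the number of isomorphism classes of $G_0$-equivariant $k_0$-models equals $|H^1(\Gamma,\Aut_\Omega(\sD))|$, which by Theorem \ref{t:Boris-Boris}(ii) equals $\prod_{i=1}^r |\Hom(\Gamma_i,\St)|$.

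Next I would use the assumption $|\Gamma|=2$: every subgroup $\Gamma_i=\Stab_\Gamma(u_i)$ is either trivial or equal to $\Gamma\simeq\Z/2\Z$. In the first case $\Hom(\Gamma_i,\St)$ has exactly one element, and the orbit $U_i$ has two elements (so $u_i$ is not fixed). In the second case $u_i$ is fixed by $\Gamma$, the orbit $U_i=\{u_i\}$ is a singleton, and $|\Hom(\Gamma,\St)|=|\St|=2$.

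Thus the only nontrivial factors in the product $\prod_{i=1}^r |\Hom(\Gamma_i,\St)|$ are contributed by the singleton orbits, which are in bijection with the fixed points of $\Gamma$ acting on $\Omt$. If there are $s$ such fixed points, the product equals $2^s$, which is the desired count.

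There is essentially no obstacle: the corollary is a direct specialization of Theorem \ref{t:Boris-Boris}, and the only point worth stating explicitly is that the principal homogeneous space of models, being nonempty by assumption, has the same cardinality as its structure group. All the substantive work is already contained in Theorem \ref{t:Boris-Boris}.
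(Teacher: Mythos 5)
Your proof is correct and follows essentially the same route as the paper: orbits of size two contribute trivial stabilizers (hence a single homomorphism to $\St$), while fixed points contribute $\Gamma_i=\Gamma$ and a factor of $2$, giving $2^s$. Your explicit remark that the principal homogeneous space is nonempty (so its cardinality equals that of the group) is a detail the paper leaves implicit, but the substance is identical.
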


\begin{proof}
Let $U_i$ be an orbit of $\Gamma$ in $\Omt$.
If $|U_i|=2$, then $\Gamma_i=\{1\}$, hence $|\Hom(\Gamma_i,\St)|=1$.
If $|U_i|=1$, then $\Gamma_i=\Gamma$, and hence, $|\Hom(\Gamma_i,\St)|=2$.
Now the corollary follows from the theorem.
 \end{proof}

\begin{proof}[Proof of Theorem \ref{t:Boris-Boris}]
Since $Y$ is quasi-projective,  there is a canonical bijection
between the set of the isomorphism classes in the theorem and the pointed set $H^1(\Gamma, \Aut^G(Y))$;
see Serre  \cite{Serre}, Proposition 5 in Section III.1.3.
By Theorem 2 of Losev \cite{Losev} (see also Subsection \ref{ss:Losev-Aut} below),
the group $\Aut^G(Y)$ is abelian,
hence $H^1(\Gamma, \Aut^G(Y))$ is an abelian group.
It follows that the set of isomorphism classes in the theorem
is canonically a principal homogeneous space of this abelian group;
see Serre \cite{Serre}, Proposition 35 bis and Remark (1) thereafter in Section I.5.3.
By Corollary \ref{c:closed-isom} there is a canonical isomorphism of abelian groups
$\Aut^G(Y)\isoto\Aut_\Omega(\sD)$, and (i) follows.

We compute $H^1(\Gamma,\Aut_\Omega(\sD))$.
Recall that we have a surjective map $\zeta\colon \sD\to \Omega$.
Set $\sD^{(2)}=\zeta^{-1}(\Omt)$; then clearly
\[\Aut_\Omega(\sD)=\Aut_\Omt(\sD^{(2)})=\prod_{\omega\in\Omt}\St=\prod_{i=1}^r \left(\prod_{\omega\in U_i} \St\right),\]
hence
\[ H^1(\Gamma,\Aut_\Omega(\sD))=\prod_{i=1}^r H^1\left(\Gamma, \prod_{\omega\in U_i} \St\right).\]
Since $\Gamma$ acts on $U_i$ transitively, the $\Gamma$-module $\prod_{\omega\in U_i} \St$
is induced by the $\Gamma_i$-module $\St$ (see Serre \cite[Section I.2.5]{Serre}) and hence,
by the lemma of Faddeev and Shapiro,
see Serre \cite[I.2.5, Proposition 10]{Serre}, we have
\[H^1\left(\Gamma, \prod_{\omega\in U_i} \St\right)\simeq H^1(\Gamma_i,\St)=\Hom(\Gamma_i,\St).\]
Thus
\[H^1(\Gamma,\Aut_\Omega(\sD))\simeq\prod_{i=1}^r \Hom(\Gamma_i,\St),\]
which proves (ii).
 \end{proof}

\begin{example}\label{ex:SO3}
Let $G=\SO_{3,\C}\simeq\PGL_{2,\C}$.
Consider the maximal torus $H=\SO_{2,\C}\subset\SO_{3,\C}=G$.
Consider the affine quadric $Y$ in $\A_\C^3$ given by the equation
\[  x_1^2+x_2^2+x_3^2=1.\]
The group $G=\SO_{3,\C}\subset \GL(3,\C)$ naturally acts in $\A^3_\C$ and preserves $Y$.
The stabilizer in $G$  of the point  $(0,0,1)\in Y(\C)$ is $H=\SO_{2,\C}$, and therefore,
we may identify $Y$ with $G/H$.
The homogeneous space  $Y=G/H$ is a spherically closed spherical homogeneous space of $G$; see Example \ref{ex:CF}.
We have
\[\Aut^G(G/H)=\sN_G(H)/H={\rm O}_{2,\C}/\SO_{2,\C}\simeq\{\pm 1\}.\]

Let $G_0$ be an $\R$-form of $G$; then there exists a maximal torus  $H_0$ in $G_0$ (defined over $\R$),
and  clearly $G_0/H_0$ is a $G_0$-equivariant $\R$-model of $Y=G/H$
(so we do not have to refer to Theorem \ref{t:CF-CF}
in order to see that $Y$ admits a $G_0$-equivariant real model).
Since
\[ H^1(\Gamma,\Aut^G(G/H))=\Hom(\Gamma,\{\pm1\})\simeq\{\pm1\},\]
the variety $Y$ has exactly two $G_0$-equivariant $\R$-models.
We describe these models for each $\R$-model of $G=\SO_{3,\C}$.
There exist two non-isomorphic $\R$-models of $\SO_{3,\C}$\hs: the split model $\SO_{2,1}$
and the compact (anisotropic) model $\SO_{3,\R}$.

Consider the indefinite real quadratic form in three variables
\[F_{2,1}(x_1,x_2,x_3)=x_1^2+x_2^2-x_3^2,\quad x_i\in \R.\]
Set $G_0=\SO(F_{2,1})=\SO_{2,1}$.
We consider the affine quadric $Y^+_{2,1}\subset\A^3_\R$ given by the equation $F_{2,1}(x)=+1$,
and the affine quadric $Y^-_{2,1}\subset\A^3_\R$ given by the equation $F_{2,1}(x)=-1$.
The group $G_0=\SO_{2,1}$ naturally acts on  $Y^+_{2,1}$ and  $Y^-_{2,1}$, and
 the stabilizers in $G_{0,\C}$ of the points $(0,0,i)\in Y^+_{2,1}(\C)$ and $(0,0,1)\in Y^-_{2,1}(\C)$
are both equal to $\SO_{2,\C}$.
We see that  $Y^+_{2,1}$ and $Y^-_{2,1}$ are $\SO_{2,1}$-equivariant $\R$-models of $Y=G/H$.
It is well known that $Y^+_{2,1}(\R)$ is a hyperboloid of one sheet, hence it is connected,
while $Y^-_{2,1}(\R)$ is a hyperboloid of two sheets, hence it is not connected.
It follows that the $\R$-varieties $Y^+_{2,1}$ and $Y^-_{2,1}$
are two non-isomorphic $\SO_{2,1}$-equivariant $\R$-models of $Y=G/H$.

Now consider the positive definite real quadratic form in three variables
\[F_{3}(x_1,x_2,x_3)=x_1^2+x_2^2+x_3^2,\quad x_i\in \R.\]
Set $G_0=\SO(F_{3})=\SO_{3,\R}$.
We consider the affine quadric $Y^+_{3}\subset\A^3_\R$ given by the equation $F_{3}(x)=+1$,
and the affine quadric $Y^-_{3}\subset\A^3_\R$ given by the equation $F_{3}(x)=-1$.
The group $G_0=\SO_{3,\R}$ naturally acts on  $Y^+_3$ and  $Y^-_3$, and
the stabilizers in $G_{0,\C}$ of the points $(0,0,1)\in Y^+_3(\C)$ and $(0,0,i)\in Y^-_3(\C)$
are both equal to $\SO_{2,\C}$.
We see that $Y^+_{3}$ and $Y^-_{3}$ are $\SO_{3,\R}$-equivariant $\R$-models of $Y=G/H$.
Clearly, $Y^+_{3}(\R)$ is the unit sphere in $\R^3$, hence it is nonempty,
while $Y^-_{3}(\R)$ is empty.
It follows that the $\R$-varieties $Y^+_{3}$ and $Y^-_{3}$
are two non-isomorphic $\SO_{3,\R}$-equivariant $\R$-models of $Y=G/H$.
\end{example}

\begin{remark}
Let $k=\C$, $k_0=\R$, $\Gamma=\Gal(\C/\R)=\{1,\gamma\}$.
Let $Y=G/H$ be a spherical homogeneous space of a connected reductive group $G$ over $\C$,
and let $G_0$ be a real model of $G$ with the corresponding homomorphism $\sigma\colon \Gamma\to\SAut(G)$,
that is, with an anti-holomorphic involution $\sigma_\gamma\colon G\to G$.
Assume that there exists an anti-holomorphic  $\sigma_\gamma$-equivariant semi-automorphism
$\mu_\gamma\colon Y\to Y$  (such $\mu_\gamma$ exists if and only if
$\ve_\gamma$ corresponding to $\sigma_\gamma$ preserves the combinatorial invariants of $Y$).
Corollary 1 in Section 2.5 of Cupit-Foutou \cite{CF}
(which inspired this section of the present article)
claims that if, moreover, $G$ is semisimple and  $Y$ is spherically closed, then
(a) {\em any} such $\mu_\gamma$ is involutive, that is, $\mu_\gamma^2=1$, and
(b) such involutive $\mu_\gamma$ is {\em unique.}
Unfortunately, this corollary is erroneous.
Example \ref{ex:anti-CF} disproves (a) (see Remark \ref{r:anti-CF}),
and  Example \ref{ex:SO3} disproves (b) (see also Theorem \ref{t:Boris-Boris}).
However, it is true that there {\em exists} an involutive
anti-holomorphic  $\sigma_\gamma$-equivariant semi-automorphism $\mu_\gamma$\hs;
see Theorem \ref{t:CF-CF}.
\end{remark}

\section{Equivariant models of spherical embeddings\\
of automorphism-free spherical homogeneous spaces}
\label{s:embeddings}

In this section we assume that $\NGH=H$.

\begin{theorem}\label{t:Huruguen-bis}
Let $k,\ G,\ H,\ Y=G/H,\ k_0,\ \Gamma,\ G_0$ be as in Subsection \ref{ss:mt}, in particular, $\charr k=0$.
We assume that
\begin{enumerate}
\item[\rm(i)] $G_0$ is an inner form of a split group, and
\item[\rm(ii)] $\NGH=H$.
\end{enumerate}
Let $Y\into Y'$ be an {\emm arbitrary} spherical embedding of $Y$.
Then $Y'$ admits a  $G_0$-equivariant  $k_0$-model $Y'_0$.
This model is compatible with the $k_0$-model $Y_0$ of $Y$ from Theorem \ref{t:main}
and is unique up to a canonical isomorphism.
\end{theorem}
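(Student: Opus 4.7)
The plan is to start from the unique $G_0$-equivariant $k_0$-model $Y_0$ of $Y$ furnished by Theorem~\ref{t:main}, extend to $Y'$ the associated family of semi-automorphisms, and then appeal to a descent theorem of Huruguen for $G$-equivariant spherical embeddings that bypasses the quasi-projectivity hypothesis of Lemma~\ref{l:model}. First I would recall that $Y_0$ amounts, by the proof of Theorem~\ref{t:main}, to a homomorphism $\gamma\mapsto\mu_\gamma$ from $\Gamma$ to $\SAut_{k/k_0}(Y)$ with each $\mu_\gamma$ a $\sigma_\gamma$-equivariant $\gamma$-semi-automorphism, such $\mu_\gamma$ being unique because $\Aut^G(Y)=\{1\}$ by Corollary~\ref{c:NGH}.

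Next I would extend $\mu_\gamma$ to $Y'$ for each $\gamma$. Consider the $G$-$k$-variety $\gamma_*Y'$, with $G$-action twisted by $\sigma_\gamma$ as in Construction~\ref{ss:*Y*Y}; it is a spherical embedding of $\gamma_*Y$, and via the $G$-equivariant isomorphism $(\mu_\gamma)_\natural\colon\gamma_*Y\isoto Y$ of Lemma~\ref{l:nu-1-2} it becomes a spherical embedding of $Y$. By the Luna--Vust classification, spherical embeddings of $Y$ are determined by their colored fans in $V(Y)\times\sD(Y)$. The colored fan of $\gamma_*Y'$ is obtained from that of $Y'$ by the action induced by $\mu_\gamma$, which by Proposition~\ref{p:Akh} together with Lemma~\ref{l:D1sDY1} is the action of $\veg$ on $V(Y)$ combined with a permutation of $\sD(Y)$ covering the induced action of $\veg$ on $V\times\sP(S)$. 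Since $G_0$ is an inner form we have $\veg=\id$, and since $\Aut^G(Y)=\{1\}$, Corollary~\ref{c:D-injective} asserts that $\zeta\colon\sD\to V\times\sP(S)$ is injective; hence $\mu_\gamma$ also acts trivially on $\sD$. The two colored fans therefore coincide, and the Luna--Vust uniqueness theorem yields a canonical $G$-equivariant isomorphism $\gamma_*Y'\isoto Y'$ restricting to the identity on $Y$. Equivalently, we obtain a $\sigma_\gamma$-equivariant $\gamma$-semi-automorphism $\mu'_\gamma\colon Y'\to Y'$ extending $\mu_\gamma$.

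Any two such extensions differ by an element of $\Aut^G(Y')$, and by the density of the open orbit $Y\subset Y'$ any element of $\Aut^G(Y')$ restricts to an element of $\Aut^G(Y)=\{1\}$; hence $\Aut^G(Y')=\{1\}$ and $\mu'_\gamma$ is unique. As in Lemma~\ref{l:Aut-1} this uniqueness forces $\gamma\mapsto\mu'_\gamma$ to be a homomorphism $\Gamma\to\SAut_{k/k_0}(Y')$ extending the given homomorphism on $Y$, and it also verifies condition~(ii) of Lemma~\ref{l:model} via Corollary~\ref{c:anon-anon}.

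The main obstacle is the final descent step, since Lemma~\ref{l:model} requires $Y'$ to be quasi-projective, which is not assumed here. To overcome this I would appeal to the descent theorem of Huruguen~\cite{Huruguen} for $G$-equivariant spherical embeddings: this theorem produces a $G_0$-equivariant $k_0$-scheme model (rather than merely an algebraic space) provided the colored fan of $Y'$ is $\Gamma$-stable and each $\Gamma$-orbit of colored cones is contained in a $\Gamma$-stable $G$-stable quasi-projective open subvariety. In our situation the $\Gamma$-action on the colored data is trivial (inner form plus $\Aut^G(Y)=\{1\}$), so both conditions hold automatically, and Huruguen's theorem delivers the desired $G_0$-equivariant $k_0$-model $Y'_0$. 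Compatibility with $Y_0$ is built into the construction, and uniqueness up to a unique isomorphism follows once more from $\Aut^G(Y')=\{1\}$.
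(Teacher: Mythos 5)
Your proposal is correct and follows essentially the same route as the paper: the heart of both arguments is that the $\Gamma$-action on the colored data is trivial (because $G_0$ is an inner form, so $\ve_\gamma=\id$, and because $\zeta\colon\sD\to\Omega$ is injective by Corollary~\ref{c:D-injective} when $\NGH=H$), after which Huruguen's descent theorem for spherical embeddings with $\Gamma$-stable colored fans produces the model $Y'_0$ compatible with $Y_0$. Your intermediate step of extending each $\mu_\gamma$ to $Y'$ via the Luna--Vust classification is sound but is in effect re-deriving part of what Huruguen's theorem already supplies; the paper skips it and verifies Huruguen's covering hypothesis directly.
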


This theorem generalizes Theorem 1.2 of Akhiezer \cite{Akhiezer}, who considered the case $k_0=\R$.
Note that Akhiezer considered only the wonderful embedding of $Y$, while
we consider an arbitrary spherical embedding, so our result is new even in the case $k_0=\R$.

\begin{proof}
We show that a $k_0$-model  of $Y'$, if exists, is unique.
Indeed, let $Y'_0$ be such a $k_0$-model.
For $\gamma\in\Gamma:=\Gal(k/k_0)$, let $\mu'_\gamma\colon Y'\to Y'$
be the corresponding $\gamma$-semi-automorphism of $Y'$.
Since $Y$ is the only open $G$-orbit in $Y'$, it is stable under $\mu'_\gamma$ for all $\gamma\in\Gamma$.
Since $k_0$ is a field of characteristic 0, hence perfect, this defines a  $G_0$-equivariant $k_0$-model $Y_0$ of $Y$,
which is unique because $\NGH=H$ and hence, $\Aut^G(Y)=\{1\}$.
Since $Y$ is Zariski-dense in $Y'$, we conclude that the model $Y'_0$ of $Y'$ is unique.

We prove the existence.
By Theorem \ref{t:main}, $Y$ admits a unique  $G_0$-equivariant $k_0$-model $Y_0$.
The model $Y_0$ defines an action of $\Gamma$ on the finite set $\sD$;
see, for example, Huruguen \cite[2.2.5]{Huruguen}.
Namely, for every $\gamma\in\Gamma$ we have a $\sigma_\gamma$-equivariant $\gamma$-semi-automorphism $\mu_\gamma$,
which induces an automorphism $m_\gamma\colon\sD\to\sD$ covering $s_\gamma\colon \Omega \to\Omega$;
see \eqref{e:mu-m}.
We show that this action of $\Gamma$ on $\sD$ is trivial.
Indeed, since $\NGH=H$, by Corollary \ref{c:D-injective} the surjective map
\[\zeta\colon \sD\to \Omega\]
is bijective.
Since by assumption $G_0$ is an inner form, for all $\gamma\in\Gamma$ we have $\veg=1$, hence $s_\gamma=1$.
Thus $\Gamma$ acts trivially on  $\Omega$ and on $\sD$.

Let $\CF(Y')$ denote the colored fan of $Y'$  (see Knop \cite{Knop-LV} or Perrin \cite[Definition 3.1.9]{Perrin}\hs)
which is a set of colored cones $(\sC,\sF)\in \CF(Y')$, where $\sC\subset V$ and $\sF\subset\sD$.
We know that $\Gamma$ acts trivially on $V=\Hom_\Z(\sX,\Q)$ and on $\sD$.
It follows that for every $\gamma\in\Gamma$ and for every colored cone $(\sC,\sF)\in \CF(Y')$, we have
\begin{equation}\label{e:stable-CF}
\gams(\sC)=\sC,\quad \gams(\sF)=\sF.
\end{equation}
It follows that the colored fan $\CF(Y')$ is $\Gamma$-stable.
Moreover, it follows from \eqref{e:stable-CF} that the hypothesis
of Theorem 2.26 of Huruguen \cite{Huruguen} is satisfied, that is, $Y'$ has a covering
by $G$-stable and $\Gamma$-stable open quasi-projective subvarieties.
By that theorem $Y'$ admits a $G_0$-equivariant  $k_0$-model compatible with $Y_0$.
 \end{proof}

\begin{remark} Huruguen \cite{Huruguen} assumes that $Y_0$ has a $k_0$-point,
but he does not use that assumption.
\end{remark}

\begin{remark}
In Theorem \ref{t:Huruguen-bis} we do not assume that $Y'$ is quasi-projective.
\end{remark}

\appendix

\section{Algebraically closed descent\\ for spherical homogeneous spaces}
\label{s:app-AB}

The proofs in this appendix were communicated to the author by experts.

\begin{theorem}\label{t:AB}
Let $G_0$ be a  connected  reductive group defined over an
{\emm algebraically closed} field $k_0$ of characteristic 0.
Let $k\supset k_0$ be a larger algebraically closed field.
We set $G=G_0\times_{k_0} k$, the base change of $G_0$ from $k_0$  to $k$.
Let $H\subset G$ be a {\em spherical subgroup} of $G$ (defined over $k$).
Then $H$ is  conjugate to a (spherical) subgroup defined over $k_0$.
\end{theorem}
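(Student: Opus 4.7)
The plan is to combine a spreading-out argument with Losev's Uniqueness Theorem (Proposition~\ref{t:Losev}), reducing the problem to the existence of a family of subgroups whose generic fiber is $H$ and whose combinatorial invariants are constant on a dense open subset of the base.

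First, by the standard spreading-out results of Proposition~\ref{p:anon}, the subgroup $H\subset G=G_0\times_{k_0}k$ descends to a closed subgroup $H_K\subset G_0\times_{k_0}K$ for some finitely generated extension $K$ of $k_0$ inside $k$. Choose a geometrically integral affine $k_0$-variety $V$ with function field $K$. After shrinking $V$, I would spread $H_K$ out to a smooth closed subgroup scheme $\mathcal{H}\subset G_0\times_{k_0}V$ whose generic fiber is $H_K$ and whose geometric fibers are closed subgroups of $G_0$ of constant dimension.

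Second, I would shrink $V$ further so that every geometric fiber of $\mathcal{H}\to V$ is a spherical subgroup of $G_0$ with the same combinatorial invariants $\sX,\sV,\Omone,\Omt$ as $H$. Note that these invariants are attached to the root datum of $G_0$, which is the same as that of $G$, so the comparison is meaningful. Openness of sphericity in a family of subgroups of constant dimension follows from the upper semicontinuity of the minimal codimension of a Borel orbit. The constancy of the combinatorial invariants on a dense open subset of $V$ is the essential ingredient and uses the Alexeev-Brion finiteness result \cite[Theorem~3.1]{AB}: since only finitely many conjugacy classes of spherical subgroups of $G_0$ can share the same combinatorial invariants, and since each invariant varies constructibly with $v\in V$, the invariants must stabilize on a dense open subset of the irreducible variety $V$.

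Third, because $k_0$ is algebraically closed, the set of $k_0$-points $V(k_0)$ is Zariski dense in $V$, hence meets the open subset from the previous step. Choose any $v_0\in V(k_0)$ therein, and set $H_0:=\mathcal{H}_{v_0}\subset G_0$: this is a spherical $k_0$-subgroup of $G_0$, and the base change $H_0\times_{k_0}k$ is a spherical subgroup of $G$ with the same combinatorial invariants $\sX,\sV,\Omone,\Omt$ as $H$. By Losev's Uniqueness Theorem (Proposition~\ref{t:Losev}) there exists $a\in G(k)$ with $H=a(H_0\times_{k_0}k)a^{-1}$, proving that $H$ is conjugate to a subgroup defined over $k_0$.

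The principal obstacle is the second step: verifying that, in a family of subgroups whose generic fiber is spherical, the combinatorial invariants can be arranged to be constant on a dense open subset of the base. This is where the Alexeev-Brion moduli theory enters in an essential way; the remaining steps are formal spreading-out combined with Losev's theorem.
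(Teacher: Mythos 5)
Your strategy --- spread $H$ out over a $k_0$-variety $V$ with function field $K$, arrange the relevant data to be constant on a dense open subset, specialize at a $k_0$-point, and conclude by Losev's uniqueness theorem --- is essentially the alternative argument sketched in Knop's MathOverflow answer \cite{Knop-MO-AB}, which the paper mentions at the end of its proof but does not follow. As written, however, your second step contains a genuine gap, and it sits exactly where you put all the weight. The assertion that ``each invariant varies constructibly with $v\in V$'' is the entire content of that step: a constructible function on an irreducible variety is automatically constant on a dense open subset and equal to its value at the generic point, so if you had constructibility of $(\sX,\sV,\Omone,\Omt)$ you would be done without invoking Proposition \ref{p:AB} at all --- which shows that Alexeev--Brion is not actually doing the work you attribute to it. Conversely, what \cite[Theorem 3.1]{AB} does give you (once you realize the fibers $\mathcal H_v$ as isotropy groups of points of a single $G$-scheme of finite type, say the relative quotient $(G_0\times_{k_0}V)/\mathcal H$ base-changed to $k$ --- a construction your proposal does not carry out) is that only finitely many conjugacy classes occur among the $\mathcal H_v\times_{k_0}k$ for $v\in V(k_0)$, so that one class occurs on a Zariski-dense set of closed points. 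But nothing in your argument identifies that class with the class of $H$, i.e.\ of the geometric generic fiber: for that you need precisely a constructibility or specialization statement for the conjugacy class (equivalently, by Proposition \ref{t:Losev}, for the invariant tuple), and such a statement is neither proved in the paper nor a formal consequence of finiteness; already the behaviour of the valuation cone and of the colors in a family of non-affine homogeneous spaces is delicate. Without it your argument could terminate with a spherical $k_0$-subgroup $H_0$ that is not conjugate to $H$.

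For contrast, the paper's proof avoids families and constructibility entirely. It works in the variety $X$ of Lie subalgebras of $\Ggg$ of a fixed codimension, which is defined over $k_0$ by construction, and uses Proposition \ref{p:AB} to show that when $\hh=\Lie H$ is self-normalizing ($\nn_\Ggg(\hh)=\hh$) the orbit $G\cdot x_\hh$ is \emph{open} in $X$; its closure is then an irreducible component of $X$, hence automatically defined over $k_0$ and, since $k_0$ is algebraically closed, endowed with a $k_0$-point. This replaces your constancy-on-a-dense-open step by the trivial fact that irreducible components of a $k_0$-scheme descend to $k_0$. The general case is reduced to the self-normalizing one via the spherical closure $\barH$ (Corollary \ref{c:Luna}), after which $H$ is recovered inside $\barH$ as an intersection of kernels of characters, all defined over $k_0$. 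If you wish to pursue your route, the precise statement you still owe is: in the family $\mathcal H\to V$, the conjugacy class of the geometric fiber is constant on a dense open subset of $V$ \emph{and agrees with that of the geometric generic fiber}. Everything else in your outline is routine spreading-out.
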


The proof is based on the following result of Alexeev and Brion \cite{AB}:

\begin{proposition} [\rm {\cite[Theorem 3.1]{AB}}]
\label{p:AB}
Let $G$ be a connected reductive group over an algebraically closed field $k$ of characteristic 0.
For any $G$-scheme $X$ of finite type, only finitely many
conjugacy classes of {\em spherical} subgroups of $G$ occur as isotropy groups of points
of $X$.
\end{proposition}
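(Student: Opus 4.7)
The plan is to combine a spreading-out argument with Proposition \ref{p:AB} and a constructibility step. First, I would find a finitely generated $k_0$-subalgebra $R\subset k$ over which $H$ is defined: there is an integral $k_0$-variety $S:=\Spec R$ and a smooth closed group subscheme $\mathcal{H}\subset G_0\times_{k_0}S$ whose base change along the structural morphism $\eta\colon\Spec k\to S$ (induced by $R\into k$) recovers $H\subset G$. After shrinking $S$, I may arrange that $\mathcal{H}\to S$ has spherical geometric fibers, since sphericity (the existence of an open Borel orbit in $G/H$) is an open condition in families and the generic fiber is spherical. Because $k_0$ is algebraically closed, $S(k_0)$ is Zariski-dense in $S$; for every $s\in S(k_0)$, the fiber $\mathcal{H}_s\subset G_0$ is then a spherical subgroup defined over $k_0$.

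Next I would apply Proposition \ref{p:AB} to the $G$-scheme of finite type $\sX:=(G_0/\mathcal{H})\times_{k_0}k$, viewing $G_0/\mathcal{H}$ as a $k_0$-scheme with residual $G_0$-action (forgetting its $S$-structure). For each $k$-point $s\colon\Spec k\to S$, the identity coset in the fiber over $s$ gives a $k$-point of $\sX$ whose $G$-stabilizer is the base change $s^*\mathcal{H}\subset G$. In particular, $H=\eta^*\mathcal{H}$ appears as such a stabilizer, and so does $(\mathcal{H}_{s'})_k$ for every $s'\in S(k_0)$. By Proposition \ref{p:AB}, only finitely many $G$-conjugacy classes of spherical subgroups of $G$ arise in this way; call them $[H_1],\dots,[H_m]$.

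The remaining task is a constructibility argument on $S$. For each $i$, let $C_i\subset S$ be the subset of points $s$ for which $s^*\mathcal{H}$ is $G$-conjugate to $H_i$ after base change to an algebraic closure of $\kappa(s)$. Realizing $s\mapsto\mathcal{H}_s$ as a morphism from $S$ to an appropriate moduli space of subgroup schemes of $G_0$ (e.g.\ a component of the Hilbert scheme parameterizing closed subschemes of $G_0$ of the relevant type), on which $G_0$ acts by conjugation with locally closed orbits, each $C_i$ is locally closed in $S$. The $C_i$ cover $S$, and since $S$ is irreducible and the cover is finite, some $C_{i_0}$ contains a dense open subset $U\subset S$. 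The $k$-point $\eta$ maps to the generic point of $S$, hence lies in $C_{i_0}$, so $[H_{i_0}]=[H]$. Because $k_0$ is algebraically closed and $U$ is of finite type over $k_0$, the set $U(k_0)$ is Zariski-dense in $U$; choosing any $u\in U(k_0)$, we conclude that $(\mathcal{H}_u)_k$ is $G$-conjugate to $H$. Thus $H$ is $G$-conjugate to the subgroup $\mathcal{H}_u\times_{k_0}k\subset G$, whose underlying $k_0$-subgroup $\mathcal{H}_u\subset G_0$ is defined over $k_0$, proving the theorem (the conjugate is automatically spherical).

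The main obstacle is the constructibility / local-closedness of the conjugacy-class strata $C_i$. One must carefully parameterize the family of subgroups by a moduli space, verify that $G_0$-conjugation acts with locally closed orbits on the relevant component, and check that passing from the algebraic closures of the various residue fields of $S$ to $k$ does not merge or split the finitely many conjugacy classes produced by Proposition \ref{p:AB}. The use of Proposition \ref{p:AB} is essential precisely to guarantee that the covering $\{C_i\}$ is \emph{finite}, so that irreducibility of $S$ forces one stratum to be Zariski-dense.
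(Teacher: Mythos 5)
Your proposal does not prove the stated proposition at all: the statement in question \emph{is} the Alexeev--Brion finiteness theorem (only finitely many conjugacy classes of spherical subgroups of $G$ occur as isotropy groups of points of a $G$-scheme of finite type), which the paper does not reprove but simply cites from [AB, Theorem 3.1]. Your argument explicitly invokes ``Proposition \ref{p:AB}'' as its key finiteness input (``By Proposition \ref{p:AB}, only finitely many $G$-conjugacy classes \dots arise in this way''), so as a proof of that proposition it is circular. Nothing in your write-up addresses why the isotropy groups of a finite-type $G$-scheme fall into finitely many spherical conjugacy classes; that is the entire content of the statement you were asked to prove.

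What you have actually sketched is a proof of Theorem \ref{t:AB} of the appendix (algebraically closed descent: every spherical subgroup of $G=G_0\times_{k_0}k$ is conjugate to one defined over $k_0$), for which Proposition \ref{p:AB} is a legitimate ingredient. Even read as such, your route --- spreading out $H$ over a finitely generated $k_0$-algebra, stratifying the base by conjugacy class via a moduli/Hilbert-scheme argument, and using density of $k_0$-points --- is not the paper's: the paper's proof of Theorem \ref{t:AB} works with the variety $X$ of Lie subalgebras of fixed codimension in $\Ggg$, shows that the orbit of $x_\hh$ is open when $\nn_\Ggg(\hh)=\hh$ (using Proposition \ref{p:AB} to bound the number of maximal-dimensional orbits), deduces that such orbits are defined over $k_0$ and have $k_0$-points, and reduces the general case to this one via the spherical closure $\barH$ and diagonalizability of $\barH/H$. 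Your approach is closer to the alternative argument the paper attributes to Knop's MathOverflow answer, and it leaves unaddressed the local-closedness of the conjugacy strata that you yourself flag as the main obstacle. In any case, neither version supplies a proof of Proposition \ref{p:AB} itself.
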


\begin{proof}[Proof of Theorem \ref{t:AB}]
The theorem will be proved in five steps.

1) Let $X_0$ be a variety equipped with an action of $G_0$. Then $X_0$
is the disjoint union of locally closed $G_0$-stable subvarieties $X_0^m$
consisting of all orbits of a fixed dimension $m$.
Note that the dimension $d(x)$ of the $G_0$-orbit of $x\in X_0$
is a lower semi-continuous function on $X_0$ (see, for  instance, Popov and Vinberg \cite[Section 1.4]{PV}\hs).
This means that for every number $\xi\in\R$, the subset
$\{x\in X_0\ |\ d(x)> \xi\}$
is open in $X_0$.
It follows that the union of the $G_0$-orbits in $X_0$ of maximal dimension is an open  subvariety.

2) Take for $X_0$ the variety of Lie subalgebras of $\Ggg_0 = \Lie\ G_0$ of a fixed
codimension, say $r$, and let $X$ be the $k$-variety obtained from $X_0$ by scalar extension.
Then $X$ is the variety of Lie subalgebras of codimension $r$ in $\Ggg = \Lie\ G$.
We write $x_\hh\in X$ for the point corresponding to a Lie subalgebra $\hh\subset \Ggg$,
and we write $\hh_x\subset\Ggg$ for the Lie subalgebra corresponding to a point $x\in X$.
The group $G$ acts on $X$ via  the adjoint representation in $\Ggg$, and
the stabilizer of a $k$-point $x_\hh$ in $X$ is the normalizer $\sN(\hh)$ of $\hh$  in $G$.
So the dimension of the orbit of $x_\hh $ is
\[\Ddim(G) - \Ddim \ \sN_G(\hh) = \Ddim(G) - \Ddim(\hh) - (\Ddim\ \sN_G(\hh) - \Ddim(\hh)\hs)
                 = r - \Ddim\, \nn_\Ggg(\hh)/\hh,\]
where $\nn_\Ggg(\hh)$ denotes the normalizer of $\hh$ in $\Ggg$.
Thus, if there exists $\hh$ such that  $\hh = \nn_\Ggg(\hh)$,
then the Lie subalgebras $\hh$ satisfying
this property correspond to the $k$-points of the open subset
consisting of the orbits of maximal dimension.
Note that $\nn_\Ggg(\hh)$ is the Lie algebra of $\sN_G(\hh)$.
So  if $\hh = \nn_\Ggg(\hh)$, then $\hh$ is an algebraic Lie algebra.

3) Let $H$ be a spherical subgroup of $G$ with Lie algebra $\hh$ such that $\nn_\Ggg(\hh) = \hh$.
We claim that the orbit $G\cdot x_\hh$ in $X$ is open.

First, note that the homogeneous $G$-variety $G\cdot x_\hh $ is spherical because the stabilizer of
$x_\hh $ in $G$ is the subgroup $\sN_G(\hh)$ with Lie algebra $\nn_\Ggg(\hh)=\hh=\Lie\, H$.
Hence for a suitable Borel subgroup $B \subset G$
we have $\Ddim (B\cdot x_\hh) = \Ddim (G\cdot x_\hh)$.
Consider the open subset
\begin{align*}
  \sU = \{x' \in X : \Ddim(B\cdot x') \ge \Ddim(B\cdot x_\hh) \} \subset X.
\end{align*}
Since by Step 2 the orbit  $G\cdot x_\hh$ has maximal dimension among the $G$-orbits in $X$,
for every $x'\in \sU$ we have
\begin{align*}
  \Ddim(B\cdot x') \ge \Ddim(B\cdot x_\hh) = \Ddim(G\cdot x_\hh) \ge \Ddim(G\cdot x'),
\end{align*}
hence
\[\Ddim (B\cdot x') = \Ddim (G\cdot x')=\Ddim(G\cdot x_\hh),\]
that is, if we write $\hh'=\hh_{x'}$\hs, then $\sN_G(\hh') \subset G$ is spherical
and $\hh' = \nn_\Ggg(\hh')$.
By Proposition \ref{p:AB} (due to Alexeev and Brion), the set of conjugacy
classes of spherical subgroups of the form $\sN_G(\hh_{x'})$ for $x'\in X$ is finite.
Hence, since $\hh_{x'}=\nn_\Ggg(\hh_{x'})$ is the Lie algebra of
$\sN_G(\hh_{x'})$ for every $x' \in \sU\subset X$, the set $G\cdot \sU$ contains
only finitely many $G$-orbits, which  are all of the same (maximal)
dimension. It follows that all these orbits are open; in particular,
the orbit $G\cdot x_\hh$ in $X$ is open.

4) By Step 3, the Lie algebras $\hh$ of spherical subgroups $H$ of $G$ such that
$\nn_\Ggg(\hh) = \hh$ form finitely many $G$-orbits, and the closures of these orbits
are irreducible components of the variety $X$, which is defined over $k_0$.
Since $k_0$ is algebraically closed,
it follows that every such orbit is defined over $k_0$ and, moreover,
 every such $G$-orbit has a $k_0$-point,
which proves the theorem for spherical subgroups such that $\nn_\Ggg(\hh) = \hh$.
Also
\[\sN_G(\hh) = \sN_G(H^0) = \sN_G(H),\]
where the latter equality follows from  Corollary \ref{c:Brion-Pauer} below.
Thus if $\sN_G(H)/H$ is finite, then $\sN_G(\hh)/H^0$ is finite,
and hence, $\nn_\Ggg(\hh) = \hh$.

5) To handle the case of an arbitrary spherical $k$-subgroup $H$ of $G$,
consider the spherical closure of $H$,
that is, the algebraic subgroup $\barH$ of $\NGH$ containing $H$ such that
\[\barH/H=\ker\left[\NGH/H\to\Aut\,\sD(G/H)\right].\]
By Corollary \ref{c:Luna} below, the spherical closure $\barH$ is spherically closed,
that is, $\sN_G(\barH)/\barH$ acts faithfully on the finite set of colors of $G/\barH$,
hence the group $\sN_G(\barH)/\barH$ is finite, and therefore, $\nn_\Ggg(\Lie\,\barH)=\Lie\,\barH$.
By Step 4 we may assume that $\barH$ is defined over $k_0$.
Now $H$ is an intersection of kernels of characters of $\barH$
(since the quotient $\barH/H$ is diagonalizable) and every such character is defined over $k_0$.
Thus $H$ is defined over $k_0$, as required.

An alternative proof, also based on Proposition \ref{p:AB} due to Alexeev and Brion,
is sketched in Knop's MathOverflow answer \cite{Knop-MO-AB}.
 \end{proof}

\begin{lemma}\label{l:Brion-Pauer}
Let $G$ be an abstract group and $H\subset G$ a subgroup.
Let $H_0\subset H$ be a characteristic subgroup of $H$
(this means that all automorphisms  of $H$ preserve $H_0$).
If the group $\sN_G(H_0)/H_0$ is abelian, then $\sN_G(H)=\sN_G(H_0)$.
\end{lemma}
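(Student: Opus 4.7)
The plan is to prove the two inclusions $\sN_G(H)\subset\sN_G(H_0)$ and $\sN_G(H_0)\subset\sN_G(H)$ separately. The first inclusion uses only that $H_0$ is characteristic in $H$, while the second uses the abelianness hypothesis in a decisive way.

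For the forward inclusion, I would take $g\in\sN_G(H)$. Then conjugation by $g$ restricts to an automorphism of $H$. Since $H_0$ is characteristic in $H$, this automorphism preserves $H_0$, which means $gH_0g^{-1}=H_0$, i.e.\ $g\in\sN_G(H_0)$. This step is essentially a one-liner.

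For the reverse inclusion, the first key observation is that $H\subset\sN_G(H_0)$: indeed $H_0$ is characteristic in $H$, hence in particular normal in $H$. This allows me to pass to the quotient $N/H_0$, where $N:=\sN_G(H_0)$, and consider the image $\bar H:=H/H_0\subset N/H_0$. By hypothesis $N/H_0$ is abelian, so $\bar H$ is normal in $N/H_0$. Now for $g\in N$ with image $\bar g$, I get $\bar g\bar H\bar g^{-1}=\bar H$ in $N/H_0$, which lifts to $gHg^{-1}\cdot H_0=H\cdot H_0=H$. But $gHg^{-1}$ already contains $gH_0g^{-1}=H_0$, so $gHg^{-1}\cdot H_0=gHg^{-1}$, giving $gHg^{-1}=H$ and hence $g\in\sN_G(H)$.

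There is no real obstacle here; the only subtlety is remembering to record that $H\subset\sN_G(H_0)$ before passing to the quotient, and to use that both $gHg^{-1}$ and $H$ contain $H_0$ so that equality of their images modulo $H_0$ upgrades to equality of the subgroups themselves.
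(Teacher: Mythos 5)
Your proof is correct and follows essentially the same route as the paper: both establish $\sN_G(H)\subset\sN_G(H_0)$ from the characteristic property, note $H\subset\sN_G(H_0)$, and then use abelianness of $\sN_G(H_0)/H_0$ to conclude that $H/H_0$ is normal there, hence $H$ is normal in $\sN_G(H_0)$. The only difference is cosmetic: the paper invokes the correspondence between subgroups of $\sN_G(H_0)$ containing $H_0$ and subgroups of the quotient, while you unwind that correspondence explicitly via the identity $gHg^{-1}\cdot H_0=gHg^{-1}$.
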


\begin{proof}
Since $H_0$ is characteristic, we have $\sN_G(H)\subset \sN_G(H_0)$.
In particular, $H_0$ is normal in $H$, hence $H\subset \sN_G(H_0)$.
Consider the inclusions of groups
\[ H_0\subset H\subset \sN_G(H)\subset \sN_G(H_0)\]
and the inclusions of the corresponding quotient groups
\[H/H_0\subset \sN_G(H)/H_0 \subset \sN_G(H_0)/H_0\hs.\]
Since $\sN_G(H_0)/H_0$ is abelian, the subgroup $H/H_0$ is normal in $\sN_G(H_0)/H_0$\hs,
and hence, $H$ is normal in $\sN_G(H_0)$.
We see that $\sN_G(H_0)\subset \sN_G(H)$ and thus $\sN_G(H_0)=\sN_G(H)$.
 \end{proof}

\begin{corollary}[\rm {Brion and Pauer \cite[Corollary 5.2]{BP}}\hs]
\label{c:Brion-Pauer}
Let $G$ be a linear algebraic group over an algebraically closed field $k$ of characteristic 0.
Let $H\subset G$ be a spherical subgroup, and let $H^0$ denote the identity component of $H$.
Then $\sN_G(H)=\sN_G(H^0)$.
\end{corollary}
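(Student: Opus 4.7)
The plan is to apply Lemma~\ref{l:Brion-Pauer} to the abstract group $G(k)$ and its subgroup $H(k)$, taking for the characteristic subgroup $H_0$ of the lemma the group $H^0(k)$ of $k$-points of the identity component $H^0\subset H$. Two hypotheses must be checked: that $H^0$ is characteristic in $H$, and that $\sN_G(H^0)/H^0$ is abelian.

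The first hypothesis is immediate. The identity component of an algebraic group is intrinsically defined as the connected component of the identity, and is therefore preserved by every automorphism of $H$. In particular, conjugation by any element of $H(k)$ sends $H^0(k)$ to itself, so that $H^0(k)$ is a characteristic subgroup of $H(k)$ in the sense of Lemma~\ref{l:Brion-Pauer}.

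For the second hypothesis, I first observe that $H^0$ is itself a spherical subgroup of $G$. Indeed, the natural projection $G/H^0\to G/H$ is a finite \'etale cover with group $H/H^0$, and the preimage of the open $B$-orbit in $G/H$ is a $B$-stable dense open subset of $G/H^0$, necessarily containing an open $B$-orbit. Therefore $B$ acts with an open orbit on $G/H^0$, so $H^0$ is spherical. Now I invoke Losev's theorem that for any spherical homogeneous space $Y=G/K$ the group $\Aut^G(Y)=\sN_G(K)/K$ is abelian (the same reference used in the proof of Theorem~\ref{t:Boris-Boris}); applied to $K=H^0$, this gives that $\sN_G(H^0)/H^0$ is abelian.

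With both hypotheses verified, Lemma~\ref{l:Brion-Pauer} yields $\sN_G(H)(k)=\sN_G(H^0)(k)$. Since both sides are $k$-points of smooth algebraic $k$-subgroups of $G$ and $k$ is algebraically closed, this equality of groups of $k$-points promotes to the equality of algebraic subgroups $\sN_G(H)=\sN_G(H^0)$. The only nontrivial ingredient is the abelianness of $\Aut^G(G/H^0)$; once this is granted, the argument is purely formal.
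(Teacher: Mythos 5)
Your proof is correct and takes essentially the same route as the paper: both apply Lemma~\ref{l:Brion-Pauer} with $H_0=H^0$, noting that $H^0$ is characteristic in $H$ and that $\sN_G(H^0)/H^0=\Aut^G(G/H^0)$ is abelian because $H^0$ is again spherical (the paper cites the embedding of $\Aut^G(Y)$ into a torus from Subsection~\ref{ss:Losev-Aut}, which is the same fact you invoke via Losev). The only difference is that you spell out the sphericity of $H^0$ and the passage from $k$-points back to algebraic subgroups, both of which the paper leaves implicit.
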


\begin{proof}
Clearly, $H^0$ is a characteristic subgroup of $H$.
Since $H$ is spherical, the subgroup $H^0$ is spherical as well, and therefore,
$\sN_G(H^0)/H^0$ is diagonalizable, hence abelian; see subsection \ref{ss:Losev-Aut} below.
Now the corollary follows from Lemma \ref{l:Brion-Pauer}.
 \end{proof}

From now on till the end of this appendix we follow  Avdeev \cite{Avdeev}.
Let  $G$ be a  connected  reductive group over an algebraically closed field $k$ of characteristic 0.
Fix a finite covering group $\Gtil\to G$ such that $\Gtil$ is a direct product of a torus with a simply
connected semisimple group. For every simple $\Gtil$-module $V$
(a finite dimensional irreducible representation $V$ of $\Gtil$),
 the corresponding projective
space $\ppP(V)$ has a natural structure of a $G$-variety.
Every $G$-variety arising in this way is said to be a simple projective $G$-space.

\begin{proposition}[\rm Bravi and Luna {\cite[Lemma 2.4.2]{BL}}]
{\rm (See also Avdeev {\cite[Cor\-ollary 3.24]{Avdeev}}.)}
\label{p:Avdeev}
For any spherical subgroup $H$ of a connected reductive group $G$
over an algebraically closed field $k$ of characteristic 0,
the spherical closure $\barH$ of $H$ is the common stabilizer in $G$ of all $H$-fixed points in all simple
projective $G$-spaces.
\end{proposition}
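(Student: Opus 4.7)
Write $\stH$ for the common stabilizer in $G$ of all $H$-fixed points in all simple projective $G$-spaces; clearly $H\subseteq\stH$ and $\stH$ is closed in $G$. The plan is to prove that $\barH=\stH$ by establishing that both groups lie inside $\NGH$ and have the same image in the group of permutations of the finite set $\sD(G/H)$ of colors.

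The first step is the inclusion $\stH\subseteq\NGH$. For this I would invoke Chevalley's embedding theorem: there is a finite-dimensional $\Gtil$-module $W$ together with a line $L\subseteq W$ such that $H=\Stab_G(L)$. Decomposing $W$ into isotypic components of simple $\Gtil$-modules and projecting $L$ onto each component furnishes a finite collection of $H$-eigenlines $[v_i]\in\ppP(V_i)^H$ whose joint stabilizer in $G$ equals $H$; any $g\in\stH$ fixes each such $[v_i]$ and therefore normalizes $H$. The analogous inclusion $\barH\subseteq\NGH$ holds by definition.

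The second and essential step is the classical dictionary linking $H$-eigenvectors in simple $\Gtil$-modules with $B$-semi-invariant sections of line bundles on $G/H$. A character $\chi\in\X^*(H)$ defines a $G$-linearized line bundle $L_\chi$ on $G/H$, and Frobenius reciprocity identifies, for each simple $\Gtil$-module $V$, the space of $H$-eigenvectors of weight $\chi$ in $V$ with the space of $G$-equivariant maps $V^\ast\to H^0(G/H,L_\chi)$. Under this identification an $H$-fixed point $[v]\in\ppP(V)^H$ corresponds, up to scalar, to a $B$-semi-invariant section $s_v$ of some $L_\chi$, and a normalizer element $g\in\NGH$ fixes $[v]$ in $\ppP(V)$ if and only if right translation by $g$ preserves the $B$-stable divisor $\mathrm{div}(s_v)$ on $G/H$.

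The main obstacle will be the final step: showing that the reduced supports of the divisors $\mathrm{div}(s_v)$, as $V$ ranges over simple $\Gtil$-modules and $v$ over $H$-eigenvectors, suffice to detect every color, i.e.\ every $D\in\sD(G/H)$ occurs as a component of $\mathrm{div}(s_v)$ for some such $v$. Because $G/H$ is only quasi-projective and $H^0(G/H,L_\chi)$ need not be finite-dimensional, I would pass to a projective spherical embedding $G/H\hookrightarrow Y'$, pull back a sufficiently ample $G$-linearized line bundle from $Y'$ whose global sections (after decomposition into simple $\Gtil$-submodules) yield $B$-semi-invariant sections whose divisors jointly cover every color, and then restrict back to $G/H$. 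Once this is achieved, the chain of equivalences ``$g$ fixes every $[v]$'' $\Leftrightarrow$ ``$g$ preserves every $\mathrm{div}(s_v)$'' $\Leftrightarrow$ ``$g$ fixes every color'' identifies $\stH/H$ with $\barH/H$ inside $\NGH/H$, proving the proposition.
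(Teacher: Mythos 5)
The paper does not prove this proposition: it quotes it from Bravi--Luna \cite[Lemma 2.4.2]{BL} and Avdeev \cite[Corollary 3.24]{Avdeev}, so your argument has to be judged on its own, and its overall architecture is indeed the standard one behind those references (pass from $H$-fixed points $[v]\in\ppP(V)^H$ to $B$-semi-invariant sections of $G$-linearized line bundles on $G/H$, and compare the resulting action on colors with the definition of $\barH$). There are, however, two genuine gaps. The first is in your Step 1: the claim that the projected eigenlines $[v_i]$ have joint stabilizer \emph{equal} to $H$ is false in general, and if it were true it would prove far too much --- it would give $\stH\subseteq H$, hence $\stH=H$, contradicting the proposition whenever $H\neq\barH$ (e.g.\ for the diagonal $\Sp_{2n,\C}$ in $\Sp_{2n,\C}\times\Sp_{2n,\C}$ of Example \ref{ex:c-ex}, where $\barH=\sN_G(H)\neq H$). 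Concretely, for $G=\SL_2$, $H=T$ and $L$ spanned by the sum of the zero-weight vectors in $V(2)\oplus V(4)$, one has $\Stab_G(L)=T$ but the two projected lines are each stabilized by all of $\sN_G(T)$. The correct statement is weaker but suffices: the joint stabilizer $K$ of the lines $[v_i]$ consists of the $g$ acting on each $v_i$ by a scalar $\lambda_i(g)$, and $H=\bigcap_{i,j}\ker(\lambda_i\lambda_j^{-1})$ is an intersection of kernels of characters of $K$, hence normal in $K$; therefore $\stH\subseteq K\subseteq\NGH$.

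The second gap is in your final step, and it is not merely the technical obstacle you flag. Showing that every color $D$ \emph{occurs as a component} of some $\mathrm{div}(s_v)$ is too weak: an element of $\NGH/H$ swapping two colors $D_1,D_2$ preserves the divisor $D_1+D_2$, so preservation of all the $\mathrm{div}(s_v)$ would not force triviality on $\sD$. What you need, and what is true, is that each color is realized \emph{exactly}: for every $D\in\sD(G/H)$ there is a simple $\Gtil$-module $V_D$ and an $H$-eigenvector $v_D\in V_D$ with $\mathrm{div}(s_{v_D})=D$. This follows by pulling $D$ back to $\Gtil$ and using that $\Gtil$ (a torus times a simply connected semisimple group) is factorial with only constant invertible functions up to characters, so the pullback is $\mathrm{div}(f_D)$ for a function $f_D\in k[\Gtil]$ that is a left $B$-eigenvector and a right eigenvector for the preimage of $H$; the simple $\Gtil$-submodule generated by $f_D$ under right translation supplies $(V_D,v_D)$. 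With this in hand the detour through an ample bundle on a projective embedding is unnecessary. Finally, the converse half of your dictionary (``$\phi_n$ preserves $\mathrm{div}(s_v)$ implies $n$ fixes $[v]$'') also deserves an argument: it again rests on the fact that the ratio of the two matrix coefficients is an invertible regular function on $\Gtil$, hence a constant times a character, and the character must be trivial by comparing isotypic components.
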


\begin{corollary}[\rm well-known]
\label{c:Luna}
Let $H$ be a spherical subgroup of a  connected  reductive group $G$
defined over an algebraically closed field $k$ of characteristic 0.
Let $H'$ denote the spherical closure of $H$,
and let $H''$ denote the spherical closure of $H'$.
Then $H''=H'$, that is, $H'$ is spherically closed.
\end{corollary}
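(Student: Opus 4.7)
My plan is to apply Proposition \ref{p:Avdeev} of Bravi and Luna twice, once to $H$ and once to $H'$. For any subgroup $K \subset G$, let $F_K$ denote the set of $K$-fixed points in all simple projective $G$-spaces $\ppP(V)$. Proposition \ref{p:Avdeev} then gives
\[
H' = \bigcap_{x \in F_H} \Stab_G(x), \qquad H'' = \bigcap_{x \in F_{H'}} \Stab_G(x).
\]
For the second equality we need $H'$ itself to be spherical, but this is immediate: the canonical projection $G/H \to G/H'$ is $G$-equivariant and surjective, so the image of the open $B$-orbit in $G/H$ is an open $B$-orbit in $G/H'$, showing that $H'$ is spherical.

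The heart of the argument is the claim $F_H = F_{H'}$. Since $H \subset H'$, every $H'$-fixed point is $H$-fixed, so $F_{H'} \subset F_H$. The opposite inclusion $F_H \subset F_{H'}$ is precisely the content of the first displayed formula: by construction, every element of $H'$ stabilizes every point of $F_H$, i.e., $H'$ fixes all of $F_H$, so $F_H \subset F_{H'}$. Combining the two inclusions yields $F_H = F_{H'}$, and consequently the two intersection expressions above coincide, giving $H'' = H'$. By the definition of spherical closure, this says exactly that $H'$ is spherically closed.

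I do not foresee a genuine obstacle here; the proof is essentially a tautological double application of Proposition \ref{p:Avdeev}. The only point requiring mild attention is the verification that $H'$ is spherical, which is needed to invoke Proposition \ref{p:Avdeev} for $H'$; once that is in place, the conclusion follows from the elementary set-theoretic equality $F_H = F_{H'}$.
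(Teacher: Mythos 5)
Your proof is correct and is essentially the paper's own argument: both apply Proposition \ref{p:Avdeev} to $H$ and to $H'$ and reduce everything to the observation that the $H$-fixed and $H'$-fixed loci in every simple projective $G$-space coincide (one inclusion from $H\subset H'$, the other because $H'$ is by definition the common stabilizer of the $H$-fixed points). Your extra check that $H'$ is spherical is a point the paper leaves implicit, and your verification of it is fine.
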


This result was stated without proof in  Section 6.1 of Luna \cite{Luna-IHES}
(see also Avdeev \cite[Corollary 3.25]{Avdeev}).

\begin{proof}[Deduction of Corollary {\ref{c:Luna}} from Proposition {\ref{p:Avdeev}}]
Let $\ppP(V)$ be a simple projective $G$-space.
Let $\ppP(V)^H$ denote the set of fixed points of $H$ in $\ppP(V)$.
Since $H\subset H'$, we have $\ppP(V)^{H'}\subset \ppP(V)^H$.
By Proposition \ref{p:Avdeev} applied to $H$, we have $\ppP(V)^{H'}\supset \ppP(V)^H$.
Thus $\ppP(V)^{H'}= \ppP(V)^H$.

By  Proposition  \ref{p:Avdeev} applied to $H'$,
the group $H''(k)$ is the set of $g\in G(k)$ that fix $\ppP(V)^{H'}$
for all simple projective $G$-spaces $\ppP(V)$.
By  Proposition  \ref{p:Avdeev} applied to $H$,
the group $H'(k)$ is the set of $g\in G(k)$ that fix $\ppP(V)^H$
for all simple projective $G$-spaces $\ppP(V)$.
Since $\ppP(V)^{H'}=\ppP(V)^H$, we have $H''=H'$, as required.
 \end{proof}

\section{The action of the automorphism group on the colors\\ of a spherical homogeneous space}
\label{s:App}

\begin{center}
{\bf  By Giuliano Gagliardi}
\end{center}

In this appendix we prove Theorem \ref{t:Losev-3}, which we restate
below as Theorem~\ref{t:Losev-3-bis}. Our proof is based on Friedrich
Knop's MathOverflow answer \cite{Knop-MO} to Borovoi's question. Knop
writes that Theorem \ref{t:Losev-3-bis} was communicated to him by Ivan
Losev.

Let $G$ be a connected  reductive group over an algebraically closed
field $k$ of characteristic~$0$. Let $Y=G/H$ be a spherical
homogeneous space.

\begin{subsec}\label{ss:Losev-Aut}
  We use the notation of Section~\ref{s:invariants}. Let
  $\phi \in \Aut^G(Y)$ be a $G$-equivariant automorphism of $Y$. For
  every $\chi \in \sX$, the automorphism $\phi$ preserves the
  one-dimensional subspace $\smash{K(Y)^{(B)}_\chi}$ and thus acts on
  this space by multiplication by a scalar
  $a_{\phi,\chi}\in k^\times$. It is easy to see that we
  obtain a homomorphism
  \begin{align*}
    \kappa\colon \Aut^G(Y) &\to \Hom(\sX, k^\times)\text{,}\\
    \phi &\mapsto (\chi \mapsto a_{\phi,\chi})\text{.}
  \end{align*}
  The group $\Hom(\sX, k^\times)$ is naturally identified with the
  group of $k$-points of the $k$-torus with character group $\sX$.
  According to Knop \cite[Theorem~5.5]{Knop-Aut}, the homomorphism
  $\kappa$ is injective and its image is closed.
\end{subsec}

\begin{subsec} \label{r:NSR}
We present results of
Knop \cite{Knop-Aut} and
Losev \cite{Losev} describing $\Aut^G(Y)$.

 The uniquely determined set $\Sigma \subset \sX$ of
 linearly independent primitive elements $\gamma$
 of the lattice $\sX$  such that
  \begin{align*}
    \mathcal V = \bigcap_{\gamma \in \Sigma} \{v \in V : \langle v, \gamma\rangle \le 0\}\text{.}
  \end{align*}
  is called the set of \emph{spherical roots} of $Y$.
  Since the image  $\kappa(\Aut^G(Y))\subset \Hom(\sX, k^\times)$ is closed,
  this image corresponds to a sublattice $\Lambda \subset \sX$ such that
  \[\im\ \kappa=\{\phi \in \Hom(\sX, k^\times) \ |\ \phi(\chi)=1\text{ for all }\chi \in \Lambda\subset\sX\}.\]

  According to Losev \cite[Theorem~2]{Losev}, there exist integers
  $(c_\gamma)_{\gamma\in\Sigma}$ equal to $1$ or $2$ such that each
  $\gamma' := c_\gamma\cdot \gamma\in\sX$ is a primitive element of the lattice $\Lambda$.
  The set
  \[\SigmaN =\{c_\gamma\cdot\gamma\}_{\gamma\in\Sigma}\subset \Lambda\]
  generates the lattice $\Lambda$; see Knop \cite[Corollary~6.5]{Knop-Aut}.
  It follows that we have
  \[\Aut^G(Y) \cong \{\psi \in \Hom(\sX, k^\times) : \psi(\SigmaN) =
    \{1\}\}\text{.}\]
     Losev has shown how the coefficients $c_\gamma$ can be
  computed from the combinatorial invariants of~$Y$, but we shall only need the
  property recalled in Proposition~\ref{p:Losev-roots} below.
 For further  details, we refer to  Losev \cite{Losev}.
\end{subsec}

For $\alpha\in S$, let $\sD(\alpha)$ denote the set of colors $D\in \sD$
such that the parabolic subgroup $P_\alpha$ {\em moves} $D$,
that is, $\alpha\in \vs(D)$.
We need the following results of Luna \cite{Luna-GC}, \cite{Luna-IHES}:

\begin{proposition}  \label{p:facts-type-a}
  Let $\alpha \in S$.
  \setlength{\parskip}{0pt}
  \begin{enumerate}
  \item[\rm(1)] We have $|\sD(\alpha)| \le 2$.
   Moreover, $|\sD(\alpha)| = 2$ if and only if
  $\alpha \in \Sigma \cap S$.
 \item[\rm(2)] Assume $|\sD(\alpha)| = 2$ and write $\sD(\alpha) =
   \{D_{\alpha}^+, D_{\alpha}^-\}$. If $\rho(D_{\alpha}^+) =
   \rho(D_{\alpha}^-)$, then:
   \begin{enumerate}
  \item[\rm(i)] we have $\langle\rho(D_{\alpha}^+),\chi\rangle =
  \langle\rho(D_{\alpha}^-),\chi\rangle = \frac{1}{2}\langle\alpha^\vee,\chi\rangle$
  for all $\chi\in\sX$,
    where $\alpha^\vee\in\X_*(T)$ is the corresponding simple coroot;
  \item[\rm(ii)] we have $\varsigma(D_{\alpha}^+) = \varsigma(D_{\alpha}^-) = \{\alpha\}$.
  \end{enumerate}
  \end{enumerate}
\end{proposition}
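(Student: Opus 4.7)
The plan is to reduce the proposition to the classical rank-one analysis of Luna contained in \cite{Luna-GC, Luna-IHES}. Fix $\alpha \in S$ and consider the minimal parabolic $P_\alpha \supset B$. Let $Y^0 \subset Y = G/H$ denote the open $B$-orbit and set $Z := P_\alpha \cdot Y^0$; this is an open $P_\alpha$-stable subvariety of $Y$. By definition a color $D$ lies in $\sD(\alpha)$ precisely when $P_\alpha$ does not stabilize $D$, equivalently when the codimension-one $B$-orbit dense in $D$ lies in $Z$ and is moved by $P_\alpha$ into the open $B$-orbit $Y^0$.

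For part (1), I would fix $y_0 \in Y^0$ and analyze the $P_\alpha$-orbit $P_\alpha y_0$, which is a spherical $P_\alpha$-variety of rank $0$ or $1$. The map $P_\alpha \to P_\alpha/B \simeq \PP^1$ combined with the orbit map $P_\alpha \to P_\alpha y_0$ shows that $\dim(P_\alpha y_0) - \dim(B y_0) \in \{0,1\}$, so the number of codimension-one $B$-orbits in $P_\alpha y_0 \setminus B y_0$ is at most two. A case analysis of the possible stabilizers $P_\alpha \cap \Stab_{P_\alpha}(y_0)$, via the classification of $P_\alpha$-spherical homogeneous spaces of rank $\le 1$, yields three mutually exclusive possibilities: the \emph{type $b$} case, with $|\sD(\alpha)| \le 1$; the \emph{type $a'$} case, with $|\sD(\alpha)| = 1$ and $2\alpha \in \Sigma$; and the \emph{type $a$} case, with $|\sD(\alpha)| = 2$ and $\alpha \in \Sigma$. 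This simultaneously gives the bound $|\sD(\alpha)| \le 2$ and the equivalence $|\sD(\alpha)| = 2 \iff \alpha \in \Sigma \cap S$, which is assertion (1).

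For part (2), the type-$a$ case furnishes an explicit formula from the $\SL_2$- or $\PGL_2$-equivariant geometry of a smooth completion of the rank-one orbit: for every $\chi \in \sX$, a nonzero $f_\chi \in K(Y)^{(B)}_\chi$ pulls back to a rational function on the completion whose divisor is supported on the two $B$-stable prime divisors that become $D_\alpha^+$ and $D_\alpha^-$ in $Y$, and the total weight of $f_\chi$ under $\alpha^\vee$ equals $\langle \alpha^\vee, \chi\rangle$. Computing orders of vanishing yields the unconditional identity
\begin{equation*}
\langle \rho(D_\alpha^+), \chi\rangle + \langle \rho(D_\alpha^-), \chi\rangle = \langle \alpha^\vee, \chi\rangle
\quad\text{for all } \chi \in \sX.
\end{equation*}
Under the hypothesis $\rho(D_\alpha^+) = \rho(D_\alpha^-)$ this forces each side to equal $\tfrac{1}{2}\langle \alpha^\vee, \chi \rangle$, giving (i). Finally, (ii) follows by applying the same rank-one reduction to any simple root $\beta \ne \alpha$: the colors $D_\alpha^\pm$ do not appear among the codimension-one $B$-orbits moved by $P_\beta$, so $\alpha$ is the only element of $\varsigma(D_\alpha^\pm)$.

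The main obstacle is establishing both the trichotomy of types and the sum formula $\rho(D_\alpha^+) + \rho(D_\alpha^-) = \alpha^\vee|_\sX$, which are the heart of Luna's rank-one analysis and ultimately rest on the explicit list of spherical $\SL_2$- and $\PGL_2$-varieties of rank one. Once this classification is accepted as input, the remaining deductions are short algebraic manipulations; it is precisely this combinatorial classification that makes it reasonable to cite the result from \cite{Luna-GC, Luna-IHES} rather than reproving it here in full.
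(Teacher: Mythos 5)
Your treatment of parts (1) and (2)(i) follows essentially the same route as the paper: (1) is Luna's rank-one trichotomy at the minimal parabolic $P_\alpha$, quoted from \cite{Luna-GC}, and (2)(i) is the identity $\rho(D_\alpha^+)+\rho(D_\alpha^-)=\alpha^\vee|_{\sX}$ (one of the axioms of a homogeneous spherical datum, \cite[Theorem~2]{Luna-IHES}) combined with the hypothesis $\rho(D_\alpha^+)=\rho(D_\alpha^-)$. Up to how much of the rank-one classification one is willing to re-derive rather than cite, this part is fine.

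Your argument for (2)(ii), however, has a genuine gap. You claim that for $\beta\neq\alpha$ ``the colors $D_\alpha^\pm$ do not appear among the codimension-one $B$-orbits moved by $P_\beta$,'' as if this followed from running the rank-one reduction at $\beta$. It does not: that reduction describes the structure of $\sD(\beta)$ in terms of data at $\beta$, but it cannot by itself exclude $D_\alpha^\pm\in\sD(\beta)$. In general a color belonging to $\sD(\alpha)$ with $\alpha\in\Sigma\cap S$ \emph{can} also be moved by a second simple root $\beta\in\Sigma\cap S$ --- this is exactly why $\varsigma$ is set-valued and why Luna's axioms contain a separate compatibility condition for colors shared between two such roots. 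The tell-tale sign that your argument is incomplete is that it never uses the hypothesis $\rho(D_\alpha^+)=\rho(D_\alpha^-)$, even though conclusion (ii) is only asserted (and is only true) under that hypothesis. The missing ingredient is a further axiom of the homogeneous spherical datum: for any $\beta\in\varsigma(D_\alpha^\pm)$ one has $\beta\in\sX$ and $\langle\rho(D_\alpha^\pm),\beta\rangle=1$. Combining this with (i) gives $1=\frac{1}{2}\langle\alpha^\vee,\beta\rangle$, i.e.\ $\langle\alpha^\vee,\beta\rangle=2$, which for two simple roots forces $\beta=\alpha$ (distinct simple roots pair nonpositively); since $\alpha\in\varsigma(D_\alpha^\pm)$ by definition of $\sD(\alpha)$, this yields $\varsigma(D_\alpha^+)=\varsigma(D_\alpha^-)=\{\alpha\}$. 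That is how the paper deduces (ii) from (i), and some argument of this kind is needed in place of your appeal to the rank-one reduction at $\beta$.
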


\begin{proof}
  For (1), see Luna \cite[Sections 2.6 and 2.7]{Luna-GC} or Timashev
  \cite[Section 30.10]{Timashev}. For (2), we use that
  \cite[Theorem~2]{Luna-IHES} or \cite[Theorem~30.22]{Timashev}
  implies that the invariants of a spherical homogeneous space satisfy
  the axioms of a homogeneous spherical datum. These axioms are stated
  in \cite[Sections 2.1 and 2.2]{Luna-IHES} and
  \cite[Definition~30.21]{Timashev}. In particular, we have
  $\rho(D_{\alpha}^+) +\rho(D_{\alpha}^-) = \alpha^\vee|_{\sX}$
  and   for every $\beta \in \varsigma(D_{\alpha}^\pm)$
  we have $\beta \in \sX$ and $\langle \rho(D_{\alpha}^\pm), \beta \rangle = 1$.
   With the assumption $\rho(D_{\alpha}^+) = \rho(D_{\alpha}^-)$,
   we obtain (i) and then (ii).
 \end{proof}

We need the following result of Losev:

\begin{proposition} \label{p:Losev-roots}
 If $\alpha \in \Sigma \cap S$ and
  $\langle\rho(D_\alpha^+),\chi\rangle = \langle\rho(D_\alpha^-),\chi\rangle
    =\frac{1}{2}\langle\alpha^\vee,\chi\rangle$ for all $\chi\in\sX$,
    then $2\alpha \in \SigmaN$ (hence $\alpha \notin \SigmaN$).
\end{proposition}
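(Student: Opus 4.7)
The proposition asserts that in the combinatorial situation isolated by Proposition \ref{p:facts-type-a}(2) (a simple spherical root $\alpha$ whose two associated colors $D_\alpha^\pm$ satisfy $\rho(D_\alpha^+) = \rho(D_\alpha^-) = \tfrac12\alpha^\vee|_\sX$ and $\varsigma(D_\alpha^+) = \varsigma(D_\alpha^-) = \{\alpha\}$), the sublattice $\Lambda \subset \sX$ contains $2\alpha$ but not $\alpha$, so that the primitive element of $\Lambda$ along $\alpha$ is $2\alpha$ and hence $c_\alpha = 2$.

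My plan is to apply Losev's Theorem 2 of \cite{Losev} (referenced in \ref{r:NSR}), which gives an explicit combinatorial description of the lattice $\Lambda$, equivalently of the coefficients $c_\gamma \in \{1,2\}$ for $\gamma \in \Sigma$. Losev enumerates simple spherical roots according to the ``type'' of the associated set $\sD(\alpha)$ of colors, and for each type reads off the value of $c_\gamma$. The hypothesis of the proposition places $\alpha$ in the distinguished type (often labelled ``$aa$'' in Luna's classification of wonderful rank-one varieties, cf.\ Luna \cite{Luna-GC}), characterised precisely by the existence of two colors with equal $\rho$ and $\varsigma$ values, and this is exactly the type for which Losev's rule yields $c_\alpha = 2$. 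Concretely, there is an element $\phi \in \Aut^G(Y)$ that swaps $D_\alpha^+$ and $D_\alpha^-$ and satisfies $\kappa(\phi)(\alpha) = -1$, $\kappa(\phi)(2\alpha) = 1$; such $\phi$ exists by Losev's direct construction inside each rank-one slice through $P_\alpha \cdot y^{(0)}$ and does not rely on Theorem \ref{t:Losev-3} (which would be circular since this proposition is used in its proof).

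The main obstacle is notational: one must translate carefully between the conventions used in this paper (colors described via $\rho$ and $\varsigma$, Proposition \ref{p:facts-type-a}) and Losev's spherical datum formalism, making sure the ``type $aa$'' case indeed corresponds to our hypothesis and that the coefficient read off from Losev's table is $2$ rather than $1$. Once this identification is carried out the conclusion $2\alpha \in \SigmaN$, and hence $\alpha \notin \SigmaN$ (since $\SigmaN$ consists of primitive elements of $\Lambda$ along each spherical root direction), is immediate.
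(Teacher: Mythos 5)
Your proposal is correct and follows essentially the same route as the paper, whose entire proof is the citation to Losev \cite[Theorem 2 and Definition 4.1.1(1)]{Losev}: one reads off from Losev's definition of the coefficients $c_\gamma$ that the case $\rho(D_\alpha^+)=\rho(D_\alpha^-)=\tfrac12\alpha^\vee|_\sX$ is precisely the one with $c_\alpha=2$, so $2\alpha\in\SigmaN$. Your additional caution that the argument must not invoke Theorem \ref{t:Losev-3} (to avoid circularity) is well taken but does not change the substance.
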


\begin{proof} See Losev \cite[Theorem 2 and Definition 4.1.1(1)]{Losev}.  \end{proof}

The following theorem is the main result of this appendix:

\begin{theorem}[\rm Losev, unpublished]
\label{t:Losev-3-bis}
The homomorphism
\begin{equation*}
\Aut^G(Y)\to \Aut_\Omega(\sD)
\end{equation*}
 is surjective.
\end{theorem}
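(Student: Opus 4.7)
The first step is to make the target $\Aut_\Omega(\sD)$ transparent. By Corollary~\ref{c:fiber-2}, each fiber of $\zeta = \rho \times \vs \colon \sD \to \Omega$ has at most two elements, so $\Aut_\Omega(\sD) \cong \prod_{\omega \in \Omt} S_2$. By Proposition~\ref{p:facts-type-a}, a two-element fiber $\{D,D'\}$ over $\omega = (v,\Sigma_0) \in \Omt$ forces $\Sigma_0 = \{\alpha\}$ for some $\alpha \in \Sigma \cap S$ with $|\sD(\alpha)| = 2$ and $\rho(D_\alpha^+) = \rho(D_\alpha^-) = v$; conversely every such $\alpha$ gives rise to such an $\omega$. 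Hence $\Omt$ is canonically in bijection with
\[
A := \{\alpha \in \Sigma \cap S : \rho(D_\alpha^+)=\rho(D_\alpha^-)\}\text{,}
\]
and the $\alpha$-factor of $\Aut_\Omega(\sD) \cong \prod_{\alpha \in A} S_2$ is generated by the transposition $(D_\alpha^+\ D_\alpha^-)$.

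On the source side, by \ref{ss:Losev-Aut}--\ref{r:NSR}, Knop's homomorphism $\kappa$ identifies $\Aut^G(Y)$ with $\Hom(\sX/\Lambda, k^\times)$, where $\Lambda$ is the sublattice of $\sX$ generated by Losev's set $\SigmaN = \{c_\gamma \gamma : \gamma \in \Sigma\}$ with each $c_\gamma \in \{1,2\}$. For each $\alpha \in A$, Proposition~\ref{p:Losev-roots} yields $2\alpha \in \SigmaN$, forcing $c_\alpha = 2$ and $\alpha \notin \SigmaN$. Since $\Sigma$ is linearly independent in $\sX$ and $\Lambda = \bigoplus_{\gamma \in \Sigma} \Z\cdot c_\gamma\gamma$, we conclude that $\alpha \notin \Lambda$ while $2\alpha \in \Lambda$, so the image $\bar\alpha \in \sX/\Lambda$ has order exactly $2$; moreover, the classes $\{\bar\alpha\}_{\alpha \in A}$ are $\Z/2\Z$-linearly independent, since any relation $\sum n_i\alpha_i \in \Lambda$ forces each $n_i$ to be divisible by $c_{\alpha_i} = 2$.

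Consequently we obtain an embedding $(\Z/2\Z)^A \hookrightarrow \Hom(\sX/\Lambda, k^\times) \cong \Aut^G(Y)$: for each subset $A' \subseteq A$, the character $\psi_{A'}$ with $\psi_{A'}(\bar\alpha) = -1$ iff $\alpha \in A'$ (and $\psi_{A'}(\bar\gamma) = 1$ for $\gamma \in \Sigma\setminus A$) defines an element $\phi_{A'} \in \Aut^G(Y)$. The theorem reduces to showing that $\phi_{A'}$ induces on $\sD$ exactly the permutation $\prod_{\alpha \in A'}(D_\alpha^+\ D_\alpha^-)$. Outside $\zeta^{-1}(\Omt)$ the action is automatically trivial, since $\phi_{A'}$ preserves $\rho$ and $\vs$ and thus acts within fibers of $\zeta$; and over each $\omega \in \Omt$ the fiber has two elements, so $\phi_{A'}$ acts either as the identity or as the transposition. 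The remaining task is to show that the transposition occurs precisely when $\alpha_\omega \in A'$.

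This final dichotomy is the main obstacle. The plan is to localize: for each $\alpha \in A$, use the local structure theorem for spherical varieties on a $P_\alpha$-stable open subset of $Y$ to reduce the question to a rank-one spherical homogeneous space of ``type $a$'', the basic example being $\PGL_{2,k}/T_k$ from Example~\ref{ex:CF}. In that setting, $\Aut^G$ has order $2$, its nontrivial element visibly swaps the two colors, and a direct computation shows that it acts on the eigenfunction $f_\alpha \in K(Y)^{(B)}_\alpha$ by the scalar $-1$. Combining this local computation with the naturality of Knop's isomorphism $\kappa$ matches the $\alpha$-component of $\Hom(\sX/\Lambda, k^\times)$ with the transposition of $D_\alpha^\pm$ in $\sD$, establishing the surjectivity of \eqref{e:zeta}.
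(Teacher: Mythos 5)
Your setup coincides with the paper's: you identify $\Omt$ with the set $\sA$ of simple roots $\alpha\in\Sigma\cap S$ satisfying $\rho(D_\alpha^+)=\rho(D_\alpha^-)$, you use Knop's embedding $\kappa$ together with Losev's description of its image via $\SigmaN$, and you produce, for each $\alpha\in\sA$, a finite-order automorphism $\phi_\alpha\in\Aut^G(Y)$ acting by $-1$ on $f_\alpha\in K(Y)^{(B)}_\alpha$ and by $+1$ on $f_\beta$ for $\beta\in\sA\setminus\{\alpha\}$ (you do this for all subsets $A'\subseteq\sA$ at once rather than one $\alpha$ at a time, which is immaterial since $\Aut_\Omega(\sD)$ is generated by the individual transpositions). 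Up to that point your argument is correct and is essentially the paper's.

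The gap is exactly where you write that ``this final dichotomy is the main obstacle'': you never prove that $\phi_\alpha$ actually swaps $D_\alpha^+$ and $D_\alpha^-$ while fixing $D_\beta^\pm$ for $\beta\neq\alpha$; you only announce a plan to reduce to $\PGL_{2,k}/T$ via the local structure theorem. That reduction is not routine and is left entirely unexecuted: you would have to show that localization at $\alpha$ carries the pair $D_\alpha^{\pm}$ to the pair of colors of the rank-one model, that $\phi_{A'}$ descends to an automorphism of that model compatibly with $\kappa$ for the smaller group, and that the eigenvalue on $f_\alpha$ is preserved under the reduction. The paper avoids localization altogether and instead passes to the quotient $\stY=G/\stH$ with $\stH/H=\langle\phi_\alpha\rangle$, so that $K(\stY)$ is the fixed field of $\phi_\alpha$ in $K(Y)$. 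Then $\phi_\alpha(f_\alpha)=-f_\alpha$ gives $\alpha\notin\stsX$, hence $\alpha\notin\stSigma$, hence $|\stsD(\alpha)|\le 1$ by Proposition \ref{p:facts-type-a}(1), which forces $Y\to\stY$ to glue $D_\alpha^+$ and $D_\alpha^-$, i.e.\ $\phi_\alpha$ swaps them; whereas for $\beta\in\sA\setminus\{\alpha\}$ one has $f_\beta\in K(\stY)$, so $\beta$ remains a primitive element of $\stsX$ dual to a wall of $-\stsV=-\sV$, hence $\beta\in S\cap\stSigma$, $|\stsD(\beta)|=2$, the two colors over $\beta$ stay separated, and $\phi_\alpha$ fixes them. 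To complete your proof you must either carry out the localization argument in full or substitute a step of this kind.
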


\begin{proof}
  Let $\sA$ denote the set of simple roots $\alpha \in S$ such that
  $|\sD(\alpha)| = 2$ and $\rho(D_{\alpha}^+) = \rho(D_{\alpha}^-)$.
  By Proposition~\ref{p:facts-type-a}, for every $\alpha\in\sA$ we
  have $\vs(D_{\alpha}^+) = \vs(D_{\alpha}^-) = \{\alpha\}$, hence the
  map $\alpha\mapsto \{D_\alpha^+, D_\alpha^-\}$ is a bijection
  between $\sA$ and the set of unordered pairs $\{D_\alpha^+,
  D_\alpha^-\}$ such that $(\rho \times \varsigma)(D_\alpha^+) = (\rho
  \times \varsigma)(D_\alpha^-)$.
  Note that there is a canonical bijection
  \[\sA\to \Omt,\quad \alpha\mapsto (\rho \times \varsigma)(D_\alpha^+).\]

  By Proposition \ref{p:facts-type-a}(1), for every $\alpha\in\sA$ we
  have $\alpha\in S\cap\Sigma\subset \sX$
  (because $\Sigma\subset  \sX$), and hence,
  there exists
  $f_\alpha \in \smash{K(Y)^{(B)}_\alpha}$ with $f_\alpha\neq 0$.
  Moreover, from Propositions \ref{p:facts-type-a} and
  \ref{p:Losev-roots} we obtain that $2\alpha \in \SigmaN$ (and
  $\alpha \notin \SigmaN$).

  We want to show that for every $\alpha \in \sA$ there exists
  $\phi_{\alpha}\in \Aut^G(Y)$ such that $\phi_{\alpha}$ swaps
  $D_{\alpha}^+$ and $D_{\alpha}^-$, but fixes all $D_{\beta}^+$
  and $D_{\beta}^-$ for $\beta\in \sA$,  $\beta\ne \alpha$.

  We have a natural homomorphism of algebraic $k$-tori
  \[ \Hom(\sX,k^\times)\to\Map(\Sigma, k^\times),\quad \psi\mapsto \psi|_\Sigma\hs, \]
  where $\Map(\Sigma, k^\times)$ denotes the group of maps $\Sigma\to k^\times$.
   Since the set $\Sigma \subset \sX$ is
  linearly independent, this homomorphism is surjective. It follows
  easily that any element of finite order in the group $\Map(\Sigma, k^\times)$
   can be lifted to an element of finite order in
  the group $\Hom(\sX,k^\times)$.

  Now let $\alpha \in \sA$.
  By the previous paragraph, there exists a homomorphism $\psi_{\alpha}\colon \sX \to k^\times$ with
  $\psi_{\alpha}(\alpha) = -1$ and $\psi_{\alpha}(\gamma) = 1$ for
  every $\gamma \in \Sigma \smallsetminus \{\alpha\}$, and such that
  $\psi_{\alpha}$ is of finite order in the group
  $\Hom(\sX,k^\times)$.
  Then we have
  $\psi_{\alpha}(\SigmaN) = \{1\}$. By
  \ref{r:NSR} there exists an automorphism of
  finite order $\phi_{\alpha} \in \Aut^G(Y)$ with
  \begin{align}\label{a:phi}
    \phi_{\alpha}(f_{\beta}) =
    \begin{cases}
      -f_{\beta} & \text {for $\beta = \alpha$,}\\
      f_{\beta} & \text{for $\beta \in \sA \smallsetminus \{\alpha\}$,}
    \end{cases}
  \end{align}
  where $f_\beta\in K(Y)^{(B)}_\beta$.
  Let $\stH \subset \NGH$ denote the subgroup containing $H$ such that
  \[
  \stH / H =\langle\phi_{\alpha}\rangle\subset\NGH/H=\Aut^G(Y),
  \]
  where $\langle\phi_{\alpha}\rangle$ denotes the finite subgroup
  generated by $\phi_{\alpha}$. We set $\stY = G/\stH$. We use the
  same notation for the combinatorial objects associated to the
  spherical homogeneous space $\stY$ as for $Y$, but with a tilde
  above the respective symbol. The morphism of $G$-varieties $Y\to
  \stY$ induces an embedding $K(\stY)\into K(Y)$, and $K(\stY)$ is
  the fixed subfield of $\phi_{\alpha}$. Since $K(\stY)$ is a
  $G$-invariant subfield of $K(Y)$, we have $\stsX \subset \sX$.

  By \eqref{a:phi} we have
  $\phi_{\alpha}(f_{\alpha})=-f_{\alpha}\neq f_{\alpha}$\hs.
  We see that $f_{\alpha} \notin K(\widetilde Y)$, hence $\alpha \in
  \sX \smallsetminus \stsX$; in particular $\alpha \notin \stSigma$.
  By Proposition~\ref{p:facts-type-a}(1) we have $|\stsD(\alpha)|
  \le 1$, hence the two colors in $\sD(\alpha)$ are mapped to one
  color by the map $Y \to \stY$, that is, $\phi_{\alpha}$
  swaps $D_{\alpha}^+$ and $D_{\alpha}^-$.

  On the other hand, for every $\beta \in \sA \smallsetminus \{\alpha\}$,
  by \eqref{a:phi} we have $\phi_{\alpha}(f_\beta)=f_\beta$,
  hence $f_\beta\in K(\stY)$ and $\beta\in\stsX$.
  Since $\beta$ is a primitive element of $\sX$, it is a primitive
  element of $\stsX\subset\sX$. The natural map $V \to
  \widetilde V$ induced by $Y \mapsto \stY$ is bijective and
  identifies $\sV$ and $\stsV$ (see Knop
  \cite[Section~4]{Knop-LV}). Since $\beta \in \Sigma$ is dual to a
  wall of $-\sV$, it is dual to a wall of $-\stsV=-\sV$.
   It follows that $\beta \in S \cap \stSigma$; hence $|\stsD(\beta)|
  = 2$, and the two colors in $\sD(\beta)$ are mapped to distinct
  colors under $Y \to \stY$, that is, $\phi_{\alpha}$ fixes
  $D_{\beta}^+$ and $D_{\beta}^-$.
 \end{proof}

\end{document}